\pdfoutput=1
\RequirePackage[l2tabu, orthodox]{nag}

\documentclass[reqno]{amsart}
\usepackage[letterpaper, portrait, margin=0.9in, headheight=1in]{geometry}
\usepackage{lmodern}
\usepackage[T1]{fontenc}
\usepackage[utf8]{inputenc}
\usepackage[english]{babel}
\usepackage{microtype} 
\usepackage{comment} 
\usepackage{float} 
\usepackage{multicol} 
\usepackage{animate}

\usepackage{amsmath,amssymb,amsthm,mathrsfs, latexsym,mathtools,mathdots,booktabs,tabularx,enumerate,bm,url,nicefrac,leftidx}
\usepackage[centertableaux]{ytableau}
\usepackage{cases} 
\usepackage{caption,subcaption} 
\usepackage{xparse,tikz,xcolor}
\usepackage{listofitems} 

\usetikzlibrary{arrows,calc,positioning,shapes.geometric,patterns}
\usepackage[pdftex]{hyperref}
\usepackage[capitalize,noabbrev]{cleveref}

\numberwithin{equation}{section}

\hypersetup{
    unicode=false,          
    pdftoolbar=true,        
   pdfmenubar=true,        
   pdffitwindow=false,     
   pdfstartview={FitH},    
   pdftitle={},    
   pdfauthor={Author},     
    pdfcreator={Creator},   
    pdfproducer={Producer}, 
   pdfkeywords={keyword1} {key2} {key3}, 
   pdfnewwindow=true,      
    colorlinks=true,       
    linkcolor=blue,          
    citecolor=black,        
    filecolor=magenta,      
    urlcolor=black    
    }

\usepackage{todonotes}
\presetkeys%
    {todonotes}%
    {inline,backgroundcolor=yellow}{}

\newtheorem{theorem}{Theorem}[section]
\newtheorem{proposition}[theorem]{Proposition}
\newtheorem{lemma}[theorem]{Lemma}
\newtheorem{corollary}[theorem]{Corollary}
\newtheorem{conjecture}[theorem]{Conjecture}

\theoremstyle{definition}
\newtheorem{definition}[theorem]{Definition}
\newtheorem{example}[theorem]{Example}
\newtheorem{remark}[theorem]{Remark}
\newtheorem{question}[theorem]{Question}

\definecolor{lightblue}{rgb}{0.8,0.8,1.0}
\definecolor{lightgreen}{rgb}{0.8,1.0,0.8}
\definecolor{pBlue}{RGB}{86,139,190}
\definecolor{pCyan}{RGB}{149,186,201}
\definecolor{pSand}{RGB}{184,166,121}
\definecolor{pAlgae}{RGB}{87,115,135}
\definecolor{pSkin}{RGB}{236,216,167}
\definecolor{pGray}{RGB}{156,175,156}
\definecolor{pPink}{RGB}{215,114,127}
\definecolor{pOrange}{RGB}{211,153,80}

\newcommand{\defin}[1]{%
\relax\ifmmode%
\textcolor{blue}{#1}%
\else \textcolor{blue}{\emph{#1}}%
\fi%
}

\newcommand{\oeis}[1]{\href{http://oeis.org/#1}{\textcolor{blue}{\texttt{#1}}}}

\newcommand{\twin}{\textnormal{twin}}

\newcommand{\setN}{\mathbb{N}}

\newcommand{\avec}{\mathbf{a}}
\newcommand{\bvec}{\mathbf{b}}

\newcommand{\wvec}{\mathbf{w}}

\newcommand{\abelian}[1]{\mathcal{A}(#1)}

\newcommand{\DP}{\mathcal{DP}}
\newcommand{\motzkin}{\mathcal{MP}} 

\newcommand{\interl}{\ll}

\DeclareMathOperator{\asc}{asc}
\DeclareMathOperator{\des}{des}

\DeclareMathOperator{\peak}{peak}
\DeclareMathOperator{\Peak}{Peak}

\DeclareMathOperator{\plateau}{plateau}
\DeclareMathOperator{\decor}{decor}
\DeclareMathOperator{\height}{height}

\DeclareMathOperator{\pnv}{\mathbf{pnv}} 
\DeclareMathOperator{\pnest}{pnest} 
\DeclareMathOperator{\Pnest}{Pnest}
\DeclareMathOperator{\rank}{rank}

\tikzset{every picture/.append
  style={scale=1,
	baseline=(current bounding box.center),
	x=1em,
	y=1em,
	thinLine/.style={line width=0.7pt},
	thickLine/.style={line width=1.4pt,line join=round},
	snaking/.style={decorate,decoration={zigzag,segment length=1.0mm,amplitude=0.15mm}},
	entries/.style={xshift=-0.5em,yshift=-0.5em,font=\small},
	bgEntry/.style={xshift=-0.5em,yshift=-0.5em,font=\small,
		regular polygon,regular polygon sides=4,fill,inner sep=0pt,minimum size=1em
	}
	}
}

\ExplSyntaxOn
\clist_new:N \l__dyck_seq_clist
\tl_new:N    \l__dyck_coords_tl
\int_new:N   \l__dyck_n_int
\int_new:N   \l__dyck_x_int
\int_new:N   \l__dyck_y_int
\int_new:N   \l__dyck_ai_int
\int_new:N   \l__dyck_cap_int
\int_new:N   \l__dyck_start_int
\int_new:N   \l__dyck_col_int
\int_new:N   \l__dyck_target_int
\NewDocumentCommand{\dyckDiagram}{ O{0.35cm} m }{%
  \group_begin:
  \clist_set:Nn \l__dyck_seq_clist { #2 }
  \int_set:Nn  \l__dyck_n_int { \clist_count:N \l__dyck_seq_clist }
  \tl_set:Nn   \l__dyck_coords_tl { (0,0) } 

  \begin{tikzpicture}[x=#1,y=#1,baseline=(current  \c_space_tl bounding  \c_space_tl box.south)]
    \int_step_inline:nn { \l__dyck_n_int }{
      \int_set:Nn \l__dyck_ai_int  { \clist_item:Nn \l__dyck_seq_clist {##1} }
      \int_compare:nNnTF { \l__dyck_ai_int } > { ##1 - 1 }
        { \int_set:Nn \l__dyck_cap_int { ##1 - 1 } }
        { \int_set_eq:NN \l__dyck_cap_int \l__dyck_ai_int }
      \int_set:Nn \l__dyck_start_int { ##1 - \l__dyck_cap_int }
      \int_step_inline:nn { \l__dyck_cap_int }{
        \int_set:Nn \l__dyck_col_int { \l__dyck_start_int + ####1 - 1 }
        \int_set:Nn \l__dyck_x_int { \l__dyck_col_int - 1 } 
        \int_set:Nn \l__dyck_y_int { ##1 - 1 }             
        \path[fill=blue!20,draw=none]
          ( \int_use:N \l__dyck_x_int , \int_use:N \l__dyck_y_int )
          rectangle
          ( \int_use:N \l__dyck_col_int , ##1 );
      }
    }

    \draw[step=1,thin,gray!70] (0,0) grid (\int_use:N\l__dyck_n_int,\int_use:N\l__dyck_n_int);
    \int_step_inline:nn { \l__dyck_n_int }{
      \node[font=\footnotesize] at (##1-0.5,##1-0.5) {##1};
    }

    \int_zero:N \l__dyck_x_int
    \int_zero:N \l__dyck_y_int
    \int_step_inline:nn { \l__dyck_n_int }{
      \int_set:Nn \l__dyck_ai_int  { \clist_item:Nn \l__dyck_seq_clist {##1} }
      \int_compare:nNnTF { \l__dyck_ai_int } > { ##1 - 1 }
        { \int_set:Nn \l__dyck_cap_int { ##1 - 1 } }
        { \int_set_eq:NN \l__dyck_cap_int \l__dyck_ai_int }
      \int_set:Nn \l__dyck_target_int { ##1 - 1 - \l__dyck_cap_int }
      \int_while_do:nn { \l__dyck_x_int < \l__dyck_target_int }{
        \int_incr:N \l__dyck_x_int
        \tl_put_right:Nx \l__dyck_coords_tl
          { ~ ( \int_use:N \l__dyck_x_int , \int_use:N \l__dyck_y_int ) }
      }
      \int_incr:N \l__dyck_y_int
      \tl_put_right:Nx \l__dyck_coords_tl
        { ~ ( \int_use:N \l__dyck_x_int , \int_use:N \l__dyck_y_int ) }
    }
    \int_while_do:nn { \l__dyck_x_int < \l__dyck_n_int }{
      \int_incr:N \l__dyck_x_int
      \tl_put_right:Nx \l__dyck_coords_tl
        { ~ ( \int_use:N \l__dyck_x_int , \int_use:N \l__dyck_y_int ) }
    }

    \draw[line  \c_space_tl width=0.6mm,black] plot coordinates { \l__dyck_coords_tl };
  \end{tikzpicture}
  \group_end:
}
\ExplSyntaxOff

\definecolor{areablue}{RGB}{205,205,245}

\definecolor{myblue}{RGB}{205,205,245}
\definecolor{myred}{RGB}{245,170,170}

\newcommand{\AbelianDyckDiagram}[4][0.9cm]{%
    \pgfmathtruncatemacro{\n}{#2}
    \pgfmathtruncatemacro{\k}{#3}
    \readlist\B{#4}

    \begin{tikzpicture}[x=#1,y=#1,line cap=round,line join=round]

        \foreach \c in {0,...,\numexpr\n-1\relax} {

            \ifnum\c<\k
                \pgfmathtruncatemacro{\ii}{\c+1}
                \pgfmathtruncatemacro{\H}{\k+\B[\ii]}
            \else
                \pgfmathtruncatemacro{\H}{\n}
            \fi

            \pgfmathtruncatemacro{\firstrow}{\c+2}
            \pgfmathtruncatemacro{\lastrow}{\H}

            \ifnum\firstrow>\lastrow
            \else
                \foreach \r in {\firstrow,...,\lastrow} {
                    \fill[myblue] (\c,\r-1) rectangle ++(1,1);
                }
            \fi
        }

        \foreach \i in {1,...,\k} {
            \pgfmathtruncatemacro{\bi}{\B[\i]}
            \pgfmathtruncatemacro{\col}{\i-1}
            \ifnum\bi>0
                \foreach \r in {\numexpr\k+1\relax,...,\numexpr\k+\bi\relax} {
                   \fill[white] (\col,\r-1) rectangle ++(1,1);
                    \fill[pattern=north west lines, pattern color=myred] (\col,\r-1) rectangle ++(1,1);
                }
            \fi
        }

        \draw[step=1,gray!60,thin] (0,0) grid (\n,\n);

        \foreach \i in {1,...,\n} {
            \node[font=\large] at (\i-0.5,\i-0.5) {\i};
        }

        \draw[black,line width=1.2pt] (0,0) -- (0,\k);

        \pgfmathtruncatemacro{\prevh}{\k}
        \foreach \i in {1,...,\k} {
            \pgfmathtruncatemacro{\bi}{\B[\i]}
            \pgfmathtruncatemacro{\xleft}{\i-1}
            \pgfmathtruncatemacro{\h}{\k+\bi}

            \draw[red,line width=1.5pt] (\xleft,\prevh) -- (\xleft,\h);

            \draw[red,line width=1.5pt] (\xleft,\h) -- (\i,\h);

            \xdef\prevh{\h}
        }

        \draw[red,line width=1.5pt] (\k,\prevh) -- (\k,\n);

        \draw[black,line width=1.2pt] (\k,\n) -- (\n,\n);

    \end{tikzpicture}%
}

\makeatletter
\renewcommand*\env@matrix[1][\arraystretch]{%
  \edef\arraystretch{#1}%
  \hskip -\arraycolsep
  \let\@ifnextchar\new@ifnextchar
  \array{*\c@MaxMatrixCols c}}
\makeatother

\title{Peaks and peak-nestings on unit interval graphs}

\author[P.~Alexandersson]{Per Alexandersson}
\address{Department of Mathematics, Stockholm University, SE-106 91 Stockholm, Sweden}
\email{per.w.alexandersson@gmail.com}

\author[L.~Saud]{Leonardo Saud Maia Leite}
\address{Department of Mathematics, KTH Royal Institute of Technology, SE-100 44 Stockholm,
Sweden}
\email{lsml@kth.se}

\keywords{unit interval graph, area sequence, Dyck path, Motzkin path, Riordan path, $2$-nested unit interval graph, Abelian unit interval graph, reduced unit interval graph, peak, peak-nesting, gamma-nonnegativity, real-rooted polynomial, log-concave, twin number}

\begin{document}

\begin{abstract}
    Unit interval graphs admit a classical Catalan encoding by area sequences, or equivalently by Dyck paths. We introduce a new statistic on these graphs, called the \emph{peak-nesting}, defined as the largest number of peak-cliques containing a common vertex. Through the correspondence with area sequences, peak-nesting also defines a new statistic on Dyck paths. We construct a bijection between Dyck paths and bicolored Motzkin paths which simultaneously records peak-nesting and the number of peaks. This yields a refinement of Touchard's identity, coefficient formulas involving Dyck paths of given height, and new combinatorial interpretations for several sequences recorded in the OEIS. We study the peak and peak-nesting polynomials over all unit interval graphs and over the subclasses of connected, Abelian, $2$-nested, and reduced unit interval graphs. For these families we obtain closed formulas, recurrences, and, for the symmetric cases, nonnegative expansions in the gamma-basis with explicit combinatorial interpretations. We also investigate questions regarding the location of zeros and the log-concavity of the corresponding coefficient sequences: some of the polynomial families form generalized Sturm sequences, whereas others fail to be real-rooted but appear nevertheless to have log-concave coefficients. Finally, we relate the peak-clique presentation of a unit interval graph to lattice path matroids.
\end{abstract}

\maketitle
\thispagestyle{empty}
\setcounter{tocdepth}{2}
\tableofcontents

\newpage


\section{Introduction}

Unit interval graphs were introduced by Roberts under the name \emph{indifference graphs} \cite{Rob69}. They are the intersection graphs of families of intervals of equal length, and Roberts proved that they coincide with proper interval graphs, namely, interval graphs admitting a representation in which no interval properly contains another. Besides their role in structural and algorithmic graph theory, unit interval graphs arise naturally from semiorders and models of indifference. Their enumeration and the removal of redundant, or twin, vertices were studied by Hanlon \cite{HanlonCounting1982}; the latter point of view leads to the reduced unit interval graphs considered in Section~\ref{sec:reduced}.

Unit interval graphs have also become important objects in algebraic combinatorics. Stanley and Stembridge formulated their celebrated positivity conjecture for incomparability graphs of $(3+1)$-free posets \cite{SS93}, and Stanley subsequently recast it in terms of his chromatic symmetric function \cite{Sta95}. Guay-Paquet showed that this conjecture reduces to the case of posets that are simultaneously $(3+1)$-free and $(2+2)$-free, equivalently unit interval orders \cite{GP13}. Shareshian and Wachs later introduced a quasisymmetric refinement of Stanley's invariant and highlighted the distinguished role played by natural unit interval orders \cite{SW16}. In a closely related direction, the Dyck path model for unit interval graphs has been used in the study of chromatic quasisymmetric functions and unicellular LLT polynomials; see, for example, \cite{AlexanderssonPanova2018}. Thus, area sequences, Dyck paths, natural unit interval orders, and naturally labeled unit interval graphs provide interchangeable models for a common Catalan family. Throughout this paper, we use the canonical labeling supplied by area sequences and refer to the resulting objects simply as unit interval graphs on $[n]$.

Dyck paths carry many classical statistics. The distribution of their number of peaks is given by the Narayana numbers, while Touchard's identity expresses the Catalan numbers as a weighted sum of Motzkin numbers \cite{Touchard1928}; see also \cite{motzkin-aigner} for background on Motzkin numbers. Under the Dyck path--unit interval graph correspondence, peaks become certain maximal cliques, which we call \emph{peak-cliques}. This suggests studying not only how many such cliques occur, but also how they overlap. The main new statistic of this paper measures precisely this phenomenon. For a vertex $i$, its \emph{peak-nesting} is the number of peak-cliques containing $i$, and the peak-nesting of the graph is the maximum of these numbers over all vertices. Equivalently, this defines a new statistic on Dyck paths through their associated unit interval graphs. We study its distribution together with the classical number of peaks.

Questions about the zeros and coefficients of combinatorial polynomials form a second theme of the paper. A polynomial with nonnegative coefficients and only real zeros has an ultra log-concave coefficient sequence, and therefore a log-concave and unimodal one. Symmetric real-rooted polynomials also admit nonnegative expansions in the gamma-basis. These implications have motivated a large body of work on real-rootedness, interlacing, log-concavity, and gamma-positivity; see \cite{Branden2015,Athanasiadis2018} and the references therein. Our peak and peak-nesting polynomials provide a varied collection of examples: some are real-rooted and even form generalized Sturm sequences, while others have nonreal zeros but retain strong coefficient inequalities. This makes it natural to ask which positivity property is the strongest one valid for each graph class.

Our main tool is a bijection
\[
   \varphi_n \colon \DP(n) \to \motzkin^{*}(n)
\]
between Dyck paths of semilength $n$ and bicolored Motzkin paths of length $n-1$; see Theorem \ref{th:bij-dyck-decorated-motzkin}. The uncolored height sequence of $\varphi_n(P)$ is the peak-nesting vector of the unit interval graph associated with $P$. Consequently,
\[
   \pnest(P) = 1 + \height( \varphi_n(P) ).
\]
At the same time, the number of peaks of $P$ is one plus the number of up-steps and marked horizontal steps of $\varphi_n(P)$. The bijection therefore records the two statistics simultaneously. It is the organizing principle behind most of our enumerative results and explains the repeated appearance of Motzkin and Riordan paths throughout the paper.

We next describe the main results in more detail. In Section \ref{sec:preliminaries} we recall the necessary background on log-concave sequences, interlacing polynomials, multiplier sequences, and gamma-expansions. In Section \ref{sec:unit} we introduce peak-cliques, the peak number, and peak-nesting. The bijection of Theorem \ref{th:bij-dyck-decorated-motzkin} implies that the vectors occurring as peak-nesting vectors are exactly the Motzkin sequences. It also gives a refinement of Touchard's identity in which Catalan objects are enumerated by peak-nesting; see Corollary \ref{cor:refinement}. In Corollary \ref{ref:pnest-formulas} we express the coefficients of the peak-nesting polynomials, both in the unrestricted and connected cases, in terms of the number of Dyck paths of given height. These formulas identify the first coefficients with powers of two, Stirling numbers of the second kind, and Fibonacci numbers, and they allow us to prove log-concavity in the upper range of the coefficient sequences. The peak-nesting polynomials are not real-rooted in general, but computations lead us to conjecture ultra log-concavity in Conjecture \ref{conj:pnest-ulc}. We also obtain recursive generating functions for unit interval graphs of bounded peak-nesting; in the limit they recover the classical continued fraction for the Catalan generating function. The peak polynomial over all unit interval graphs is the classical Narayana polynomial, and the connected version is obtained by lowering the semilength by one.

Section \ref{sec:abelian} concerns Abelian unit interval graphs. We first characterize them as the unit interval graphs whose vertex set can be divided into two consecutive cliques, and deduce that there are $2^{n-1}$ such graphs on $[n]$. The peak-nesting polynomial has coefficients $\displaystyle \binom{n}{2i-1}$, giving a new interpretation of \cite[\oeis{A034867}]{OEIS}, while the peak polynomial has coefficients $\displaystyle \binom{n}{2i}$, and yields a new interpretation of \cite[\oeis{A034839}]{OEIS}. We obtain radical expressions and recurrences for both families and prove that they form generalized Sturm sequences; see Theorem \ref{thm:pnest-pol-abelian} and Corollaries \ref{pnest-pol-abelian-rr} and \ref{cor:peak-pol-albelian-rr}. The corresponding connected peak polynomial is also real-rooted. For the symmetric even-indexed members, Propositions \ref{th:gamma-Abelian} and \ref{prop:gamma-peak-abelian} give explicit gamma-polynomials and interpret their coefficients by pattern-avoiding Abelian Dyck paths.

In Section \ref{sec:line-graphs} we prove a structural characterization that illustrates the usefulness of the new statistic: a unit interval graph is a line graph of a triangle-free multigraph if and only if its peak-nesting is at most two; see Proposition \ref{charac-unit-interval-line}. The bicolored Motzkin model then reduces to paths of height at most one. This gives formulas for the numbers of $2$-nested unit interval graphs and their connected counterparts, as well as closed expressions and recurrences for their peak and peak-nesting polynomials. The unrestricted and connected peak-nesting and peak polynomials are real-rooted and form generalized Sturm sequences; see Corollaries \ref{cor:pnest-line-rr}, \ref{cor:peak-line-rr} and \ref{cor:cpeak-line-rr}. Their coefficients give graph-theoretic interpretations of the arrays \cite[\oeis{A056241}]{OEIS} and \cite[\oeis{A085478}]{OEIS}. Moreover, the unrestricted peak polynomial is symmetric after division by $t$, and Proposition \ref{prop:gamma-line} determines its gamma-vector and interprets it using Motzkin paths of height at most one.

Section \ref{sec:reduced} is devoted to reduced unit interval graphs, that is, unit interval graphs without adjacent twin vertices. Under our bijection, reduced graphs correspond to Motzkin paths in which every horizontal step is marked. As a consequence, reduced and connected reduced unit interval graphs are counted by Motzkin and Riordan numbers, respectively. Their intersections with the class of $2$-nested graphs are counted by powers of two and Fibonacci numbers. More generally, Proposition \ref{prop:twin-formula} shows that the distribution of the twin number is given by binomial transforms of Motzkin numbers, providing a new interpretation of \cite[\oeis{A091869}]{OEIS}; its coefficient sequences are simultaneously log-concave and ultra log-convex. The peak-nesting polynomials of reduced graphs satisfy natural recurrences involving the maximum product of polynomials. They are neither real-rooted nor ultra log-concave in general, but our computations suggest that their coefficients are always log-concave; see Conjecture \ref{conj:pnest-reduced}. In contrast, the peak polynomials of reduced and connected reduced unit interval graphs are real-rooted. This follows from explicit formulas for the ascent polynomials of Motzkin and Riordan paths, together with interlacing and multiplier-sequence arguments; see Theorem \ref{th:peak-reduced}, Corollary \ref{cor:peak-reduced-rr}, Theorem \ref{th:peak-connected-reduced}, and Corollary \ref{cor:peak-connected-reduced-rr}. These formulas also give new interpretations of the relevant OEIS arrays.

Table \ref{tab:real-rootedness-summary} summarizes the results obtained for the polynomials studied in this project. Observe that the real-rootedness of some of the polynomials presented in Table \ref{tab:real-rootedness-summary}, such as the peak-nesting polynomial of abelian, $2$-nested and connected $2$-nested unit interval graphs, and the peak polynomial of abelian, connected $2$-nested and reduced unit interval graphs, follows from either the theory of Pólya frequency sequences \cite{Brenti1989}, or orthogonal polynomials. We also show that they form generalized Sturm sequences.

We conclude in Section \ref{sec:open} with generalizations and further directions. We introduce $m$-Abelian and bounded peak-nesting unit interval graphs and ask for the corresponding enumerative and real-rootedness properties. Finally, the ordered family of peak-cliques of a unit interval graph is an interval presentation of a lattice path matroid. Using this correspondence and the structural theory of lattice path matroids \cite{BoninMierNoy2003, BoninMier2006, Bonin2010, multi-path}, we translate the Abelian, $m$-nested, and reduced conditions into restrictions on lattice path matroid presentations and diagrams. This connection suggests studying bases, circuits, flats, minors, duality, Tutte polynomials, and Ehrhart polynomials of lattice path matroids through their associated unit interval graphs.

\begin{table}[H]
    \centering
    \begin{tabularx}{\textwidth}{@{}X
        >{\centering\arraybackslash}p{0.25\textwidth}
        >{\centering\arraybackslash}p{0.25\textwidth}@{}}
        \toprule
        \textbf{Class of unit interval graphs}
        & \textbf{Peak-nesting}
        & \textbf{Peaks} \\
        \midrule
        All
        & ULC (Conjecture \ref{conj:pnest-ulc})
        & SS \cite{Branden2015}\\

        Connected
        & ULC (Conjecture \ref{conj:pnest-ulc})
        & GSS \cite{Branden2015}\\

        \addlinespace
        Abelian
        & GSS (\cref{pnest-pol-abelian-rr})
        & GSS (\cref{cor:peak-pol-albelian-rr}) \\

        Connected Abelian
        & ULC (\cref{cor:ulc-pnest})
        & RR (\cref{cor:peak-connected-abelian-rr}) \\

        \addlinespace
        $2$-nested
        & GSS (\cref{cor:pnest-line-rr})
        & SS (\cref{cor:peak-line-rr}) \\

        Connected $2$-nested
        & GSS (\cref{cor:pnest-line-rr})
        & GSS (\cref{cor:cpeak-line-rr}) \\

        \addlinespace
        Reduced
        & LC (\cref{conj:pnest-reduced})
        & GSS (\cref{cor:peak-reduced-rr}) \\

        Connected reduced
        & LC (\cref{conj:pnest-reduced})
        & RR (\cref{cor:peak-connected-reduced-rr}) \\
        \bottomrule
    \end{tabularx}
    \caption{Positivity results of the peak-nesting and peak polynomials for the classes considered in this paper. The acronyms GSS, SS, RR, ULC and LC refer to generalized Sturm sequence, Sturm sequence, real-rooted, ultra log-concavity and log-concavity, respectively.}
    \label{tab:real-rootedness-summary}
\end{table} 

\section{Preliminaries}
\label{sec:preliminaries}

Let $n \in \setN = \{1,2,3,\ldots\}$ and set $[n] \coloneqq \{1, 2, \ldots, n\}$.

\subsection{Sequences of real numbers}

Let $\mathcal{S} = \{b_i\}_{i=0}^d$ be a sequence of real numbers.
We say that 
\begin{itemize}
    \item $\mathcal{S}$ is \defin{unimodal} if there exists $0 \leq m \leq d$ such that $b_0 \leq \cdots \leq b_{m-1} \leq b_m \geq b_{m+1} \geq \cdots \geq b_d$;
    \item $\mathcal{S}$ is \defin{log-concave} (resp. \defin{log-convex}) if $b_i^2 \geq b_{i+1} b_{i-1}$ (resp. $b_i^2 \leq b_{i+1} b_{i-1}$) for all $i=1, \ldots, d-1$;
    \item $\mathcal{S}$ is \defin{ultra log-concave} (resp. \defin{ultra log-convex}) if $\left\{ \nicefrac{b_i}{\binom{d}{i}} \right\}_{i=0}^d$ is log-concave (resp. log-convex);
    \item the generating polynomial of $\mathcal{S}$, $p_\mathcal{S}(t) = b_0 + b_1 t + \ldots b_d t^d$, is \defin{real-rooted} if $p_\mathcal{S}(t)$ has only real zeros.
\end{itemize}

The following classical lemma connects the four properties above:

\begin{lemma}
\label{lem:sequences}
    Let $\mathcal{S} = \{b_i\}_{i=0}^d$ be a sequence of nonnegative real numbers. If
    \begin{enumerate}[(i)]
        \item $p_\mathcal{S}(t)$ is real-rooted, then $\mathcal{S}$ is ultra log-concave;
        \item $\mathcal{S}$ is ultra log-concave, then $\mathcal{S}$ is log-concave;
        \item $\mathcal{S}$ is log-concave and $b_i > 0$ for all $i = 0, 1, \ldots, d$, then $\mathcal{S}$ is unimodal.
    \end{enumerate}
\end{lemma}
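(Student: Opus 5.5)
The plan is to prove the three implications one at a time. The first is the only one with real content; it rests on Newton's inequalities. The other two are short manipulations of inequalities.

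For (i), write $p_\mathcal{S}(t)=\sum_{i=0}^d b_i t^i = \sum_{i=0}^d \binom{d}{i} a_i t^i$, where $a_i = b_i/\binom{d}{i}$. The goal is $a_i^2 \ge a_{i-1}a_{i+1}$ for $1\le i\le d-1$. Since the coefficients are nonnegative and the polynomial is real-rooted, all its zeros are nonpositive. If $b_d=0$, I view $p_\mathcal{S}$ as a polynomial of formal degree $d$ with some zeros at infinity. To keep this clean, I would first reduce to the case where $d$ is the true degree, or argue by continuity: perturb $p_\mathcal{S}$ by $\varepsilon t^d$ times a real-rooted factor, then let $\varepsilon\to 0$. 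The inequalities are closed conditions, so they survive the limit.

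In the generic case I use the standard Newton argument. Real-rootedness is preserved under differentiation by Rolle's theorem, and under the reversal $t^d p(1/t)$. Apply $\frac{d^{\,i-1}}{dt^{\,i-1}}$ to $p$. Then reverse, and differentiate $d-i-1$ more times. The result is a real-rooted quadratic whose coefficients are, up to a common positive factor, $a_{i-1}, 2a_i, a_{i+1}$. Its discriminant is nonnegative, which gives exactly $a_i^2\ge a_{i-1}a_{i+1}$. This is the step that needs care. One must check the binomial bookkeeping, namely that the constants combine to $\binom{2}{j}a_{i-1+j}$. One must also handle degenerate cases, where the quadratic has degree lower than two or vanishes identically. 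In those cases the inequality is trivial, because then $a_{i-1}a_{i+1}=0$.

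For (ii), use $\binom{d}{i}^2 \ge \binom{d}{i-1}\binom{d}{i+1}$. This is itself the log-concavity of binomial coefficients. It follows from the ratio
\[
\frac{\binom{d}{i}^2}{\binom{d}{i-1}\binom{d}{i+1}} = \frac{(i+1)(d-i+1)}{i(d-i)} \ge 1 .
\]
Multiplying this by $a_i^2 \ge a_{i-1}a_{i+1}$ gives $b_i^2 \ge b_{i-1}b_{i+1}$. Nonnegativity of all quantities justifies the multiplication.

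For (iii), positivity lets us form the ratios $r_i=b_{i}/b_{i-1}$. Log-concavity says $r_{i+1}\le r_i$, so the ratios are weakly decreasing. Let $m$ be the largest index with $r_m\ge 1$, and take $m=0$ if there is none. Then the sequence weakly increases up to $b_m$ and weakly decreases afterwards, which is unimodality.
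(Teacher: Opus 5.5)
Your proof is correct. The paper does not prove this lemma: it quotes it as classical and uses it as a black box, so there is no argument of the paper's to compare against.

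What you wrote is the standard proof. Part (i) is Newton's inequalities via Rolle's theorem and reversal, (ii) is the log-concavity of $\binom{d}{i}$, and (iii) is the monotone ratios $b_i/b_{i-1}$. The bookkeeping checks out. After $i-1$ derivatives, a reversal, and $d-i-1$ further derivatives, you get a positive multiple of $a_{i+1}+2a_it+a_{i-1}t^2$.

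One simplification: the case $b_d=0$ needs no perturbation. Reversal $t^N f(1/t)$ preserves real-rootedness for any $N\ge\deg f$, since it only adds zeros at the origin. So the derivative--reversal argument runs verbatim with formal degree $d$. Any drop in degree of the final quadratic is covered by your own degenerate-case remark, because then $a_{i-1}=0$.

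If you did want to perturb, the perturbation would have to be multiplicative, e.g.\ $p_\mathcal{S}(t)(1+\varepsilon t)^{d-e}$ with $e$ the true degree. Adding $\varepsilon t^d$ can destroy real-rootedness: for $\mathcal{S}=(1,0,0)$ it gives $1+\varepsilon t^2$.

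Finally, nonnegativity of the coefficients plays no role in (i), since Newton's inequalities hold for every real-rooted real polynomial. It is needed only in (ii), where you use it correctly.
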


\begin{remark}
    Let $\mathcal{S}$ be a sequence as in the lemma above. Then the log-convexity of $\mathcal{S}$ implies the ultra log-convexity of $\mathcal{S}$, which is the converse implication of what happens between log-concave and ultra log-concave sequences.
\end{remark}

\subsection{Interlacing sequences of polynomials}

The following definitions related to real-rootedness and interlacing are relevant in some proofs further down. Good sources of such topics are \cite{Wagner1992}, \cite{Branden2015} and \cite{Alexandersson2020}.

Let $\mathbb{R}[t]$ be the space of all polynomials with coefficients in $\mathbb{R}$. A sequence $\sigma = \{\sigma_i\}_{i \geq 0}$ of real numbers is called a \defin{multiplier sequence} if the linear operator
\[
    \begin{matrix}
            T_\sigma \colon & \mathbb{R}[t]          & \to     & \mathbb{R}[t] \\
                            & \sum_{i=0}^d b_i t^i & \mapsto & \sum_{i=0}^d \sigma_i b_i t^i
    \end{matrix}
\]
maps real-rooted polynomials to real-rooted polynomials. The following result characterizes multiplier sequences:

\begin{lemma}\cite{SchurPolya1914,Karlin1968}
\label{lem:schur-polya}
    A sequence $\sigma = \{\sigma_i\}_{i \geq 0}$ of real numbers is a multiplier sequence if and only if the exponential generating function
    \[
        \Phi_\sigma (z) = \sum_{i \geq 0} \sigma_i \frac{z^i}{i!}
    \]
    or $\Phi_\sigma (-z)$ has the form
    \[
        C z^m e^{az} \prod_{j \geq 1} (1 + \lambda_j z),
    \]
    where $C \in \mathbb{R}$, $m \in \mathbb{Z}_{\geq 0}$, $a \geq 0$, $\lambda_j \geq 0$ and $\sum_{j \geq 1} \lambda_j < \infty$.
\end{lemma}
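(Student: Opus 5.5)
The plan is to prove the two directions separately. The bridge between them is the family of \emph{Jensen polynomials}
\[
    g_n(t) \coloneqq T_\sigma\bigl((1+t)^n\bigr) = \sum_{i=0}^n \binom{n}{i}\sigma_i t^i ,
\]
together with the closure of the Laguerre--P\'olya class under locally uniform limits.

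\textbf{Necessity.} Suppose $\sigma$ is a multiplier sequence. Since $(1+t)^n$ is real-rooted, each $g_n$ is real-rooted. I would first extract sign information. Applying $T_\sigma$ to $t^k(1+t)^2$ and to similar test polynomials shows that the nonzero $\sigma_i$ occupy a consecutive block of indices. It also shows that, on that block, the $\sigma_i$ either all have the same sign or strictly alternate in sign. The alternating case is exactly what passing to $\Phi_\sigma(-z)$ absorbs, and the factor $z^m$ records the leading zeros of $\sigma$.

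Next I rescale and write
\[
    g_n(z/n)=\sum_{i=0}^n \frac{n(n-1)\cdots(n-i+1)}{n^i}\,\sigma_i\frac{z^i}{i!}.
\]
The goal is to show $g_n(z/n)\to\Phi_\sigma(z)$ locally uniformly. Termwise convergence is clear, but this step needs a uniform growth bound on $|\sigma_i|/i!$. I would obtain it from the real-rootedness of $g_n$ via Newton's inequalities, which make the sequence $\{\sigma_i\}$ log-concave in absolute value. Log-concavity then gives $|\sigma_i|\le C\rho^i$, and dominated convergence applies.

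With convergence in hand, the limit of real-rooted polynomials whose coefficients all have the same sign lies in the Laguerre--P\'olya class with nonnegative coefficients. By the Laguerre--P\'olya (Hadamard factorization) theorem, this class consists precisely of the functions $Cz^m e^{az}\prod_j(1+\lambda_j z)$ with $a,\lambda_j\ge 0$ and $\sum_j\lambda_j<\infty$.

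\textbf{Sufficiency.} Assume $\Phi_\sigma$ (or $\Phi_\sigma(-z)$, which merely replaces $t$ by $-t$ in the output) has the stated form. Then $\Phi_\sigma$ is a locally uniform limit of polynomials $h_N(z)=\sum_i c^{(N)}_i z^i$ that have only real nonpositive zeros. Writing $\sigma^{(N)}_i = i!\,c^{(N)}_i$, we have $\sigma^{(N)}_i\to\sigma_i$.

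Now fix a real-rooted polynomial $f=\sum_i b_it^i$. The Schur--Mal\'o composition theorem says that if $f$ is real-rooted and $h$ is a polynomial whose zeros are all real of the same sign, then
\[
    \sum_i i!\,c_i b_i t^i
\]
is real-rooted. Applying it with $h=h_N$ shows $T_{\sigma^{(N)}}f$ is real-rooted. Letting $N\to\infty$ gives $T_\sigma f$, a polynomial of degree at most $\deg f$. By Hurwitz's theorem it is real-rooted or identically zero.

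\textbf{Main obstacle.} The delicate part is the necessity direction. It requires the growth estimate on $\sigma_i$ that justifies the limit, together with the careful sign analysis that produces the dichotomy between $\Phi_\sigma(z)$ and $\Phi_\sigma(-z)$. For the Schur--Mal\'o composition theorem used in sufficiency, I would simply cite it rather than reprove it. It follows from Grace's apolarity theorem applied to $f$ and $h$.
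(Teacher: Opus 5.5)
The paper gives no proof of this lemma. It is the P\'olya--Schur theorem, quoted from \cite{SchurPolya1914,Karlin1968}, so your sketch is a genuinely different route. Your sketch is the standard classical proof, and it is correct.

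\textbf{Necessity.} The test polynomials $t^k(1+t)^2$, $t^k(1-t^2)$ and $t^k(1+t)^r$ with $r\ge 3$ do what you claim. They give $\sigma_{k+1}^2\ge\sigma_k\sigma_{k+2}\ge 0$ and rule out gaps, which yields both the consecutive block and the same-sign/alternating dichotomy. The first inequality already gives log-concavity of $|\sigma_i|$, so Newton's inequalities are not needed. The resulting bound $|\sigma_i|\le C\rho^i$ justifies $g_n(z/n)\to\Phi_\sigma(z)$ locally uniformly, and the Laguerre--P\'olya limit theorem gives the product form.

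\textbf{Sufficiency.} Approximating by $Cz^m(1+az/N)^N\prod_{j\le N}(1+\lambda_j z)$, then applying Schur's composition theorem and Hurwitz's theorem, works as you describe.

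One imprecision: Schur's theorem does not follow from Grace's theorem applied to $f$ and $h$ directly. Via Szeg\H{o}'s composition theorem, one applies it to $f$ and the auxiliary polynomial $\sum_k N(N-1)\cdots(N-k+1)\,c_k z^k$. That polynomial is real-rooted with nonpositive zeros by Laguerre's theorem applied to $h(D)w^N$.

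\textbf{Comparison.} Your route makes explicit that the lemma rests on two classical inputs: the Laguerre--P\'olya limit theorem and Schur's composition theorem. It also shows that the paper's only use of the lemma, in Corollary~\ref{cor:peak-connected-reduced-rr}, needs much less. There $\sigma$ is finite and $\Phi_\sigma$ is a polynomial with only negative zeros. So what is actually used is exactly Schur's composition theorem with $h=\Phi_\sigma$, since $i!\,c_i=\sigma_i$. Neither the limiting argument nor the necessity direction is needed there. The paper's citation, in turn, buys brevity.
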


We now turn our attention to interlacing polynomials, a powerful theory used to prove the real-rootedness of many sequences of polynomials. Suppose $f(t)$ and $g(t)$ are real-rooted polynomials with positive leading coefficients, and that the zeros of $f(t)$ and $g(t)$ are
\[
    \cdots \leq \lambda_3 \leq \lambda_2 \leq \lambda_1  \ \  \mbox{ and } \ \ \cdots \leq \delta_3 \leq \delta_2 \leq \delta_1,
\]
respectively. We say that the zeros of $g(t)$ \defin{interlace} those of $f(t)$ if 
\[
 \cdots \leq \delta_3 \leq \lambda_3 \leq \delta_2 \leq \lambda_2 \leq \delta_1 \leq \lambda_1, 
\]
and write $\defin{g(t) \interl f(t)}$. In particular the degrees of $f(t)$ and $g(t)$ differ by at most one. By convention we also write, $0 \interl 0$, $0 \interl f(t)$ and $f(t) \interl 0$ for any real-rooted polynomial $f(t)$ with positive leading coefficient. Let $\mathcal{F} = \{f_i(t)\}_{i=0}^d$ be a sequence of real-rooted polynomials with positive leading coefficients. If $f_{i-1}(t) \interl f_i(t)$ for all $i \in [n]$ and $\deg(f_i(t)) = i$ for all $i=0, 1, \ldots, d$, then $\mathcal{F}$ is called a \defin{Sturm sequence}, see \cite{Liu2007}. If we drop the degree condition, then $\mathcal{F}$ is called a \defin{generalized Sturm sequence}. Moreover, if $f_i(t) \interl f_j(t)$ for all $0 \leq i \leq j \leq d$, then $\mathcal{F}$ is called an \defin{interlacing sequence}.

\begin{lemma}
\label{lem:interlacing-properties}
    \hfill
    \begin{enumerate}[(i)]
        \item If $f(t), g(t), h(t) \in \mathbb{R}[t]$ are such that $h(t) \interl f(t)$ and $h(t) \interl g(t)$, then $h(t) \interl f(t) + g(t)$;
        \item If $f(t), g(t), h(t) \in \mathbb{R}[t]$ are such that $f(t) \interl h(t)$ and $g(t) \interl h(t)$, then $f(t) + g(t) \interl h(t)$;
        \item If $f(t), g(t) \in \mathbb{R}[t]$ have only nonnegative coefficients, then $g(t) \interl f(t)$ if and only if $f(t) \interl t \cdot g(t)$;        
     \end{enumerate}
\end{lemma}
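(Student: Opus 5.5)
We argue directly from the definition of interlacing via the ordered zeros, treating the three parts separately. The tool for (i) and (ii) is the sign-alternation characterization. If $h \interl f$ with zeros $\delta_1 \geq \delta_2 \geq \cdots$ of $h$ and $\lambda_1 \geq \lambda_2 \geq \cdots$ of $f$, then on each interval $[\delta_{k+1},\delta_k]$ there is exactly one zero $\lambda_{k+1}$ of $f$ (counting multiplicity). Hence $f(\delta_k)$ has sign $(-1)^{k-1}$ or vanishes, since $f$ has positive leading coefficient and exactly $k-1$ zeros to the right of $\delta_k$.

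\textbf{Parts (i) and (ii).} First reduce to the case where $h$ has simple zeros and $h$ shares no zeros with $f$ or $g$. A common zero can be factored out of all three polynomials, since it is then a zero of $f+g$ as well. The simple-zero case extends to the general case by a continuity (Hurwitz) argument, perturbing the zeros and using that interlacing is a closed condition. In the reduced setting, $f(\delta_k)$ and $g(\delta_k)$ have the same sign $(-1)^{k-1}$. So $(f+g)(\delta_k)$ is nonzero with that sign, and by the intermediate value theorem $f+g$ has a zero in each $(\delta_{k+1},\delta_k)$. It also has a zero to the right of $\delta_1$ when $\deg f+g > \deg h$, and a zero left of the last $\delta$ when the degree count requires it. Counting these zeros against $\deg(f+g)$, which equals $\max(\deg f,\deg g)$ because the leading coefficients are positive and cannot cancel, accounts for all zeros of $f+g$, so $h \interl f+g$. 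The degree bookkeeping works because each of $f,g$ has degree $\deg h$ or $\deg h+1$. Part (ii) is symmetric: evaluate $f$ and $g$ at the zeros of $h$, where now $h$ lies on the larger-zero side and the signs at $\lambda_k$ are $(-1)^{k}$. The same counting argument applies.

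\textbf{Part (iii).} Polynomials with nonnegative coefficients have no positive zeros, so all their real zeros are $\leq 0$. Suppose $g \interl f$ with zeros $\cdots\le\delta_2\le\lambda_2\le\delta_1\le\lambda_1\le 0$. The zeros of $t\,g$ are those of $g$ together with $0$. Since $0 \geq \lambda_1 \geq \delta_1 \geq \lambda_2 \geq \cdots$, the zeros of $f$ interlace those of $t g$ in the required pattern, and $\deg(tg)-\deg f\in\{0,1\}$ matches the convention. Conversely, if $f \interl tg$, the largest zero of $tg$ is $0$ (all other zeros are nonpositive), and removing it gives $g \interl f$ by reading the same chain. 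The degenerate conventions involving $0$ are checked directly.

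The main obstacle is the bookkeeping in (i) and (ii): handling multiple zeros and shared zeros, and matching the degree cases so that no zero of $f+g$ escapes the count. The cleanest route is the perturbation reduction to simple, disjoint zeros followed by a limit argument.
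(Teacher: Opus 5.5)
You take a different route from the paper. The paper gives no argument for (i) and (ii) beyond citing \cite[Proposition 3.5]{Wagner1992}, and it calls (iii) trivial. Your (iii) is exactly that trivial check and is correct.

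For (i) and (ii), the sign-alternation and intermediate value argument is the standard self-contained proof, but your sign count is off by one, and this is not cosmetic. If $h \interl f$, then $f$ carries the largest zero, so $\lambda_1,\dots,\lambda_k$ all lie to the right of $\delta_k$. Hence $f(\delta_k)$ has sign $(-1)^k$, not $(-1)^{k-1}$; for example, $h=t$, $f=t^2-1$ gives $f(0)=-1$. In (ii), where $h$ carries the largest zero, only $k-1$ zeros of $f$ exceed $\lambda_k$, so the sign there is $(-1)^{k-1}$, not $(-1)^k$.

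The interior zeros survive, because there you only use the \emph{common} sign of $f$ and $g$. The location of the remaining zeros does depend on the actual sign, and it is exactly what distinguishes $h\interl f+g$ from $f+g\interl h$. With the correct sign, $(f+g)(\delta_1)<0$. So in (i) a zero of $f+g$ to the right of $\delta_1$ is forced in every case, and a zero to the left of $\delta_d$ (with $d=\deg h$) occurs precisely when $\deg(f+g)=d+1$. Your bookkeeping puts a zero right of $\delta_1$ only when $\deg(f+g)>\deg h$, and otherwise one on the left. In the equal-degree case this produces the pattern of $f+g\interl h$, which is the wrong conclusion. Symmetrically, your sign in (ii) would force a zero to the right of $\lambda_1$, contradicting $f+g\interl h$.

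Two further repairs are needed.

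First, factoring out works only for a zero shared by all three of $h,f,g$. A zero shared by $h$ and $f$ alone cannot be divided out of $f+g$. So every degeneracy must be removed by the perturbation, and that perturbation must preserve \emph{both} hypotheses at once. This is easy but should be stated. Keep the leading coefficients, and replace the $k$-th largest zero $\delta_k$ of $h$ by $\delta_k-k\varepsilon$. Replace the $k$-th largest zeros $\lambda_k$ of $f$ and $\mu_k$ of $g$ by $\lambda_k-(k-\tfrac12)\varepsilon$ and $\mu_k-(k-\tfrac12)\varepsilon$. In (ii), exchange the roles of the two shifts. Then:
\begin{itemize}
\item both interlacings become strict;
\item $h_\varepsilon$ has simple zeros shared with neither $f_\varepsilon$ nor $g_\varepsilon$;
\item $f_\varepsilon+g_\varepsilon$ has the same degree and leading coefficient as $f+g$.
\end{itemize}
Continuity of zeros then lets you pass to the limit.

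Second, (i) and (ii) must be read with $h\neq 0$. Under the paper's convention, $0\interl f$ and $f\interl 0$ hold for every real-rooted $f$ with positive leading coefficient. So $f=(t-1)^2$ and $g=(t+1)^2$ satisfy the hypotheses with $h=0$, yet $f+g=2(t^2+1)$ is not real-rooted. The case of a nonzero constant $h$ is immediate, since then $f$ and $g$ have degree at most one. Your argument evaluates at the zeros of $h$, so it should treat that case separately.
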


\begin{proof}
    The proofs of (i) and (ii) can be found in \cite[Proposition 3.5]{Wagner1992}, while the proof of (iii) is trivial.
\end{proof}

\begin{lemma}\cite[Theorem 2.1]{Liu2007}
\label{recursion-rr}
    Let $F(t), f(t), g(t), a(t), b(t) \in \mathbb{R}[t]$ be such that
    \[
        F(t) = a(t) f(t) + b(t) g(t)
    \]
    and $\deg(F(t)) \in \{ \deg(f(t)), \deg(f(t)) + 1 \}$. Assume $g(t) \interl f(t)$, and $F(t)$ and $g(t)$ have leading coefficients of the same sign. If $b(r) \leq 0$ whenever $f(r) = 0$, then $F(t)$ is real-rooted and $f(t) \interl F(t)$.
\end{lemma}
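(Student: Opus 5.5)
The plan is the standard sign-alternation argument: evaluate $F$ at the zeros of $f$ and count sign changes. First I reduce to a generic situation. Assume $f$ has positive leading coefficient and simple zeros $r_1 > r_2 > \cdots > r_n$, and that $f(t)$ and $g(t)$ have no common zero. If $f$ and $g$ share zeros, let $h=\gcd(f,g)$; it is real-rooted, since it divides $f$. Write $f = h\tilde f$, $g = h\tilde g$, so that $F = h\,(a\tilde f + b\tilde g)$. Interlacing is preserved on passing to $\tilde g \interl \tilde f$. The hypothesis $b(r)\le 0$ still holds at the zeros of $\tilde f$, and the conclusion for $\tilde F = a\tilde f + b\tilde g$ lifts back to $F$, because multiplying both $f$ and $\tilde F$ by the real-rooted $h$ preserves $f \interl F$. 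Remaining multiplicities in $f$ cannot occur once common zeros are removed: a double zero of $f$ would have to be a zero of $g$ under $g \interl f$.

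By the convention of leading coefficients, $g$ also has positive leading coefficient, and hence so does $F$. Since $g \interl f$ and $g$ has no zero in common with $f$, the zeros of $g$ lie strictly between consecutive zeros of $f$, and none exceeds $r_1$. Therefore
\[
  \operatorname{sign} g(r_i) = (-1)^{i-1}, \qquad i=1,\dots,n.
\]
The key identity is $F(r_i) = a(r_i)f(r_i) + b(r_i)g(r_i) = b(r_i)g(r_i)$. Because $b(r_i)\le 0$, this gives $(-1)^iF(r_i)\ge 0$, so $F$ weakly alternates in sign along $r_1 > r_2 > \cdots > r_n$, starting with $F(r_1)\le 0$.

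Next comes the root count via the intermediate value theorem. Each closed interval $[r_{i+1}, r_i]$ contains a zero of $F$, giving $n-1$ zeros. Since $F(r_1)\le 0$ and $F(t)\to+\infty$ as $t \to \infty$, there is a further zero in $[r_1,\infty)$. If $\deg F = n$, this accounts for all zeros. If $\deg F = n+1$, then $(-1)^{n+1}F(t)\to+\infty$ as $t\to-\infty$, while $(-1)^nF(r_n)\ge0$, so there is one more zero in $(-\infty, r_n]$. Thus $F$ is real-rooted, and its zeros are placed one in each of these intervals, which is exactly the pattern $f \interl F$.

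The main obstacle is the boundary case where some $F(r_i)=0$, which happens when $b(r_i)=0$. The intervals then share endpoints, so one must check that the same zero is not counted twice. I would handle this by perturbation: replace $b$ by $b-\varepsilon$ with $\varepsilon>0$, which makes all the inequalities strict and gives distinct zeros in open intervals. Letting $\varepsilon\to 0$, I then use that real-rootedness and interlacing are closed under limits of polynomials of bounded degree (Hurwitz's theorem). One must check that the degree condition is stable under the perturbation. If it is not, the same limiting argument can be applied to the leading-coefficient-adjusted polynomial $F + \delta\, t\, f$, which only moves zeros continuously.
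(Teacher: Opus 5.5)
The paper gives no proof of this lemma; it is quoted from \cite{Liu2007}. Your argument is correct and is essentially the standard proof of that result: remove $\gcd(f,g)$ so that $f$ has simple zeros not shared with $g$, use $F(r_i)=b(r_i)g(r_i)$ to get weak sign alternation of $F$ along the zeros of $f$, locate one zero of $F$ per interval by the intermediate value theorem, and handle the boundary case $b(r_i)=0$ by a continuity argument.

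The one point you left open is automatic. Since $\deg g\le \deg f\le \deg F$ and $F$ has positive leading coefficient, $F-\varepsilon g$ has the same degree as $F$ and a positive leading coefficient for all sufficiently small $\varepsilon>0$. So the fallback via $F+\delta t f$ is unnecessary.
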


\subsection{Symmetric polynomials and the gamma-polynomial}

Let $p(t) = \sum_{i \geq 0}^d b_i t^i$ be polynomial with $b_d \neq 0$. Then $p(t)$ is called \defin{symmetric} (or \defin{palindromic}) if $p(t) = t^d \cdot p(1/t)$, and $d$ is the \defin{symmetric degree} of $p(t)$.

Symmetric polynomials with symmetric degree $d$ span a vector space of dimension $\lfloor d / 2 \rfloor + 1$. Hence, they can be written in the basis
\[
    \{ t^i (1+t)^{d-2i} \}_{i=0}^{\lfloor d / 2 \rfloor},
\]
that is,
\[
    p(t) = \sum_{0 \leq 2i \leq d} \gamma_i t^i(1+t)^{d-2i}.
\]
The polynomial $\gamma_p (t) = \sum_{i=0}^{\lfloor d / 2 \rfloor} \gamma_i t^i$ is called the \defin{gamma-polynomial} of $p(t)$, and $p(t)$ is called \defin{gamma-nonnegative} if all the coefficients of $\gamma_p (t)$ are nonnegative.

Gamma-nonnegativity is a topic of interest, since this property is related to questions regarding real-rootedness, log-concavity and unimodality, see \cite{Athanasiadis2018, foata1970, Branden2004, Gal2005} for further reading. In fact, an immediate consequence of the nonnegativity of the coefficients of $\gamma_p (t)$ is the unimodality of the coefficients of $p(t)$, which arises by expanding the coefficients of $(1+t)^{d-2i}$ when $p(t)$ is written in the basis $\{ t^i (1+t)^{d-2i} \}_{i=0}^{\lfloor d / 2 \rfloor}$ and observing that each $\gamma_i$ contributes a symmetric unimodal summand.

The following two lemmas are classical results and will be used throughout this paper:

\begin{lemma}\cite[Lemma 4.1]{Branden2004}
\label{lem:gamma1}
    If $p(t)$ and $q(t)$ are symmetric polynomials of symmetric degrees $c$ and $d$, respectively, then $(pq)(t)$ is a polynomial of symmetric degree $c+d$. Moreover, if $p(t)$ is a symmetric polynomial with positive leading coefficient where all its zeros are nonpositive real numbers, then $p(t)$ is gamma-nonnegative. 
\end{lemma}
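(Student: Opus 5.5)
The statement has two parts, and I will prove them separately. In both, the key tool is the characterization of symmetric polynomials through their zeros.

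\emph{Products.} Write $p(t) = t^c p(1/t)$ and $q(t) = t^d q(1/t)$, where the leading coefficients of $p$ and $q$ are nonzero. Then
\[
    (pq)(t) = t^{c+d}\, p(1/t)\, q(1/t) = t^{c+d}\, (pq)(1/t).
\]
The leading coefficient of $pq$ is the product of the two nonzero leading coefficients, so it is nonzero and $\deg(pq) = c+d$. Hence $pq$ is symmetric of symmetric degree $c+d$.

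\emph{Gamma-nonnegativity.} Let $p$ be symmetric of degree $d$, with positive leading coefficient and only nonpositive real zeros. The constant term equals the leading coefficient by symmetry, so it is nonzero and $0$ is not a root. Every root $r<0$ of $p$ therefore has $1/r<0$ as a root too, since $p(1/r) = r^{-d} p(r) = 0$. Comparing $p(t)=t^dp(1/t)$ near a root also shows that $r$ and $1/r$ have the same multiplicity.

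Group the roots accordingly. Roots with $r \neq 1/r$ come in pairs $\{r, 1/r\}$. The only self-paired negative root is $r=-1$. Each pair contributes the factor
\[
    (t-r)(t-1/r) = t^2 + (a+2)\,t + 1 = (1+t)^2 + a\,t,
\]
where $a = -r - 1/r - 2 \geq 0$ by AM--GM applied to $-r>0$. Each root at $-1$ contributes a factor $(1+t)$. Both kinds of factor are symmetric, of symmetric degree $2$ and $1$ respectively, and both have nonnegative gamma-vectors: $\gamma = 1 + a t$ and $\gamma = 1$. Thus $p(t)$ is the product of the positive leading coefficient with such factors.

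\emph{Closure under products.} It remains to check that the product of two symmetric polynomials with nonnegative gamma-vectors again has a nonnegative gamma-vector. Take basis elements $t^i(1+t)^{c-2i}$ and $t^j(1+t)^{d-2j}$. Their product is
\[
    t^{i+j}(1+t)^{c+d-2(i+j)},
\]
which is a basis element for symmetric degree $c+d$. Expanding the product bilinearly, the gamma-coefficients of the product are sums of products of nonnegative numbers, hence nonnegative.

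The main obstacle is the root-pairing step: one must rule out a root at $0$ and check that the multiplicities of $r$ and $1/r$ agree. Both follow directly from the identity $p(t)=t^dp(1/t)$, so I expect this step to be routine.
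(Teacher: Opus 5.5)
Your proof is correct and complete. The paper gives no argument of its own and simply cites Br\"and\'en, whose standard proof is exactly yours: factor $p$ over its negative zeros into copies of $1+t$ and $(1+t)^2+at$ with $a\geq 0$, then use $t^i(1+t)^{c-2i}\cdot t^j(1+t)^{d-2j}=t^{i+j}(1+t)^{c+d-2(i+j)}$ so that gamma-vectors multiply. The multiplicity step you flag is indeed routine; it is seen most cleanly from $b_d\prod_i(t-r_i)=t^dp(1/t)=b_d\prod_i(1-r_it)$, which shows the multiset of zeros is invariant under $r\mapsto 1/r$.
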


\begin{lemma}\cite[Section 4.6]{eulerian-kyle}
\label{lem:gamma2}
    A symmetric polynomial $p(t)$ is real-rooted if and only if all the roots of $\gamma_p(t)$ belong to $(-\infty, 1/4]$.
\end{lemma}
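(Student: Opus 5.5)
The natural approach is the substitution $x = t/(1+t)^2$, which turns the gamma-expansion into a product factorization. Write $d$ for the symmetric degree of $p(t)$ and $\gamma_p(x) = \sum_{i=0}^{m} \gamma_i x^i$ with $\gamma_m \neq 0$, so $m \leq \lfloor d/2 \rfloor$. Pulling out $(1+t)^d$ from each basis element gives the identity
\[
    p(t) = \sum_{i=0}^{m} \gamma_i t^i (1+t)^{d-2i} = (1+t)^{d} \, \gamma_p\!\left( \frac{t}{(1+t)^2} \right).
\]

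First I will factor $\gamma_p(x) = \gamma_m \prod_{j=1}^{m} (x - x_j)$ over $\mathbb{C}$. Clearing denominators in the identity above then gives
\[
    p(t) = \gamma_m (1+t)^{d-2m} \prod_{j=1}^{m} \bigl( t - x_j (1+t)^2 \bigr).
\]
So the zeros of $p(t)$ are $t=-1$ with multiplicity $d-2m$, together with the zeros of the quadratics
\[
    q_j(t) = x_j t^2 + (2x_j - 1) t + x_j .
\]
Note that $x_j \neq 0$, since $\gamma_0 = b_0 = b_d \neq 0$ by symmetry. Hence each $q_j$ is a genuine quadratic, and its two roots have product $1$.

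The main step is to show that $q_j$ has two real roots if and only if $x_j \in (-\infty, 1/4]$. I will treat the real and nonreal cases separately.

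\emph{Real case.} If $x_j$ is real, the discriminant of $q_j$ is $(2x_j-1)^2 - 4x_j^2 = 1 - 4x_j$. This is nonnegative exactly when $x_j \leq 1/4$.

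\emph{Nonreal case.} Suppose $q_j(t_0) = 0$ for some real $t_0$. Then $t_0 \neq -1$, because $q_j(-1) = -1$. Therefore $x_j = t_0/(1+t_0)^2$ is real. Equivalently, a nonreal $x_j$ produces only nonreal zeros of $p(t)$.

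Combining the two cases, every zero of $p(t)$ is real if and only if every $x_j$ is real and at most $1/4$, which is the claim. The only point needing care is the bookkeeping above: $t=-1$ is never a root of any $q_j$, and $x_j=0$ cannot occur. This guarantees that the factorization accounts for all $d$ zeros of $p(t)$ and that no spurious roots appear.
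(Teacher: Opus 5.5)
Your proof is correct: the paper gives no argument of its own for this lemma (it only cites Petersen's book), and your route through the identity $p(t) = (1+t)^d\,\gamma_p\bigl(t/(1+t)^2\bigr)$ is the standard proof of this fact, since $t \mapsto t/(1+t)^2$ maps $\mathbb{R}\setminus\{-1\}$ onto $(-\infty,1/4]$. One harmless slip: $q_j(-1) = 1$, not $-1$ (the value $-1$ belongs to $t - x_j(1+t)^2 = -q_j(t)$), and your argument only uses that this value is nonzero.
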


\section{Unit interval graphs}
\label{sec:unit}

In this section, we define the peak and the peak-nesting statistics on unit interval graphs. These statistics will be studied in detail in Sections \ref{sec:pnest-all} and \ref{sec:peak-all}, where we define the peak and the peak-nesting polynomials, and find combinatorial interpretations for their coefficients. We also study properties such as real-rootedness and log-concavity.

For undefined graph terminology, we refer to \cite{douglas-graph-theory}. A \defin{graph} is a pair $G = (V, E)$ where $V$, the \defin{vertex set} of $G$, is a set whose elements are called \defin{vertices} and $E \subseteq \binom{V}{2}$, the \defin{edge set} of $G$, is a set whose elements are called \defin{edges}. We will denote an edge $\{u,v\}$ of $G$ by $uv$ (or, equivalently, $vu$). The vertices $u,v \in V$ are called \defin{adjacent} (or \defin{neighbors}) if $uv \in E$, that is, $uv$ is an edge of $G$.

A \defin{Dyck path} of semilength $n$ is a North-East ($NE$) lattice path starting at $(0,0)$ and ending at $(n,n)$ where, for all $i = 1, \ldots, 2n$, there are at least as many North steps (represented by the vector $(0,1)$ or by the letter $N$) as East steps (represented by the vector $(1,0)$ or by the letter $E$) among its first $i$ steps. Equivalently, it can be defined as a lattice path starting at $(0,0)$ and ending at $(2n,0)$ with steps in $\{U = (1,1), D = (1,-1)\}$ where, for all $i = 1, \ldots, 2n$, there are at least as many $U$ steps as $D$ steps among its first $i$ steps. A \defin{Dyck diagram} of size $n$ is a diagram of boxes in $\{(a,b) \in \mathbb{Z}^2 \colon 0 \leq a \leq b \leq n \}$ associated with a Dyck path where the boxes under the Dyck path are colored, see Figure \ref{fig:dyck-path-diagram}. 

An \defin{area sequence} is a finite list of nonnegative integers $\avec = (a_1,a_2,\dotsc,a_n)$ such that
\[
    a_i < i \text{ and } a_{i} \leq a_{i-1}+1 \text{ for all } i\geq 2.
\]
Area sequences of size $n$ are in bijection with Dyck diagrams of size $n$ (and, consequently, in bijection with Dyck paths of semilength $n$), where the number of blue boxes in the $i$th row of the Dyck diagram is $a_i$, see Example \ref{ex:uig}. Hence, we denote the set of area sequences of size $n$ by $\DP(n)$ as these essentially describe Dyck paths.

An area sequence $\avec = (a_1, \ldots, a_n)$ defines a graph $\Gamma_\avec$ with vertex set $[n]$ and edge set given by
\[
    \left\{ \{j, i\} \colon 1 \leq j < i \leq n, i - a_i \leq j \right\}.
\]
In particular, $\Gamma_\avec$ is connected if and only if $a_2, \ldots, a_n \geq 1$.

\begin{example}
\label{ex:uig}
    The diagram in Figure \ref{fig:dyck-path-diagram} illustrates the Dyck path associated with the area sequence $\avec = (0,1,1,2,3,3,2,0)$. Each labeled cell represents a vertex of $\Gamma_\avec$, and the cells between the Dyck path (outlined) and the labeled cells correspond to the edges of $\Gamma_\avec$.
    \begin{figure}[H]
        \centering
            \dyckDiagram[0.35cm]{0,1,1,2,3,3,2,0}
        \caption{The Dyck diagram corresponding to the area sequence $(0,1,1,2,3,3,2,0)$.}
        \label{fig:dyck-path-diagram}
    \end{figure}
\end{example}

Let $\{S_i\}_i$ be a family of real intervals. An \defin{interval graph} is a graph whose vertex set is $\{S_i\}_i$ and whose edge set is
\[
    \{ S_i S_j \colon S_i \cap S_j \neq \emptyset \}.
\]
A \defin{unit interval graph} is an interval graph whose family of real intervals is a family of unit intervals, that is, intervals of length $1$. Since we work only with finite graphs, we may refer to finite unit interval graphs simply as unit interval graphs. The family of unit interval graphs is in bijection with the family of area sequences \cite{AlexanderssonPanova2018}, that is, for each area sequence $\avec \in \DP(n)$, $\Gamma_\avec$ is a unit interval graph and each unit interval graph is isomorphic to $\Gamma_\avec$ for some $\avec \in \DP(n)$. Hence, we may use properties of area sequences to study unit interval graphs.

Recall that a \defin{clique} in a graph $G$ is an isolated vertex or a subset $S \subseteq V$ of its set of vertices such that $uv$ is an edge of $G$ for every pair of vertices $u, v \in S$. The following lemma follows directly from the definition of area sequences:

\begin{lemma}
\label{lem:uig-clique}
    Let $1 \leq i < j \leq n$. If $ij$ is an edge in $\Gamma_\avec$, then $\{i,i+1,\dotsc, j\}$ is a clique in $\Gamma_\avec$.
\end{lemma}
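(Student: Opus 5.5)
We need to show that any two vertices $k<l$ with $i\le k<l\le j$ are adjacent. By the definition of $\Gamma_\avec$, this means showing $l-a_l\le k$.

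The key step is to show that the quantity $m-a_m$ is weakly increasing in $m$. The area sequence condition $a_m\le a_{m-1}+1$ gives
\[
m-a_m \;\ge\; m-a_{m-1}-1 \;=\; (m-1)-a_{m-1}
\]
for all $m\ge 2$. So $m\mapsto m-a_m$ is weakly increasing, and by induction $l-a_l\le j-a_j$ whenever $l\le j$.

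Now, since $ij$ is an edge with $i<j$, the definition of the edge set gives $j-a_j\le i$. Combining this with the monotonicity, for $k<l$ in $\{i,\dots,j\}$ we get
\[
l-a_l \;\le\; j-a_j \;\le\; i \;\le\; k,
\]
so $\{k,l\}$ is an edge of $\Gamma_\avec$. Every pair of vertices in $\{i,i+1,\dots,j\}$ is therefore adjacent, and the set is a clique.

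There is no real obstacle. The only point needing care is recognising that the condition $a_m\le a_{m-1}+1$ is exactly the monotonicity of $m-a_m$. The case where the set is a single vertex cannot occur, since $i<j$, and the convention that isolated vertices count as cliques is never needed.
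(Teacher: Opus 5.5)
Your argument is correct. The area-sequence condition $a_m\le a_{m-1}+1$ makes $m-a_m$ weakly increasing, and combined with $j-a_j\le i$ this gives $l-a_l\le k$ for all $i\le k<l\le j$. The paper gives no written proof and only says the lemma follows directly from the definition of area sequences; your proposal is exactly that direct verification.
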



\begin{definition}
    Let $1 \leq i \leq j \leq n$. A \defin{peak-edge} in a unit interval graph $\Gamma_\avec \in \mathcal{DP}(n)$ is an isolated vertex $i$ or an edge $ij$ in a where neither $\{i, j+1\}$ nor $\{i-1, j\}$ is an edge. The set of \defin{peak-cliques} of $\Gamma_\avec$ is the set of cliques determined by the peak-edges, as in Lemma~\ref{lem:uig-clique}. The \defin{peak-nesting of a vertex} $i$ in $\Gamma_\avec$ is the number of peak-cliques that contain $i$ and it is denoted by $\pnest(i)$. The \defin{peak-nesting vector} of $\Gamma_\avec$ is the vector $(\pnest(1), \ldots, \pnest(n))$. The \defin{peak-nesting} of $\Gamma_\avec$ is the maximal peak-nesting over all vertices, and is denoted by $\pnest(\avec)$. The \defin{peak number} of $\Gamma_\avec$ is the number of peak-edges of $\Gamma_\avec$ and it is denoted by $\peak(\avec)$.
\end{definition}

\begin{remark}
\label{rmk:bij-peaks-peak-edges}
    Equivalently, a \defin{peak} in a Dyck path is a North step followed by an East step, and there is a clear bijection between peaks in a Dyck path and peak-edges in its correspondent unit interval graph, see Figure \ref{fig:dyck-path-diagram} and Example \ref{ex:def-peaks}. Hence, we may refer to a peak-edge in a unit interval graph simply by a peak.
\end{remark}

\begin{example}
\label{ex:def-peaks}
    In Example~\ref{ex:uig}, the edges $1 2$, $2 5$, $3 6$, $5 7$ and the vertex $8$ are the peaks of $\Gamma_\avec$. Its set of peak-cliques is
    \[
        \mathcal{P}_\avec = \{ \{1,2\}, \quad \{2,3,4,5\}, \quad \{3,4,5,6\}, \quad \{5,6,7\}, \quad \{8\} \},
    \]
    from which follows that its peak number is $5$ and its peak-nesting vector is
    \[
        (1,2,2,2,3,2,1,1).
    \]
    Hence, $\pnest(\avec) = 3$.
\end{example}

\begin{example}
    The following picture shows the 14 area sequences of length 4, with the peak number, the peak-nesting and the peak-nesting vector written under each diagram.
    \begin{center}
        \setlength{\tabcolsep}{1pt}
        \begin{tabular}{*{7}{c}}
            \shortstack{\dyckDiagram[0.03125\textwidth]{0,0,0,0}\\[-1pt]{\scriptsize $\peak = 4$}\\[-1pt]{\scriptsize $\pnest = 1$}\\[-1pt]{\scriptsize $\pnv$ = (1,1,1,1)}} &
            \shortstack{\dyckDiagram[0.03125\textwidth]{0,1,0,0}\\[-1pt]{\scriptsize $\peak = 3$}\\[-1pt]{\scriptsize $\pnest = 1$}\\[-1pt]{\scriptsize (1,1,1,1)}} &
            \shortstack{\dyckDiagram[0.03125\textwidth]{0,0,1,0}\\[-1pt]{\scriptsize $\peak = 3$}\\[-1pt]{\scriptsize $\pnest = 1$}\\[-1pt]{\scriptsize (1,1,1,1)}} &
            \shortstack{\dyckDiagram[0.03125\textwidth]{0,1,1,0}\\[-1pt]{\scriptsize $\peak = 3$}\\[-1pt]{\scriptsize $\pnest = 2$}\\[-1pt]{\scriptsize (1,2,1,1)}} &
            \shortstack{\dyckDiagram[0.03125\textwidth]{0,1,2,0}\\[-1pt]{\scriptsize $\peak = 2$}\\[-1pt]{\scriptsize $\pnest = 1$}\\[-1pt]{\scriptsize (1,1,1,1)}} &
            \shortstack{\dyckDiagram[0.03125\textwidth]{0,0,0,1}\\[-1pt]{\scriptsize $\peak = 3$}\\[-1pt]{\scriptsize $\pnest = 1$}\\[-1pt]{\scriptsize (1,1,1,1)}} &
            \shortstack{\dyckDiagram[0.03125\textwidth]{0,1,0,1}\\[-1pt]{\scriptsize $\peak = 2$}\\[-1pt]{\scriptsize $\pnest = 1$}\\[-1pt]{\scriptsize (1,1,1,1)}}
            \\[8pt]
            \shortstack{\dyckDiagram[0.03125\textwidth]{0,0,1,1}\\[-1pt]{\scriptsize $\peak = 3$}\\[-1pt]{\scriptsize $\pnest = 2$}\\[-1pt]{\scriptsize (1,1,2,1)}} &
            \shortstack{\dyckDiagram[0.03125\textwidth]{0,0,1,2}\\[-1pt]{\scriptsize $\peak = 2$}\\[-1pt]{\scriptsize $\pnest = 1$}\\[-1pt]{\scriptsize (1,1,1,1)}} &
            \shortstack{\dyckDiagram[0.03125\textwidth]{0,1,1,1}\\[-1pt]{\scriptsize $\peak = 3$}\\[-1pt]{\scriptsize $\pnest = 2$}\\[-1pt]{\scriptsize (1,2,2,1)}} &
            \shortstack{\dyckDiagram[0.03125\textwidth]{0,1,2,1}\\[-1pt]{\scriptsize $\peak = 2$}\\[-1pt]{\scriptsize $\pnest = 2$}\\[-1pt]{\scriptsize (1,1,2,1)}} &
            \shortstack{\dyckDiagram[0.03125\textwidth]{0,1,1,2}\\[-1pt]{\scriptsize $\peak = 2$}\\[-1pt]{\scriptsize $\pnest = 2$}\\[-1pt]{\scriptsize (1,2,1,1)}} &
            \shortstack{\dyckDiagram[0.03125\textwidth]{0,1,2,2}\\[-1pt]{\scriptsize $\peak = 2$}\\[-1pt]{\scriptsize $\pnest = 2$}\\[-1pt]{\scriptsize (1,2,2,1)}} &
            \shortstack{\dyckDiagram[0.03125\textwidth]{0,1,2,3}\\[-1pt]{\scriptsize $\peak = 1$}\\[-1pt]{\scriptsize $\pnest = 1$}\\[-1pt]{\scriptsize (1,1,1,1)}}
        \end{tabular}
    \end{center}
\end{example}

We are interested in studying the distribution of the peak number and of the peak-nesting statistic over unit interval graphs on $[n]$. In the following two subsections, we discuss both topics.

\subsection{The peak-nesting polynomial of unit interval graphs}
\label{sec:pnest-all}
In this section, we study the peak-nesting polynomial over all unit interval graphs and over connected unit interval graphs. We express the peak-nesting polynomial over all graphs as a formula of heights of Motzkin and Dyck paths, which in particular is a refinement of the Touchard's identity \cite{Touchard1928}, see Corollary \ref{cor:refinement}. We also find coefficientwise formulas for the peak-nesting polynomials, and prove the log-concavity for the upper half coefficients of both of them.

We start with the following simple result, which follows directly from the definition of peak-cliques:

\begin{lemma}
\label{peak-cliques-are-maximal}
    Let $P$ be a peak-clique of a unit interval graph $\Gamma_\avec$. Then there is no clique $C$ of $\Gamma_\avec$ such that $P \subsetneq C$, that is, each peak-clique is a maximal clique of a unit interval graph.
\end{lemma}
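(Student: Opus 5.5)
The plan is to argue by contradiction, using only the defining property of a peak-edge together with Lemma~\ref{lem:uig-clique}. Let $P$ be a peak-clique, determined by a peak-edge $ij$ with $i \leq j$. By Lemma~\ref{lem:uig-clique}, $P = \{i, i+1, \dotsc, j\}$; when $i = j$ this reduces to the isolated vertex $\{i\}$. Suppose there is a clique $C$ with $P \subsetneq C$, and pick a vertex $v \in C \setminus P$. Then either $v < i$ or $v > j$.

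First consider $v > j$. Since $C$ is a clique containing both $i$ and $v$, the pair $iv$ is an edge. Applying Lemma~\ref{lem:uig-clique} to the edge $iv$, the set $\{i, \dotsc, v\}$ is a clique. Because $j+1 \leq v$, this clique contains $j+1$, so $\{i, j+1\}$ is an edge. This contradicts the definition of a peak-edge.

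The case $v < i$ is symmetric. Here $vj$ is an edge, so $\{v, \dotsc, j\}$ is a clique containing $i-1$, and hence $\{i-1, j\}$ is an edge, again a contradiction.

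The only point needing care is the degenerate case in which the peak is an isolated vertex $i$. In that case no vertex is adjacent to $i$ at all, so any $C \supsetneq \{i\}$ would contain an edge at $i$, which is impossible. The same two-case argument also covers this situation, provided the edge condition is read with $j = i$. I expect no real obstacle; the argument is essentially immediate once Lemma~\ref{lem:uig-clique} is applied to the edge joining an endpoint of $P$ to the extra vertex $v$.
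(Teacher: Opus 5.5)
Your proof is correct and follows essentially the same route as the paper: both rule out an extra vertex $v<i$ or $v>j$ in a larger clique by reducing to the forbidden edges $\{i-1,j\}$ and $\{i,j+1\}$. The only difference is that you justify this reduction explicitly by applying Lemma~\ref{lem:uig-clique} to the edge $iv$ (or $vj$), which the paper attributes simply to the definition of area sequences.
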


\begin{proof}
    Let $P$ be a peak-clique of $\Gamma_\avec$. By definition of area sequences, there exist $i,j \in [n]$ such that $i \leq j$ and $P = \{i, i+1, \ldots, j\}$. Moreover, by definition of peak-cliques, neither $\{i-1, j\}$ nor $\{i, j+1\}$ are edges of $\Gamma_\avec$. Hence, by definition of area sequences, there is neither $k < i$ such that $\{k, j\}$ is an edge of $\Gamma_\avec$, nor $k > j$ such that $\{i, k\}$ is an edge of $\Gamma_\avec$. Therefore there is no clique $C$ of $\Gamma_\avec$ such that $P \subsetneq C$.
\end{proof}

We are interested in studying the distribution of the peak-nesting statistic over unit interval graphs on $[n]$. The \defin{peak-nesting polynomial on $[n]$} is defined as $\Pnest_0(t) \coloneqq 1$, and
\[
    \Pnest_n(t) \coloneqq \sum_{\avec \in \mathcal{DP}(n)} t^{\pnest(\avec)}
\]
for $n \in \mathbb{N}$, see Table \ref{tab:peak-nesting-polynomial}. We also investigate the following variation of the peak-nesting polynomial, where the sum is over connected unit interval graphs: we set $\Pnest_0^c(t) \coloneqq 0$, and
\[
    \Pnest_n^c(t) \coloneqq \sum_{\substack{\avec \in \DP(n) \\ \Gamma_\avec \mbox{ \scriptsize is connected}}} t^{\pnest(\avec)}.
\]
for $n \in \mathbb{N}$.

We now describe the coefficients of $\Pnest_n(t)$ and $\Pnest_n^c(t)$ combinatorially using Motzkin numbers. A \defin{(uncolored) Motzkin path} of length $n \geq 0$ is a lattice path starting at $(0,0)$ and ending at $(n,0)$ with steps in $\{U = (1,1), D = (1,-1), H = (1,0)\}$ such that, for all $i = 1, 2, \ldots, n$, there are at least as many $U$ steps as $D$ steps among its first $i$ steps. In particular, every Dyck path is a Motzkin path. A \defin{(uncolored) Motzkin sequence} of size $n$ is a list of positive integers $\wvec = (w_1,w_2,\dotsc,w_n)$ where $w_1=w_n=1$ and $|w_i-w_{i+1}|\leq 1$ whenever $1 \leq i < n$. For each $n \in \mathbb{N}$, we construct a bijection between the set of all Motzkin paths of length $n-1$ and all Motzkin sequences of size $n$ as follows: given a Motzkin sequence $\wvec = (w_1,w_2,\dotsc,w_n)$, we define a Motzkin path $M = S_1 S_2 \cdots S_{n-1}$ whose $i$th step $S_i$ is given by
\begin{equation}
\label{eq:bij-motzkin-path-seq}
    S_i =
    \begin{cases}
        U &\mbox{if } w_{i+1} - w_i = 1, \\
        D &\mbox{if } w_{i+1} - w_i = -1, \\
        H &\mbox{if } w_{i+1} - w_i = 0
    \end{cases}
\end{equation}
for each $i \in [n-1]$. Given $n \geq 0$, the set of all Motzkin sequences of size $n$ (or the set of all Motzkin paths of length $n-1$ when $n \geq 1$), will be denoted by \defin{$\motzkin(n)$}. Observe that $\DP(n) \subseteq \motzkin(2n+1)$ for all $n \geq 0$.

An \defin{ascent}, a \defin{descent} and a \defin{plateau} of a Motzkin sequence $\wvec = (w_1,w_2,\dotsc,w_n)$ is an index $i \in [n-1]$ such that 
\[
    w_i < w_{i+1},\quad  w_i > w_{i+1},\quad  \text{ and } \quad  w_i = w_{i+1},
\]
respectively.  The number of ascents, descents and plateaus are denoted, respectively, by \defin{$\asc(\wvec)$}, \defin{$\des(\wvec)$} and \defin{$\plateau(\wvec)$}, and it is clear that $\asc(\wvec) = \des(\wvec)$ for every Motzkin sequence $\wvec$.

A \defin{bicolored Motzkin sequence} is a Motzkin sequence where each plateau is either marked with $*$ or not. If $\wvec^*$ is a bicolored Motzkin sequence, we let $\wvec$ denote the uncolored version. A \defin{bicolored Motzkin path} is a Motzkin path where each $H$ step is either marked with $*$ or not. The number of $U$ steps in a bicolored Motzkin path $M^*$ is denoted by $\defin{\mathrm{u}(M^*)}$, while its number of $H^*$ steps is denoted by $\defin{\mathrm{h}^*(M^*)}$. Observe that $\mathrm{u}(M^*)$ also denotes the number of $D$ steps of $M^*$.

As in the case of Motzkin sequences and Motzkin paths, there is also a bijection between bicolored Motzkin sequences of size $n$ and bicolored Motzkin paths of length $n-1$: given a bicolored Motzkin sequence $\wvec^* = (v_1, \ldots, v_n)$, let $\wvec = (w_1, \ldots, w_n)$ be its uncolored version. Define a bicolored Motzkin path $M^* = S_1 S_2 \cdots S_{n-1}$ whose $i$th step $S_i$ is given by
\begin{equation}
\label{eq:bij-dmotzkin}
    S_i =
    \begin{cases}
        U &\mbox{if } w_{i+1} - w_i = 1, \\
        D &\mbox{if } w_{i+1} - w_i = -1, \\
        H &\mbox{if } w_{i+1} - w_i = 0 \mbox{ and } v_i \neq w_{i+1}^*,\\
        H^* &\mbox{if } v_i = w_{i+1}^*.
    \end{cases}
\end{equation}
It is straightforward to verify that the map $\wvec^* \mapsto M^*$ is a bijection between the set of all bicolored Motzkin sequences of size $n$ and the set of all bicolored Motzkin paths of size $n-1$.

Given $n \geq 0$, we denote the set of all bicolored Motzkin sequences of size $n$ (or the set of all bicolored Motzkin paths of length $n-1$ if $n \geq 1$) by \defin{$\motzkin^*(n)$}. It is clear that $\motzkin(n) \subseteq \motzkin^*(n)$ for all $n \geq 0$.

The following theorem is crucial for the proof of Corollary \ref{cor:refinement}:

\begin{theorem}
\label{th:bij-dyck-decorated-motzkin}
    For each $n \geq 0$ there is a bijection $\varphi_n: \DP(n) \rightarrow \motzkin^*(n)$, with the following properties: if $\varphi_n(\avec)=\wvec^*_\avec$, then
    \begin{enumerate}[(i)]
        \item $\pnv(\avec) = \wvec_\avec$;
        \item $\peak(\avec) = 1 + \asc(\wvec^*_\avec)+\decor(\wvec^*_\avec)$.
    \end{enumerate}
\end{theorem}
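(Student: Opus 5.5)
The plan is to encode a unit interval graph through its ordered list of peak-cliques and read off both statistics from where these intervals begin and end. First I would record the structure of the peak-cliques. By Lemma~\ref{lem:uig-clique} and Remark~\ref{rmk:bij-peaks-peak-edges}, the peaks of the Dyck path of $\avec$ correspond to peak-cliques $[i_1,j_1],\dots,[i_p,j_p]$ with $p=\peak(\avec)$. Reading the peaks from left to right, these satisfy
\[
  1=i_1<i_2<\cdots<i_p,\qquad j_1<j_2<\cdots<j_p=n,\qquad i_k\le j_k,\qquad i_{k+1}\le j_k+1 .
\]
The last condition holds because every vertex lies in some maximal clique, and by Lemma~\ref{peak-cliques-are-maximal} the peak-cliques are exactly the maximal cliques. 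Conversely, I would check that any such pair of sequences is the peak data of a unique Dyck path: the peak at $(i_k,j_k)$ determines the path, because the $N$-steps and $E$-steps between consecutive peaks are forced. So $\DP(n)$ is in bijection with pairs of subsets $S=\{i_k\}$ and $E=\{j_k\}$ of $[n]$ of equal size satisfying the above inequalities after sorting.

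Next I would compute the peak-nesting vector. We have $\pnest(v)=\#\{k: i_k\le v\le j_k\}$. Hence
\[
  \pnest(v+1)-\pnest(v)=[v+1\in S]-[v\in E]\in\{-1,0,1\},
\]
since starts are distinct and ends are distinct. Also $\pnest(1)=\pnest(n)=1$: vertex $1$ lies only in the first clique because $i_2>1$, and symmetrically for $n$. Every $\pnest(v)\ge1$ by the covering property. So $\wvec_\avec:=\pnv(\avec)$ is a Motzkin sequence.

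An index $v$ is an ascent exactly when $v+1\in S$ and $v\notin E$, and a descent exactly when $v\in E$ and $v+1\notin S$. A plateau arises in two ways: either $v+1\in S$ and $v\in E$, or neither holds. I would define $\varphi_n(\avec)$ by marking precisely the plateaus of the first kind, and let $\decor$ count these marked plateaus; this is the intended reading of the $H^*$ case in \eqref{eq:bij-dmotzkin}. Then $S\setminus\{1\}$ is in bijection with the ascents together with the marked plateaus, via $v+1\leftrightarrow v$. This gives (ii): $\peak(\avec)=|S|=1+\asc+\decor$. Item (i) holds by construction.

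Finally I would construct the inverse, which I expect to be the only step needing care. Given a bicolored Motzkin sequence $\wvec^*$, set
\begin{align*}
  S&=\{1\}\cup\{v+1: v \text{ an ascent or marked plateau}\},\\
  E&=\{n\}\cup\{v: v \text{ a descent or marked plateau}\}.
\end{align*}
Since $\asc=\des$, we have $|S|=|E|$. Sort both sets and pair them in order.

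The main obstacle is verifying $i_k\le j_k$ and $i_{k+1}\le j_k+1$. I would derive both from the identity
\[
  w_v=\#\{s\in S: s\le v\}-\#\{e\in E: e\le v-1\},
\]
which follows by telescoping the increments, together with $w_v\ge1$. For $v=j_k$, the number of ends at most $j_k-1$ is $k-1$, so the number of starts at most $j_k$ is at least $k$; this gives $i_k\le j_k$. For $v=j_k+1$, the number of ends at most $j_k$ is $k$, so the number of starts at most $j_k+1$ is at least $k+1$; this gives $i_{k+1}\le j_k+1$. The endpoint condition $w_n=1$ ensures that $n\notin S\setminus\{1\}$ is compatible with $j_p=n$.

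The two constructions are visibly mutually inverse, since the bicolored sequence records exactly $S$ and $E$. This completes the bijection.
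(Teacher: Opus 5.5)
Your proof is correct. It differs from the paper's mainly in how injectivity is obtained.

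The paper defines $\varphi_n$ exactly as you do. Its marking rule (``$w_i=w_{i+1}$ and the collections of peak-cliques containing $i$ and $i+1$ differ'') is your condition $v\in E$, $v+1\in S$. For surjectivity it uses the same interval construction as your inverse: start an interval at $1$ and after each $U$ or $H^*$, and end the oldest active interval at each $D$ or $H^*$ and at $n$. This first-in-first-out pairing is your sorted pairing. The paper then asserts ``by construction'' that these intervals are the peak-cliques of some $\avec$ with $\varphi_n(\avec)=\wvec^*$. Injectivity comes from counting: $|\DP(n)|=C_n=|\motzkin^*(n)|$ by Touchard's identity.

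You instead encode $\DP(n)$ by the start/end data $(S,E)$ with explicit constraints, and you check well-definedness in both directions. The peak-nesting vector is a Motzkin sequence, and the $(S,E)$ built from $\wvec^*$ satisfies $i_k\le j_k$ and $i_{k+1}\le j_k+1$ by your telescoping identity. You then verify directly that the two maps are mutually inverse. This takes a little more bookkeeping, but it fills in the steps the paper leaves implicit. It also removes the dependence on Touchard's identity: combined with the count \eqref{eq:touchard-motzkin}, your bijection gives a bijective proof of Touchard's identity, and hence of the refinement in Corollary~\ref{cor:refinement}. The paper's argument, by contrast, uses Touchard's identity as an input.

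Two small corrections.

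\begin{itemize}
\item Lemma~\ref{peak-cliques-are-maximal} only says that peak-cliques are maximal. For the covering property you need the converse, which follows at once from Lemma~\ref{lem:uig-clique}. A maximal clique is an interval $[i,j]$, and maximality forbids $\{i-1,j\}$ and $\{i,j+1\}$ from being edges, so $ij$ is a peak-edge.
\item Drop the sentence about $w_n=1$. The vertex $n$ can lie in $S\setminus\{1\}$: for example $\wvec^*=(1^*,1)$, two isolated vertices, gives $S=E=\{1,2\}$. Nothing in your verification needs $n\notin S$. The real role of $w_n=1$ is to give $\asc=\des$, i.e.\ $|S|=|E|$; equivalently, your identity at $v=n$ reads $1=|S|-(|E|-1)$.
\end{itemize}
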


\begin{proof}
    For $n=0$, the claim is trivial, so let $n \in \mathbb{N}$. We start by proving the existence of a bijection $\varphi_n$ as follows: given $\avec \in \mathcal{DP}(n)$, let $\wvec_\avec = (w_1, \ldots, w_n)$ be the peak-nesting vector of $\Gamma_\avec$. Let $\wvec^*_\avec = (v_1, \ldots, v_n)$ where, for each, $1 \leq i \leq n$,
    \[
        v_i =
        \begin{cases}
            w_i^* &\mbox{if $w_i = w_{i+1}$ and the collections of peak-cliques containing $i$ and $i+1$ are different}, \\
            w_i &\mbox{otherwise}.
        \end{cases}
    \]
    We define $\varphi_n(\avec) \coloneqq \wvec^*_\avec$ and claim that $\varphi_n$ is a bijection. We first prove that $\varphi_n$ is surjective. In fact, given $\wvec^* \in \motzkin^*(n)$, let $M^* = S_1 \cdots S_{n-1}$ be the bicolored Motzkin path given by \eqref{eq:bij-dmotzkin}. We define a finite sequence of integer intervals $[a_k, b_k] = \{i \in \mathbb{Z} \colon a_k \leq i \leq b_k \}$ as follows: start an interval at vertex $1$. For each $i \in [n-1]$, if $S_i$ equals
    \begin{itemize}
        \item $U$, start a new interval at the vertex $i+1$;
        \item $D$, end the oldest active interval at $i$;
        \item $H$, keep the intervals unchanged;
        \item $H^*$, end the oldest active interval at $i$, and start a new one at $i+1$.
    \end{itemize}
    End the remaining interval at $n$. The sequence of finite integer intervals obtained is the set of peak-cliques $\mathcal{P}_\avec$ of a unit interval graph $\avec$ for which $\varphi_n(\avec) \coloneqq \wvec^*$, by construction. Hence, $\varphi_n$ is surjective.
    
    For example, let $\wvec^*_\avec = (1,2,2^*,2,3^*,3,2,1)$ and let $\wvec_\avec$ be its uncolored version. We construct the peak-cliques of $\Gamma_\avec$ step-by-step as follows, see Figure \ref{fig:w-surjection}:
    \begin{enumerate}[a)]
        \item first, we let $1 \in P_1$;
        \item since $w_1^* < w_2^*$, there is another peak-clique $P_2$ containing $2$ and, since $w_2 = 2$, it follows that $2 \in P_1$;
        \item since $w_3^* = 2^*$, it follows that $3 \in P_2, P_1$. Moreover, since $w_3^*$ is a marked entry, it follows that there exists a peak-clique $P_3$ such that its smallest element is $4$, which implies that $P_1 = \{1,2,3\}$;
        \item since $w_4^* = 2$, it follows that $4 \in P_3, P_2$;
        \item since $w_4 < w_5 = 3$, there is another peak-clique $P_4$ containing $5$, and $6 \in P_4, P_3, P_2$. Moreover, since $w_5^*$ is a marked entry, there exists a peak-clique $P_5$ such that its smallest element is $6$, which implies that $P_2 = \{2,3,4,5\}$;
        \item since $w_6^* = 3$, it follows that $6 \in P_5, P_4, P_3$;
        \item since $w_7^* = 2$, it follows that $7 \in P_5, P_4$ and $P_3 = \{4,5,6,7\}$;
        \item since $w_8^* = 1$, it follows that $8 \in P_5$ and $P_4 = \{6,7,8\}$.
    \end{enumerate}
    Hence, $\Gamma_\avec$ is the unit interval graph whose set of peak-cliques is
    \[
        \mathcal{P}_\avec = \{ \{1,2,3\}, \{2,3,4,5\}, \{4,5,6\}, \{5,6,7\}, \{6,7,8\} \}.
    \]
    
    \begin{figure}[H]
        \centering
        \begin{tabular}{cccc}
            \subfloat[]{\begin{tikzpicture}[scale=1.0,x=1em,y=1em,baseline=(current bounding box.south)]
            \draw[step=1,very thin,gray!50] (0,0) grid (8,6);
            
            \foreach \x in {1,...,8} \node at (\x-0.5,5.5) {\footnotesize \x};
              
            \draw[line width=3pt, line cap=round] (0,0.5) -- (1,0.5);   
            
            \node at (0.5,-0.5) {\footnotesize 1};
        \end{tikzpicture}} &
        
        \subfloat[]{\begin{tikzpicture}[scale=1.0,x=1em,y=1em,baseline=(current bounding box.south)]
            \draw[step=1,very thin,gray!50] (0,0) grid (8,6);
            
            \foreach \x in {1,...,8} \node at (\x-0.5,5.5) {\footnotesize \x};
              
            \draw[line width=3pt, line cap=round] (0,0.5) -- (2,0.5);   
            \draw[line width=3pt, line cap=round] (1,1.5) -- (2,1.5);   
            
            \node at (0.5,-0.5) {\footnotesize 1};
            \node at (1.5,-0.5) {\footnotesize 2};
        \end{tikzpicture}} &
        
        \subfloat[]{\begin{tikzpicture}[scale=1.0,x=1em,y=1em,baseline=(current bounding box.south)]
            \draw[step=1,very thin,gray!50] (0,0) grid (8,6);
            
            \foreach \x in {1,...,8} \node at (\x-0.5,5.5) {\footnotesize \x};
              
            \draw[line width=3pt, line cap=round] (0,0.5) -- (3,0.5);   
            \draw[line width=3pt, line cap=round] (1,1.5) -- (3,1.5);   
            
            \node at (0.5,-0.5) {\footnotesize 1};
            \node at (1.5,-0.5) {\footnotesize 2};
            \node at (2.5,-0.5) {\footnotesize $2^*$};
        \end{tikzpicture}} &

        \subfloat[]{\begin{tikzpicture}[scale=1.0,x=1em,y=1em,baseline=(current bounding box.south)]
            \draw[step=1,very thin,gray!50] (0,0) grid (8,6);
            
            \foreach \x in {1,...,8} \node at (\x-0.5,5.5) {\footnotesize \x};
              
            \draw[line width=3pt, line cap=round] (0,0.5) -- (3,0.5);   
            \draw[line width=3pt, line cap=round] (1,1.5) -- (4,1.5);   
            \draw[line width=3pt, line cap=round] (3,2.5) -- (4,2.5);   
            
            \node at (0.5,-0.5) {\footnotesize 1};
            \node at (1.5,-0.5) {\footnotesize 2};
            \node at (2.5,-0.5) {\footnotesize $2^*$};
            \node at (3.5,-0.5) {\footnotesize 2};
            
            \draw[line width=0.8pt, blue, dashed] (3,2.5) -- (3,0.5);
        \end{tikzpicture}} \\
        
        & & \\
        
        \subfloat[]{\begin{tikzpicture}[scale=1.0,x=1em,y=1em,baseline=(current bounding box.south)]
            \draw[step=1,very thin,gray!50] (0,0) grid (8,6);
            
            \foreach \x in {1,...,8} \node at (\x-0.5,5.5) {\footnotesize \x};
              
            \draw[line width=3pt, line cap=round] (0,0.5) -- (3,0.5);   
            \draw[line width=3pt, line cap=round] (1,1.5) -- (5,1.5);   
            \draw[line width=3pt, line cap=round] (3,2.5) -- (5,2.5);   
            \draw[line width=3pt, line cap=round] (4,3.5) -- (5,3.5);   
            
            \node at (0.5,-0.5) {\footnotesize 1};
            \node at (1.5,-0.5) {\footnotesize 2};
            \node at (2.5,-0.5) {\footnotesize $2^*$};
            \node at (3.5,-0.5) {\footnotesize 2};
            \node at (4.5,-0.5) {\footnotesize $3^*$};
            
            \draw[line width=0.8pt, blue, dashed] (3,2.5) -- (3,0.5);
        \end{tikzpicture}} &

        \subfloat[]{\begin{tikzpicture}[scale=1.0,x=1em,y=1em,baseline=(current bounding box.south)]
            \draw[step=1,very thin,gray!50] (0,0) grid (8,6);
            
            \foreach \x in {1,...,8} \node at (\x-0.5,5.5) {\footnotesize \x};
              
            \draw[line width=3pt, line cap=round] (0,0.5) -- (3,0.5);   
            \draw[line width=3pt, line cap=round] (1,1.5) -- (5,1.5);   
            \draw[line width=3pt, line cap=round] (3,2.5) -- (6,2.5);   
            \draw[line width=3pt, line cap=round] (4,3.5) -- (6,3.5);   
            \draw[line width=3pt, line cap=round] (5,4.5) -- (6,4.5);   
            
            \node at (0.5,-0.5) {\footnotesize 1};
            \node at (1.5,-0.5) {\footnotesize 2};
            \node at (2.5,-0.5) {\footnotesize $2^*$};
            \node at (3.5,-0.5) {\footnotesize 2};
            \node at (4.5,-0.5) {\footnotesize $3^*$};
            \node at (5.5,-0.5) {\footnotesize 3};
            
            \draw[line width=0.8pt, blue, dashed] (3,2.5) -- (3,0.5);
            \draw[line width=0.8pt, blue, dashed] (5,4.5) -- (5,1.5);
        \end{tikzpicture}} &
        
        \subfloat[]{\begin{tikzpicture}[scale=1.0,x=1em,y=1em,baseline=(current bounding box.south)]
            \draw[step=1,very thin,gray!50] (0,0) grid (8,6);
            
            \foreach \x in {1,...,8} \node at (\x-0.5,5.5) {\footnotesize \x};
              
            \draw[line width=3pt, line cap=round] (0,0.5) -- (3,0.5);   
            \draw[line width=3pt, line cap=round] (1,1.5) -- (5,1.5);   
            \draw[line width=3pt, line cap=round] (3,2.5) -- (6,2.5);   
            \draw[line width=3pt, line cap=round] (4,3.5) -- (7,3.5);   
            \draw[line width=3pt, line cap=round] (5,4.5) -- (7,4.5);   
            
            \node at (0.5,-0.5) {\footnotesize 1};
            \node at (1.5,-0.5) {\footnotesize 2};
            \node at (2.5,-0.5) {\footnotesize $2^*$};
            \node at (3.5,-0.5) {\footnotesize 2};
            \node at (4.5,-0.5) {\footnotesize $3^*$};
            \node at (5.5,-0.5) {\footnotesize 3};
            \node at (6.5,-0.5) {\footnotesize 2};
            
            \draw[line width=0.8pt, blue, dashed] (3,2.5) -- (3,0.5);
            \draw[line width=0.8pt, blue, dashed] (5,4.5) -- (5,1.5);
        \end{tikzpicture}} &
        
        \subfloat[]{\begin{tikzpicture}[scale=1.0,x=1em,y=1em,baseline=(current bounding box.south)]
            \draw[step=1,very thin,gray!50] (0,0) grid (8,6);
            
            \foreach \x in {1,...,8} \node at (\x-0.5,5.5) {\footnotesize \x};
              
            \draw[line width=3pt, line cap=round] (0,0.5) -- (3,0.5);   
            \draw[line width=3pt, line cap=round] (1,1.5) -- (5,1.5);   
            \draw[line width=3pt, line cap=round] (3,2.5) -- (6,2.5);   
            \draw[line width=3pt, line cap=round] (4,3.5) -- (7,3.5);   
            \draw[line width=3pt, line cap=round] (5,4.5) -- (8,4.5);   
            
            \node at (0.5,-0.5) {\footnotesize 1};
            \node at (1.5,-0.5) {\footnotesize 2};
            \node at (2.5,-0.5) {\footnotesize $2^*$};
            \node at (3.5,-0.5) {\footnotesize 2};
            \node at (4.5,-0.5) {\footnotesize $3^*$};
            \node at (5.5,-0.5) {\footnotesize 3};
            \node at (6.5,-0.5) {\footnotesize 2};
            \node at (7.5,-0.5) {\footnotesize 1};
            
            \draw[line width=0.8pt, blue, dashed] (3,2.5) -- (3,0.5);
            \draw[line width=0.8pt, blue, dashed] (5,4.5) -- (5,1.5);
        \end{tikzpicture}}\\
        \end{tabular}
        \caption{A construction step-by-step of a unit interval graph $\Gamma_\avec \in \DP(8)$ for which $\varphi_8 (\avec) = (1,2,2^*,2,3^*,3,2,1)$.}
        \label{fig:w-surjection}
    \end{figure}

    Now we show that $\varphi_n$ is also an injection. We will show that $|\DP(n)| = |\motzkin^*(n)|$, from which the injectivity of $\varphi_n$ will follow. In fact, recall that $\displaystyle |\DP(n)| = C_n = \frac{1}{n+1} \binom{2n}{n}$, the $n$th Catalan number \cite[\oeis{A000108}]{OEIS}. Touchard's identity \cite{Touchard1928} states that
    \[
        C_{n} = \sum_{k = 0}^{\lfloor (n-1) / 2 \rfloor} \binom{n-1}{2k} 2^{n-1-2k} C_k.
    \]
    Now, observe that any bicolored Motzkin path $M^*$ of length $n-1$ is obtained as follows: among the $n-1$ steps of $M^*$, choose $k$ of them to be $U$ steps and $k$ of them to be $D$ steps, and distribute them in such a way that the path obtained by deleting the $H$ and the $H^*$ steps of $M^*$ is a Dyck path, which can be done in $\binom{n-1}{2k} C_k$ different ways. Now, each one of the $H$ steps can be marked or not, which is done in $2^{n-1-2k}$ different ways. Hence,
    \begin{equation}
    \label{eq:touchard-motzkin}
        |\motzkin^*(n)| = \sum_{k = 0}^{\lfloor (n-1) / 2 \rfloor} \binom{n-1}{2k} 2^{n-1-2k} C_k,
    \end{equation}
    from which $|\DP(n)| = |\motzkin^*(n)|$ follows.

    Now, we prove that $\varphi_n$ satisfies (i) and (ii). In fact, (i) is automatically satisfied by definition of $\varphi_n$. To prove (ii), observe that if we traverse the entries of $\wvec^*_\avec$ from the start, we see that every ascent of $\wvec_\avec$ corresponds to a new peak-clique. Moreover, every marked entry gives rise to the end of a peak-clique and the start of a new peak-clique, so the (ii) holds.
\end{proof}

\begin{definition}
    Given $\avec \in \DP(n)$, we call the unique $\wvec_\avec^* \in \mathcal{MP}^*(n)$ the \defin{bicolored Motzkin sequence associated to $\avec$}.
\end{definition}

From Theorem \ref{th:bij-dyck-decorated-motzkin}, it follows that, for each $\wvec \in \motzkin(n)$, $n \geq 0$, there might exist many $\avec \in \mathcal{DP}(n)$ for which $\pnv(\avec) = \wvec$. For example, for $\wvec = (1,1,1)$, the sequences $\avec_1 = (0,0,0)$ and $\avec_2 = (1,2,3)$ are such that $\pnv(\avec_1) = \pnv(\avec_2) = \wvec$. In fact, the set of peak-cliques of $\avec_1$ is $\mathcal{P}_{\avec_1} = \{ \{1\}, \{2\}, \{3\} \}$, and the set of peak-cliques of $\avec_2$ is $\mathcal{P}_{\avec_2} = \{ \{1,2,3\} \}$, see Figure \ref{fig:example-motivation}.
\begin{figure}[H]
    \begin{align*}
        \dyckDiagram[0.04\textwidth]{0,0,0}
        &&&
        \dyckDiagram[0.04\textwidth]{0,1,2}
    \end{align*}
    \caption{The area sequences of $\avec_1 = (0,0,0)$, to the left, and $\avec_2 = (0,1,2)$, to the right.}
    \label{fig:example-motivation}
\end{figure}
This fact motivates the following corollary:

\begin{corollary}
\label{cor:peak-and-pnest}
    A vector $\wvec=(w_1,w_2,\dotsc,w_n)$ is a peak-nesting vector if and only if $\wvec \in \motzkin(n)$. Moreover,
    \[
        \sum_{\substack{\avec \in \DP(n) \\ \pnv(\avec)=\wvec}} t^{\peak(\avec)} = t^{1+\asc(\wvec)}(1+t)^{\plateau(\wvec)} = t^{1+\asc(\wvec)}(1+t)^{n-1-2\asc(\wvec)}.
    \]
\end{corollary}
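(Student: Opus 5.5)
The plan is to read everything off the bijection $\varphi_n$ of Theorem~\ref{th:bij-dyck-decorated-motzkin}, grouping the area sequences by the uncolored part of their image.

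First, the characterization of peak-nesting vectors. By Theorem~\ref{th:bij-dyck-decorated-motzkin}(i), $\pnv(\avec)=\wvec_\avec$, and $\wvec_\avec$ is the uncolored version of $\wvec^*_\avec\in\motzkin^*(n)$. Hence every peak-nesting vector lies in $\motzkin(n)$. For the converse, take $\wvec\in\motzkin(n)$ and view it as a bicolored Motzkin sequence with no marked plateaus; this is possible since $\motzkin(n)\subseteq\motzkin^*(n)$. Surjectivity of $\varphi_n$ gives some $\avec$ with $\varphi_n(\avec)=\wvec$, and then $\pnv(\avec)=\wvec$.

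Next, the generating function. Fix $\wvec\in\motzkin(n)$. Since $\varphi_n$ is a bijection, the set $\{\avec\in\DP(n)\colon \pnv(\avec)=\wvec\}$ is in bijection with the set of bicolored Motzkin sequences whose uncolored version is $\wvec$. Such a sequence is obtained by independently choosing, for each plateau of $\wvec$, whether to mark it, so there are $2^{\plateau(\wvec)}$ of them. The statistic $\decor(\wvec^*)$ is the number of marked plateaus; the paper uses it as $\mathrm{h}^*$ of the associated path, which I take as its definition. By Theorem~\ref{th:bij-dyck-decorated-motzkin}(ii),
\[
\peak(\avec)=1+\asc(\wvec)+\decor(\wvec^*_\avec),
\]
where the ascent count is unaffected by the marking. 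Summing over the subsets of marked plateaus gives
\[
t^{1+\asc(\wvec)}\sum_{j}\binom{\plateau(\wvec)}{j}t^j=t^{1+\asc(\wvec)}(1+t)^{\plateau(\wvec)}.
\]
The last equality in the statement follows from counting the $n-1$ indices of $[n-1]$: each is an ascent, a descent or a plateau, and $\asc(\wvec)=\des(\wvec)$. Hence $\plateau(\wvec)=n-1-2\asc(\wvec)$.

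The only delicate point is making sure that every subset of plateaus can be marked independently, i.e.\ that $\motzkin^*(n)$ really contains all $2^{\plateau(\wvec)}$ markings of each $\wvec$. This is how $\motzkin^*(n)$ is defined, and it is consistent with the count \eqref{eq:touchard-motzkin}. So there is no genuine obstacle: the argument is bookkeeping once the theorem is granted. The degenerate case $n=0$, or $n=1$ with $\wvec=(1)$, should be checked separately. For $n=1$ the unique graph has one peak and $\asc=\plateau=0$, so both sides equal $t$.
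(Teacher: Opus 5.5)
Your argument is correct and is essentially the paper's proof, which reads the first part off Theorem~\ref{th:bij-dyck-decorated-motzkin}(i) and the second part off Theorem~\ref{th:bij-dyck-decorated-motzkin}(ii); you simply spell out the count over the $2^{\plateau(\wvec)}$ independent markings of each fiber and the identity $\plateau(\wvec)=n-1-2\asc(\wvec)$. The one loose end is the case $n=0$, which you flagged but did not settle: the empty graph has no peaks, so the left side is $1$ while neither right-hand expression equals $1$, and the statement (like part (ii) of the theorem) must be read for $n\ge 1$, where your general argument covers $n=1$ with no separate check.
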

\begin{proof}
    The first part follows from Theorem \ref{th:bij-dyck-decorated-motzkin}(i). The second part follows from Theorem \ref{th:bij-dyck-decorated-motzkin}(ii).
\end{proof}

The \defin{height of a bicolored Motzkin path $M^*$ at position $k$} is the number $h_k(M^*)$ given by the difference between the number of $U$ steps and $D$ steps in its first $k$ steps. The \defin{height vector} of $M^* \in \motzkin(n)^*$ is the vector $(h_0 (M^*), h_1(M^*), \ldots, h_{n-1}(M^*))$, where $h_0 (M^*) \coloneqq 0$. The \defin{height} of $M^*$ is the number given by $\height(M^*) \coloneqq \max \{h_0 (M^*), h_1(M^*), \ldots, h_{n-1}(M^*)\}$.

\begin{lemma}
\label{lem:pnest=height}
    Let $\avec \in \mathcal{DP}(n)$ and let $\wvec = \pnv(\avec)$. Then $\pnest(\avec) = \height(M_\wvec) + 1$, where $M_\wvec$ is obtained from $\wvec$ via the bijection \eqref{eq:bij-motzkin-path-seq}.
\end{lemma}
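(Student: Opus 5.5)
The plan is to compute the heights of $M_\wvec$ directly from the entries of $\wvec$ and then take a maximum. By Corollary~\ref{cor:peak-and-pnest} (equivalently, Theorem~\ref{th:bij-dyck-decorated-motzkin}(i)), the peak-nesting vector $\wvec = \pnv(\avec) = (w_1,\dots,w_n)$ is a Motzkin sequence. So $w_1 = 1$, and consecutive entries differ by at most one. Consequently the path $M_\wvec = S_1\cdots S_{n-1}$ given by \eqref{eq:bij-motzkin-path-seq} is well defined.

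The key step is the identity
\[
    h_k(M_\wvec) = w_{k+1} - 1 \quad \text{for all } 0 \le k \le n-1,
\]
which I would prove by induction on $k$.
\begin{itemize}
    \item For $k=0$, we have $h_0(M_\wvec) = 0 = w_1 - 1$, since $w_1 = 1$.
    \item For the inductive step, the height changes by $+1$, $-1$ or $0$ according to whether $S_k$ is $U$, $D$ or $H$.
    \item By \eqref{eq:bij-motzkin-path-seq}, these three cases occur exactly when $w_{k+1} - w_k$ equals $1$, $-1$ or $0$ respectively.
    \item Hence $h_k(M_\wvec) - h_{k-1}(M_\wvec) = w_{k+1} - w_k$, and telescoping gives the identity.
\end{itemize}

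Taking the maximum over $k$ then gives
\[
    \height(M_\wvec) = \max_{0\le k\le n-1} (w_{k+1}-1) = \max_{1\le i \le n} \pnest(i) - 1.
\]
By definition, $\max_{1\le i\le n}\pnest(i) = \pnest(\avec)$, so $\pnest(\avec) = \height(M_\wvec)+1$, as claimed.

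There is no real obstacle here. The only point needing care is the indexing offset: $h_k$ corresponds to the vertex $k+1$, and $h_0 = 0$ corresponds to $w_1 = 1$. The fact $w_1 = 1$ holds because vertex $1$ lies only in the first peak-clique. This can be taken from the Motzkin-sequence property supplied by Theorem~\ref{th:bij-dyck-decorated-motzkin}, or checked directly: any peak-clique containing $1$ starts at $1$, and by maximality (Lemma~\ref{peak-cliques-are-maximal}) there is only one such clique.
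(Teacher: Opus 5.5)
Your proof is correct and follows the same route as the paper. The paper also observes that the height vector of $M_\wvec$ is $(w_1-1,\dots,w_n-1)$, takes the maximum, and uses Theorem~\ref{th:bij-dyck-decorated-motzkin}(i) to identify $\max_i w_i$ with $\pnest(\avec)$; your telescoping induction and the check that $w_1=1$ just spell out what the paper states as an observation.
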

\begin{proof}
    First, observe that, if $\wvec = (w_1, \ldots, w_n)$ is a Motzkin sequence, then it defines a Motzkin path $M_\wvec$ with height vector $(w_1 - 1, w_2 - 1, w_3 - 1, \ldots, w_n -1)$. Hence, the height of $M_\wvec$ is given by
    \begin{equation}
    \label{eq:height-motzkin-seq}
        \height(M_\wvec) \coloneqq \max_{i \in [n]} \{w_i - 1\}.
    \end{equation}
    By Theorem \ref{th:bij-dyck-decorated-motzkin}(i), $\pnv(\avec) = \wvec$, from which follows that
    \begin{equation}
    \label{eq:pnv-motzkin-seq}
        \pnest(\avec) = \max_{i \in [n]} \{w_i\}.
    \end{equation}
    The claim now follows from a combination of \eqref{eq:height-motzkin-seq} and \eqref{eq:pnv-motzkin-seq}.
\end{proof}

We prove now the main result of this section, where we express the peak-nesting polynomial in terms of heights of uncolored Motzkin paths:

\begin{corollary}
\label{cor:refinement}
    The peak-nesting generating polynomial over all Dyck paths is given by the following refinement of Touchard's identity:
    \[
        \Pnest_{n}(t) = \sum_{i=0}^{\lfloor \frac{n-1}{2} \rfloor} 2^{n-1-2i} \sum_{\substack{M \in \motzkin (n) \\ \mathrm{u}(M) = i}}  t^{\height(M)+1} = \sum_{i=0}^{\lfloor \frac{n-1}{2} \rfloor} 2^{n-1-2i} \binom{n-1}{2i} \sum_{D \in \DP(i)} t^{\height(D) + 1}.
    \]
\end{corollary}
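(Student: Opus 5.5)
The plan is to push the sum defining $\Pnest_n(t)$ through the bijection $\varphi_n$ of Theorem \ref{th:bij-dyck-decorated-motzkin}, and then group bicolored Motzkin paths by their uncolored version. Since $\varphi_n$ is a bijection $\DP(n)\to\motzkin^*(n)$, we can write
\[
    \Pnest_n(t) = \sum_{\wvec^* \in \motzkin^*(n)} t^{\pnest(\varphi_n^{-1}(\wvec^*))}.
\]
By Theorem \ref{th:bij-dyck-decorated-motzkin}(i), the peak-nesting vector of $\varphi_n^{-1}(\wvec^*)$ is the uncolored sequence $\wvec$. Lemma \ref{lem:pnest=height} then gives the exponent as $\height(M_\wvec)+1$, which depends only on the uncolored path $M=M_\wvec$.

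Next I group the bicolored sequences by their uncolored version $M\in\motzkin(n)$. Each $M$ with $\mathrm{u}(M)=i$ has exactly $i$ down-steps, so it has $n-1-2i$ horizontal steps. Each of these steps may independently be marked or not, so $M$ has exactly $2^{n-1-2i}$ bicolored lifts, all with the same exponent. Summing over $i$ from $0$ to $\lfloor (n-1)/2\rfloor$ gives the first equality.

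For the second equality, I use the decomposition already used in \eqref{eq:touchard-motzkin}. A Motzkin path $M$ of length $n-1$ with $i$ up-steps is determined by two choices: the set of $2i$ positions holding $U$ or $D$ steps, in $\binom{n-1}{2i}$ ways, and the Dyck path $D\in\DP(i)$ read off from those steps in order. The key check is that deleting horizontal steps preserves the height sequence at the non-horizontal positions and does not create new heights, so $\height(M)=\height(D)$. Substituting this into the inner sum gives $\binom{n-1}{2i}\sum_{D\in\DP(i)} t^{\height(D)+1}$.

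A few small points need care. The first is the edge case $n=0$, where the convention $\Pnest_0=1$ should be matched or else excluded. The second is that Dyck paths are identified with area sequences in $\DP(i)$, so "height" must be read as the maximal height of the $U/D$ path, which is $\max_j a_j+1$ for $i\ge1$ and $0$ for the empty path. I expect no genuine obstacle here; the only delicate point is making sure the exponent is independent of the marking, which is exactly Theorem \ref{th:bij-dyck-decorated-motzkin}(i).
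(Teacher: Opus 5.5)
Your proposal is correct and follows the paper's proof. The paper cites Theorem~\ref{th:bij-dyck-decorated-motzkin}(i) with Lemma~\ref{lem:pnest=height} for the first equality and \eqref{eq:touchard-motzkin} for the second; you supply the details it leaves implicit: the $2^{n-1-2i}$ markings of each uncolored path, and the fact that deleting horizontal steps does not change the height. Your remark about $n=0$ is also right: the right-hand side is an empty sum there, so the identity should be read for $n\geq 1$.
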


\begin{proof}
    The first equality follows from Theorem \ref{th:bij-dyck-decorated-motzkin}(i) and Lemma \ref{lem:pnest=height}. The second equality follows from the first equality and \eqref{eq:touchard-motzkin}.
\end{proof}

For each $1 \leq i \leq n$, let $H(n,i)$ be the number of Dyck paths of semilength $n$ with height $i$, see \cite[\oeis{A080936}]{OEIS}. Set $H(0,0) = 1$ and $H(n,i) = 0$ if $i > n$.

\begin{corollary}
\label{ref:pnest-formulas}
    For each $n \in \mathbb{N}$,
    \[
        \Pnest_n(t) = \sum_{i = 1}^{ \lceil n/2 \rceil} \left[ H(n,2i - 1) + H(n, 2i) \right] t^i \quad \mbox{and} \quad \Pnest_n^c(t) = \sum_{i = 1}^{ \lceil n/2 \rceil} \left[ H(n-1,2i - 2) + H(n-1, 2i - 1) \right] t^i .
    \]
\end{corollary}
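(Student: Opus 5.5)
The plan is to push everything through the bijection $\varphi_n$ of Theorem~\ref{th:bij-dyck-decorated-motzkin} and then use a classical "pairing" bijection from bicolored Motzkin paths to Dyck paths that turns Motzkin height into Dyck height. By Theorem~\ref{th:bij-dyck-decorated-motzkin}(i) and Lemma~\ref{lem:pnest=height}, the coefficient of $t^i$ in $\Pnest_n(t)$ is the number of bicolored Motzkin paths $M^*$ of length $n-1$ with $\height(M^*)=i-1$. So it suffices to show the following: for $m=n-1$ and each $h\geq 0$, the number of bicolored Motzkin paths of length $m$ and height $h$ equals $H(m+1,2h+1)+H(m+1,2h+2)$.

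For this I use the following map. Given $M^*=S_1\cdots S_m$, build a word $w$ of length $2m$ by replacing
\[
U\mapsto UU,\qquad D\mapsto DD,\qquad H\mapsto UD,\qquad H^*\mapsto DU,
\]
and then set $\Psi(M^*)=U\,w\,D$.
\begin{itemize}
\item \emph{Heights at even positions.} If the Motzkin path has height $k$ after $j$ steps, then $w$ has height $2k$ after $2j$ steps, since each pair changes the height by $\pm 2$ or $0$.
\item \emph{Heights at odd positions.} These are $2k\pm1$; the value $2k-1$ occurs only for an $H^*$ step at height $k$.
\item \emph{Dyck condition.} Hence $w$ stays $\geq -1$, so $Uw D$ is a Dyck path of semilength $m+1$.
\item \emph{Bijectivity.} Conversely, any Dyck path of semilength $m+1$ has the form $UwD$ with $w\geq -1$. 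Its even-position heights are then even and hence $\geq 0$, so reading $w$ in consecutive pairs inverts the map.
\end{itemize}

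For the height: if $\height(M^*)=h$, then the even-position maxima of $w$ equal $2h$. The odd positions add at most $1$, and they add exactly $1$ iff some $U$ step leaves height $h$ or some $H$ step occurs at height $h$; the former is impossible by maximality, so this happens iff some $H$ (unmarked) step occurs at height $h$. Adding the initial $U$ shifts everything by $1$, so $\height(\Psi(M^*))\in\{2h+1,2h+2\}$. Conversely, every Dyck path of height $2h+1$ or $2h+2$ arises from a Motzkin path of height exactly $h$, because $h$ is recovered as half the maximal even-position height of $w$. This gives the first formula, with $i=h+1$.

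For $\Pnest_n^c$, I first identify connectedness on the Motzkin side via the interval construction in the surjectivity part of the proof of Theorem~\ref{th:bij-dyck-decorated-motzkin}. The graph is disconnected exactly when some peak-clique ends at $i$ and none of the active peak-cliques contains $i+1$.
\begin{itemize}
\item A $D$ step never does this, since at least one active interval survives.
\item An $H^*$ step does this exactly when it occurs at height $0$, i.e. when only one interval is active.
\end{itemize}
So connected graphs correspond to bicolored Motzkin paths with no $H^*$ at height $0$. Under $\Psi$, these are exactly the Dyck paths $UwD$ in which $w$ never reaches $-1$, i.e. $w$ is itself a Dyck path of semilength $n-1$. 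The same height analysis, now without the shift by the outer $U$, gives $\height(w)\in\{2h,2h+1\}$. With $i=h+1$ this yields $H(n-1,2i-2)+H(n-1,2i-1)$.

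I expect the main obstacle to be this connectedness characterization: one must carefully check with the interval-building description that only $H^*$ at height $0$ separates consecutive vertices. The small cases ($n=1$, and the convention $H(0,0)=1$) should be checked separately. The range $1\leq i\leq\lceil n/2\rceil$ is automatic, since $H(n,j)=0$ for $j>n$.
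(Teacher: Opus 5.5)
Your proposal is correct and is essentially the paper's proof. It uses the same Delest--Viennot map $U\mapsto UU$, $D\mapsto DD$, $H\mapsto UD$, $H^*\mapsto DU$ wrapped as $U\,w\,D$, the same height bookkeeping ($\{2h+1,2h+2\}$, resp.\ $\{2h,2h+1\}$ for $w$ alone), and the same characterization of connectedness as the absence of $H^*$ steps at height $0$, while supplying details (bijectivity, exactly when the odd positions raise the height) that the paper only asserts. One harmless slip: with $k$ the height before the step, a $D$ step leaving height $k\geq 1$ also produces the odd height $2k-1$, so the accurate statement is that an odd height dips below both neighbouring even heights only at an $H^*$ step; this does not affect your conclusion that $w$ reaches $-1$ exactly at $H^*$ steps at height $0$.
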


\begin{proof}
    Fix $n \in \mathbb{N}$. We first prove the formula for $\Pnest_n(t)$. By Theorem \ref{th:bij-dyck-decorated-motzkin} and Lemma \ref{lem:pnest=height},
    \[
        \Pnest_n(t) = \sum_{M^* \in \mathcal{MP}^*(n)} t^{\height(M^*) + 1},
    \]
    so we may write
    \[
        \Pnest_n(t) = \sum_{i=1}^{ \lceil n/2 \rceil} h_{n,i-1} t^i,
    \]
    where $h_{n,i-1}$ is the number of bicolored Motzkin paths of semilength $n-1$ with height $i-1$. We will prove that
    \begin{equation}
    \label{eq:height-Motzkin-to-Dyck}
        h_{n,i-1} = H(n,2i - 1) + H(n, 2i)
    \end{equation}
    for each $i = 1, \ldots, \lceil \mbox{\Large $\nicefrac{n}{2}$} \rceil$.
    
    We recall the classical Delest–Viennot bijection between bicolored Motzkin paths of length $n-1$ and Dyck paths of semilength $n$ \cite[Equation 15]{DelestViennot1984}: let $\eta \colon \{U, D, H, H^*\} \to \{UU, DD, UD, DU\}$ be such that $\eta(U) = UU$, $\eta(D) = DD$, $\eta(H) = UD$ and $\eta(H^*) = DU$, and let $M^* = S_1 \cdots S_{n-1}$ be a bicolored Motzkin path of length $n-1$. Then $\mapsto D_{M^*} = U \eta(S_1) \cdots \eta(S_{n-1}) D \in \DP(n)$, and the map
    \[
        M^* \mapsto D_{M^*}
    \]
    is a bijection between $\mathcal{MP}^*(n)$ and $\DP(n)$.

    Let $\height(M^*) = i-1$. By the bijection constructed above, $\height(D_{M^*}) \in \{2i-1, 2i\}$, from which \eqref{eq:height-Motzkin-to-Dyck} follows.

    To prove the formula for $\Pnest_n^c(t)$, observe that a unit interval graph is connected if and only if its associated bicolored Motzkin sequence has no entry equal to $1^*$ (or, equivalently, if and only if its associated bicolored Motzkin path has no $H^*$ step at height $0$). Let $\motzkin^*_\circ (n)$ be the set of such bicolored Motzkin paths. Then
    \[
        \Pnest_n^c(t) = \sum_{M^* \in \motzkin^*_\circ(n)} t^{\height(M^*) + 1}.
    \]
    Let $D_{M^*} = \eta(S_1) \cdots \eta(S_{n-1})$. Since the map
    \[
        M^* \mapsto D_{M^*}
    \]
    is a bijection between $\motzkin^*_\circ(n)$ and $\DP(n-1)$, by repeating the argument used for $\Pnest_n(t)$, we verify that if $\height(M^*) = i-1$, $M^* \in \mathcal{MP}_\circ^*(n)$, then $\height(D_{M^*}) \in \{2i-2, 2i-1\}$, which finishes the proof.
\end{proof}

\begin{table}[H]
\centering
\begin{tabular}{@{}r r r@{}}
\toprule
$n$ & $\Pnest_n(t)$ & $\Pnest_n^c(t)$ \\
\midrule
0  & $1$ & $0$ \\
1  & $t$ & $t$ \\
2  & $2t$ & $t$ \\
3  & $4t+t^{2}$ & $t+t^{2}$ \\
4  & $8t+6t^{2}$ & $t+4t^{2}$ \\
5  & $16t+25t^{2}+t^{3}$ & $t+12t^{2}+t^{3}$ \\
6  & $32t+90t^{2}+10t^{3}$ & $t+33t^{2}+8t^{3}$ \\
7  & $64t+301t^{2}+63t^{3}+t^{4}$ & $t+88t^{2}+42t^{3}+t^{4}$ \\
8  & $128t+966t^{2}+322t^{3}+14t^{4}$ & $t+232t^{2}+184t^{3}+12t^{4}$ \\
9  & $256t+3025t^{2}+1463t^{3}+117t^{4}+t^{5}$ & $t+609t^{2}+731t^{3}+88t^{4}+t^{5}$ \\
10 & $512t+9330t^{2}+6174t^{3}+762t^{4}+18t^{5}$ & $t+1596t^{2}+2737t^{3}+512t^{4}+16t^{5}$ \\
\bottomrule
\end{tabular}
\caption{The distribution of peak-nestings over unit interval graphs and connected unit interval graphs with $n$ vertices, $0 \leq n \leq 10$.}
\label{tab:peak-nesting-polynomial}
\end{table}

Observe that the polynomials $\Pnest_n(t)$ and $\Pnest_n^c(t)$ are not always real-rooted. For example,
\[
    \Pnest_{17}(t) = 65536t + 21457825t^2 + 63739103t^3 + 35640450t^4 + 7907108t^5 + 799799t^6 + 34475t^7 + 493t^8 + t^9
\]
and
\[
    \Pnest^c_{16}(t) = t + 514228t^2 + 4543140t^3 + 3646720t^4 + 899000t^5 + 88536t^6 + 3192t^7 + 28t^8
\]
are not real-rooted polynomials. However, using Mathematica \cite{Mathematica}, we verified that the coefficients of $\Pnest_n(t)$ and $\Pnest_n^c(t)$ form ultra log-concave sequences for $n \leq 200$, from which we conjecture the following:

\begin{conjecture}
\label{conj:pnest-ulc}
    The coefficients of $\Pnest_n(t)$ and $\Pnest_n^c(t)$ form ultra log-concave sequences for all $n \in \mathbb{N}$.
\end{conjecture}

However, the coefficientwise formula obtained for $\Pnest_n(t)$ and $\Pnest_n^c(t)$ leads to simple formulas for some of its coefficients, from which we deduce the log-concavity of the upper-half coefficients of $\Pnest_n(t)$ and $\Pnest_n^c(t)$:

\begin{corollary}
\label{prop:pnest-1-and-2}
    For all $n \geq 1$, we have
    \begin{enumerate}[(i)]
        \item $[t] \Pnest_n(t) = 2^{n-1}$ and $[t] \Pnest_n^c(t) = 1$;
        \item $[t^2] \Pnest_n(t) = S(n,3)$, the Stirling numbers of the second kind \cite[\oeis{A000392}]{OEIS}, and $[t^2] \Pnest_n^c(t) = F_{2n-3} - 1$, where $F_k$ is the $k$th Fibonacci number \cite[\oeis{A000045}]{OEIS};
        \item $[t^i] \Pnest_n(t) = \dfrac{4(2i+1)[8i(i+1)-3(n+1)](2n+1)!}{(n-2i+1)! (n+2i+3)!}$  if $i \geq \lceil (n+2)/4 \rceil$. In particular, for $n \geq 1$,
        \[
            \left[ t^{\lceil \frac{n}{2} \rceil} \right] \Pnest_n(t) = 
            \begin{cases}
                1 &\mbox{ if } n \mbox{ is odd}, \\
                2(n-1) &\mbox{ if } n \mbox{ is even};
            \end{cases}
        \]
        \item $[t^i] \Pnest_n^c(t) = \dfrac{8i [8i^2 (2n-1)- (6n^2 + n -2)](2n-2)!}{(n-2i+1)! (n+2i+1)!}$ if $i > \lfloor (n+2)/4 \rfloor$. In particular, for $n \geq 1$,
        \[
            \left[ t^{\lceil \frac{n}{2} \rceil} \right] \Pnest_n^c(t) = 
            \begin{cases}
                1 &\mbox{ if } n \mbox{ is odd}, \\
                2(n-2) &\mbox{ if } n \mbox{ is even}.
            \end{cases}
        \]        
    \end{enumerate}
    Hence,
    \[
        \left( [t^i] \Pnest_n(t) \right)^2 \geq [t^{i-1}] \Pnest_n(t) \cdot [t^{i+1}] \Pnest_n(t) \quad \mbox{if } \lceil (n+2)/4 \rceil < i \leq \lceil n/2 \rceil
    \]
    and
    \[
        \left( [t^i] \Pnest_n^c(t) \right)^2 \geq [t^{i-1}] \Pnest_n^c(t) \cdot [t^{i+1}] \Pnest_n^c(t) \quad \mbox{if } \lfloor (n+2)/4 \rfloor + 1 < i \leq \lceil n/2 \rceil.
    \]
\end{corollary}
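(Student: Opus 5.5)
The plan is to read every coefficient off Corollary~\ref{ref:pnest-formulas}:
\[
[t^i]\Pnest_n(t)=H(n,2i-1)+H(n,2i), \qquad [t^i]\Pnest_n^c(t)=H(n-1,2i-2)+H(n-1,2i-1).
\]
Each item then becomes a statement about the numbers $H(m,h)$ of Dyck paths of semilength $m$ and height exactly $h$. These I would handle with the classical reflection (ballot) formula for paths of height at most $h$, namely
\[
\sum_{j\le h}H(m,j)=\sum_{k\in\mathbb{Z}}\left[\binom{2m}{m-k(h+2)}-\binom{2m}{m+1+k(h+2)}\right],
\]
together with the transfer-matrix or continued-fraction description via Fibonacci/Chebyshev polynomials.

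For (i), I would use $H(n,1)=1$ and $H(n,2)=2^{n-1}-1$. The latter holds because paths of height $\le 2$ are counted by $2^{n-1}$, being sequences of primitive blocks $U(UD)^kD$. This gives $2^{n-1}$. In the connected case we get $H(n-1,0)+H(n-1,1)=1$, since $H(n-1,0)=0$ for $n\ge2$ and the convention $H(0,0)=1$ covers $n=1$.

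For (ii), I need $H(n,3)+H(n,4)$ and $H(n-1,2)+H(n-1,3)$. I would compute the generating functions of paths of height $\le h$ from the finite continued fraction $1/(1-x/(1-x/\cdots))$, which is a ratio of Fibonacci-type polynomials. Height $\le 4$ gives $(1-3x+x^2)/((1-x)(1-4x+3x^2))$, whose coefficients are $(3^{n-1}+1)/2$. Subtracting $2^{n-1}$ yields $(3^{n-1}-2^n+1)/2=S(n,3)$. Height $\le 3$ gives $F_{2n-1}$, since $(1-2x)/(1-3x+x^2)$ produces odd-indexed Fibonacci numbers. Therefore $H(n-1,2)+H(n-1,3)=F_{2n-3}-2^{n-2}+(2^{n-2}-1)$. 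I would double-check the indexing against Table~\ref{tab:peak-nesting-polynomial}: $n=5$ gives $F_7-1=12$.

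For (iii) and (iv), I would use the observation that when $h+2>m$ the reflection sum for paths of height at most $h$ keeps only the terms $k=0,\pm1$. Then $H(m,h)$ is a short alternating sum of four or five binomials. The condition $i\ge\lceil (n+2)/4\rceil$ is exactly what makes the needed heights $2i-2$ and $2i$ satisfy this truncation, so the $k=\pm2$ terms vanish. Summing the two consecutive heights telescopes the $k=0$ term. One is left with a combination of $\binom{2n}{n\pm(2i+1)}$ and $\binom{2n}{n\pm(2i+2)}$-type terms, plus a second-difference of binomials. After putting everything over $(n-2i+1)!(n+2i+3)!$, this should collapse to the stated rational expression. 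The special cases follow by substituting $i=\lceil n/2\rceil$: for odd $n$ only the path $U^nD^n$ survives, and for even $n$ we get $H(n,n-1)+H(n,n)=2(n-1)+1$ minus the appropriate term. I would verify the connected analogue against the table ($n=8$, $i=4$ gives $12$).

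The main obstacle is the algebraic simplification in (iii)/(iv) together with pinning down the exact threshold where the truncation is valid. I would do it by factoring out $(2n)!/((n-2i+1)!(n+2i+3)!)$ and expanding the remaining polynomial in $n$ and $i$. Finally, the log-concavity claims follow from (iii)/(iv): in the range where $i-1$, $i$ and $i+1$ all satisfy the closed form, the ratio $c_i^2/(c_{i-1}c_{i+1})$ is an explicit rational function. The factorial parts contribute a product already $\ge1$, by log-concavity of $1/((n-2i+1)!(n+2i+3)!)$ in $i$. It remains to show that the polynomial factors $(2i+1)(8i(i+1)-3(n+1))$ are log-concave in $i$ for $i$ above the threshold, where they are positive. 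This reduces to a direct polynomial inequality, and at the top index a zero coefficient $c_{i+1}$ makes the inequality trivial.
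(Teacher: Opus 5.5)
Your plan is correct, and it derives what the paper mostly quotes. The paper proves (i) by an induction that inserts a $UD$ into height-$\le 2$ paths. In (ii) it takes the partial sums $(1+3^{n-1})/2$ and $F_{2n-3}$ from OEIS (A124302, A001519). For (iii)/(iv) it quotes from A080936 the closed form $H(n,k)=\frac{2(2k+3)(2k^2+6k+1-3n)(2n)!}{(n-k)!(n+k+3)!}$, valid for $k\ge\lfloor (n+1)/2\rfloor$, and adds it over two consecutive heights. Its log-concavity step is the same Hadamard-product argument you sketch.

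Your block decomposition, finite continued fractions and truncated reflection sum actually derive these inputs, and they explain the thresholds. At height $h$ and semilength $m$, the $|k|\ge 2$ reflection terms all vanish exactly when $2(h+2)>m+1$. For $h=2i-2$, $m=n$ this reads $4i>n+1\iff i\ge\lceil (n+2)/4\rceil$. For $h=2i-3$, $m=n-1$ it reads $4i>n+2\iff i>\lfloor (n+2)/4\rfloor$.

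The cost is the simplification you left open, but it does go through. The $C_n$ terms and the $\binom{2n}{n-2i-1}$ terms cancel, leaving
\[
H(n,2i-1)+H(n,2i)=\binom{2n}{n-2i+1}-2\binom{2n}{n-2i}+2\binom{2n}{n-2i-2}-\binom{2n}{n-2i-3}.
\]
After factoring out $(2n)!/\bigl((n-2i+1)!\,(n+2i+3)!\bigr)$, the remaining quartic equals $4(2i+1)[8i(i+1)-3(n+1)](2n+1)$. The same four-term expression with $2n$ replaced by $2n-2$ gives (iv).

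Some slips to repair:
\begin{itemize}
\item The truncation condition you state, $h+2>m$, is much too strong. For $h=2i-2$ it would require $2i>n$, which covers only the top coefficient for odd $n$. The correct condition is $2(h+2)>m+1$, which is what your remark about the $k=\pm2$ terms actually needs.
\item The height-$\le4$ generating function is $(1-3x+x^2)/(1-4x+3x^2)=(1-3x+x^2)/((1-x)(1-3x))$. With the extra factor $1-x$ you wrote, the coefficients would not be $(3^{n-1}+1)/2$.
\item For even $n$, $H(n,n-1)=2n-3$ and $H(n,n)=1$, so the top coefficient is directly $2(n-1)$ with nothing to subtract. Equivalently, $i=n/2$ in (iii) gives $2(2n^2+n-3)/(2n+3)=2(n-1)$.
\item The connected log-concavity needs the analogous, equally routine, check on the factors $i$ and $8i^2(2n-1)-(6n^2+n-2)$.
\end{itemize}
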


\begin{proof}
    \leavevmode
    \begin{enumerate}[(i)]
        \item It is clear that there exists only one Dyck path of semilength $n-1$ with height $1$, which is $(UD) \cdots (UD)$, from which follows that $[t] \Pnest_n^c(t) = 1$. To prove that $[t] \Pnest_n(t) = 2^{n-1}$ for all $n \geq 1$, we use induction on $n$: for $n = 1$, there exists only one Dyck path in $\DP(1)$, $UD$, and its height is $1$, so $[t] \Pnest_1(t) = 1$. Assume that there exist $2^{n-2}$ Dyck paths on $\DP(n-1)$ with height at most $2$. We construct two different Dyck paths on $\DP(n)$ with height at most $2$ for a Dyck path $P \in \DP(n-1)$ as follows: either we add $UD$ after $P$, or we add $UD$ immediately to the left of the last step of $P$, which is a $D$ step. This operation clearly gives all the Dyck paths of semilength $n$ with hight at most $2$, so there are $2 \cdot 2^{n-2}$ many of them;
        \item By \cite[\oeis{A124302}]{OEIS},
        \[
            H(n,1) + H(n,2) + H(n,3) + H(n,4) = \frac{1+3^{n-1}}{2}.
        \]
        Hence,
        \[
            [t^2] \Pnest_n(t) = \frac{1+3^{n-1}}{2} - [t] \Pnest_n(t) = \frac{1+3^{n-1}}{2} - 2^{n-1} = S(n,3),
        \]
        where the last equality follows from \cite[\oeis{A000392}]{OEIS}. On the other hand, $H(n-1, 1) + H(n-1, 2) + H(n-1, 3) = F_{2n-3}$ by \cite[\oeis{A001519}]{OEIS}, so $[t^2] \Pnest_n^c(t) = F_{2n-3} - 1$;
        \item By \cite[\oeis{A080936}]{OEIS}, if $k \geq \lfloor (n+1)/2 \rfloor$, then
        \[
            H(n,k) = \frac{2(2k+3)(2k^2+6k+1-3n)(2n)!}{(n-k)! (n+k+3)!},
        \]
        from which the formulas for $[t^i] \Pnest_n(t)$ and $[t^i] \Pnest_n^c(t)$ follow.
    \end{enumerate}
    To prove the log-concavity of the upper half coefficients, we observe that the sequences
    \[
        \{ 2i+1 \}_i, \quad \{8i(i+1)-3(n+1)\}_i \quad \mbox{and} \quad \left\{ \frac{1}{(n-2i+1)! (n+2i+3)!} \right\}_i
    \]
    are log-concave if $\lceil (n+2)/4 \rceil < i \leq \lceil n/2 \rceil$, just like the sequences
    \[
        \{i\}_i, \quad \{ 8i^2 (2n-1)- (6n^2 + n -2) \}_i \quad \mbox{and} \quad \left\{ \frac{1}{(n-2i+1)! (n+2i+1)!} \right\}_i .
    \]
    $\lfloor (n+2)/4 \rfloor +1 < i \leq \lceil n/2 \rceil$. Since the Hadamard product of log-concave sequences is log-concave (see, e.g., \cite{Branden2015}), the claim follows.
\end{proof}

We finish this section with the following proposition, where we find interesting recursions for the generating functions of unit interval graphs with bounded peak-nesting numbers:

\begin{proposition}
    Let $C_k(z)$ and $D_k(z)$ be the generating functions for the number of connected unit interval graphs and the number of unit interval graphs on $[n]$ with peak-nesting at most $k$, respectively. By definition, we let
    \[
        C_1(z) = \sum_{n \geq 0} z^{n+1} = \frac{z}{1-z} \quad \mbox{and} \quad D_1(z) = 1 + \sum_{n \geq 1} 2^{n-1} z^n = \frac{1}{1-C_1(z)}.
    \]
    In general, we have the recursions
    \begin{numcases}{}
        C_k(z) = \frac{z}{1-z D_{k-1}(z)} &\mbox{if } $k \geq 2$, \label{eq:Ck-recursion}\\
        D_k(z) = \frac{1}{1-C_{k}(z)} &\mbox{if } $k \geq 1$.  \label{eq:Dk-recursion}
    \end{numcases}
    Taking the limit $k \to \infty$, we recover the continued fraction identity \cite[Prop. 5]{Flajolet2006} for Catalan numbers:
    \begin{equation}
    \label{eq:continued-fraction}
        \sum_{n \geq 0} \frac{1}{n+1}\binom{2n}{n} z^n  = \dfrac{1}{ 1 - \dfrac{z}{1 - \dfrac{z}{1 -  \dfrac{z}{\ddots}}}}. 
    \end{equation}
\end{proposition}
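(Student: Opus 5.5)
We transfer the problem to bicolored Motzkin paths via Theorem \ref{th:bij-dyck-decorated-motzkin} and Lemma \ref{lem:pnest=height}. A unit interval graph on $[n]$ with peak-nesting at most $k$ corresponds to a bicolored Motzkin path of length $n-1$ and height at most $k-1$. As in the proof of Corollary \ref{ref:pnest-formulas}, the graph is connected exactly when the path has no $H^*$ step at height $0$. So $C_k(z)$ counts paths of height at most $k-1$ with no ground-level $H^*$, weighted by $z^{\text{length}+1}$, and $D_k(z)$ counts all paths of height at most $k-1$ in the same way, together with the constant term $1$ for $n=0$.

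\textbf{Equation \eqref{eq:Dk-recursion}.} Cut a bicolored Motzkin path at each $H^*$ step at height $0$. Equivalently, a graph splits uniquely into its connected components, which are consecutive vertex blocks. Each piece is a path with no ground-level $H^*$, and a piece of length $m$ contributes $z^{m+1}$. The $r-1$ cutting steps $H^*$ account for the $r-1$ extra vertices in the weight, so a concatenation of $r\ge 0$ pieces has total weight equal to the product of the $C_k$-weights. Hence $D_k=\sum_{r\ge0}C_k^r=1/(1-C_k)$. Each component has peak-nesting at most $k$ exactly when the whole graph does, so the height bound is preserved. For $k=1$ this matches the stated $D_1$, since $C_1=z/(1-z)$ counts the paths $H^m$.

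\textbf{Equation \eqref{eq:Ck-recursion}, $k\ge2$.} Decompose a connected path $M$ of height at most $k-1$ by its ground-level steps. Each such step is either $H$ or a primitive excursion $U\,N\,D$, where $N$ is an arbitrary bicolored Motzkin path of height at most $k-2$. Crucially, the $H^*$ steps inside $N$ lie at positive height in $M$, so they are unrestricted. Let $N$ have length $m$, so its $D_{k-1}$-weight is $z^{m+1}$. The excursion $UND$ has length $m+2$ and adds $m+2$ to the vertex count, so its weight is $z^{m+2}=z\cdot z^{m+1}$. Likewise $H$ has weight $z$. Thus each ground-level block has generating function $z+z(D_{k-1}-1)=zD_{k-1}$, where the $-1$ removes the $n=0$ term. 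The extra factor $z$ for the vertex $1$ then gives $C_k=z\sum_{r\ge0}(zD_{k-1})^r=z/(1-zD_{k-1})$. Alternatively, $z+z(D_{k-1}-1)$ can be written directly; the main point to check carefully is exactly this bookkeeping of the constant term of $D_{k-1}$ and the off-by-one between length and vertices. I expect this to be the only real obstacle.

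\textbf{The limit.} The coefficient of $z^n$ in $D_k$ stabilizes at $C_n$ once $k>n$, because peak-nesting never exceeds $n$. Hence $D_k\to\sum_n C_nz^n$ coefficientwise. Substituting \eqref{eq:Ck-recursion} into \eqref{eq:Dk-recursion} gives $D_k=1/(1-z/(1-zD_{k-1}))$. Iterating this expresses $D_k$ as a truncated continued fraction in which each level contributes a factor $z$, so truncations agree with the infinite fraction up to order roughly $z^{k}$. Passing to the limit yields \eqref{eq:continued-fraction}.
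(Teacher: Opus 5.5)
Your proposal is correct. For the two recursions it follows the paper's argument. The paper also gets $D_k=1/(1-C_k)$ from the ordered decomposition into connected components. It gets $C_k=z/(1-zD_{k-1})$ by taking bicolored Motzkin sequences of graphs with peak-nesting at most $k-1$, adding $1$ to every entry, appending a $1$, concatenating, and prepending a $1$. This is exactly your ground-level decomposition: the empty graph gives your $H$ block and a nonempty graph gives $U N D$.

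You differ from the paper in the limit step. The paper passes to $C=\lim C_k$ and $D=\lim D_k$ inside the recursions and eliminates $D$ to get $C=z+C^2$. It then picks the root with $C(0)=0$, giving $C=z\,\mathcal C(z)$, and concludes $D=1/(1-z\,\mathcal C(z))$. The continued fraction itself is quoted as the known expansion of $\mathcal C(z)$, which the paper writes into its definition of $\mathcal C$.

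You instead use two facts. First, $[z^n]D_k$ stabilizes at the Catalan number because peak-nesting is bounded. Second, iterating $D_k=1/(1-z/(1-zD_{k-1}))$ from $D_1$ makes $D_k$ literally a convergent of the continued fraction. Your route therefore proves the continued-fraction expansion rather than presupposing it. It also avoids justifying the passage of the coefficientwise limit through the recursions and the choice of square-root branch, and it explains why the fraction appears. The paper's route additionally gives the closed form $\lim_k C_k=(1-\sqrt{1-4z})/2$ directly, which you would recover only afterwards from $C=1-1/D$.

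To make your ``roughly $z^k$'' precise, set $F_0=1$ and $F_h=1/(1-zF_{h-1})$. Then $D_k=F_{2k}$ exactly, and $F_h\equiv F_{h+1}\pmod{z^{h+1}}$, so $D_k$ agrees with the infinite fraction modulo $z^{2k+1}$. This is consistent with Corollary \ref{ref:pnest-formulas}, since $F_{2k}$ counts Dyck paths of height at most $2k$.
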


\begin{proof}
    First, we prove \eqref{eq:Dk-recursion}. Since any non-empty unit interval graph with peak-nesting at most $k$ is an ordered disjoint union of connected unit interval graphs with peak-nesting number at most $k$, $[C_k(z)]^n$ is the generating function for the set of unit interval graphs with peak-nesting at most $k$ and $n$ connected components. Hence,
    \[
        D_k(z) = 1 + C_k(z) + [C_k(z)]^2 + [C_k(z)]^3 + \dotsb = \frac{1}{1 - C_k(z)},
    \]
    where the constant term corresponds to the empty graph, so \eqref{eq:Dk-recursion} holds.

    Now, we prove \eqref{eq:Ck-recursion}. First, observe that a unit interval graph $\Gamma_\avec$ is connected if and only if no entry of its associated bicolored Motzkin sequence is $1^*$. We use unit interval graphs with peak-nesting at most $k-1$ to construct connected unit interval graphs on $[n]$ with peak-nesting at most $k$ as follows: let $\Gamma_{\avec_1}, \ldots, \Gamma_{\avec_m}$ be a collection of unit interval graphs whose associated bicolored Motzkin sequences $\wvec_{\avec_1}^*, \ldots, \wvec_{\avec_m}^*$ are such that $\pnest(\avec_i) \leq k-1$ for all $i \in [m]$.

    For each $i \in [m]$, let $\tilde{\wvec}_{\avec_i}$ be the vector obtained by appending $1$ on $\wvec_{\avec_i}$, and by adding $1$ to each entry of $\wvec_{\avec_i}$. Moreover, let $\tilde{\wvec}_{\avec_i}^*$ be the bicolored vector obtained from $\tilde{\wvec}_{\avec_i}$ by adding $*$'s at the correspondent entries of $\wvec_{\avec_i}^*$ that have  $*$'s. For example, if $\wvec_{\avec_i}^* = (1,2^*,2,3^*,3,2,1^*,1)$, then $\wvec_{\avec_i} = (1,2,2,3,3,2,1,1)$, $\tilde{\wvec}_{\avec_i} = (2,3,3,4,4,3,2,2,1)$ and $\tilde{\wvec}_{\avec_i}^* = (2,3^*,3,4^*,4,3,2^*,2,1)$. Now, concatenate $\tilde{\wvec}_{\avec_1}^*, \ldots, \tilde{\wvec}_{\avec_m}^*$ and prepend a $1$ in the sequence obtained. By definition, this is a bicolored Motzkin sequence with peak-nesting at most $k$ where no entry equals to $1^*$, hence it is the bicolored Motzkin sequence associated to a connected unit interval graph with peak-nesting at most $k$. Clearly, any such graph can be constructed in this way, from which follows that
    \[
        C_k(z) = z \cdot \left( 1 + z D_{k-1}(z) + [z D_{k-1}(z)]^2 + [z D_{k-1}(z)]^3 + \dotsb  \right) = \frac{z}{1 - z D_{k-1}(z)}
    \]
    and verifies \eqref{eq:Ck-recursion}.

    Finally, we prove \eqref{eq:continued-fraction}. Let
    \[
        \mathcal{C}(z) = \frac{1 - \sqrt{1 - 4z}}{2z} = \sum_{n \geq 0} \frac{1}{n+1}\binom{2n}{n} z^n = \dfrac{1}{ 1 - \dfrac{z}{1 - \dfrac{z}{1 -  \dfrac{z}{\ddots}}}} .
    \]
    We will prove that
    \[
        \lim_{k \to \infty} [ D_k(z) ] = \mathcal{C}(z).
    \]
    First, set
    \[
        C(z) \coloneqq \lim_{k \to \infty} [ C_k(z) ] \quad \mbox{and} \quad D(z) \coloneqq \lim_{k \to \infty} [ D_k(z) ] .
    \]
    By \eqref{eq:Ck-recursion} and \eqref{eq:Dk-recursion},
    \begin{equation}
    \label{eq:lim-k}
        C(z) = \frac{z}{1 - z D(z)} \quad \mbox{and} \quad D(z) = \frac{1}{1 - C(z)} ,
    \end{equation}
    which implies
    \[
        C(z) = \frac{z}{1 - \frac{z}{1 - C(z)}} .
    \]
    Hence,
    \begin{equation}
    \label{eq:pol-C}
        C(z) = z + [C(z)]^2 .
    \end{equation}
    Observe that, by \eqref{eq:lim-k}, $C(0) = 0$. Thus, \eqref{eq:pol-C} implies
    \[
        C(z) = \frac{1 - \sqrt{1 - 4z}}{2} = z \cdot \mathcal{C}(z),
    \]
    from which follows that
    \[
        D(z) = \frac{1}{1 - z \cdot \mathcal{C}(z)} = \dfrac{1}{ 1 - \dfrac{z}{1 - \dfrac{z}{1 -  \dfrac{z}{\ddots}}}} .
    \]
\end{proof}

\subsection{The peak polynomial of unit interval graphs}
\label{sec:peak-all}
Recall that, given a unit interval graph $\Gamma_\avec$, its peak-number $\peak(\avec)$ is defined as the number of peak-cliques of $\Gamma_\avec$. The \defin{peak polynomial on $[n]$} is defined as
\[
    \Peak_n(t) \coloneqq \sum_{\avec \in \mathcal{DP}(n)} t^{\peak(\avec)}.
\]
It is known that $\Peak_n(t) = N_n(t)$, the $n$th Narayana polynomial, whose closed formula is
\[
    N_n(t) = \sum_{i=1}^n \frac{1}{n} \binom{n}{i} \binom{n}{i-1} t^i = t \cdot \leftidx{_2}F_1 ( 1-n, -n; 2; t )
\]
see \cite[\oeis{A001263}]{OEIS}, where $\leftidx{_2}F_1 (a, b, c; t)$ is the ordinary hypergeometric function.

The Narayana polynomials are known to be real-rooted, and $N_n(t)/t$ are gamma-nonnegative, with decomposition
\[
    \frac{N_n(t)}{t} = \sum_{0 \leq 2i \leq n} M(n,i) \cdot t^i (1+t)^{n-1-2i},
\]
where $M(n,i)$ is the number of Motzkin paths of length $n-1$ with $i$ $U$ steps by Corollary \ref{cor:peak-and-pnest}, see \cite[\oeis{A055151}]{OEIS}.

We also define
\[
    \Peak_n^c(t) \coloneqq \sum_{\substack{\avec \in \DP(n) \\ \Gamma_\avec \mbox{ \scriptsize is connected}}} t^{\peak(\avec)},
\]
and since connected unit interval graphs on $[n]$ are in bijection with Dyck paths of the form $UPD$, where $P \in \DP(n-1)$, it follows that $\Peak_n^c(t) = N_{n-1}(t)$ for $n \geq 2$, from which their real-rootedness and gamma-nonnegativity follow.

\section{Abelian unit interval graphs}
\label{sec:abelian}

In this section, we study Abelian area sequences, and the peak-nesting and peak polynomials associated to it. An area sequence $\avec$ of size $n$ (and its associated unit interval graph $\Gamma_\avec$) is called \defin{Abelian} if every vertex $i$ of $\Gamma_\avec$, $i \neq 1, n$, is adjacent to at least one of the vertices $1$ or $n$. Its associated Dyck path is called an \defin{Abelian Dyck path}. For example, Figure \ref{fig:all-abelians4} shows all the Abelian area sequences of size $4$. We denote the set of Abelian area sequences of size $n$ (or Abelian Dyck paths of semilength $n$) by $\abelian{n}$.

\begin{figure}[H]
    \begin{tabular}{*{4}{c}}
        \dyckDiagram[0.03125\textwidth]{0,1,2,0} &
        \dyckDiagram[0.03125\textwidth]{0,1,0,1} &
        \dyckDiagram[0.03125\textwidth]{0,0,1,2} &
        \dyckDiagram[0.03125\textwidth]{0,1,1,1} \\
        \dyckDiagram[0.03125\textwidth]{0,1,2,1} &
        \dyckDiagram[0.03125\textwidth]{0,1,1,2} &
        \dyckDiagram[0.03125\textwidth]{0,1,2,2} &
        \dyckDiagram[0.03125\textwidth]{0,1,2,3} \\
    \end{tabular}
    \caption{All Abelian area sequences of size $4$.}
    \label{fig:all-abelians4}
\end{figure}

The following lemma follows directly from the definition of Abelian area sequences:

\begin{lemma}
\label{charac-abelian}
    An area sequence $\avec$ of size $n$ is Abelian if and only if there exists $k \in \{1, \ldots, n\}$ such that $\{1, \ldots, k\}$ and $\{k+1, \ldots, n\}$ are cliques of $\Gamma_\avec$.
\end{lemma}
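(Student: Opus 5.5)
The plan is to prove both directions directly from the definition of $\Gamma_\avec$ together with Lemma~\ref{lem:uig-clique}. The single fact I will use throughout is the following. If $j < i$ and $ji$ is an edge, then by Lemma~\ref{lem:uig-clique} the whole interval $\{j,\dots,i\}$ is a clique. Consequently:
\begin{itemize}
\item the neighbours of $1$ form an initial interval $\{1,\dots,k\}$ (together with $1$ itself);
\item the neighbours of $n$ form a final interval $\{m,\dots,n\}$.
\end{itemize}

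For the forward direction, assume $\avec$ is Abelian. Let $k$ be the largest vertex adjacent to $1$, with $k=1$ if $1$ is isolated. Then $\{1,\dots,k\}$ is a clique by Lemma~\ref{lem:uig-clique}, so it remains to show that $\{k+1,\dots,n\}$ is a clique.
\begin{itemize}
\item Every vertex $i$ with $k<i<n$ is not adjacent to $1$, so by the Abelian hypothesis it is adjacent to $n$.
\item Hence $k+1$ is adjacent to $n$ (or $k+1=n$), and Lemma~\ref{lem:uig-clique} gives that $\{k+1,\dots,n\}$ is a clique.
\end{itemize}
If $k=n$, the second set is empty and the statement still holds, taking an empty set to be trivially a clique. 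If instead we want $k<n$, we can take $k=n-1$, since $\{n\}$ is a clique by the paper's convention that an isolated vertex, or any single vertex, is a clique.

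For the converse, assume $\{1,\dots,k\}$ and $\{k+1,\dots,n\}$ are cliques, and take $i\neq 1,n$.
\begin{itemize}
\item If $i\le k$, then $i$ and $1$ lie in the clique $\{1,\dots,k\}$, so $i$ is adjacent to $1$.
\item If $i\ge k+1$, then $i$ and $n$ lie in the clique $\{k+1,\dots,n\}$, so $i$ is adjacent to $n$.
\end{itemize}
Thus $\avec$ is Abelian.

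The only delicate point is bookkeeping of the edge cases: $1$ isolated, $k=n$, and singleton cliques. I expect to handle these by the convention that a single vertex is a clique. No real obstacle is expected; the key input is just Lemma~\ref{lem:uig-clique}.
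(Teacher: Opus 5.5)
Your proof is correct and follows the same route as the paper, which also takes $k$ to be the largest neighbour of $1$ (or $k=1$ if $1$ is isolated) and calls the converse direction clear. You make explicit, via Lemma~\ref{lem:uig-clique}, the steps the paper leaves to the reader, including the edge cases $k=1$ and $k=n$.
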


\begin{proof}
    The sufficiency of the claim is clear, therefore we only need to prove its necessity. In fact, let
    \[
        k = \max \left\{ \{ i \in [n] \colon \{1,i\} \mbox{ is an edge of } \Gamma_\avec \} \cup \{1\} \right\} .
    \]
    By definition of $k$ and by definition of Abelian area sequences, the claim follows.
\end{proof}

By Lemma \ref{charac-abelian}, an Abelian Dyck path is a Dyck path such that, for some $k \in \{1, \ldots, n\}$, its first $k$ steps are $N$ steps, and its $(k+1)$th step and its last $n-k$ steps are $E$ steps. Hence there is an $NE$ lattice path from $(0,k)$ to $(k,n)$ whose first step is an $E$ step. Hence, an Abelian Dyck path can be uniquely represented by the pair $(k, L)$, where $k \in \{1, \ldots, n\}$ is as given in Lemma \ref{charac-abelian}, and $L$ is an $NE$ lattice path from $(0,k)$ to $(k,n)$ whose first step is an $E$ step.

\begin{proposition}
\label{prop-number-abelian}
    The number of Abelian unit interval graphs on $[n]$ is $2^{n-1}$. The number of connected Abelian unit interval graphs on $[n]$ is $2^{n-1} - (n-1)$.
\end{proposition}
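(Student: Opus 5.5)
We plan to parametrize Abelian area sequences directly via Lemma~\ref{charac-abelian}, rather than through the pair $(k,L)$, since the choice of $k$ need not be unique. The idea is to take $k$ as in the proof of that lemma, namely $k=\max\{i : \{1,i\}\text{ is an edge}\}\cup\{1\}$. Then $a_i=i-1$ for $i\le k$, and $a_{k+1}\le k-1$ because $1$ is not adjacent to $k+1$. The remaining condition is that $\{k+1,\dots,n\}$ is a clique, i.e.\ $a_n\ge n-k-1$. Together with $a_{i}\le a_{i-1}+1$, this forces $a_i\ge i-k-1$ for all $i>k$.

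Next we encode the row lengths beyond $k$. Setting $b_i=a_{k+i}-(i-1)$ for $i=1,\dots,n-k$, the area conditions become
\[
0\le b_{n-k}\le b_{n-k-1}\le\cdots\le b_1\le k-1,
\]
where $b_1\le k-1$ comes from the choice of $k$. Monotonicity follows from $a_{i}\le a_{i-1}+1$, and the lower bound from the clique condition. Every such weakly decreasing sequence gives a valid Abelian area sequence with this $k$, since $a_i<i$ is automatic. Hence the number of Abelian area sequences with parameter $k$ equals the number of weakly decreasing sequences of length $n-k$ with values in $\{0,\dots,k-1\}$, which is $\binom{n-1}{n-k}$. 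For $k=n$ the sequence is empty and the count is $1$. Summing over $k=1,\dots,n$ gives $\sum_k\binom{n-1}{k-1}=2^{n-1}$.

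For the connected count we subtract the disconnected Abelian graphs. The graph $\Gamma_\avec$ is connected iff $a_{k+1}\ge1$, i.e.\ $b_1\ge1$ (for $k<n$). So the disconnected Abelian graphs with parameter $k<n$ are exactly those with $a_{k+1}=0$. Since $b_1=0$ forces $b_i=0$ for all $i$, there is exactly one such sequence for each $k$. Every $k=n$ graph is connected, except when $n=1$, which we check separately. This removes $n-1$ graphs, giving $2^{n-1}-(n-1)$.

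The main obstacle is the translation of the two clique conditions into the clean inequalities on $b_i$, in particular checking that $b_1\le k-1$ exactly encodes the maximality of $k$, so that no graph is counted twice. We would verify this against the eight sequences in Figure~\ref{fig:all-abelians4}, where $n=4$ gives $1+3+3+1$.
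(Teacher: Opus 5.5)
Your proof is correct and is essentially the paper's argument: both fix $k$ as the largest neighbor of $1$, and count the monotone sequences describing the lattice path inside the $k\times(n-k)$ rectangle. You record it by rows ($n-k$ values in $\{0,\dots,k-1\}$, giving $\binom{n-1}{n-k}$), the paper by columns ($0=b_1\le\cdots\le b_k\le n-k$, giving $\binom{n-1}{k-1}$); the paper's requirement that $L$ begin with an $E$ step is exactly your condition $b_1\le k-1$, so $k$ is unique there too, and in both the graph is disconnected exactly when $a_{k+1}=0$. The only quibble is that no exception is needed at $n=1$, since the one-vertex graph is connected (the criterion $a_2,\dots,a_n\ge 1$ is vacuous), in agreement with $2^{0}-0=1$.
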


\begin{proof}
    We start by proving the first part of the proposition. For each $k \in \{1, \ldots, k\}$, we will count the number of Abelian Dyck paths $(k, L)$ of size $n$. Observe that each Abelian Dyck path $(k, L)$ determines uniquely a weakly increasing sequence $0 = b_1 \leq b_2 \leq \cdots \leq b_k \leq n-k$, where $b_i$ is the number of boxes under the $i$th $E$ step of $L$ and above the horizontal line $y=k$, see Figure \ref{fig:Abelian}.

    \begin{figure}[H]
        \begin{tabular}{*{2}{c}}
            \AbelianDyckDiagram[0.03125\textwidth]{5}{3}{0,1,1} &
            \AbelianDyckDiagram[0.03125\textwidth]{5}{3}{0,1,2}
        \end{tabular}
        \caption{The Abelian unit interval graphs on $[5]$ given by $(3; 0, 1, 1)$, on the left, and $(3; 0, 1, 2)$, on the right.}
        \label{fig:Abelian}
    \end{figure}

    Let $x_1 \coloneqq 0$ and $x_i \coloneqq b_i - b_{i-1}$ for each $i = 2, \ldots, k$. Observe that
    \begin{equation*}
        b_k = x_2 + \ldots + x_k \leq n-k.
    \end{equation*}
    Hence, the number of weakly increasing sequences $0 = b_1 \leq b_2 \leq \cdots \leq b_k \leq n-k$ equals the number of integer solutions of the inequality above, which corresponds to the number of integer solutions of
    \begin{equation}
    \label{eq:number-abelian}
        x_2 + \ldots + x_k + x_{k+1} = n-k, \quad x_2, \ldots, x_{k+1} \geq 0,
    \end{equation}
    which is $\binom{n-1}{k-1}$. Summing over all $k \in [n]$, the claim follows.

    From the proof of the first part, it follows that each Abelian Dyck path of semilength $n$ can be uniquely represented by the tuple $(k; 0, b_2, \ldots, b_k)$, where $0 \leq b_2 \leq b_3 \leq \cdots \leq b_k \leq n-k$. So, for the second part, observe that the Abelian unit interval graph determined by $(k; 0, b_2, \ldots, b_k)$ is disconnected if and only if $k \neq n$ and $b_2 = \ldots b_k = 0$. Hence, there are $n-1$ disconnected Abelian unit interval graphs with $n$ vertices, from which the second part of the proposition follows.
\end{proof}

\subsection{The peak-nesting polynomial of Abelian unit interval graphs}

Set $\Pnest_{\mathcal{A}(0)}(t) \coloneqq 1$ and $\Pnest_{\mathcal{A}(0)}^c(t) \coloneqq 0$, and let
\[
    \Pnest_{\mathcal{A}(n)}(t) \coloneqq \sum_{\avec \in \abelian{n}} t^{\pnest(\avec)} \quad \mbox{and} \quad \Pnest_{\mathcal{A}(n)}^c(t) \coloneqq \sum_{\substack{\avec \in \abelian{n} \\ \Gamma_\avec \mbox{ \scriptsize is connected}}} t^{\pnest(\avec)},
\]
for $n \geq 1$, see Table \ref{tab:pnest-Abelian}. The following theorem gives a new interpretation of \cite[\oeis{A034867}]{OEIS}:

\begin{theorem}
\label{thm:pnest-pol-abelian}
    For $n \in \mathbb{N}$,
    \[
        \Pnest_{\mathcal{A}(n)}(t) = \sum_{i=1}^{\lceil n/2 \rceil} \binom{n}{2i - 1}t^i  = \frac{\sqrt{t}}{2} \left[ (1+\sqrt{t})^n - (1-\sqrt{t})^n \right] .
    \]
\end{theorem}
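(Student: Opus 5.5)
The plan is to use the encoding of Abelian Dyck paths from the proof of Proposition~\ref{prop-number-abelian}, compute the peak-nesting directly from that encoding, and reduce the count to a classical enumeration of binary words by the number of occurrences of a two-letter factor.

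Represent $\avec\in\abelian{n}$ by $(k;0=b_1\le b_2\le\cdots\le b_k\le n-k)$. Then vertex $i\le k$ has largest neighbour $r_i=k+b_i$, and every vertex $i>k$ has $r_i=n$.

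\textbf{Step 1: describe the peak-cliques.} In a unit interval graph with this labeling, the peak-cliques are exactly the intervals $[i,r_i]$ with $r_i>r_{i-1}$ (set $r_0=0$). This is the content of the definition of peak-edges together with Lemma~\ref{lem:uig-clique} and Lemma~\ref{peak-cliques-are-maximal}. Hence the peak-cliques are:
\begin{itemize}
\item one clique $[i,k+b_i]$ for each distinct value among $b_1,\dots,b_k$;
\item in addition, $[k+1,n]$, exactly when $k<n$ and $b_k<n-k$.
\end{itemize}

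\textbf{Step 2: compute $\pnest(\avec)$.} Every clique of the first kind contains vertex $k$, so $\pnest(k)=d$, where $d$ is the number of distinct values among the $b_i$.

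For a vertex $j>k$, the cliques containing it are among the first-kind cliques with $b_i>0$, plus possibly $[k+1,n]$. This gives at most $(d-1)+1=d$. The same bound holds for every other vertex $j<k$ as well, since such a vertex lies in a sub-collection of the cliques containing $k$. Therefore
\[
\pnest(\avec)=d.
\]

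\textbf{Step 3: translate to lattice words.} Encode $L$, after its forced initial $E$ step, as a word $W\in\{E,N\}^{n-1}$. Here $k$ ranges over all values, so $W$ is an arbitrary binary word; this recovers the count $2^{n-1}$. A new distinct $b$-value appears exactly when a run of $N$'s is followed by an $E$. Hence
\[
d=1+\#\{\text{occurrences of the factor } NE \text{ in } W\}.
\]

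\textbf{Step 4: count the words.} The number of binary words of length $m$ with exactly $j$ occurrences of $NE$ is $\binom{m+1}{2j+1}$. To see this, consider the padded word $E\,W\,N$. It has $m+1$ internal gaps, and it switches letter exactly $2j+1$ times, alternating $E\to N$ and $N\to E$ and starting and ending with an $E\to N$ switch. The set of switch gaps determines $W$, and any odd-size set of gaps is realized.

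With $m=n-1$ and $i=j+1$, the coefficient of $t^i$ is therefore $\binom{n}{2i-1}$. The radical form then follows by expanding $(1\pm\sqrt t)^n$: the difference keeps only the odd powers $\sqrt t^{\,2i-1}$, each with factor $2$, and multiplying by $\sqrt t/2$ gives $\sum_i\binom{n}{2i-1}t^i$.

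The main obstacle is Step 2: checking that no vertex $j>k$ exceeds the peak-nesting of vertex $k$, and handling the edge cases $k=n$ (complete graph) and $b_k=n-k$ (no separate clique $[k+1,n]$).
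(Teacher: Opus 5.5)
Your proposal is correct and follows essentially the paper's argument. The shared steps are the $(k;b)$ encoding, the fact (the paper's Lemma~\ref{lemma:pnest(k+1)}) that the peak-nesting is attained at vertex $k$ and equals one plus the number of strict increases among the $b_i$ (equivalently, one plus the number of $NE$ factors of $L$), and the reduction to counting binary words of length $n-1$ by occurrences of a two-letter factor. The differences are minor: you prove the count $\binom{n}{2i-1}$ directly with the switch-gap bijection on $E\,W\,N$ where the paper cites the OEIS, you handle $i=1$ together with $i>1$, and you obtain the radical form by binomial expansion rather than by induction.
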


To prove Theorem~\ref{thm:pnest-pol-abelian}, we use the following lemma:
\begin{lemma}
\label{lemma:pnest(k+1)}
    Let $\avec = (k; 0, b_2, \ldots, b_k) \in \abelian{n}$. Then $\pnest(\avec) = \pnest(k)$.
\end{lemma}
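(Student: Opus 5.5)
We describe the peak-cliques of $\Gamma_\avec$ explicitly in terms of the tuple $(k;0,b_2,\ldots,b_k)$ and then count how many of them contain each vertex. By Lemma \ref{charac-abelian} and the encoding in the proof of Proposition \ref{prop-number-abelian}, vertex $i\le k$ is adjacent to exactly the vertices $1,\ldots,k+b_i$ (with $k+b_i\ge k$), and the neighbourhoods are nested: $k+b_1\le k+b_2\le\cdots\le k+b_k$. The vertices $k+1,\ldots,n$ form a clique. So every edge $ij$ with $i<j$ has either $i\le k$, or both endpoints in $\{k+1,\ldots,n\}$.

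\textbf{Step 1: list the peak-edges.} An edge $ij$ with $i\le k$ and $j=k+b_i$ is a peak-edge exactly when $i=1$ or $b_{i-1}<b_i$; this is just the condition that $\{i-1,j\}$ is not an edge. The only other possible peak-edge is $\{m,n\}$, where $m$ is the smallest vertex $\ge k+1$ not adjacent to $k$. When $b_k=n-k$, every vertex is adjacent to $k$ and no such extra clique occurs. We then check the following.
\begin{itemize}
\item Every peak-clique of the first type is an interval $\{i,\ldots,k+b_i\}$ with $i\le k\le k+b_i$, so it contains $k$.
\item The possible last clique $\{k+b_k+1,\ldots,n\}$ does not contain $k$.
\end{itemize}
So $\pnest(k)$ equals the number of indices $i\le k$ with $i=1$ or $b_{i-1}<b_i$.

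\textbf{Step 2: compare with other vertices.} Take a vertex $v\le k$. A clique of the first type contains $v$ only if its left endpoint $i$ satisfies $i\le v\le k$. Every such clique also contains $k$, so $\pnest(v)\le\pnest(k)$.

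Now take $v>k$. If $v$ lies in the last clique, then $v>k+b_k$. No clique of the first type reaches past $k+b_k$, so $v$ lies in no first-type clique and $\pnest(v)=1\le\pnest(k)$. If $v\le k+b_k$, then $v$ lies only in first-type cliques, each of which contains $k$, so again $\pnest(v)\le\pnest(k)$. Hence $\pnest(\avec)=\max_v\pnest(v)=\pnest(k)$.

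\textbf{Main obstacle.} The delicate point is Step 1: getting the peak-edge description exactly right, including the degenerate cases. These are $k=n$ (a single clique, no extra clique), $b_k=0$ (the graph is disconnected and $\{k+1,\ldots,n\}$ is a separate clique), and peak-edges whose right endpoint equals $k$ itself. In each of these cases we must confirm that no peak-clique meets $\{k+1,\ldots,n\}$ without containing $k$, other than the single last clique. The cleanest way is to verify directly from the peak-edge definition that $\{i,j+1\}$ and $\{i-1,j\}$ are non-edges, using the monotonicity of the $b_i$.
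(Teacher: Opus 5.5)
Your argument breaks at Step~1, where you misidentify the extra peak-clique, and Step~2 then fails for some vertices $v>k$. The conclusion of the lemma is still true.

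\textbf{The extra peak-clique.} Inside the clique $\{k+1,\ldots,n\}$ the only candidate peak-edge is $\{k+1,n\}$. For $j<n$ the pair $\{i,j+1\}$ is an edge, and for $i>k+1$ the pair $\{i-1,n\}$ is an edge. Moreover, $\{k+1,n\}$ is a peak-edge exactly when $\{k,n\}$ is not an edge, i.e.\ when $b_k<n-k$. So the last peak-clique is $\{k+1,\ldots,n\}$, not $\{k+b_k+1,\ldots,n\}$. When $1\le b_k<n-k$, your pair $\{k+b_k+1,n\}$ is not a peak-edge, because $\{k+b_k,n\}$ is an edge.

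\textbf{Example.} Take $(3;0,1,1)\in\abelian{5}$, with area sequence $(0,1,2,2,1)$. Its peak-cliques are $\{1,2,3\}$, $\{2,3,4\}$ and $\{4,5\}$, whereas your description produces $\{5\}$ as the last one.

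\textbf{Where Step~2 fails.} Take $b_k<n-k$ and a vertex $k<v\le k+b_k$. Such a $v$ lies in the last clique $\{k+1,\ldots,n\}$, which does not contain $k$. So your claim that $v$ ``lies only in first-type cliques, each of which contains $k$'' is false. In the example, $v=4$ has $\pnest(4)=2=\pnest(3)$, not $1$.

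\textbf{The missing idea.} The first peak-clique $\{1,\ldots,k\}$, which comes from $b_1=0$, contains $k$ but no vertex larger than $k$. Hence a vertex $v>k$ lies in at most $\pnest(k)-1$ first-type cliques, plus possibly the single clique $\{k+1,\ldots,n\}$. This gives $\pnest(v)\le\pnest(k)$.

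This compensation is exactly the paper's argument. Let $p$ be the total number of peak-cliques. Every $j>k$ misses $\{1,\ldots,k\}$, so $\pnest(j)\le p-1$. The vertex $k$ misses at most the last clique, so $\pnest(k)\in\{p-1,p\}$. Your treatment of the vertices $v\le k$ is correct as written.
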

\begin{proof}
    Let $p$ be the number of peak-edges of $\avec$, and let $b_{k+1} = n-k$. It is not hard to see that
    \[
        p = 1 + \# \{i \in [k] \colon b_i < b_{i+1} \},
    \]
    where $b_1 = 0$, see Figure \ref{fig:Abelian}. We will compute $\pnest(j)$ for each $j \in [n]$. In fact,
    \begin{equation}
    \label{eq:pnest-abelian}
        \pnest(j) = 1 + \# \{i \in [j-1] \colon b_i < b_{i+1} \}
    \end{equation}
    if $1 \leq j \leq k$, and $\pnest(j) \leq p-1$ if $k < j \leq n$. We now show that $\pnest(\avec) = \pnest(k)$. In fact, $\pnest(k) \geq \pnest(j)$ for $1 \leq j \leq k-1$ by \eqref{eq:pnest-abelian}. Moreover,
    \[
        \pnest(k) =
        \begin{cases}
            p \quad &\mbox{if } b_k = n-k, \\
            p-1 \quad &\mbox{if } b_k < n-k,
        \end{cases}
    \]
    see Figure \ref{fig:Abelian}, from which follows that $\pnest(k) \geq \pnest(j)$ for $k+1 \leq j \leq n$. Hence, $\pnest(\avec) = \pnest(k)$.
\end{proof}

A \defin{binary word} of length $n$ is a sequence $w = w_1 w_2 \cdots w_n$ where $w_i \in \{0,1\}$ for all $i \in [n]$ and each $w_i$ is called a \defin{letter}.
\begin{proof}[Proof of Theorem \ref{thm:pnest-pol-abelian}]
    We start by proving the first equality. First, consider $i=1$. By the proofs of Proposition \ref{prop-number-abelian} and Lemma \ref{lemma:pnest(k+1)}, for each $1 \leq k \leq n$ there exists only one $(k; 0, b_2, \ldots, b_k) \in \abelian{n}$ such that $\pnest((k; 0, b_2, \ldots, b_k)) = 1$, which is given by $(k; 0, 0, \ldots, 0)$. Hence, $[t]\Pnest_{\abelian{n}}(t) = n$.

    Now, let $i > 1$. By the proof of Lemma \ref{lemma:pnest(k+1)}, the number of Abelian unit interval graphs $(k, L)$ such that $\pnest((k,L)) = i$ equals the number of $NE$ lattice paths $L$ from $(0,k)$ to $(k,n)$ with $i-1$ peaks. We count the number of such paths. Given an Abelian Dyck path $(k, L)$, recall that $L$ is an $NE$ lattice path with $k$ $E$ steps and $n-k$ $N$ steps, where the first step of $L$ is an $E$ step. Hence, there is a bijection between the set of $NE$ lattice paths $L$ and the set of binary words of length $n$ whose first letter is $0$, where the $E$ steps are encoded by zeros and the $N$ steps are encoded by ones. For example, the lattice path $L = ENEEN$ is encoded by the binary word $B_L = 01001$. Therefore, the number of peaks in a lattice path $L$ corresponds to the number of ascents in the binary word $B_L$. Observe that $k$ corresponds to the number of zeros in the binary word $B_L$. Since $1 \leq k \leq n$ and the first step of $L$ is $E$, the number of area sequences $(k, L)$ such that $\pnest((k,L)) = i$ equals the number of binary words of length $n-1$ and with $i-1$ ascents, which is given by $\binom{n}{2(i-1)+1}$ \cite[\oeis{A034867}]{OEIS}.

    The second equality is easily verified by induction on $n$.
\end{proof}

\begin{corollary}
\label{pnest-pol-abelian-rr}
    The polynomials $\{ \Pnest_{\abelian{n}}(t) \}_{n \geq 1}$ satisfy the recursion
    \begin{equation}
    \label{eq1}
        \Pnest_{\abelian{n}}(t) =
        \begin{cases}
            t \quad &\mbox{if } n = 1, \\
            2t \quad &\mbox{if } n = 2, \\
            2 \cdot \Pnest_{\abelian{n-1}}(t) + (t-1) \cdot \Pnest_{\abelian{n-2}}(t) \quad &\mbox{if } n \geq 3 .
        \end{cases}
    \end{equation}
    Moreover, $\Pnest_{\abelian{n}}(t)$ is real-rooted for every $n \geq 0$, and $\{\Pnest_{\abelian{n}}(t)\}_{n\geq 0}$ is a generalized Sturm sequence.
\end{corollary}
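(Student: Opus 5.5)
The recursion can be read off from the closed form in Theorem \ref{thm:pnest-pol-abelian}. Write $s=\sqrt{t}$ and $P_n(t)=\frac{s}{2}\left[(1+s)^n-(1-s)^n\right]$. The numbers $\alpha=1+s$ and $\beta=1-s$ are the roots of $x^2-2x+(1-t)=0$, since $\alpha+\beta=2$ and $\alpha\beta=1-t$. Hence every linear combination of $\alpha^n$ and $\beta^n$, and in particular $P_n(t)$, satisfies $P_n=2P_{n-1}+(t-1)P_{n-2}$. This holds for all $n\geq 2$ if we use $P_0=0$. The initial values follow from the formula: $P_1=\frac{s}{2}\cdot 2s=t$ and $P_2=\frac{s}{2}\cdot 4s=2t$. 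This gives \eqref{eq1}. Alternatively, one can check it coefficientwise with Pascal's rule: $\binom{n}{2i-1}=2\binom{n-1}{2i-1}+\binom{n-2}{2i-3}-\binom{n-2}{2i-1}$, which follows by expanding $\binom{n}{m}=\binom{n-2}{m}+2\binom{n-2}{m-1}+\binom{n-2}{m-2}$.

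For real-rootedness, the zeros can be computed explicitly. With $t=s^2$ and $s\neq 0$, the equation $P_n=0$ means $\left(\frac{1+s}{1-s}\right)^n=1$, so $\frac{1+s}{1-s}=\omega=e^{2\pi i j/n}$ with $\omega\neq -1$. Solving gives $s=\frac{\omega-1}{\omega+1}=i\tan(\pi j/n)$, and therefore $t=-\tan^2(\pi j/n)\leq 0$. The distinct values come from $j=1,\dots,\lceil n/2\rceil-1$, together with the root $t=0$. This accounts for all $\deg P_n=\lceil n/2\rceil$ zeros, so $P_n$ is real-rooted with distinct nonpositive zeros.

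For interlacing I would not work from the explicit zeros. Instead I would apply Lemma \ref{recursion-rr} inductively with $F=P_n$, $f=P_{n-1}$, $g=P_{n-2}$, $a(t)=2$ and $b(t)=t-1$. The hypotheses are checked as follows.
\begin{itemize}
\item The degree condition holds, since $\deg P_n\in\{\deg P_{n-1},\deg P_{n-1}+1\}$.
\item Leading coefficients are positive.
\item Every zero $r$ of $P_{n-1}$ satisfies $r\leq 0$, so $b(r)=r-1<0$.
\end{itemize}
Given $P_{n-2}\interl P_{n-1}$, the lemma yields $P_{n-1}\interl P_n$. The base cases are $P_0\interl P_1$ (with $P_0$ taken as $0$ or $1$, by convention) and $t\interl 2t$, which hold trivially. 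Nonpositivity of the zeros is already known from the explicit computation, or it follows because all coefficients are positive.

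The main point needing care is the index-$0$ convention. The recursion needs $P_0=0$, while the paper sets $\Pnest_{\abelian{0}}=1$. So I would state the recursion only for $n\geq 3$, as written, start the induction at $P_1\interl P_2$, and note that $1\interl t$ holds by convention.
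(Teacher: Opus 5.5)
Your proof is correct. It differs from the paper's mainly in the interlacing step.

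Write $P_n=\Pnest_{\abelian{n}}(t)$ as you do. The paper treats $P_n=2P_{n-1}+(t-1)P_{n-2}$ as a sum and applies the closure property of Lemma~\ref{lem:interlacing-properties}(i). Here $P_{n-1}$ trivially interlaces $2P_{n-1}$, and the paper asserts $P_{n-1}\interl (t-1)P_{n-2}$ (written with the harmless factor $2$). That relation has to be derived from the induction hypothesis. One way is Lemma~\ref{lem:interlacing-properties}(iii), which gives $P_{n-1}\interl tP_{n-2}$, together with the observation that moving the top zero $0$ of $tP_{n-2}$ to $1$ preserves interlacing. The paper leaves this step implicit.

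You instead feed the recurrence into Lemma~\ref{recursion-rr}, whose hypotheses are one-line checks. The only substantive one is $b(r)=r-1<0$ at the zeros of $P_{n-1}$, which holds because those zeros are nonpositive (all coefficients are positive). So you use the stronger Liu--Wang criterion and need no auxiliary interlacing. The paper uses only Wagner's elementary closure property, at the cost of that extra relation.

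Your other ingredients are also more explicit than the paper's ``easily verified by induction''. The characteristic equation $x^2-2x+(1-t)$ of $1\pm\sqrt t$ gives the recurrence at once. It also shows why the recurrence can only start at $n=3$ under the convention $\Pnest_{\abelian{0}}=1$: the closed form forces $P_0=0$. The zeros $-\tan^2(\pi j/n)$ give real-rootedness with simple zeros, independently of any induction. Since $j/n<j/(n-1)<(j+1)/n$ for $1\le j<n-1$, these explicit zeros would even give $P_{n-1}\interl P_n$ directly, so the Liu--Wang step is optional in your argument.
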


\begin{proof}
    Equation \ref{eq1} is easily verified by induction on $n \geq 3$. We also use induction to prove that $\{\Pnest_{\abelian{n}}(t)\}_{n\geq 0}$ is a generalized Sturm sequence.
    
    Recall that $\Pnest_{\abelian{0}}(t) = 1$. By definition, $\Pnest_{\abelian{0}}(t) \interl \Pnest_{\abelian{1}}(t)$, so assume by induction that $\Pnest_{\abelian{n-2}}(t) \interl \Pnest_{\abelian{n-1}}(t)$ for some $n > 2$, and let us show that $\Pnest_{\abelian{n-1}}(t) \interl \Pnest_{\abelian{n}}(t)$. In fact, $2 \cdot \Pnest_{\abelian{n-1}}(t) \interl (t-1) \cdot \Pnest_{\abelian{n-2}}(t)$, which implies that $\Pnest_{\abelian{n-1}}(t) \interl \Pnest_{\abelian{n}}(t)$ by Lemma \ref{lem:interlacing-properties}(i). Hence, the claim follows.
\end{proof}

Different from $\Pnest_{\mathcal{A}(n)}(t)$, the polynomials $\Pnest_{\mathcal{A}(n)}^c(t)$ are not real-rooted in general. In fact, $\Pnest_{\mathcal{A}(n)}^c(t)$ is real-rooted for $0 \leq n \leq 16$, but
\[
    \Pnest_{\mathcal{A}(17)}^c(t) = t + 680t^{2} + 6188t^{3} + 19448t^{4} + 24310t^{5} + 12376t^{6} + 2380t^{7} + 136t^{8} + t^{9}
\]
is not real-rooted. However, the sequence of coefficients of $\Pnest_{\mathcal{A}(n)}^c(t)$ is ultra log-concave.

\begin{corollary}
\label{cor:ulc-pnest}
    For each $n \in \mathbb{N}$,
    \begin{equation}
    \label{eq2}
        \Pnest_{\mathcal{A}(n)}^c(t) = \Pnest_{\mathcal{A}(n)}(t) - (n-1)t.
    \end{equation}
    Moreover, the sequence of coefficients of $\Pnest_{\mathcal{A}(n)}^c(t)$ is an ultra log-concave sequence.
\end{corollary}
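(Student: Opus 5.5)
For \eqref{eq2}, the plan is to identify exactly which Abelian graphs are disconnected and what their peak-nesting is. By the proof of Proposition~\ref{prop-number-abelian}, the disconnected Abelian unit interval graphs on $[n]$ are precisely the $n-1$ graphs $(k;0,0,\ldots,0)$ with $1\le k\le n-1$. In the proof of Theorem~\ref{thm:pnest-pol-abelian} we saw that every graph of the form $(k;0,\ldots,0)$ has peak-nesting $1$. Each disconnected graph therefore contributes $t$ to $\Pnest_{\mathcal{A}(n)}(t)$. Removing these $n-1$ terms gives $\Pnest^c_{\mathcal{A}(n)}(t)=\Pnest_{\mathcal{A}(n)}(t)-(n-1)t$.

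For ultra log-concavity, Theorem~\ref{thm:pnest-pol-abelian} gives the coefficients explicitly: $c_1=1$ and $c_i=\binom{n}{2i-1}$ for $2\le i\le d:=\lceil n/2\rceil$. The polynomial has degree $d$, so we must show that $u_i:=c_i/\binom{d}{i}$ is log-concave for $1\le i\le d$, with $u_0=0$. The indices $i=0,1$ only involve $c_0=0$ on one side, so the inequalities there are trivial. The case $n\le 2$ is also trivial.

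For $3\le i\le d-1$, all three coefficients are the unmodified $\binom{n}{2i-1}$. The sequence $\binom{n}{2i-1}$ is the restriction of the log-concave sequence $\{\binom{n}{j}\}_j$ to every second index, and it is real-rooted-derived. The cleanest route is that $\Pnest_{\mathcal{A}(n)}(t)$ is real-rooted with nonnegative coefficients by Corollary~\ref{pnest-pol-abelian-rr}. By Lemma~\ref{lem:sequences}(i), its coefficients are therefore ultra log-concave with respect to the same degree $d$, which handles every triple not involving $c_1$.

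The remaining case is $i=2$, where $c_1$ is lowered from $n$ to $1$. Here we need $u_2^2\ge u_1u_3$. Since $u_1$ decreased while $u_2$ and $u_3$ are unchanged, this inequality follows from the same inequality for the original polynomial. In general, lowering an endpoint-adjacent coefficient only weakens the right-hand sides of the inequalities in which it appears as a neighbour. The only inequality with $c_1$ squared is at $i=1$, which involves $c_0=0$ and is trivial.

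The one real check, and the expected obstacle, is internal zeros: $c_1=1>0$, so the coefficient sequence has no internal zeros. Log-concavity with positive entries is then fully given by the termwise inequalities above, and no further argument is needed.
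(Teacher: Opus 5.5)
Your proposal is correct and follows the paper's argument. The $n-1$ disconnected Abelian graphs are exactly the graphs $(k;0,\ldots,0)$ with $k<n$, and each has peak-nesting $1$. Ultra log-concavity is inherited from that of $\Pnest_{\mathcal{A}(n)}(t)$, which holds by its real-rootedness, because lowering $c_1$ from $n$ to $1$ only weakens the one nontrivial inequality in which $c_1$ appears as a neighbour. You spell out the index bookkeeping that the paper's one-line proof leaves implicit.
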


\begin{proof}
    Equation \ref{eq2} follows trivially from the proofs of Proposition \ref{prop-number-abelian} and Theorem \ref{thm:pnest-pol-abelian}. The ultra log-concavity of the coefficients of $\Pnest_{\mathcal{A}(n)}^c(t)$ follows from the ultra log-concavity of the coefficients of $\Pnest_{\mathcal{A}(n)}(t)$.
\end{proof}

\begin{table}[H]
\centering
\begin{tabular}{@{}r r r@{}}
\toprule
$n$ & $\Pnest_{\mathcal{A}(n)}(t)$ & $\Pnest_{\mathcal{A}(n)}^c(t)$ \\
\midrule
0  & $1$ & $0$ \\
1  & $t$ & $t$ \\
2  & $2t$ & $t$ \\
3  & $3t+t^{2}$ & $t+t^{2}$ \\
4  & $4t+4t^{2}$ & $t+4t^{2}$ \\
5  & $5t+10t^{2}+t^{3}$ & $t+10t^{2}+t^{3}$ \\
6  & $6t+20t^{2}+6t^{3}$ & $t+20t^{2}+6t^{3}$ \\
7  & $7t+35t^{2}+21t^{3}+t^{4}$ & $t+35t^{2}+21t^{3}+t^{4}$ \\
8  & $8t+56t^{2}+56t^{3}+8t^{4}$ & $t+56t^{2}+56t^{3}+8t^{4}$ \\
9  & $9t+84t^{2}+126t^{3}+36t^{4}+t^{5}$ & $t+84t^{2}+126t^{3}+36t^{4}+t^{5}$ \\
10 & $10t+120t^{2}+252t^{3}+120t^{4}+10t^{5}$ & $t+120t^{2}+252t^{3}+120t^{4}+10t^{5}$\\
\bottomrule
\end{tabular}
\caption{The distribution of peak-nestings over Abelian unit interval graphs and connected unit interval graphs with $n$ vertices, $0 \leq n \leq 10$. The coefficients of $\Pnest_{\mathcal{A}(n)}(t)$ are given by \cite[\oeis{A034867}]{OEIS}.}
\label{tab:pnest-Abelian}
\end{table}

By Theorem \ref{thm:pnest-pol-abelian}, one easily verifies that $\mbox{\Large $\nicefrac{\Pnest_{\abelian{2n}}(t)}{t}$}$ is symmetric for every $n \geq 1$, see Table \ref{tab:pnest-Abelian}. By Corollary \ref{pnest-pol-abelian-rr}, they are also real-rooted, from which follows that the gamma-polynomials associated to them, $\gamma_{\abelian{2n}}^{\pnest}(t)$, have nonnegative coefficients and are real-rooted by Lemmas \ref{lem:gamma1} and \ref{lem:gamma2}, respectively, see Table \ref{tab:gamma-pnest-Abelian}.

We finish this section by finding a closed formula for the coefficients of $\gamma_{\abelian{2n}}^{\pnest}(t)$, and by presenting a combinatorial interpretation for them. Given an $NE$ lattice path $L = S_1 \cdots S_m$ and $i \in [m-1]$, we say that $i$ is a \defin{turn} if $S_i \neq S_{i+1}$, and we denote the set of all turns of $L$ by $T(L)$. Recall that any Abelian Dyck path of semilength $n$ has a unique representation $(k, L)$, where $1 \leq k \leq n$ and $L$ is an $NE$ lattice path from $(0,k)$ to $(k,n)$ whose first step is an $E$ step.

\begin{proposition}
\label{th:gamma-Abelian}
    For each $n \in \mathbb{N}$,
    \begin{equation}
    \label{eq:pnest-gamma-Abelian}
        \gamma_{\abelian{2n}}^{\pnest}(t) = \sum_{i = 0}^{\lfloor \frac{n-1}{2} \rfloor} 2^{2i + 1} \binom{n}{2i + 1} t^i = \frac{(1 + 2\sqrt{t})^n - (1 - 2\sqrt{t})^n}{2 \sqrt{t}}.
    \end{equation}
    Moreover, $\displaystyle 2^{2i + 1} \binom{n}{2i + 1}$ is the number of Abelian Dyck paths $(k, L)$ of semilength $2n$ with peak-nesting $i+1$ for which there is no $j \in [n]$ such that $S_{2j-1} S_{2j} S_{2j+1} = ENE$ or $S_{2j-1} S_{2j} S_{2j+1} = NEN$ are subwords of $LN$, the augmented $NE$ lattice path obtained by appending an $N$ step to $L$.
\end{proposition}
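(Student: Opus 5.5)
The identity \eqref{eq:pnest-gamma-Abelian} will follow from the radical expression in Theorem~\ref{thm:pnest-pol-abelian}, rewritten in the basis $\{t^i(1+t)^{n-1-2i}\}$. For $m=2n$, Theorem~\ref{thm:pnest-pol-abelian} gives
\[
\frac{\Pnest_{\abelian{2n}}(t)}{t}=\frac{1}{2\sqrt t}\left[(1+\sqrt t)^{2n}-(1-\sqrt t)^{2n}\right].
\]
Since $(1\pm\sqrt t)^2=(1+t)\pm 2\sqrt t$, I set $u=1+t$ and $v=2\sqrt t$ and expand $(u+v)^n-(u-v)^n$ by the binomial theorem. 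Only the odd powers of $v$ survive, and $v^{2i+1}/(2\sqrt t)=2^{2i}t^i$. This gives
\[
\frac{\Pnest_{\abelian{2n}}(t)}{t}=\sum_{i\ge 0}2^{2i+1}\binom{n}{2i+1}\,t^i(1+t)^{n-1-2i}.
\]
The left side has degree $n-1$ and is symmetric, so $n-1$ is its symmetric degree. Uniqueness of the expansion in the basis $\{t^i(1+t)^{n-1-2i}\}$ then identifies $\gamma_i=2^{2i+1}\binom{n}{2i+1}$. The second closed form in \eqref{eq:pnest-gamma-Abelian} is the same binomial sum $\sum_i\binom{n}{2i+1}(2\sqrt t)^{2i+1}/\sqrt t$, read backwards.

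For the combinatorial interpretation I use the encoding from the proof of Theorem~\ref{thm:pnest-pol-abelian}. An Abelian Dyck path of semilength $2n$ is a word $L=S_1\cdots S_{2n}$ in $\{E,N\}$ with $S_1=E$, and its peak-nesting is $1+\#\{NE\text{-free ascents}\}$, that is, one plus the number of factors $EN$ in $L$. Appending $N$ gives $LN=S_1S_2\cdots S_{2n+1}$, and I split its letters after $S_1$ into $n$ blocks $\beta_j=S_{2j}S_{2j+1}$. The peaks of $L$ (factors $EN$) are exactly the $EN$ factors of $LN$ except possibly the final one. I would then define, for each block, a "switch" exchanging $\beta_j$ between $EE$ and $NN$ or between $EN$ and $NE$, chosen according to the letter $S_{2j-1}$ preceding the block. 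The goal is to arrange that a forbidden pattern $S_{2j-1}S_{2j}S_{2j+1}\in\{ENE,NEN\}$ occurs exactly at the blocks where the switch changes the number of $EN$ factors by exactly one, while leaving all other blocks untouched.

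The switches at distinct blocks commute, so they generate a $(\mathbb Z/2)^{\text{free}}$-action on the set of words. Each orbit then contributes $t^{i}(1+t)^{f}$ (after the shift by the factor $t$), where $f$ is the number of free blocks and $i$ counts the pinned blocks that contribute an ascent. Each orbit contains a unique representative avoiding both forbidden patterns, namely the one with every free block put in its canonical state. Comparing with the expansion above, the number of pattern-avoiding paths with peak-nesting $i+1$ equals $\gamma_i$. As a consistency check I would also count these avoiding words directly: choose the $2i+1$ blocks where the letter changes (binomial $\binom{n}{2i+1}$), with two choices of internal shape for each (factor $2^{2i+1}$).

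The main obstacle is this last step: choosing the switch rule so that it is well defined relative to the preceding letter, commutes across neighbouring blocks, preserves $S_1=E$ and the appended final $N$, and changes the peak count by exactly one on free blocks. I would first verify the rule by hand for $n=2,3$ against Table~\ref{tab:pnest-Abelian} before writing the general argument.
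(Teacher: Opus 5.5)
Your derivation of \eqref{eq:pnest-gamma-Abelian} is correct and is the same computation as in the paper. The combinatorial half, however, has a genuine gap, in two places.

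First, the statistic is misidentified. Write $\#XY(W)$ for the number of factors $XY$ of a word $W$. Then $\pnest(k,L)=1+\#NE(L)$, not $1+\#EN(L)$. Indeed, $\pnest(k,L)=\pnest(k)=1+\#\{i\in[k-1] : b_i<b_{i+1}\}$ by Lemma~\ref{lemma:pnest(k+1)} and \eqref{eq:pnest-abelian}, and $b_i<b_{i+1}$ exactly when the $(i+1)$st $E$ step of $L$ is preceded by an $N$. Since $LN$ begins with $E$ and ends with $N$, this equals $\#EN(LN)=\tfrac12\bigl(|T(LN)|+1\bigr)$. Your version is off by one whenever $L$ ends in $N$. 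For example, in semilength $2$, $L=EN$ (so $k=1$, area sequence $(0,0)$) has peak-nesting $1$, and your formula would give $\Pnest_{\abelian{2}}(t)/t=1+t$ instead of $2$.

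Second, the switch you sketch fails at exactly the step you flag as the main obstacle. Exchanging $\beta_j$ between $EE$ and $NN$, or between $EN$ and $NE$, always changes $S_{2j+1}$. That letter is the first letter of the next triple $S_{2j+1}S_{2j+2}S_{2j+3}$, and for $j=n$ it is the appended $N$. For $j<n$ such a move toggles the turns at positions $2j-1$ and $2j+1$. So it alters the pattern of block $j+1$, and with it that block's status and switch rule, and the switches do not generate a well-defined $(\mathbb{Z}/2)^f$-action. It also changes the peak-nesting by $0$ or $+1$ depending on $S_{2j+2}$, so the free switches do not raise it by exactly one as the orbit argument needs.

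The missing idea, which is what the paper does, is to change only the middle letter. Whenever $S_{2j-1}=S_{2j+1}$, toggle $S_{2j}$, i.e.\ $EEE\leftrightarrow ENE$ and $NNN\leftrightarrow NEN$; in your notation, $EE\leftrightarrow NE$ after an $E$ and $NN\leftrightarrow EN$ after an $N$. The even position $2j$ lies in only one triple and affects only the turns at $2j-1$ and $2j$. Hence these toggles commute, never touch $S_1=E$ or the final $N$, and move $|T(LN)\cap\{2j-1,2j\}|$ between $0$ and $2$, so they change the peak-nesting by exactly one. Blocks with one turn are pinned. Each class then has a unique pattern-free member, and a class whose pattern-free member has $2i+1$ turns contributes $t^{i+1}(1+t)^{n-1-2i}$.

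For the statement as written you do not even need the classes: your ``consistency check'' is already a complete proof once the statistic is corrected.
\begin{itemize}
\item Avoiding both patterns means each $\{2j-1,2j\}$ contains at most one turn of $LN$.
\item Because $S_1=E$, the word is determined by its turn set, which has odd size exactly because the word ends in $N$.
\item Peak-nesting $i+1$ means $2i+1$ turns.
\end{itemize}
Hence there are $\binom{n}{2i+1}2^{2i+1}$ such paths. That count silently uses $\pnest=\tfrac12\bigl(|T(LN)|+1\bigr)$, which contradicts the formula you stated, so the correction above is needed for the argument to go through.
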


\begin{proof}
    Let $n \in \mathbb{N}$. By Theorem \ref{thm:pnest-pol-abelian},
    \begin{align*}
        \frac{\Pnest_{\abelian{2n}}(t)}{t} &= \frac{(1+\sqrt{t})^{2n} - (1-\sqrt{t})^{2n}}{2 \sqrt{t}} \\
                                           &= \frac{\left( (1+t) + 2\sqrt{t} \right)^{n} - \left( (1+t) - 2\sqrt{t} \right)^{n}}{2 \sqrt{t}} \\
                                           &= \frac{1}{2\sqrt{t}} \left[ \sum_{i=0}^{n} \left( 2\sqrt{t} \right)^i \binom{n}{i} (1+t)^{n-i} - \sum_{i=0}^{n} \left( -2\sqrt{t} \right)^i \binom{n}{i} (1+t)^{n-i} \right] \\
                                           &= \sum_{i = 0}^{\lfloor \frac{n-1}{2} \rfloor} 2^{2i + 1} \binom{n}{2i + 1} t^i (1 + t)^{n-1-2i}.
    \end{align*}
    Hence, \eqref{eq:pnest-gamma-Abelian} holds.

    To prove the second part of the theorem, let
    \[
        [2n] = \bigsqcup_{j=1}^n P_j ,
    \]
    where $P_j = \{2j - 1, 2j\}$, $j = 1, \ldots, n$. Let $(k, L)$ be an Abelian Dyck path of semilength $2n$. Hence, $L = S_1 \cdots S_{2n}$ is an $NE$ lattice path with $2n$ steps where $S_1 = E$. We shall identify each subword $S_{2j-1} S_{2j}$ with the set $P_j$. Clearly, $|T(LN) \cap P_j| \in \{0, 1, 2\}$ for each $j \in [n]$. Observe that, if $|T(LN) \cap P_j| = 2$ for some $j \in [n]$, then $ENE$ or $NEN$ is a subword of $LN$. Hence, if there are $2i + 1$ indices $j$ for which $|T(LN) \cap P_j| = 1$, then there are at most $n - 1 - 2i$ indices $j$ for which $|T(LN) \cap P_j| = 2$. We may remove the sets $P_j$ for which $|T(L) \cap P_j| = 2$ as follows: if $S_{2j-1} S_{2j} S_{2j+1} = ENE$, replace $S_{2j}$ by $E$; if $S_{2j-1} S_{2j} S_{2j+1} = NEN$, replace $S_{2j}$ by $N$. Then, we obtain a new lattice path $\tilde{L}N$ with $2i + 1$ turns, each of them in a distinct $P_j$. Moreover, since the first step of $L$ is $E$, it follows that $\tilde{L}N$ has exactly $i$ subwords $NE$ and $i+1$ subwords $EN$, from which follows that $\pnest( (k, \tilde{L}) ) = i+1$.
    
    Observe that adding back a triple $ENE$ or $NEN$ to $\tilde{L}$ increases the peak-nesting by $1$, so the entire class of Abelian Dyck paths obtained by $(k, \tilde{L})$ by mapping $EEE \mapsto ENE$ and $NNN \mapsto NEN$ contributes
    \[
        t^{i+1} (1 + t)^{n - 1 - 2i}
    \]
    to the peak-nesting polynomial $\Pnest_{\abelian{2n}}(t)$. Hence, $[t^i] \gamma_{\abelian{2n}}^{\pnest}(t)$ equals the number of Abelian Dyck satisfying the all the properties stated in the proposition, which finishes the proof.
\end{proof}

\begin{remark}
\label{rmk:gamma-pnest-abelian}
    The polynomials $\{ \gamma^{\pnest}_{\abelian{2n}}(t) \}_{n \geq 1}$ satisfy the recursion
    \begin{equation*}
        \gamma^{\pnest}_{\abelian{2n}}(t) =
        \begin{cases}
            2 &\mbox{if } n = 1, \\
            4 &\mbox{if } n = 2, \\
            2 \cdot \gamma^{\pnest}_{\abelian{2n-2}}(t) + (4t - 1) \cdot \gamma^{\pnest}_{\abelian{2n-4}}(t) &\mbox{if } n \geq 3,
        \end{cases}
    \end{equation*}
    from which one can prove that $\{\gamma^{\pnest}_{\abelian{2n}}(t)\}_{n\geq 1}$ is a generalized Sturm sequence.
\end{remark}

\begin{table}[H]
\centering
\begin{tabular}{@{}r r@{}}
\toprule
$n$ & $\gamma^{\pnest}_{\abelian{2n}}(t)$\\
\midrule
1  & $2$ \\
2  & $4$ \\
3  & $6+8t$ \\
4  & $8+32t$ \\
5  & $10+80t+32t^{2}$ \\
6  & $12+160t+192t^{2}$ \\
7  & $14+280t+672t^{2}+128t^{3}$ \\
8  & $16+448t+1792t^{2}+1024t^{3}$ \\
9  & $18+672t+4032t^{2}+4608t^{3}+512t^{4}$ \\
10 & $20+960t+8064t^{2}+15360t^{3}+5120t^{4}$ \\
\bottomrule
\end{tabular}
\caption{The polynomials $\gamma^{\pnest}_{\abelian{2n}}(t)$, $1 \leq n \leq 10$.}
\label{tab:gamma-pnest-Abelian}
\end{table}

\subsection{The peak polynomial of Abelian unit interval graphs}
Set $\Peak_{\abelian{0}}(t) \coloneqq 1$ and $\Peak_{\abelian{0}}^c(t) \coloneqq 0$, and let
\[
    \Peak_{\abelian{n}}(t) \coloneqq \sum_{\avec \in \abelian{n}} t^{\peak(\avec)} \qquad \mbox{and} \qquad \Peak_{\abelian{n}}^c(t) \coloneqq \sum_{\substack{\avec \in \abelian{n} \\ \Gamma_\avec \mbox{ \scriptsize is connected}}} t^{\peak(\avec)}
\]
for $n \in \mathbb{N}$, see Table \ref{tab:peak-Abelian}.

A \defin{run} in a binary word $w$ is an uninterrupted sequence of ones (or zeros) that cannot be extended, that is, flanked on each side either by zero (or one) or by the start or the end of $w$. For example, $00$, $1$ and $11$ are runs in $0010011$, but $0$ is not.
\begin{theorem}
\label{th:useful}
    For each $n \in \mathbb{N}$,
    \begin{equation}
    \label{eq:peak-pol-abelian}
        \Peak_{\abelian{n}}(t) = \sum_{i=0}^{\lfloor \frac{n}{2} \rfloor} \binom{n}{2i} t^{i+1} = \frac{t}{2} \left[ (1+\sqrt{t})^n + (1-\sqrt{t})^n \right]
    \end{equation}
    and
    \begin{equation}
    \label{eq:peak-pol-connected-abelian}
        \Peak_{\abelian{n}}^c(t) = \Peak_{\abelian{n}}(t) - (n-1)t^2.
    \end{equation}
\end{theorem}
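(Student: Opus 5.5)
The plan is to work entirely with the encoding of an Abelian Dyck path of semilength $n$ as a pair $(k,L)$ from Lemma~\ref{charac-abelian} and the discussion after it. The Dyck path is $N^k$, followed by $L$, followed by $E^{n-k}$. As in the proof of Theorem~\ref{thm:pnest-pol-abelian}, I encode $L$ by a binary word $B_L$ of length $n$ whose first letter is $0$, with $E\mapsto 0$ and $N\mapsto 1$. This is a bijection between $\abelian{n}$ and binary words of length $n$ beginning with $0$; the number $k$ of zeros is determined by the word. The task is then to read $\peak$ off $B_L$ and count words.

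First I count the peaks (occurrences of $NE$) of the full Dyck path, using Remark~\ref{rmk:bij-peaks-peak-edges}. The block $N^k$ is followed by the first step of $L$, which is $E$, so this gives exactly one peak. Inside $L$, each occurrence of $10$ in $B_L$ gives a peak. At the junction of $L$ with $E^{n-k}$ there is one more peak exactly when $L$ ends in $N$, i.e. when $B_L$ ends in $1$ (if $k=n$ there is no tail, but then $B_L=0^n$ anyway). Hence
\[
  \peak = 1 + \#\{\text{occurrences of } 10 \text{ in } B_L0\}.
\]
This count equals one plus the number of maximal runs of ones in the word $B_L0$. This matches the formula $p = 1+\#\{i\in[k] : b_i<b_{i+1}\}$ with $b_{k+1}=n-k$ from the proof of Lemma~\ref{lemma:pnest(k+1)}, which I would cite as a consistency check.

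Next I count words. The word $B_L0$ has length $n+1$, begins and ends with $0$, and its middle $n-1$ letters are free. Such a word has exactly $i$ runs of ones if and only if exactly $2i$ of its $n$ adjacent-position gaps are letter changes. Conversely, any choice of $2i$ gaps out of $n$ determines a unique such word. So there are $\binom{n}{2i}$ Abelian paths with $\peak = i+1$, which gives the first equality in \eqref{eq:peak-pol-abelian}. The closed form follows by expanding $(1\pm\sqrt t)^n$ with the binomial theorem: the odd powers of $\sqrt t$ cancel and the even ones double.

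For \eqref{eq:peak-pol-connected-abelian}, I use the proof of Proposition~\ref{prop-number-abelian}. There, the disconnected Abelian graphs are exactly those with $k<n$ and $b_2=\dots=b_k=0$. In word terms, these are the $n-1$ words $0^k1^{n-k}$ with $1\le k\le n-1$. Each such $B_L0 = 0^k1^{n-k}0$ has a single run of ones, so each has $\peak=2$. Subtracting $(n-1)t^2$ finishes the proof. The only delicate step is getting the boundary peaks right, namely the first peak and the one at the junction of $L$ with $E^{n-k}$, including the edge cases $k=n$ and $k=1$. Appending the final $0$ to $B_L$ is designed precisely to handle these cases uniformly.
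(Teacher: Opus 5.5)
Your proposal is correct and is essentially the paper's proof. It uses the same binary-word encoding of $(k,L)$, the same identification of $\peak$ with one plus the number of runs of ones, and the same observation that the $n-1$ disconnected graphs ($B_L=0^k1^{n-k}$) each have two peaks. The only differences are that you derive $\binom{n}{2i}$ directly from the $2i$ letter changes in $B_L0$ (the paper cites \oeis{A034839}), you make the final peak at the junction with $E^{n-k}$ explicit (the paper's step $\peak(\avec)=1+\peak(L)$ leaves it implicit), and you get the closed form from the binomial theorem rather than by induction.
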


\begin{proof}
    Let $n \in \mathbb{N}$. We start by proving \eqref{eq:peak-pol-abelian}. First, we prove that $\peak(\avec) \leq 1 + \lfloor \frac{n}{2} \rfloor$ for all $\avec \in \abelian{n}$. In fact, let $\avec = (k, L)$. As in the proof of Theorem \ref{thm:pnest-pol-abelian} we encode $L$ by a binary word $B_L$ of length $n$ where the first letter is $0$. Observe that, by the proof of Lemma \ref{lemma:pnest(k+1)}, $\peak(\avec) =  1 + \peak(L)$. Observe that the number of peaks of $L$ corresponds to the number of runs of $1$s in $L$, which is maximized by $01010\cdots101$ if $n$ is even and $01010\cdots1010$ if $n$ is odd. Hence, $\peak(\avec) \leq 1 + \lfloor \frac{n}{2} \rfloor$. Now, we show that
    \begin{equation}
    \label{eq:peaks=runs}
        [t^{i+1}]\Peak_{\abelian{n}}(t) = \binom{n}{2i}
    \end{equation}
    for all $i = 0, 1, \ldots, \lfloor \frac{n}{2} \rfloor$. In fact, there is only one Abelian area sequence of length $n$ with only one peak, which is $\avec = (0, 1, 2, \ldots, n-1)$, from which follows that $[t]\peak_{\abelian{n}}(t) = 1$, and the number of Abelian area sequences with $i+1$ peaks, $i + 1 \geq 2$, equals the number of binary words of length $n$ starting at $0$ with $i$ runs of consecutive $1$s, which is given by $\binom{n}{2i}$ \cite[\oeis{A034839}]{OEIS}. The second equality of \eqref{eq:peak-pol-abelian} follows by induction on $n \geq 1$.
    
    Now, to verify Equation \ref{eq:peak-pol-connected-abelian}, observe that, by the proof of Proposition \ref{prop-number-abelian}, all the disconnected Abelian unit interval graphs on $[n]$ have two peaks. Since there are $(n-1)$ many of them, \eqref{eq:peak-pol-connected-abelian} follows.
\end{proof}

\begin{corollary}
\label{cor:peak-pol-albelian-rr}
    The polynomials $\{ \Peak_{\abelian{n}}(t) \}_{n \geq 1}$ satisfy the recursion
    \begin{equation}
    \label{eq:rec-peak-abelian}
        \Peak_{\abelian{n}}(t) =
        \begin{cases}
            t \quad &\mbox{if } n = 1, \\
            t + t^2 &\mbox{if } n = 2, \\
            2 \cdot \Peak_{\abelian{n-1}}(t) + (t-1) \cdot \Peak_{\abelian{n-2}}(t) \quad &\mbox{if } n \geq 3.
        \end{cases}
    \end{equation}
    Moreover, $\Peak_{\abelian{n}}$ is real-rooted for every $n \geq 0$, and $\{\Peak_{\abelian{n}}\}_{n\geq 0}$ is a generalized Sturm sequence.
\end{corollary}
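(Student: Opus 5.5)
The recursion follows directly from the closed form in Theorem \ref{th:useful}. Write
\[
\Peak_{\abelian{n}}(t)=\frac{t}{2}\left[\alpha^n+\beta^n\right],\qquad \alpha=1+\sqrt t,\quad \beta=1-\sqrt t .
\]
The numbers $\alpha,\beta$ are the roots of $x^2-2x+(1-t)=0$, since $\alpha+\beta=2$ and $\alpha\beta=1-t$. Hence $x^n=2x^{n-1}-(1-t)x^{n-2}$ for $x\in\{\alpha,\beta\}$. Summing over both roots and multiplying by $t/2$ gives
\[
\Peak_{\abelian{n}}=2\Peak_{\abelian{n-1}}+(t-1)\Peak_{\abelian{n-2}}\quad\text{for } n\ge 3.
\]
The initial values $t$ and $t+t^2$ are read off from $\binom{n}{2i}$. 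With $\Peak_{\abelian{0}}=1$ the relation fails at $n=2$, which explains why $n=2$ is listed separately.

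For real-rootedness and the generalized Sturm property I would argue by induction, showing $\Peak_{\abelian{n-1}}\interl\Peak_{\abelian{n}}$ via Lemma~\ref{recursion-rr}. Take
\[
F=\Peak_{\abelian{n}},\quad f=\Peak_{\abelian{n-1}},\quad g=\Peak_{\abelian{n-2}},\quad a=2,\quad b=t-1 .
\]
The hypotheses check as follows.
\begin{itemize}
\item The degree condition holds, since $\deg F=1+\lfloor n/2\rfloor\in\{\deg f,\deg f+1\}$.
\item The leading coefficients are positive, and $g\interl f$ by the induction hypothesis.
\item All coefficients are nonnegative and the constant term is zero. So every root $r$ of $f$ satisfies $r\le 0$, and therefore $b(r)=r-1<0$.
\end{itemize}
Lemma~\ref{recursion-rr} then gives that $F$ is real-rooted and $f\interl F$.

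The base cases are immediate. First, $1\interl t$ holds by convention (or directly). Second, $t\interl t+t^2$ holds because the roots $0$ and $-1,0$ interlace. Note that the step from $n=1$ to $n=2$ does not come from the recursion, so it must be checked by hand.

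The only delicate point is the degenerate case where $f$ and $g$ share the root $0$, or have repeated roots. The interlacing in Lemma~\ref{recursion-rr} uses weak inequalities, so a common root $0$ is allowed. I would also factor out $t$: put $\tilde P_n=\Peak_{\abelian{n}}/t$, which satisfies the same recursion. Then $\tilde P_n(0)=1$, so all roots are strictly negative and the lemma applies cleanly. Interlacing is preserved under multiplication by $t$ by Lemma~\ref{lem:interlacing-properties}.
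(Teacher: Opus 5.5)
Your proof is correct. It has the same skeleton as the paper's: the recurrence comes from the closed form in Theorem~\ref{th:useful}, followed by induction on $\Peak_{\abelian{n-1}}\interl\Peak_{\abelian{n}}$. The interlacing step, however, uses a different lemma.

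The paper defers to the proof of Corollary~\ref{pnest-pol-abelian-rr}, which is a common-interlacer argument via Lemma~\ref{lem:interlacing-properties}(i). There, $\Peak_{\abelian{n-1}}$ interlaces $2\Peak_{\abelian{n-1}}$ trivially. It also interlaces $(t-1)\Peak_{\abelian{n-2}}$, because $\Peak_{\abelian{n-2}}\interl\Peak_{\abelian{n-1}}$ and the added zero $1$ lies to the right of all the nonpositive zeros of $\Peak_{\abelian{n-1}}$. Hence it interlaces the sum.

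You use Lemma~\ref{recursion-rr} instead, where exactly the same sign fact enters as $b(r)=r-1<0$ at the zeros of $f$. So both routes rest on the same observation. Yours has three advantages:
\begin{itemize}
\item It checks the degree and leading-coefficient hypotheses explicitly.
\item It verifies $\Peak_{\abelian{1}}\interl\Peak_{\abelian{2}}$ by hand. The paper's induction passes over this step: it is anchored at $\Peak_{\abelian{0}}\interl\Peak_{\abelian{1}}$, although the recurrence only holds from $n=3$.
\item Deriving the recurrence from $x^2-2x+(1-t)=0$ is a cleaner replacement for the paper's ``easily verified by induction''.
\end{itemize}
The detour through $\Peak_{\abelian{n}}/t$ is harmless but unnecessary. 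Lemma~\ref{recursion-rr} is stated with weak interlacing, so the shared zero at $0$ causes no difficulty.
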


\begin{proof}
    Equation \ref{eq:rec-peak-abelian} is easily verified by induction on $n \geq 3$. The proof that $\{\Peak_{\abelian{n}}(t)\}_{n\geq 0}$ is a generalized Sturm sequence is similar to the proof of Corollary \ref{pnest-pol-abelian-rr}.
\end{proof}

\begin{corollary}\label{cor:peak-connected-abelian-rr}
    The polynomials $\Peak_{\abelian{n}}^c(t)$ are real-rooted for all $n \geq 0$.
\end{corollary}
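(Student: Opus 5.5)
The plan is to factor out $t$ and prove real-rootedness of the remaining factor by counting sign changes on the negative real axis. By Theorem \ref{th:useful}, $\Peak^c_{\abelian{n}}(t) = t\,Q_n(t)$ with
\[
Q_n(t) = E_n(t) - (n-1)t, \qquad E_n(t) = \sum_{i=0}^{\lfloor n/2\rfloor}\binom{n}{2i}t^i = \tfrac12\left[(1+\sqrt t)^n + (1-\sqrt t)^n\right].
\]
It therefore suffices to show that $Q_n$ has $\lfloor n/2\rfloor$ real zeros. For $n\ge 4$ the $t$-coefficient of $Q_n$ is $\binom{n}{2}-(n-1)=\binom{n-1}{2}$, so $\deg Q_n=\lfloor n/2\rfloor$. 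The cases $n\le 3$, where $Q_n$ has degree at most one, are trivial. A finite list of small $n$ will be checked directly, for example against Table \ref{tab:peak-Abelian}.

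The key step is a trigonometric parametrization of the negative half-line, which is the same device that locates the zeros of $E_n$. Put $t=-\tan^2\theta$ with $\theta\in[0,\pi/2)$, so that $\sqrt t=i\tan\theta$. Then
\[
E_n(-\tan^2\theta)=\frac{\cos(n\theta)}{\cos^n\theta},
\qquad
Q_n(-\tan^2\theta)=\frac{\cos(n\theta)}{\cos^n\theta}+(n-1)\tan^2\theta .
\]
Evaluate this at $\theta_j=j\pi/n$ for $0\le j<n/2$, where $\cos(n\theta_j)=(-1)^j$. Writing $u=\tan^2\theta_j$, we have $1/\cos^n\theta_j=(1+u)^{n/2}$. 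The sign of $Q_n$ at $\theta_j$ is therefore $(-1)^j$ as soon as
\[
(1+u)^{n/2} > (n-1)u .
\]
For $j=0$ this holds trivially, since $Q_n(0)=1$. If $n$ is even we add the point $t=-\infty$, corresponding to $\theta=\pi/2$. There the sign of $Q_n$ is $(-1)^{n/2}$, because its leading coefficient is $\binom{n}{n}=1$. Given these signs, the intermediate value theorem yields at least $\lfloor n/2\rfloor$ sign changes along $(-\infty,0]$, hence $\lfloor n/2\rfloor$ distinct negative zeros. Since this equals $\deg Q_n$, all zeros of $Q_n$ are real.

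The main obstacle is the inequality $(1+u)^{n/2}>(n-1)u$ for $u\ge \tan^2(\pi/n)$, i.e.\ on the range relevant to $j\ge1$. Bernoulli's inequality alone is not enough, because it only gives a linear term $\tfrac n2 u$. The first approach to try is to use $\tan^2(\pi/n)\ge \pi^2/n^2$ and to note that $f(u)=(1+u)^{n/2}-(n-1)u$ is convex. Its minimum lies at $u^*=\bigl(\tfrac{2(n-1)}{n}\bigr)^{2/(n-2)}-1\approx \tfrac{2\ln 2}{n}$. There $f(u^*)\ge 1-(n-1)u^*$ is bounded below by roughly $1-2\ln 2\cdot\tfrac{n-1}{n}>0$ for all large $n$. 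This settles every sufficiently large $n$, and the finitely many remaining $n$ are verified directly. If this estimate turns out to be too tight, the fallback is to evaluate $Q_n$ at the zeros $\theta=(2j-1)\pi/(2n)$ of $E_n$ instead. At those points $Q_n=(n-1)\tan^2\theta>0$, and combining them with the alternating extrema should still produce enough sign changes.
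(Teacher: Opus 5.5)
Your argument matches the paper's proof except at the one step that needs real work, and your proposed estimate there fails. The skeleton is the paper's: the same substitution $t=-\tan^2\theta$, the same sample points $\theta_j=j\pi/n$, and the same intermediate-value count. Your point $t=-\infty$ for even $n$ plays the role of the paper's $\theta_m=\pi/2$. Your key inequality $(1+u)^{n/2}>(n-1)u$ with $u=\tan^2\theta$ becomes, after multiplying by $\cos^n\theta$, exactly the paper's bound $(n-1)\sin^2\theta\cos^{n-2}\theta<1$.

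\textbf{Why the estimate fails.} Since $2\ln 2\approx 1.386>1$, your lower bound $1-(n-1)u^*\approx 1-2\ln 2\cdot\tfrac{n-1}{n}$ tends to $1-2\ln 2<0$. So it proves nothing for large $n$. The step $(1+u^*)^{n/2}\ge 1$ throws away too much; in fact $(1+u^*)^{n/2}\to 2$.

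\textbf{Why the two remedies you mention do not help.} Restricting to $u\ge\tan^2(\pi/n)$ changes nothing, because $u^*\sim 2\ln 2/n$ lies far to the right of $\tan^2(\pi/n)\sim\pi^2/n^2$. The minimum of $f$ is therefore inside your range, and odd $j$ with $\tan^2\theta_j$ near $u^*$ do occur. The fallback fails for a different reason. At even $j$ the value $Q_n(-\tan^2\theta_j)$ is positive for free, so the inequality is only needed at odd $j$, where you must show $Q_n<0$. The points $(2j-1)\pi/(2n)$, where $Q_n=(n-1)\tan^2\theta>0$, supply no negative values and hence no sign changes.

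\textbf{The repair.} At the critical point, $(1+u^*)^{(n-2)/2}=\tfrac{2(n-1)}{n}$, so $(1+u^*)^{n/2}=\tfrac{2(n-1)}{n}(1+u^*)$ and
\[
f(u^*)=\frac{n-1}{n}\bigl(2-(n-2)u^*\bigr).
\]
For $n\ge 4$ put $x=2/(n-2)\in(0,1]$. Since $\tfrac{2(n-1)}{n}<2$ and $2^x$ is convex on $[0,1]$, we get $u^*<2^{x}-1\le x$. Hence $f(u^*)>0$, and by convexity $f>0$ on all of $[0,\infty)$.

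Alternatively, follow the paper: with $c=\cos^2\theta$, maximize $(n-1)(1-c)c^{(n-2)/2}$ at $c=(n-2)/n$. This gives $\tfrac{2(n-1)}{n}\bigl(1-\tfrac2n\bigr)^{(n-2)/2}<1$, which you can confirm with $\ln(1-y)\le -y$.

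Either way the bound is uniform in $\theta$, so no case check is needed beyond the trivial $n\le 3$. Separately, $\deg Q_n=\lfloor n/2\rfloor$ for $n\ge 4$ follows from the leading coefficient $\binom{n}{2\lfloor n/2\rfloor}\in\{1,n\}$, not from the $t$-coefficient.
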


\begin{proof}
    We use a strategy similar to the one used in the proof of \cite[Theorem 1.3]{rank2matroids} and in the proof of \cite[Theorem 1]{liu2026interlacing}. Set $m=\lfloor n/2\rfloor$. Since $t | \Peak_{\abelian{n}}^c(t)$ for all $n \geq 0$, it is sufficient to prove that 
    \[
        p_n(t) \coloneqq \sum_{j = 0}^{m} \binom{n}{2j}t^j-(n-1)t
    \]
    is real-rooted.  The cases $n = 0, \ldots, 5$ are immediate, since $p_0(t) = 0$, $p_1(t) = p_2(t) = 1$, $p_3(t) = 1 + t$, $p_4(t) = 1 + 3t + t^2$ and $p_5(t) = 1 + 6t + 5t^2$, so assume $n\geq 6$. We will use the Intermediate Value Theorem to show that $p_n(t)$ has $m$ distinct zeros.
    
    Let $\operatorname{Re} \colon \mathbb{C} \to \mathbb{R}$ denote the real part of a complex number, that is, if $a, b \in \mathbb{R}$, then $\operatorname{Re}(a + ib) = a$. Since
    \[
            \operatorname{Re}(1+i t)^n = \sum_{j = 0}^m \binom{n}{2j}(-t^2)^j \quad \mbox{and} \quad \operatorname{Re}(1 + i \tan(\theta))^n = \operatorname{Re}\left(  \frac{e^{i n\theta}}{\cos^n (\theta)}  \right) = \frac{\cos(n \theta)}{\cos^n (\theta)},
    \]
    it follows that
    \[
        p_n( - \tan^2 (\theta)) = \frac{\cos(n \theta)}{\cos^n (\theta)} + (n-1)\tan^2(\theta).
    \]
    We will show that $p_n( - \tan^2 (\theta))$ has $m$ distinct zeros in $(0, \pi/2)$.  Since $\cos (\theta)>0$ on this interval, this is equivalent to studying the zeros of
    \begin{equation}
    \label{eq:trigSigns}
        F_n(\theta)\coloneqq \cos(n\theta) + (n-1)\sin^2(\theta)\cos^{n-2}(\theta) = \cos^n (\theta) \, p_n(-\tan^2 (\theta)).
    \end{equation}
    We shall use that
    \begin{equation}
    \label{eq:limits-Fn}
        0 \leq (n-1)\sin^2 (\theta) \cos^{n-2} (\theta) < 1, \qquad 0<\theta<\pi/2,
    \end{equation}
    where the leftmost inequality is obvious. For the rightmost inequality, set $u = \cos^2 (\theta)$. The left hand side becomes
    $(n-1)(1-u)u^{(n-2)/2}$, whose maximum on $0 < u < 1$ is
    \[
        \frac{2(n-1)}{n}\left(1-\frac{2}{n}\right)^{(n-2)/2}<1
    \]
    for $n \geq 2$. For $k=0,1,\ldots,m$, set $\theta_k=k\pi/n$. Since
    \[
        \cos(n \theta_k) =
        \begin{cases}
            \phantom{-}1 &\mbox{if } k \mbox{ is even},\\
            -1 &\mbox{if } k \mbox{ is odd},
        \end{cases} 
    \]
    it follows that
    \[
        F_n (\theta_k) 
        \begin{cases}
            > 0 \quad \mbox{ if } k \mbox{ is even},\\
            < 0 \quad \mbox{ if } k \mbox{ is odd},
        \end{cases}
    \]
    by definition of $F_n(\theta)$ and \eqref{eq:limits-Fn}. Hence, by continuity of $F_n(\theta)$ and by the Intermediate Value Theorem, $F_n(\theta)$ has a zero $x_k$ in each interval $(\theta_{k-1}, \theta_k)$, $k = 1, 2, \ldots, m$. Then, the corresponding numbers $-\tan^2(x_k)$ are $m$ distinct negative zeros of $p_n(t)$.  Since $\deg p_n (t) =m$, all zeros of $p_n(t)$ are real.
\end{proof}


\begin{table}[H]
\centering
\begin{tabular}{@{}r r r@{}}
\toprule
$n$ & $\Peak_{\mathcal{A}(n)}(t)$ & $\Peak_{\mathcal{A}(n)}^c(t)$ \\
\midrule
0  & $1$ & $0$ \\
1  & $t$ & $t$ \\
2  & $t+t^2$ & $t$ \\
3  & $t+3t^2$ & $t+t^2$ \\
4  & $t+6t^2+t^3$ & $t+3t^2+t^3$ \\
5  & $t+10t^2+5t^3$ & $t+6t^2+5t^3$ \\
6  & $t+15t^2+15t^3+t^4$ & $t+10t^2+15t^3+t^4$ \\
7  & $t+21t^2+35t^3+7t^4$ & $t+15t^2+35t^3+7t^4$ \\
8  & $t+28t^2+70t^3+28t^4+t^5$ & $t+21t^2+70t^3+28t^4+t^5$ \\
9  & $t+36t^2+126t^3+84t^4+9t^5$ & $t+28t^2+126t^3+84t^4+9t^5$ \\
10 & $t+45t^2+210t^3+210t^4+45t^5+t^6$ & $t+36t^2+210t^3+210t^4+45t^5+t^6$\\
\bottomrule
\end{tabular}
\caption{The distribution of peaks over Abelian unit interval graphs and connected Abelian unit interval graphs with $n$ vertices, $1 \leq n \leq 10$. The coefficients of $\Peak_{\mathcal{A}(n)}(t)$ are given by \cite[\oeis{A034839}]{OEIS}.}
\label{tab:peak-Abelian}
\end{table}

By Theorem \ref{th:useful}, one easily verifies that $\mbox{\Large $\nicefrac{\Peak_{\abelian{2n}}(t)}{t}$}$ is symmetric for every $n \geq 1$. Since they are also real-rooted by Corollary \ref{cor:peak-pol-albelian-rr}, it follows that the gamma-polynomials associated to them, $\gamma_{\abelian{2n}}^{\peak}(t)$, have nonnegative coefficients and are real-rooted by Lemmas \ref{lem:gamma1} and \ref{lem:gamma2}, see Table \ref{tab:gamma-peak-Abelian}.

We finish this section by finding a closed formula for the coefficients of $\gamma_{\abelian{2n}}^{\peak}(t)$, and by presenting a combinatorial interpretation for them.

\begin{proposition}
\label{prop:gamma-peak-abelian}
    For each $n \in \mathbb{N}$,
    \begin{equation}
    \label{eq:peak-gamma-Abelian}
        \gamma_{\abelian{2n}}^{\peak}(t) = \sum_{i=0}^{\lfloor \frac{n}{2} \rfloor} 2^{2i} \binom{n}{2i} t^i = \frac{(1 + 2\sqrt{t})^n + (1 - 2\sqrt{t})^n}{2} .
    \end{equation}
    Moreover, $\displaystyle 2^{2i} \binom{n}{2i}$ is the number of Abelian Dyck paths $(k, L)$ of semilength $2n$ with $i+1$ peaks for which there is no $j \in [n]$ such that $S_{2j-1} S_{2j} S_{2j+1} = ENE$ or $S_{2j-1} S_{2j} S_{2j+1} = NEN$ are subwords of $LE$, the augmented $NE$ lattice path obtained by appending an $E$ step to $L$.
\end{proposition}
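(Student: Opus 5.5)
The proof will follow the one of Proposition~\ref{th:gamma-Abelian} essentially line by line, with the even-part identity of Theorem~\ref{th:useful} replacing the odd-part identity of Theorem~\ref{thm:pnest-pol-abelian}.

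For the closed formula, Theorem~\ref{th:useful} gives
\[
\frac{\Peak_{\abelian{2n}}(t)}{t} = \frac{(1+\sqrt{t})^{2n} + (1-\sqrt{t})^{2n}}{2} = \frac{\left((1+t)+2\sqrt{t}\right)^n + \left((1+t)-2\sqrt{t}\right)^n}{2}.
\]
Expanding both terms by the binomial theorem, the odd powers of $2\sqrt{t}$ cancel. The surviving terms are
\[
\sum_{i=0}^{\lfloor n/2\rfloor} 2^{2i}\binom{n}{2i} t^i (1+t)^{n-2i}.
\]
Since $\Peak_{\abelian{2n}}(t)/t$ has symmetric degree $n$, this is exactly its expansion in the basis $\{t^i(1+t)^{n-2i}\}$, which gives \eqref{eq:peak-gamma-Abelian}. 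The radical form follows by substituting $x=2\sqrt{t}$ into $\frac{(1+x)^n+(1-x)^n}{2}=\sum_i\binom{n}{2i}x^{2i}$.

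For the combinatorial interpretation, I take an Abelian Dyck path $(k,L)$ of semilength $2n$ with $L=S_1\cdots S_{2n}$ and $S_1=E$. I split $[2n]$ into the pairs $P_j=\{2j-1,2j\}$ and look at the turns of the augmented word $LE$. Appending $E$ ensures that every run of $N$'s in $LE$ is followed by an $E$. Hence $\peak(L)$, the number of $NE$ factors, equals the number of runs of $1$'s in $B_L$ used in the proof of Theorem~\ref{th:useful}, and $\peak(\avec)=1+\peak(L)$. Since $LE$ starts and ends with $E$, its number of turns is even, namely $2\peak(L)$.

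Each $P_j$ contains $0$, $1$ or $2$ turns of $LE$. Having $2$ turns in $P_j$ means that $S_{2j-1}S_{2j}S_{2j+1}\in\{ENE,NEN\}$. I then define the reduction: replace such an $S_{2j}$ by $S_{2j-1}$. This removes exactly two turns, lowers $\peak(L)$ by one, and does not affect the other pairs, because it only changes the turns at positions $2j-1$ and $2j$. After applying it to every such pair, I get a path $\tilde L$ in which every pair has at most one turn and which has no forbidden $ENE/NEN$ pattern.

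If $\tilde L E$ has $2i$ turns, then exactly $2i$ pairs carry one turn and the remaining $n-2i$ pairs carry none. The fibre of the reduction over $\tilde L$ consists of the independent choice, for each zero-turn pair, of whether to insert the flip. This is possible because a zero-turn pair has $S_{2j-1}=S_{2j}$, and the question is whether it also equals $S_{2j+1}$. So the first thing I need to check is that a zero-turn pair in $\tilde L E$ really has the form $EEE$ or $NNN$ together with the next letter. The fibre then contributes $t^{i+1}(1+t)^{n-2i}$, after accounting for $S_1=E$ and the trailing $E$.

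This is the main obstacle: showing that the flip/reduction is a clean bijection between the full class and pairs (reduced path, subset of zero-turn pairs). Two points need care. First, the first pair: since $S_1=E$ is fixed, a flip at $j=1$ would change $S_2$ but never $S_1$, so it stays valid. Second, the last pair $j=n$, where $S_{2n+1}$ is the appended $E$.

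Once the bijection is established, comparing with the gamma-expansion shows that $[t^i]\gamma^{\peak}_{\abelian{2n}}(t)$ counts the reduced paths with $i+1$ peaks. By the closed formula this count is $2^{2i}\binom{n}{2i}$. As a direct cross-check, I would count these paths as choosing $2i$ of the $n$ pairs to carry turns, each in one of two positions.
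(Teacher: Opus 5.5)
Your proposal is correct and follows the paper's proof. The gamma-expansion is the same binomial computation from Theorem~\ref{th:useful}, and the combinatorial part uses the same pair decomposition $P_j=\{2j-1,2j\}$ and middle-letter reduction as Proposition~\ref{th:gamma-Abelian}, to which the paper simply refers. The step you flag as the main obstacle is already settled by your own observation that a flip changes only $S_{2j}$, hence only the turns in $P_j$, while the odd-indexed letters are never touched; these letters include $S_1=E$ and the appended $E$ and determine which pairs have exactly one turn.
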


\begin{proof}
    Let $n \in \mathbb{N}$. By \eqref{eq:peak-pol-abelian},
    \begin{align*}
        \frac{\Peak_{\abelian{2n}}(t)}{t} &= \frac{(1+\sqrt{t})^{2n} + (1-\sqrt{t})^{2n}}{2} \\
                                           &= \frac{\left( (1+t) + 2\sqrt{t} \right)^{n} + \left( (1+t) - 2\sqrt{t} \right)^{n}}{2} \\
                                           &= \frac{1}{2} \left[ \sum_{i=0}^{n} \left( 2\sqrt{t} \right)^i \binom{n}{i} (1+t)^{n-i} + \sum_{i=0}^{n} \left( -2\sqrt{t} \right)^i \binom{n}{i} (1+t)^{n-i} \right] \\
                                           &= \sum_{i = 0}^{\lfloor \frac{n}{2} \rfloor} 2^{2i} \binom{n}{2i} t^i (1 + t)^{n-2i}.
    \end{align*}
    Hence, \eqref{eq:peak-gamma-Abelian} holds.

    The proof of the second part of the result is similar to the proof of the second part of Theorem \ref{th:gamma-Abelian}, and we leave it for the reader.
\end{proof}

\begin{remark}
\label{rmk:gamma-peak-abelian}
    The polynomials $\{ \gamma^{\peak}_{\abelian{2n}}(t) \}_{n \geq 1}$ satisfy the recursion
    \begin{equation*}
        \gamma^{\peak}_{\abelian{2n}}(t) =
        \begin{cases}
            1 &\mbox{if } n = 1, \\
            1+4t &\mbox{if } n = 2, \\
            2 \cdot \gamma^{\peak}_{\abelian{2n-2}}(t) + (4t - 1) \cdot \gamma^{\peak}_{\abelian{2n-4}}(t) &\mbox{if } n \geq 3,
        \end{cases}
    \end{equation*}
    from which one can prove that $\{\gamma^{\peak}_{\abelian{2n}}(t)\}_{n\geq 1}$ is a generalized Sturm sequence.
\end{remark}

\begin{table}[H]
\centering
\begin{tabular}{@{}r r@{}}
\toprule
$n$ & $\gamma^{\peak}_{\abelian{2n}}(t)$\\
\midrule
1  & $1$ \\
2  & $1 + 4t$ \\
3  & $1 + 12t$ \\
4  & $1 + 24t + 16t^2$ \\
5  & $1 + 40t + 80t^2$ \\
6  & $1 + 60t + 240t^2 + 64t^3$ \\
7  & $1 + 84t + 560t^2 + 448t^3$ \\
8  & $1 + 112t + 1120t^2 + 1792t^3 + 256t^4$ \\
9  & $1 + 144t + 2016t^2 + 5376t^3 + 2304t^4$ \\
10  & $1 + 180t + 3360t^2 + 13440t^3 + 11520t^4 + 1024t^5$ \\
\bottomrule
\end{tabular}
\caption{The polynomials $\gamma^{\peak}_{\abelian{2n}}(t)$, $1 \leq n \leq 10$.}
\label{tab:gamma-peak-Abelian}
\end{table}

\section{2-nested unit interval graphs}
\label{sec:line-graphs}

In this section, we define $m$-nested unit interval graphs, and study the case $m = 2$. We characterize $2$-nested unit interval graphs in terms of a subclass of line graphs, and study the peak-nesting and the peak-polynomials associated to them, proving that they are all generalized Sturm sequences. In particular, we find new combinatorial interpretations for some sequences in OEIS.

A unit interval graph $\Gamma_\avec$ is called \defin{$m$-nested} if its peak-nesting is at most $m$, and we denote the set of all $m$-nested unit interval graphs on $[n]$ by $\mathcal{N}_m(n)$. The \defin{line graph} of a graph $G = (V,E)$ is a graph $L(G)$ such that its vertex set is $E$, and two vertices of $L(G)$ are adjacent if and only if their corresponding edges share a common vertex in $G$. A \defin{multigraph} is a pair $H = (V, F)$ where $V$ is a set of elements called \defin{vertex set} and $F \subseteq \binom{V}{1} \cup \binom{V}{2}$ is a multiset called \defin{edge set}. In particular, every graph is a multigraph.

\begin{lemma}\cite[Theorem 8.4]{hararyGT1969}
\label{charac-line}
    A graph $G$ is the line graph of some other graph or multigraph if and only if it is possible to find a collection of cliques in $G$ (allowing some of the cliques to be single vertices) that partition the edges of $G$, such that each vertex of $G$ belongs to exactly two of the cliques.
\end{lemma}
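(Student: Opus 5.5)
The proof is the classical argument of Krausz, and I would prove the two implications separately, taking a multigraph $H=(V,F)$ to have edges that are either genuine pairs $\{u,v\}$ or singletons $\{u\}$ ("half-edges"). Throughout I would work with the convention, implicit in the statement, that the cliques in the collection form a family indexed by a set, so that a single vertex may occur as a singleton clique more than once.

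\emph{Necessity.} Suppose $G = L(H)$, so $V(G)=F$. For each $v \in V$, let $C_v \subseteq F$ be the set of edges of $H$ incident with $v$. Each $C_v$ is a clique in $G$, since its members pairwise share $v$.
\begin{itemize}
\item Every edge of $G$ joins two edges $e,f$ of $H$ that share an endpoint $v$, so it lies in $C_v$.
\item Conversely, every vertex $e \in F$ lies in $C_u$ and $C_v$ when $e=\{u,v\}$. When $e=\{u\}$, I would pair $C_u$ with the singleton clique $\{e\}$.
\end{itemize}
Thus each vertex of $G$ belongs to exactly two cliques of the family $\{C_v\}_{v\in V}\cup\{\{e\} : |e|=1\}$. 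The delicate point is that these cliques must partition $E(G)$. An edge $ef$ of $G$ lies in two cliques $C_u, C_v$ exactly when $e$ and $f$ are parallel edges $\{u,v\}$. So I would argue (or impose as part of the setting) that $H$ can be taken without parallel pairs. I expect this to be the main obstacle: it is exactly where the multigraph convention of the statement must be pinned down. In the application to unit interval graphs, the relevant case is the triangle-free one, where the construction in the converse never produces parallel edges.

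\emph{Sufficiency.} Given a family $\mathcal{K}$ of cliques of $G$ that partitions $E(G)$, with every vertex in exactly two members, I would define a multigraph $H$ with vertex set $\mathcal{K}$. Each vertex $x$ of $G$ becomes the edge $\{K,K'\}$, where $K,K'$ are the two cliques containing $x$. If these two are the same singleton $\{x\}$ counted twice, I would instead use the $1$-element edge $\{K\}$.

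It then remains to verify that $x \mapsto$ (its edge) is an isomorphism $G \cong L(H)$. If $xy \in E(G)$, the partition property puts $xy$ in some $K \in \mathcal{K}$, so the edges of $x$ and $y$ in $H$ share the endpoint $K$. Conversely, if the edges of $x$ and $y$ share an endpoint $K$, then $x,y \in K$, and since $K$ is a clique, $xy \in E(G)$.

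Finally, distinct $x,y$ cannot share both of their cliques: otherwise the edge $xy$ would lie in two members of a partition. Hence the map is injective on edges and $H$ has no parallel pairs, which closes the loop with the necessity direction.
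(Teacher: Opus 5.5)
Your sufficiency argument is correct; it also shows that the multigraph it produces never has parallel edges. Your necessity argument is correct whenever $H$ has no two parallel $2$-element edges, and $1$-element edges cause no trouble. The paper gives no proof of this lemma, only the citation to Harary, whose Theorem 8.4 is Krausz's theorem for line graphs of \emph{graphs}. Your argument is the standard one behind it.

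The genuine gap is the option you leave open, namely arguing that $H$ ``can be taken without parallel pairs''. This cannot be done. Once parallel edges are allowed (the paper's $F$ is a multiset), the ``only if'' direction is false. Let $H$ have three parallel edges $1,2,3$ between $u$ and $v$, plus pendant edges $x=uw$ and $y=vz$. Then $L(H)$ is $K_5$ minus the edge $xy$.

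Suppose the edges of $L(H)$ were partitioned into cliques with every vertex in at most two non-singleton cliques. The cliques through $1$ split $N(1)=\{2,3,x,y\}$ into two cliques separating $x$ from $y$. There are two cases.
\begin{itemize}
\item One part is $\{2,3,x\}$, so the clique $\{1,2,3,x\}$ is used. Then no clique through $y$ can contain two of $1,2,3$, since those edges are already used, nor can it contain $x$. Hence $y$ lies in the three cliques $\{1,y\},\{2,y\},\{3,y\}$. The case of the part $\{2,3,y\}$ is symmetric.
\item Otherwise, up to swapping $2$ and $3$, the cliques at $1$ are $\{1,2,x\}$ and $\{1,3,y\}$. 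This forces the cliques at $2$ to be $\{1,2,x\}$ and $\{2,3,y\}$. Then the edge $3y$ lies in two of them.
\end{itemize}
This is the classical fact that $K_5-e$ is a line graph of a multigraph but is one of Beineke's forbidden subgraphs for line graphs of graphs.

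So the only correct resolution is your other option. Restrict the statement to graphs, or to multigraphs whose only repeated edges are $1$-element ones; with that restriction your proof is complete.

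Your closing remark about the application is also mistaken. The triangle-free multigraphs needed in Proposition~\ref{charac-unit-interval-line} do have parallel edges. For instance, $\Gamma_{(0,1,2,3,3)}$ is $K_5$ minus the edge $15$. It is $2$-nested with peak-cliques $\{1,2,3,4\}$ and $\{2,3,4,5\}$. The multigraph built in the proof of that proposition therefore has a triple edge, coming from the twin vertices $2,3,4$. By the argument above, $\Gamma_{(0,1,2,3,3)}$ is not the line graph of any graph. That proof constructs the multigraph directly and never invokes this lemma, so nothing there requires the lemma in the generality in which it is stated.
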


A multigraph is called \defin{triangle-free} if it does not contain a triangle as an induced subgraph.

\begin{proposition}
\label{charac-unit-interval-line}
    A unit interval graph is $2$-nested if and only if it is the line graph of a triangle-free multigraph.
\end{proposition}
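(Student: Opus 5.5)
The plan is to prove the two implications separately. For the forward direction I build a multigraph whose vertices are the peak-cliques. For the converse I use the fact that in the line graph of a triangle-free multigraph every clique lies inside a star. Rather than checking the exact partition condition of Lemma~\ref{charac-line}, I will construct the multigraph directly and verify $L(H)\cong\Gamma_\avec$ by hand. The reason is the example $\avec=(0,1,2,2)$: there two peak-cliques, $\{1,2,3\}$ and $\{2,3,4\}$, share the edge $23$. The corresponding multigraph has a double edge, so the cliques only cover the edges rather than partition them. Lemma~\ref{charac-line} serves only as the guiding picture.

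First I record a preliminary fact: every edge lies in a peak-clique. Given an edge $ij$ with $i<j$, Lemma~\ref{lem:uig-clique} makes $\{i,\dots,j\}$ a clique. Extend it to the right while $\{i,j+1\}$ is an edge, then to the left while $\{i-1,j'\}$ is an edge; the monotonicity of area sequences makes this legitimate. The process ends at a peak-edge whose peak-clique contains $\{i,j\}$. I will also use that peak-cliques are integer intervals $[a_1,b_1],\dots,[a_p,b_p]$ with $a_1<\dots<a_p$ and $b_1<\dots<b_p$. This holds because they are distinct maximal cliques (Lemma~\ref{peak-cliques-are-maximal}) that are intervals, so none contains another.

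($\Rightarrow$) Suppose $\pnest(\avec)\le 2$. Let $H$ have one vertex for each peak-clique. Add a fresh pendant vertex $z_v$ for each vertex $v$ of $\Gamma_\avec$ lying in exactly one peak-clique. For each $v\in[n]$ put an edge $e_v$ joining the two peak-cliques containing $v$, or joining its unique peak-clique to $z_v$. Then $e_u$ and $e_v$ share an endpoint exactly when $u$ and $v$ lie in a common peak-clique. By the preliminary fact this happens exactly when $uv$ is an edge of $\Gamma_\avec$, so $L(H)\cong\Gamma_\avec$. For triangle-freeness, the pendant vertices have degree one and lie in no triangle. Suppose three peak-cliques $[a_r,b_r]$, $[a_s,b_s]$, $[a_t,b_t]$ with $r<s<t$ pairwise intersect. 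Then $a_t\le b_r$, and the monotone endpoints give $a_t\in[a_r,b_r]\cap[a_s,b_s]\cap[a_t,b_t]$. That vertex would have peak-nesting at least $3$, a contradiction.

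($\Leftarrow$) Let $\Gamma_\avec\cong L(H)$ with $H$ a triangle-free multigraph. Any clique of $L(H)$ is a set of pairwise incident edges of $H$. Pairwise incident edges either share a common endpoint or form a triangle in the underlying graph, and since $H$ is triangle-free they share a common endpoint. Hence every maximal clique of $L(H)$ is a full star $\mathrm{St}(x)$ of edges at some vertex $x$ of $H$. Here I must check that parallel edges and one-element edges (loops or half-edges) cause no trouble; they simply belong to the star of their endpoint(s). Now take a vertex $v$ of $\Gamma_\avec$, that is, an edge of $H$ with endpoints among $\{x,y\}$. Every peak-clique containing $v$ is maximal by Lemma~\ref{peak-cliques-are-maximal}, so it equals $\mathrm{St}(x)$ or $\mathrm{St}(y)$. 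Distinct peak-cliques are distinct sets, so $\pnest(v)\le 2$.

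The main obstacle I expect is the careful bookkeeping in the converse for degenerate cases: isolated vertices of $\Gamma_\avec$, and the multigraph convention allowing singleton edges. I also need to confirm that "triangle-free" as defined (no induced triangle) really excludes three pairwise incident edges without a common endpoint when parallel edges are present.
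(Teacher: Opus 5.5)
Your proposal is correct and follows the paper's proof. In the forward direction the vertices of the multigraph are the peak-cliques, and each vertex of $\Gamma_\avec$ becomes an edge joining the one or two peak-cliques that contain it. You use pendant vertices where the paper uses singleton edges, and you supply the covering argument (every edge lies in a peak-clique) and the Helly-type argument (no three pairwise intersecting peak-cliques) that the paper leaves to ``by construction''. In the converse, maximal cliques of $L(H)$ are stars, so each vertex lies in at most two peak-cliques.

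On the point you flagged: ``triangle-free'' must be read as ``the underlying simple graph has no three pairwise adjacent vertices'', which is also what the paper's argument tacitly uses. Under a literal induced-subgraph reading, a triangle $xyz$ with $xy$ doubled and pendant edges at $x$ and $y$ would count as triangle-free. Its line graph is $\Gamma_{(0,1,2,3,3,3)}$, which has peak-nesting $3$, so the converse would fail.
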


\begin{proof}
    Assume that $\Gamma_\avec$ is a $2$-nested unit interval graph, and let $\mathcal{P}_\avec = \{P_1, \ldots, P_m\}$ be its set of peak-cliques, where $i < j \implies \min (P_i) < \min (P_j)$. We construct a graph $G$ as follows: its vertex set is $\{P_1, \ldots, P_m\}$, and for each vertex $k$ of $\Gamma_\avec$, if $\pnest(k) = 1$ and $k \in P_i$, let $e$ be an edge whose endpoints are $P_i$; otherwise, let $e$ be the edge whose endpoints are the peak-cliques to which $k$ belongs. By construction, $G$ is triangle-free and $L(G) \cong \Gamma_\avec$.

    Conversely, let $H$ be a triangle free multigraph such that $L(H)$ is a unit interval graph. Then every maximal clique of $L(H)$ arises from the edges incident with a vertex of $H$. Since every edge of $G$ is incident to at most two of its vertices, it follows that every vertex of $L(G)$ belongs to at most two maximal cliques. Hence, the peak-nesting of $L(G)$ is at most $2$.
\end{proof}

Set $g_0 \coloneqq 1$, $h_0 \coloneqq 0$ and, for each $n \in \mathbb{N}$, let $g_n = |\mathcal{N}_2(n)|$ and $h_n = | \{ \Gamma_\avec \in \mathcal{N}_2(n) \colon \Gamma_\avec \mbox{ is connected} \} |$, see Table \ref{tab:unit-line}. By Corollary \ref{prop:pnest-1-and-2}(i) and (ii), and \cite[\oeis{A007051},\oeis{A001519}]{OEIS}, we have the following:

\begin{proposition}
\label{prop:recursions}
    The numbers $g_n$ and $h_n$ satisfy the recursions
    \begin{equation}
    \label{rec1}
        g_n = 
        \begin{cases}
            1 &\mbox{ if } n = 0 \mbox{ or } n = 1, \\
            2 &\mbox{ if } n = 2, \\
            4g_{n-1} - 3g_{n-2} &\mbox{ if } n \geq 3,
        \end{cases}
    \end{equation}
    and 
    \begin{equation}
    \label{rec2}
        h_n = 
        \begin{cases}
            0 &\mbox{ if } n = 0, \\
            1 &\mbox{ if } n=1 \mbox{ or } n = 2, \\
            3h_{n-1} - h_{n-2} &\mbox{ if } n \geq 3 .
        \end{cases}
    \end{equation}
    Hence,
    \[
        g_n = \frac{3^{n-1} + 1}{2} \quad \mbox{and} \quad h_n = F_{2n-3} = \frac{1}{\sqrt{5}} \left( \frac{1 + \sqrt{5}}{2} \right)^{2n-3} - \frac{1}{\sqrt{5}} \left( \frac{1 - \sqrt{5}}{2} \right)^{2n-3}
    \]
    for $n \geq 3$. The generating functions for $\{g_n\}_{n \geq 0}$ and $\{h_n\}_{n \geq 0}$ are, respectively,
    \[
        g(z) = \sum_{n \geq 0} g_n z^n = \frac{1-3z+z^2}{(1-z)(1-3z)} \quad \mbox{and} \quad h(z) = \sum_{n \geq 0} h_n z^n = \frac{z(1 - 2z)}{1 - 3z + z^2}.
    \]
\end{proposition}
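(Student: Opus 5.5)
The plan is to read off $g_n$ and $h_n$ from coefficients already computed. By definition, $\mathcal{N}_2(n)$ consists of the graphs with peak-nesting $1$ or $2$. Since $\Pnest_n(t)$ has no constant term for $n\ge1$, we get
\[
g_n=[t]\Pnest_n(t)+[t^2]\Pnest_n(t), \qquad h_n=[t]\Pnest_n^c(t)+[t^2]\Pnest_n^c(t).
\]
Corollary \ref{prop:pnest-1-and-2}(i),(ii) then gives the closed forms. For the unrestricted count,
\[
g_n=2^{n-1}+S(n,3)=2^{n-1}+\frac{1+3^{n-1}}{2}-2^{n-1}=\frac{3^{n-1}+1}{2},
\]
using the expression for $S(n,3)$ from the proof of that corollary. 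For the connected count, $h_n=1+(F_{2n-3}-1)=F_{2n-3}$. Binet's formula then yields the stated radical expression for $h_n$.

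For the small cases I would check directly against Table \ref{tab:peak-nesting-polynomial}: $g_1=1$, $g_2=2$, $h_1=h_2=1$, together with the conventions $g_0=1$ and $h_0=0$. The formulas $g_n=(3^{n-1}+1)/2$ and $h_n=F_{2n-3}$ also hold at $n=1,2$ if we interpret $F_{-1}=1$, which is why the statement only asserts them for $n\ge3$.

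The recursions follow from the closed forms. For $g_n$, note that $3^{n-1}$ and the constant $1$ are both annihilated by the operator with characteristic polynomial $(x-1)(x-3)=x^2-4x+3$. Hence $g_n=4g_{n-1}-3g_{n-2}$ for $n\ge3$, after checking $g_3=5=4\cdot2-3\cdot1$. For $h_n$, use the Fibonacci identity $F_{m+2}=3F_m-F_{m-2}$, which comes from expanding $F_{m+2}=F_{m+1}+F_m=2F_m+F_{m-1}$ and $F_{m-1}=F_m-F_{m-2}$. Taking $m=2n-5$ gives $h_n=3h_{n-1}-h_{n-2}$, and the base case $n=3$ is $2=3\cdot1-1$.

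For the generating functions, I would multiply each recursion by $z^n$ and sum over $n\ge3$, solving with the initial values. For $g$, $(1-4z+3z^2)g(z)$ equals a polynomial of degree at most $2$ determined by $g_0,g_1,g_2$: it is $1+(1-4)z+(2-4+3)z^2=1-3z+z^2$. For $h$, $(1-3z+z^2)h(z)=0+z+(1-3)z^2=z-2z^2$.

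The only point requiring care is the interaction of the boundary conventions ($g_0=1$, $h_0=0$) with the recurrence start. The generating-function computation must use the initial values and not the closed forms, since the closed forms fail at $n=0$. Otherwise everything is a direct consequence of Corollary \ref{prop:pnest-1-and-2}, so I expect no real obstacle.
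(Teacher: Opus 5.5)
Your proposal is correct and follows essentially the paper's route. The paper obtains the proposition directly from Corollary~\ref{prop:pnest-1-and-2}(i),(ii), reading off $g_n$ and $h_n$ as the sum of the $t$ and $t^2$ coefficients, and cites the OEIS entries for the recursions and generating functions. You instead verify those recursions and generating functions explicitly from the closed forms and initial values, and that verification is correct.
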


\begin{table}[H]
\centering
    \begin{tabular}{ c c  c  c  c  c  c  c  c  c  c  c } 
        \toprule
        $\mathbf{n}$ & $\mathbf{0}$ & $\mathbf{1}$ & $\mathbf{2}$ & $\mathbf{3}$ & $\mathbf{4}$ & $\mathbf{5}$ & $\mathbf{6}$ & $\mathbf{7}$ & $\mathbf{8}$ & $\mathbf{9}$ & $\mathbf{10}$ \\
        \midrule
        $\mathbf{g_n}$ & 1 & 1 & 2 & 5 & 14 & 41 & 122 & 365 & 1094 & 3281 & 9842 \\
        \midrule
        $\mathbf{h_n}$ & 0 & 1 & 1 & 2 & 5 & 13 & 34 & 89 & 233 & 610 & 1597 \\
        \bottomrule
    \end{tabular}
    \caption{Table of values of $g_n$ and $h_n$ for $0 \leq n \leq 10$. These sequences are \cite[\oeis{A007051}]{OEIS} and \cite[\oeis{A001519}]{OEIS}, respectively.}
    \label{tab:unit-line}
\end{table}

\begin{corollary}
    The number of 2-nested Abelian unit interval graphs on $[n]$ is given by the Cake numbers, \cite[\oeis{A000125}]{OEIS}. An explicit formula is $\binom{n-1}{0}+\binom{n-1}{1}+\binom{n-1}{2}+\binom{n-1}{3}$.
\end{corollary}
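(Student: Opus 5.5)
The count follows from two earlier results: Lemma \ref{lemma:pnest(k+1)} and the binary-word encoding in the proof of Theorem \ref{thm:pnest-pol-abelian}. By definition, the $2$-nested Abelian unit interval graphs on $[n]$ are the Abelian ones with peak-nesting at most $2$. So their number is
\[
    [t]\Pnest_{\abelian{n}}(t) + [t^2]\Pnest_{\abelian{n}}(t).
\]
Theorem \ref{thm:pnest-pol-abelian} gives these two coefficients as $\binom{n}{1}$ and $\binom{n}{3}$. Hence the count is $\binom{n}{1}+\binom{n}{3}$.

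It remains to check that this equals the stated expression. Pascal's rule gives
\[
    \binom{n}{1} = \binom{n-1}{0}+\binom{n-1}{1}, \qquad \binom{n}{3} = \binom{n-1}{2}+\binom{n-1}{3}.
\]
Adding these yields exactly $\binom{n-1}{0}+\binom{n-1}{1}+\binom{n-1}{2}+\binom{n-1}{3}$, which is the Cake number formula of \oeis{A000125}.

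There is also a direct combinatorial version of the same count. In the proof of Theorem \ref{thm:pnest-pol-abelian}, an Abelian Dyck path $(k,L)$ corresponds to a binary word of length $n$ starting with $0$. Deleting that leading $0$ gives an arbitrary binary word of length $n-1$, and peak-nesting $i$ corresponds to $i-1$ ascents. A binary word of length $n-1$ with at most one ascent has the form $1^a0^b1^c$, or else it is constant. Such a word is determined by its set of positions where adjacent letters differ, and this set can have size $0,1,2$ or $3$. Counting these change-position sets of size at most $3$ among the $n-2$ gaps gives $\sum_{j\le 3}\binom{n-2}{j}$. That sum must be reconciled with the formula above through the extra choice of $k$. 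I would therefore not rely on this route, and would instead use the coefficient argument, which is immediate.

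The only delicate point is small $n$, where some binomial coefficients vanish. The formula still holds there: for $n=1$ it gives $1$, for $n=2$ it gives $2$, for $n=3$ it gives $4$, and for $n=4$ it gives $8$. These agree with Table \ref{tab:pnest-Abelian}, where the sum of the $t$ and $t^2$ coefficients is $1,2,4,8$. Since Theorem \ref{thm:pnest-pol-abelian} is valid for all $n\ge1$ and the Pascal identity holds for all $n \geq 1$, there is no real obstacle.
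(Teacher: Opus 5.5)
Your coefficient argument is correct and is exactly the paper's proof: add $[t]\Pnest_{\abelian{n}}(t)=\binom{n}{1}$ and $[t^2]\Pnest_{\abelian{n}}(t)=\binom{n}{3}$ from Theorem~\ref{thm:pnest-pol-abelian}, then split each term by Pascal's rule. The combinatorial aside you abandoned has errors, which do not affect the proof since you do not rely on it: the words of length $n-1$ with at most one ascent are exactly those of the form $1^a0^b1^c0^d$ (so $0^a1^b0^c$ is missed by your form); a word is determined by its change set only together with its first letter; and $k$ is fixed by the word, not an extra choice.
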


\begin{proof}
    By Proposition \ref{charac-unit-interval-line}, a unit interval graph is a 2-nested if and only if its peak-nesting is at most $2$. Hence, by Theorem \ref{thm:pnest-pol-abelian} the number of 2-nested Abelian unit interval graphs is
    \begin{align*}
        [t]\pnest_{\mathcal{A}(n)}(t) + [t^2]\pnest_{\mathcal{A}(n)}(t) &= \binom{n}{1} + \binom{n}{3} \\
                                                                        &= \binom{n-1}{0} + \binom{n-1}{1}  + \binom{n-1}{2} + \binom{n-1}{3}.
    \end{align*}
\end{proof}

\subsection{The peak-nesting polynomial of 2-nested unit interval graphs}
\label{sub:pnest-2nested}

Set $\Pnest_{\mathcal{N}_2(0)}(t) \coloneqq 1$ and $\Pnest_{\mathcal{N}_2(0)}^c(t) \coloneqq 0$. For each $n \in \mathbb{N}$, let
\[
    \Pnest_{\mathcal{N}_2(n)}(t) \coloneqq \sum_{\avec \in \mathcal{N}_2(n)} t^{\pnest(\avec)} \qquad \mbox{and} \qquad \Pnest_{\mathcal{N}_2(n)}^c(t) \coloneqq \sum_{\substack{\avec \in \mathcal{N}_2(n) \\ \Gamma_\avec \mbox{ \scriptsize is connected}}} t^{\pnest(\avec)},
\]
see Table \ref{tab:pnest-line}.

By Corollary~\ref{prop:pnest-1-and-2},
\[
    \Pnest_{\mathcal{N}_2(n)}(t) = 2^{n-1}t + S(n,3) t^2 \qquad \mbox{and} \qquad \Pnest_{\mathcal{N}_2(n)}^c(t) = t + (F_{2n-3} - 1) t^2,
\]
for $n \geq 1$, from which we get the following:

\begin{corollary}\label{cor:pnest-line-rr}
    The polynomials $\Pnest_{\mathcal{N}_2(n)}(t)$ and $\Pnest_{\mathcal{N}_2(n)}^c(t)$ are real-rooted for every $n \geq 0$, and the sequences $\left\{ \Pnest_{\mathcal{N}_2(n)}(t) \right\}_{n \geq 0}$ and $\left\{ \Pnest_{\mathcal{N}_2(n)}^c(t) \right\}_{n \geq 0}$ are generalized Sturm sequences.
\end{corollary}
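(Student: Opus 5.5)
The plan is to work directly from the explicit formulas
\[
    \Pnest_{\mathcal{N}_2(n)}(t) = 2^{n-1}t + S(n,3)\, t^2, \qquad \Pnest_{\mathcal{N}_2(n)}^c(t) = t + (F_{2n-3}-1)\, t^2 \qquad (n \geq 1).
\]
\textbf{Real-rootedness.} Each polynomial factors as $t \cdot (a_n + b_n t)$ with $a_n > 0$ and $b_n \geq 0$. Hence its zeros are $0$ and, when $b_n > 0$, the single point $-a_n/b_n \leq 0$. So every member of both families is real-rooted, and the degree is $1$ for $n \leq 2$ and $2$ for $n \geq 3$. The cases $n = 0$, namely $1$ and $0$, are real-rooted by convention.

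\textbf{Interlacing.} For $n \geq 3$ two consecutive members $f_{n-1} = t(a_{n-1}+b_{n-1}t)$ and $f_n = t(a_n+b_nt)$ share the zero $0$. I will check $f_{n-1} \interl f_n$ by comparing the nonzero roots. The required ordering $r_n \leq 0 \leq r_{n-1}$ fails, since both are nonpositive, so interlacing must instead read $r_{n-1} \leq r_n \leq 0$ on the ordered root lists $\{r_n, 0\}$ and $\{r_{n-1}, 0\}$.

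Writing the roots as $\lambda_2 \leq \lambda_1$ for $f_n$ and $\delta_2 \leq \delta_1$ for $f_{n-1}$, the definition requires $\delta_2 \leq \lambda_2 \leq \delta_1 \leq \lambda_1$. Here $\lambda_1 = \delta_1 = 0$, so the condition reduces to $\delta_2 \leq \lambda_2$, that is, $-a_{n-1}/b_{n-1} \leq -a_n/b_n$. Equivalently,
\[
    a_n b_{n-1} \leq a_{n-1} b_n.
\]
For the unrestricted family this reads $2^{n-1}S(n-1,3) \leq 2^{n-2}S(n,3)$, i.e.\ $2S(n-1,3) \leq S(n,3)$. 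This follows from $S(n,3) = 3S(n-1,3) + S(n-1,2)$. For the connected family the condition is $F_{2n-5}-1 \leq F_{2n-3}-1$, which holds because the Fibonacci numbers are increasing. One could alternatively phrase this through Lemma~\ref{recursion-rr} using the three-term recurrences of Proposition~\ref{prop:recursions}, but the direct root comparison is simpler.

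\textbf{Small and boundary cases.} Transitions from degree $0$ to $1$ to $2$ need separate checks. The step $1 \interl t$ holds by convention, and the step $t \interl 2t$ (unrestricted) or $t \interl t$ (connected) is fine since all zeros are $0$. For the step from $n = 2$ to $n = 3$ the interlacing $\lambda_2 \leq \delta_1 = 0 \leq \lambda_1 = 0$ is immediate. In the connected case the step $0 \interl t$ holds by convention.

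The only genuine content is the coefficient-ratio inequality, which is easy. The main care point is handling the conventions at $n = 0, 1, 2$ and the double appearance of the zero $0$ correctly.
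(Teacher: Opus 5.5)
Your proof is correct and follows the same route as the paper. The paper states this corollary as an immediate consequence of the explicit formulas $2^{n-1}t + S(n,3)t^2$ and $t + (F_{2n-3}-1)t^2$, without further detail. Your argument supplies the verification the paper leaves implicit: you reduce interlacing to $a_n b_{n-1} \le a_{n-1} b_n$, check it via $S(n,3) = 3S(n-1,3) + S(n-1,2)$ and the monotonicity of the Fibonacci numbers, and handle the boundary cases $n \le 3$ separately.
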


\begin{table}[H]
\centering
\begin{tabular}{@{}r r r@{}}
\toprule
$n$ & $\Pnest_{\mathcal{N}_2(n)}(t)$ & $\Pnest_{\mathcal{N}_2(n)}^c(t)$ \\
\midrule
0  & $1$ & $0$ \\
1  & $t$ & $t$ \\
2  & $2t$ & $t$ \\
3  & $4t + t^2$ & $t + t^2$ \\
4  & $8t + 6t^2$ & $t + 4t^2$ \\
5  & $16t + 25t^2$ & $t + 12t^2$ \\
6  & $32t + 90t^2$ & $t + 33t^2$ \\
7  & $64t + 301t^2$ & $t + 88t^2$ \\
8  & $128t + 966t^2$ & $t + 232t^2$ \\
9  & $256t + 3025t^2$ & $t + 609t^2$ \\
10 & $512t + 9330t^2$ & $t + 1596t^2$\\
\bottomrule
\end{tabular}
\caption{The distribution of peak-nestings over 2-nested unit interval graphs and connected 2-nested unit interval graphs with $n$ vertices, $0 \leq n \leq 10$.}
\label{tab:pnest-line}
\end{table}

\subsection{The peak polynomial of 2-nested unit interval graphs}

Set $\Peak_{\mathcal{N}_2(0)}(t) \coloneqq 1$ and $\Peak_{\mathcal{N}_2(0)}^c(t) \coloneqq 0$. For each $n \in \mathbb{N}$, let
\[
    \Peak_{\mathcal{N}_2(n)}(t) \coloneqq \sum_{\avec \in \mathcal{N}_2(n)} t^{\peak(\avec)} \qquad \mbox{and} \qquad \Peak_{\mathcal{N}_2(n)}^c(t) \coloneqq \sum_{\substack{\avec \in \mathcal{N}_2(n) \\ \Gamma_\avec \mbox{ \scriptsize is connected}}} t^{\peak(\avec)} ,
\]
see Table \ref{tab:peak-line}.

\begin{theorem}
\label{th:peak-line-formula}
    For each $n \in \mathbb{N}$,
    \begin{equation}
    \label{eq:peak-pol-line}
        \Peak_{\mathcal{N}_2(n)}(t) = \sum_{i=0}^{n-1} \left[ \sum_{k=0}^i \binom{n-1}{k} \binom{n-1-k}{2(i-k)} \right] t^{i+1} = \frac{t \cdot \left[ (1 + t + \sqrt{t})^{n-1} + (1 + t - \sqrt{t})^{n-1} \right]}{2} .
    \end{equation}
\end{theorem}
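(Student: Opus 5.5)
The plan is to use the bijection $\varphi_n$ of Theorem \ref{th:bij-dyck-decorated-motzkin} together with Corollary \ref{cor:peak-and-pnest}. By Lemma \ref{lem:pnest=height}, $\Gamma_\avec$ is $2$-nested exactly when the uncolored Motzkin path $M_\wvec$ of $\pnv(\avec)$ has height at most one. So we sum the formula of Corollary \ref{cor:peak-and-pnest} over Motzkin paths of length $n-1$ and height at most one:
\[
    \Peak_{\mathcal{N}_2(n)}(t) = \sum_{\substack{M \in \motzkin(n) \\ \height(M) \leq 1}} t^{1+\mathrm{u}(M)}(1+t)^{n-1-2\mathrm{u}(M)}.
\]
A Motzkin path of height at most one with $j$ up steps is determined as follows. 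Choose $2j$ of the $n-1$ positions for the $U$ and $D$ steps; these must alternate $UDUD\cdots$, so there is exactly one arrangement, and every remaining step is $H$. Hence there are $\binom{n-1}{2j}$ such paths, and
\[
    \Peak_{\mathcal{N}_2(n)}(t) = t\sum_{j\ge 0}\binom{n-1}{2j} t^{j}(1+t)^{n-1-2j}.
\]

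For the closed form, write $t^j = (\sqrt t)^{2j}$. The sum is then the even part of the binomial expansion of $\bigl((1+t)+\sqrt t\bigr)^{n-1}$, namely
\[
    \tfrac12\left[(1+t+\sqrt t)^{n-1}+(1+t-\sqrt t)^{n-1}\right].
\]
This gives the right-hand equality of \eqref{eq:peak-pol-line}, by the same manipulation used in Propositions \ref{th:gamma-Abelian} and \ref{prop:gamma-peak-abelian}.

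For the coefficient formula, expand $(1+t)^{n-1-2j}$ and collect $t^{i}$ in $\sum_j \binom{n-1}{2j}t^j(1+t)^{n-1-2j}$. This gives $\sum_j\binom{n-1}{2j}\binom{n-1-2j}{i-j}$. The target is $\sum_k\binom{n-1}{k}\binom{n-1-k}{2(i-k)}$, so the two sums must be matched. I would argue combinatorially: both count words of length $n-1$ over $\{U,D,H,H^*\}$ that satisfy the height-$\le 1$ structure and have weight $i = \mathrm{u} + \mathrm{h}^*$.

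In the first sum, we choose the $2j$ positions of the $U/D$ steps and then choose which $i-j$ of the remaining $H$ steps are marked. In the second sum, we first choose the $k$ positions that will carry a weight-one letter, and then distribute the rest. This is the step I expect to be the main obstacle: choosing the labelling so that the two choice orders line up. My plan is to reinterpret $k$ as the number of marked letters, where each pair $UD$ is encoded by marking its $U$ and leaving its $D$ unmarked. An equivalent route is to compare the coefficient of $x^{n-1}$ in the generating functions, or to verify both sums via Vandermonde-type identities.

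If the bijective matching proves awkward, I would fall back on checking the coefficient identity algebraically. Expand $(1+t+\sqrt t)^{n-1}$ as a trinomial, $\sum \frac{(n-1)!}{a!b!c!}t^{b}(\sqrt t)^{c}$, and keep only the terms with $c$ even. Setting $c = 2(i-k)$ and $b$ accordingly then directly yields $\binom{n-1}{k}\binom{n-1-k}{2(i-k)}$, which makes the trinomial expansion the cleanest justification of the middle expression.
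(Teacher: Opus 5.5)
Your argument is correct, but it is organized differently from the paper's.

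The paper works with the bicolored path $M^*_\avec$ throughout. It uses $\peak(\avec)=1+\mathrm{u}(M^*_\avec)+\mathrm{h}^*(M^*_\avec)$ together with the height-at-most-one condition. A path with $k$ marked horizontal steps and $i-k$ up steps is then specified by two choices: the $k$ positions of the $H^*$ steps, and then $2(i-k)$ of the remaining $n-1-k$ positions for the alternating $U/D$ steps. This gives the middle expression immediately. The closed form is then only asserted ``by induction on $n$''.

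You instead group by the uncolored path via Corollary~\ref{cor:peak-and-pnest}, obtaining $t\sum_j\binom{n-1}{2j}t^j(1+t)^{n-1-2j}$. This is the computation the paper postpones to Proposition~\ref{prop:gamma-line}. From it the closed form follows by the binomial theorem. The middle expression then follows from the trinomial expansion: the terms with $b=k$, $c=2(i-k)$, $a=n-1-2i+k$ contribute exactly $\binom{n-1}{k}\binom{n-1-k}{2(i-k)}$. Your route derives the closed form directly and gives the symmetry and gamma-expansion for free. The paper's route gives the coefficients a direct meaning, with $k$ the number of marked plateaus.

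The ``main obstacle'' you anticipated is not one, and your proposed reading of $k$ is off. Counting each $UD$ pair through a marked $U$ would make the number of marked letters $\mathrm{u}+\mathrm{h}^*=i$, not $k$. The right reading is $k=\mathrm{h}^*$ and $j=i-k=\mathrm{u}$. The two sums simply count the same words with the two choice orders reversed. Since your trinomial fallback already settles the identity, this does not affect correctness.
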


\begin{proof}
    Fix $n \in \mathbb{N}$. Given $\avec \in \mathcal{N}_2(n)$, let $M^*_\avec$ be its associated bicolored Motzkin path. By Theorem \ref{th:bij-dyck-decorated-motzkin}(ii) and by \eqref{eq:bij-dmotzkin},
    \begin{equation}
    \label{eq:peak-path}
        \peak(\avec) = 1 + \mathrm{u}(M^*_\avec) + \mathrm{h}^*(M^*_\avec).
    \end{equation}
    Moreover, since $\Gamma_\avec$ is 2-nested, it follows that $\height(M^*_\avec) \leq 1$, which implies that $M^*_\avec$ can be decomposed as
    \[
        W_1 (U W_2 D) W_3 (U W_4 D) \cdots (U W_{2m} D) W_{2m+1},
    \]
    where $W_1, \ldots, W_{2m+1}$ are (possibly empty) words in $\{H, H^*\}$.

    Assume that $\peak(\avec) = i+1$ and that $h^*(M^*_\avec) = k$. Recall that $M^*_\avec$ has $n-1$ steps. By \eqref{eq:peak-path}, $\mathrm{u}(M^*_\avec) = i - k$. Hence, there are
    \[
        \binom{n-1}{k} \binom{n-1-k}{2(i-k)}
    \]
    many such paths. Summing over all $0 \leq k \leq i$ gives the first equality of \eqref{eq:peak-pol-line}.

    The second equality of \eqref{eq:peak-pol-line} follows by induction on $n \geq 1$.
\end{proof}

\begin{corollary}
\label{cor:peak-line-rr}
    The polynomials $\{\Peak_{\mathcal{N}_2(n)}(t)\}_{n \geq 1}$ satisfy the recursion
    \begin{equation}
    \label{eq:rec-peak-pol-line}
        \Peak_{\mathcal{N}_2(n)}(t) =
        \begin{cases}
            t \quad &\mbox{if } n = 1, \\
            t + t^2 \quad &\mbox{if } n = 2, \\
            2 \left( t+1 \right) \cdot \Peak_{\mathcal{N}_2(n-1)}(t) - (1+t+t^2) \cdot \Peak_{\mathcal{N}_2(n-2)}(t)  \quad &\mbox{if } n \geq 3.
        \end{cases}
    \end{equation}
    Moreover, $\Peak_{\mathcal{N}_2(n)}(t)$ is real-rooted for every $n \geq 0$ and $\left\{ \Peak_{\mathcal{N}_2(n)}(t) \right\}_{n \geq 0}$ is a Sturm sequence.
\end{corollary}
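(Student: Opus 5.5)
The recursion comes straight from the closed form in Theorem \ref{th:peak-line-formula}. Set $\alpha = 1+t+\sqrt{t}$ and $\beta = 1+t-\sqrt{t}$. Then $\alpha+\beta = 2(1+t)$ and $\alpha\beta = (1+t)^2 - t = 1+t+t^2$. So $\alpha$ and $\beta$ are the roots of $x^2 - 2(1+t)x + (1+t+t^2)$, and any combination $t(\alpha^{n-1}+\beta^{n-1})/2$ satisfies $P_n = 2(1+t)P_{n-1} - (1+t+t^2)P_{n-2}$. The initial values $P_1 = t$ and $P_2 = t\cdot 2(1+t)/2 = t+t^2$ come from the formula, and one checks $P_3 = 2(1+t)(t+t^2) - (1+t+t^2)t$ against the first equality. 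The relation with $P_0$ is not needed, since the recursion is stated only for $n\ge 3$.

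For real-rootedness I would not go through the recursion. Since the sign of $-(1+t+t^2)$ is negative, Lemma \ref{recursion-rr} needs $b(r)\le 0$, which holds because $b(t)=-(1+t+t^2)<0$ everywhere. Even so, it is cleaner to compute the zeros explicitly. Write $Q_n = P_n/t = (\alpha^{n-1}+\beta^{n-1})/2$. For $t<0$, put $t=-s^2$ with $s>0$; then $\alpha = 1-s^2+is$ and $\beta=\bar\alpha$, so $Q_n = |\alpha|^{n-1}\cos((n-1)\theta)$ with $\theta=\arg\alpha$. Zeros occur when $(n-1)\theta \equiv \pi/2 \pmod \pi$.

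As $s$ runs over $(0,\infty)$, $\alpha = 1-s^2+is$ runs in the upper half plane from $1$ to $-\infty$. Hence $\theta$ increases continuously from $0$ to $\pi$; monotonicity follows since $\cot\theta = (1-s^2)/s$ is strictly decreasing. So there are exactly $n-1$ values of $s$, hence $n-1$ distinct negative real zeros of $Q_n$. Since $\deg Q_n = n-1$ (the leading coefficient is $1$), all zeros of $P_n = tQ_n$ are real. This also gives $\deg P_n = n$, which is the Sturm degree condition.

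The expected main obstacle is interlacing $P_{n-1}\interl P_n$. In the angle picture, the zeros of $Q_n$ occur at $\theta = (2j-1)\pi/(2(n-1))$ for $j=1,\dots,n-1$, and those of $Q_{n-1}$ at $\theta=(2j-1)\pi/(2(n-2))$. These angle sets interlace on $(0,\pi)$, because $\frac{2j-1}{2(n-1)} < \frac{2j-1}{2(n-2)} < \frac{2j+1}{2(n-1)}$. Since $\theta\mapsto t=-s^2$ is monotone, the zeros of $Q_{n-1}$ interlace those of $Q_n$.

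Multiplying both by $t$ adds a common zero at $0$, to the right of all the others, so $P_{n-1}\interl P_n$ still holds. The cases $n=0,1$ must be checked separately: $1\interl t$, and $t \interl t+t^2$ holds since $0$ lies right of $-1$. Positive leading coefficients are clear, so $\{P_n\}$ is a Sturm sequence.
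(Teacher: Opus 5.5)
Your proof is correct, but it reaches real-rootedness and interlacing by a different route from the paper's. The paper gets the recursion by induction from the closed form of Theorem \ref{th:peak-line-formula}; your characteristic-root computation with $\alpha,\beta$ is the same idea made explicit. The paper then feeds the recursion into Lemma \ref{recursion-rr} with $f=P_{n-1}$, $g=P_{n-2}$, $a=2(1+t)$ and $b=-(1+t+t^2)$. The inductive hypothesis $P_{n-2}\interl P_{n-1}$, positive leading coefficients, and $b<0$ everywhere give $P_{n-1}\interl P_n$ in one line.

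You instead restrict to the negative axis via $t=-s^2$ and write $P_n/t=|\alpha|^{n-1}\cos((n-1)\theta)$, where $\theta=\arg(1-s^2+is)$ increases strictly from $0$ to $\pi$. This locates the zeros exactly at $\theta=(2j-1)\pi/(2(n-1))$. Interlacing then reduces to the elementary inequality $(2j-1)(n-1)<(2j+1)(n-2)$ for $j\le n-2$.

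What your approach buys: it is self-contained, needs neither the induction nor the external interlacing lemma, and proves more. It shows the zeros are simple, negative, explicitly parametrized, and strictly interlacing. It is essentially the trigonometric technique the paper itself uses for Corollary \ref{cor:peak-connected-abelian-rr}.

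What the paper's approach buys: it is shorter, and it uses only the sign of $b(t)$, so it applies to any three-term recurrence of this shape without a closed form.

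One small point in your favor: the recursion holds only for $n\ge 3$, so $P_1\interl P_2$ does not follow from $P_0\interl P_1$. It must be checked by hand. You do this ($t\interl t+t^2$), whereas the paper's induction starts from $P_0\interl P_1$ and passes over this base case.
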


\begin{proof}
    The recursion given in \eqref{eq:rec-peak-pol-line} is easily verified by induction on $n \geq 3$. To prove that $\left\{ \Peak_{\mathcal{N}_2(n)}(t) \right\}_{n \geq 0}$ is a Sturm sequence, first observe that $\Peak_{\mathcal{N}_2(0)}(t) \interl \Peak_{\mathcal{N}_2(1)}(t)$, and assume by induction on $n$ that $\Peak_{\mathcal{N}_2(n-2)}(t) \interl \Peak_{\mathcal{N}_2(n-1)}(t)$ for some $n-1 > 1$. Since $\Peak_{\mathcal{N}_2(n)}(t)$ and $\Peak_{\mathcal{N}_2(n-2)}(t)$ have positive leading coefficients by Theorem \ref{th:peak-line-formula}, and $-(1+t+t^2) \leq 0$ for all $t \in \mathbb{R}$, Lemma \ref{recursion-rr} implies that $\Peak_{\mathcal{N}_2(n-1)}(t) \interl \Peak_{\mathcal{N}_2(n)}(t)$, which concludes the proof.
\end{proof}

\begin{theorem}
    For each $n \geq 2$,
    \begin{equation}
    \label{eq:cpeak-pol-line}
        \Peak_{\mathcal{N}_2(n)}^c(t) = \sum_{i=0}^{n-2} \binom{n-2+i}{2i} t^{i+1} = t \cdot \leftidx{_2}F_1 \left( 2-n, n-1 ; \frac{1}{2}; -\frac{t}{4} \right).
    \end{equation}
\end{theorem}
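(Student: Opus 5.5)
We work with bicolored Motzkin paths, as in the proof of Theorem \ref{th:peak-line-formula}. By Theorem \ref{th:bij-dyck-decorated-motzkin} and \eqref{eq:peak-path}, a connected $2$-nested graph on $[n]$ corresponds to a bicolored Motzkin path $M^*$ of length $n-1$ with height at most $1$ and no $H^*$ step at height $0$ (connectedness, as in the proof of Corollary \ref{ref:pnest-formulas}). Its peak number is $1+\mathrm{u}(M^*)+\mathrm{h}^*(M^*)$. Such a path has the form
\[
    H^{c_0}\,(U W_1 D)\,H^{c_1}\,(U W_2 D)\cdots (U W_m D)\,H^{c_m},
\]
where the $W_j$ are words in $\{H,H^*\}$. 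The $m$ up-steps each contribute one peak, and so does every $H^*$ inside some $W_j$.

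The first step is to encode this as a weighted word. Each level-$0$ $H$ contributes weight $1$. Each block $UW_jD$ of length $\ell_j+2$ contributes $t\,(1+t)^{\ell_j}$. To reach the claimed binomial form without a double sum, I would instead use the Delest–Viennot map from the proof of Corollary \ref{ref:pnest-formulas}. The map $M^*\mapsto \eta(S_1)\cdots\eta(S_{n-1})$ is a bijection from these paths onto Dyck paths of semilength $n-1$. The height-$\le1$, no-$H^*$-at-$0$ condition becomes height at most $3$. Under $\eta$, peaks of $M^*$ (in the sense above) should correspond to something simple on the Dyck path, such as peaks or valleys at specified heights. This route needs care, so I would primarily use generating functions.

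Concretely, let $x$ mark length and $t$ mark peaks. A block has generating function $B=\dfrac{t x^2}{1-x(1+t)}$. The full generating function is then
\[
    \sum_{n\ge1}\Peak^c_{\mathcal N_2(n)}(t)\,x^{n-1}=\frac{t}{1-x-B}=\frac{t\,(1-x(1+t))}{(1-x)(1-x(1+t))-tx^2}.
\]
The denominator simplifies to $1-(2+t)x+x^2$, a Chebyshev-type denominator. Expanding $\dfrac{1}{1-(2+t)x+x^2}$ gives $\sum_k \binom{k+j}{2j}$-type coefficients. Indeed, the standard identity
\[
    \frac{1-x}{1-(2+t)x+x^2}=\sum_{N\ge0}\sum_i \binom{N+i}{2i}t^i x^N
\]
(the Morgan-Voyce polynomials) is what I would use. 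The numerator here is $1-x-tx$ rather than $1-x$, so I would split it as $(1-x)-tx$. This gives two Morgan-Voyce-type series, one shifted by $tx$. Pascal's rule $\binom{N+i}{2i}-\binom{N-1+i+1}{2i+2}\cdot(\ldots)$, or a direct coefficient comparison, should then collapse the result to $\binom{(n-2)+i}{2i}$ after the index shift $N=n-2$. This bookkeeping is the main obstacle. If it fails, I would verify the identity by checking that both sides satisfy the recurrence $P_n=(2+t)P_{n-1}-P_{n-2}$ with matching initial values $n=2,3$, namely $t$ and $t+t^2$.

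Finally, the hypergeometric form follows from the term ratio of $c_i=\binom{n-2+i}{2i}$. That ratio is
\[
    \frac{c_{i+1}}{c_i}=\frac{(n-1+i)(n-2-i)}{(2i+1)(2i+2)}=\frac{(i+n-1)(i+2-n)}{(i+\tfrac12)(i+1)}\cdot\left(-\tfrac14\right).
\]
This matches ${}_2F_1(2-n,n-1;\tfrac12;z)$ evaluated at $z=-t/4$, and the sign convention there needs checking. Since $c_0=1$, the prefactor $t$ completes the formula.
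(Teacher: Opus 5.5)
Your proof is correct, but it takes a different route from the paper's.

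\textbf{The paper's route.} The paper proves the coefficient formula bijectively. For $i\ge 1$ it replaces every $H^*$ (necessarily at height $1$) by $DU$, and it inserts one extra $H$ between consecutive blocks $UW_jD$. This gives a bijection onto Motzkin paths of length $n-2+i$ with $i$ up-steps and height at most $1$. Those paths are counted directly by choosing the $2i$ positions of the alternating $U$/$D$ steps, which gives $\binom{n-2+i}{2i}$.

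\textbf{Your route.} You compute the generating function instead. Your block series $tx^2/(1-x(1+t))$ is right, and so is the simplification of the denominator to $1-(2+t)x+x^2$. The step you flag as the main obstacle takes one line. Since $1-x-tx=\bigl(1-(2+t)x+x^2\bigr)+x(1-x)$, you get
\[
\sum_{n\ge1}\Peak^c_{\mathcal{N}_2(n)}(t)\,x^{n-1}=t+tx\cdot\frac{1-x}{1-(2+t)x+x^2}.
\]
The Morgan--Voyce identity then gives $\Peak^c_{\mathcal{N}_2(n)}(t)=t\sum_i\binom{n-2+i}{2i}t^i$ for $n\ge 2$ at once. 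If you prefer to split the numerator as $(1-x)-tx$, your garbled Pascal step should read $\binom{N+i}{2i}-\binom{N+i-1}{2i-1}=\binom{N+i-1}{2i}$. Your fallback via $P_n=(2+t)P_{n-1}-P_{n-2}$ with the values $t$ and $t+t^2$ at $n=2,3$ is also valid. The Delest--Viennot digression is unnecessary and can be dropped.

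\textbf{Hypergeometric form.} Your term-ratio computation settles the sign. The ratio is exactly that of ${}_2F_1(2-n,n-1;\tfrac12;-t/4)$, and the series terminates at $i=n-2$ because $(2-n)_i=0$ for $i>n-2$.

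\textbf{What each route buys.} The paper's bijection gives a combinatorial interpretation of the coefficients as Motzkin paths of height at most one. Your generating function yields the recurrence of Corollary~\ref{cor:cpeak-line-rr} for free from the denominator. It also gives an actual derivation of the hypergeometric form, where the paper only refers to an induction.
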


\begin{proof}
    Fix $n \geq 2$. Let $\mathcal{C}_{n,i}$ be the set of all bicolored Motzkin paths of length $n-1$ associated to connected 2-nested unit interval graphs with $i+1$ peaks and $\mathcal{D}_{n,i}$ be the set of all Motzkin paths of length $n-2+i$ with $i$ $U$ steps and height at most $1$. We will show that there is a bijection between $\mathcal{C}_{n,i}$ and $\mathcal{D}_{n,i}$.

    In fact, let $\Gamma_\avec \in \mathcal{N}_2(n)$ be connected and let $M^*_\avec$ be its associated bicolored Motzkin path. Then $M^*_\avec$ can be decomposed as
    \[
        W_1 (U W_2 D) W_3 (U W_4 D) \cdots (U W_{2m} D) W_{2m+1},
    \]
    where $W_{2j}$ is a word in $\{H, H^*\}$ and $W_{2j+1} = H^k$ for some $k \geq 0$, by connectedness of $\Gamma_\avec$.
    
    Assume that $\peak(\avec) = i+1$. Using the notation of the proof of Theorem \ref{th:peak-line-formula}, observe that $\mathrm{u}(M^*_\avec) = m$ and let $\mathrm{h}^*(M^*_\avec) = h^*$. Then $m + h^* = i$ by Equation \ref{eq:peak-path}. If $i = 0$, then $m = h^* = 0$, and $M^*_\avec = H^{n-1}$. Hence $[1]\Peak_{\mathcal{N}_2(n)}^c(t) = 1$; if $i \geq 1$, let $\tilde{M}_\avec$ be the (uncolored) Motzkin path obtained from $M^*_\avec$ by replacing each $H^*$ by a $DU$ word and by adding an $H$ step between each word $U W_{2j} D$ and $U W_{2j+2} D$, $j \in [m-1]$. Since $\height(\tilde{M}_\avec) \leq 1$, and $\tilde{M}_\avec$ has $(n-1) + h^* + (m-1) = n-2+i$ steps, of which $m + h^* = i$ are $U$ steps, it follows that $\tilde{M}_\avec$ is a Motzkin path in $\mathcal{D}_{n,i}$. Hence the map $M^*_\avec \mapsto \tilde{M}_\avec$ is clearly a bijection between $\mathcal{C}_{n,i}$ and $\mathcal{D}_{n,i}$.
    
    We now observe that
    \[
        |\mathcal{D}_{n,i}| = \binom{n-2+i}{2i},
    \]
    since $\height(M) \leq 1$ for all $M \in \mathcal{D}_{n,i}$, which finishes the proof.

    The second equality of \eqref{eq:cpeak-pol-line} is verified by induction on $n \geq 2$.
\end{proof}

\begin{corollary}
\label{cor:cpeak-line-rr}
    The polynomials $\{\Peak_{\mathcal{N}_2(n)}^c(t)\}_{n \geq 1}$ satisfy the recursion
    \begin{equation}
    \label{eq:crec-peak-pol-line}
        \Peak_{\mathcal{N}_2(n)}^c(t) =
        \begin{cases}
            t \quad &\mbox{if } n = 1 \mbox{ or } n = 2, \\
            \left( t+2 \right) \cdot \Peak_{\mathcal{N}_2(n-1)}^c(t) - \Peak_{\mathcal{N}_2(n-2)}^c(t)  \quad &\mbox{if } n \geq 3.
        \end{cases}
    \end{equation}
    Moreover, $\Peak_{\mathcal{N}_2(n)}^c(t)$ is real-rooted for every $n \geq 0$ and $\left\{ \Peak_{\mathcal{N}_2(n)}^c(t) \right\}_{n \geq 0}$ is a generalized Sturm sequence.
\end{corollary}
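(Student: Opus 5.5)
Write $P_n(t) \coloneqq \Peak_{\mathcal{N}_2(n)}^c(t)$. The recursion follows from the closed formula \eqref{eq:cpeak-pol-line}, $P_n(t)=\sum_{i\ge 0}\binom{n-2+i}{2i}t^{i+1}$ for $n\ge 2$, together with Pascal's rule. I will compare coefficients of $t^{i+1}$ on both sides of
\[
P_n(t)=(t+2)P_{n-1}(t)-P_{n-2}(t).
\]
The required identity is
\[
\binom{n-2+i}{2i}=2\binom{n-3+i}{2i}+\binom{n-4+i}{2i-2}-\binom{n-4+i}{2i}.
\]
To verify it, apply Pascal twice to the left-hand side, reducing the upper index to $n-4+i$:
\[
\binom{n-2+i}{2i}=\binom{n-4+i}{2i}+2\binom{n-4+i}{2i-1}+\binom{n-4+i}{2i-2}.
\]
Then rewrite $2\binom{n-3+i}{2i}=2\binom{n-4+i}{2i}+2\binom{n-4+i}{2i-1}$. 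Substituting shows the two sides agree.

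The base cases $n=1,2$ both give $t$: for $n=2$ the formula yields $t$, and $n=1$ is the single graph. The case $n=3$ involves $P_1=t$, which does not follow the formula. I will check it directly: $(t+2)t-t=t^2+t$, which matches $\binom{1}{0}t+\binom{2}{2}t^2$. For $n\ge 4$, all indices are at least $2$ and the identity above applies.

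For real-rootedness and the generalized Sturm property, I will use Lemma \ref{recursion-rr} with $F=P_n$, $f=P_{n-1}$, $g=P_{n-2}$, $a=t+2$, $b=-1$. The coefficient $b\equiv-1$ is nonpositive everywhere. The polynomials $P_n$ and $P_{n-2}$ both have positive leading coefficients, and $\deg P_n=\deg P_{n-1}+1$ for $n\ge 3$, since $\deg P_n=n-1$. The induction starts from $P_0=0\interl P_1=t$ and $P_1=t\interl P_2=t$; the latter holds since two equal linear polynomials with root $0$ interlace. At each step the hypothesis $P_{n-2}\interl P_{n-1}$ then yields $P_{n-1}\interl P_n$ and real-rootedness of $P_n$.

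The main obstacle is handling the small indices, where $P_0=0$ and $P_1=P_2$ sit awkwardly with Lemma \ref{recursion-rr}. I would avoid this by invoking the lemma only from $n=3$ on, with base hypothesis $P_1\interl P_2$. Since that base hypothesis is degenerate, I would fall back on the explicit check that $P_3=t(1+t)$ has roots $-1,0$, which interlace the root $0$ of $P_2$. From $n=4$ onward, all degrees are distinct and the lemma applies cleanly.

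Alternatively, one can substitute $t=2\cos\theta-2$. This turns the recurrence into the Chebyshev-type recurrence $P_n=2\cos\theta\,P_{n-1}-P_{n-2}$, so $P_n$ is expressed through $\sin$ functions and its $n-1$ distinct real roots in $[-4,0]$ can be exhibited directly. This would be the fallback if the interlacing bookkeeping fails.
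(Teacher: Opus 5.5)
Your proposal is correct and follows essentially the same route as the paper. You read the recursion off the closed formula \eqref{eq:cpeak-pol-line}, making the paper's ``easily verified'' step explicit via a double Pascal expansion and a direct check of $n=3$, and you get real-rootedness and $P_{n-1}\interl P_n$ from Lemma~\ref{recursion-rr} with $b\equiv -1$, exactly as in the proof of Corollary~\ref{cor:peak-line-rr}, with somewhat more care than the paper about the degenerate initial terms $P_0=0$, $P_1=P_2=t$.
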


\begin{proof}
    The recursion given in \eqref{eq:crec-peak-pol-line} is easily verified by induction on $n \geq 3$. The proof that $\{\Peak_{\mathcal{N}_2(n)}^c(t)\}_{n\geq 0}$ is a generalized Sturm sequence is similar to the proof of Corollary \ref{cor:peak-line-rr}.
\end{proof}

\begin{table}[H]
\centering
\begin{tabular}{@{}r r r@{}}
\toprule
$n$ & $\Peak_{\mathcal{N}_2(n)}(t)$ & $\Peak_{\mathcal{N}_2(n)}^c(t)$ \\
\midrule
0  & $1$ & $0$ \\
1  & $t$ & $t$ \\
2  & $t + t^2$ & $t$ \\
3  & $t + 3t^2 + t^3$ & $t + t^2$ \\
4  & $t + 6t^2 + 6t^3 + t^4$ & $t + 3t^2 + t^3$ \\
5  & $t + 10t^2 + 19t^3 + 10t^4 + t^5$ & $t + 6t^2 + 5t^3 + t^4$ \\
6  & $t + 15t^2 + 45t^3 + 45t^4 + 15t^5 + t^6$ & $t + 10t^2 + 15t^3 + 7t^4 + t^5$ \\
7  & $t + 21t^2 + 90t^3 + 141t^4 + 90t^5 + 21t^6 + t^7$ & $t + 15t^2 + 35t^3 + 28t^4 + 9t^5 + t^6$ \\
8  & $t + 28t^2 + 161t^3 + 357t^4 + 357t^5 + 161t^6 + 28t^7 + t^8$ & $t + 21t^2 + 70t^3 + 84t^4 + 45t^5 + 11t^6 + t^7$ \\
\bottomrule
\end{tabular}
\caption{The distribution of peaks over 2-nested unit interval graphs and connected 2-nested unit interval graphs with $n$ vertices, $0 \leq n \leq 8$. The coefficients of $\Peak_{\mathcal{N}_2(n)}(t)$ are given by \cite[\oeis{A056241}]{OEIS}, while the coefficients of $\Peak_{\mathcal{N}_2(n)}^c(t)$ are given by \cite[\oeis{A085478}]{OEIS}.}
\label{tab:peak-line}
\end{table}

By Table \ref{tab:peak-line}, one verifies that $\mbox{\Large $\nicefrac{\Peak_{\mathcal{N}_2(n)}(t)}{t}$}$ is symmetric for every $n \in \mathbb{N}$, which we prove in Proposition \ref{prop:gamma-line}. Since they are also real-rooted by Corollary \ref{cor:peak-line-rr}, it follows that the gamma-polynomials associated to them, $\gamma_{\mathcal{N}_2(n)}(t)$, have nonnegative coefficients and are real-rooted by Lemmas \ref{lem:gamma1} and \ref{lem:gamma2}.

We finish this section by finding a closed formula for the coefficients of $\gamma_{\mathcal{N}_2(n)}(t)$, and by presenting a combinatorial interpretation for them.

\begin{proposition}
\label{prop:gamma-line}
    For each $n \in \mathbb{N}$,
    \begin{equation}
    \label{eq:gamma-line}
        \gamma_{\mathcal{N}_2(n)}(t) = \sum_{i=0}^{\lfloor \frac{n-1}{2} \rfloor} \binom{n-1}{2i} t^i = \frac{(1+\sqrt{t})^{n-1} + (1-\sqrt{t})^{n-1}}{2} .
    \end{equation}
    Moreover, $\displaystyle \binom{n-1}{2i}$ is the number of Motzkin paths of length $n-1$ with $i$ $U$ steps and height at most $1$ (or, equivalently, it is the number of Motzkin sequences of size $n$ with $i$ ascents whose largest entry is at most $2$).
\end{proposition}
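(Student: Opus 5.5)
We would prove the identity directly from the closed form in Theorem~\ref{th:peak-line-formula}, exactly as in Propositions~\ref{th:gamma-Abelian} and~\ref{prop:gamma-peak-abelian}, and then read off the combinatorial interpretation from the Motzkin-path model. Writing $u=\sqrt{t}$, Theorem~\ref{th:peak-line-formula} gives
\[
    \frac{\Peak_{\mathcal{N}_2(n)}(t)}{t} = \frac{\left((1+t)+u\right)^{n-1} + \left((1+t)-u\right)^{n-1}}{2}.
\]
Expanding both powers binomially in the two summands $(1+t)$ and $\pm u$, the odd powers of $u$ cancel and the even ones double. This leaves
\[
    \sum_{i=0}^{\lfloor (n-1)/2\rfloor}\binom{n-1}{2i}\, t^i (1+t)^{n-1-2i}.
\]
This simultaneously proves that $\Peak_{\mathcal{N}_2(n)}(t)/t$ is symmetric of symmetric degree $n-1$, since each basis element $t^i(1+t)^{n-1-2i}$ is. It also identifies the gamma-coefficients as $\gamma_i=\binom{n-1}{2i}$, by uniqueness of the expansion in the basis $\{t^i(1+t)^{n-1-2i}\}$. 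The closed form $\frac{(1+\sqrt t)^{n-1}+(1-\sqrt t)^{n-1}}{2}$ is then the standard even-part extraction of $(1+\sqrt t)^{n-1}$.

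For the interpretation, we would count Motzkin paths of length $n-1$ with height at most $1$ and exactly $i$ $U$ steps. Such a path is determined by choosing the $2i$ positions of its non-horizontal steps among the $n-1$ steps. The height restriction forces these steps to alternate $U,D,U,D,\ldots$ in order, and conversely any such choice yields a valid path of height at most $1$. This gives $\binom{n-1}{2i}$. The reformulation via Motzkin sequences follows from the bijection \eqref{eq:bij-motzkin-path-seq}: height vector equals $(w_1-1,\ldots,w_n-1)$ as in the proof of Lemma~\ref{lem:pnest=height}, so height at most $1$ means largest entry at most $2$, and $U$ steps correspond to ascents.

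Optionally, one can make the interpretation bijective, in the spirit of Corollary~\ref{cor:peak-and-pnest}. For each uncolored Motzkin path $M$ of height at most $1$ with $i$ up steps, the bicolored paths over $M$ contribute $t^{1+i}(1+t)^{n-1-2i}$ to $\Peak_{\mathcal{N}_2(n)}(t)$, because each of the $n-1-2i$ horizontal steps is independently marked or not and each mark adds one peak by Theorem~\ref{th:bij-dyck-decorated-motzkin}(ii). Summing over $M$ gives the gamma-expansion directly. This route is the cleaner one and we would present it as the main argument, with the algebraic computation as a check. No step is a real obstacle; the only care needed is the symmetric-degree bookkeeping, namely dividing by $t$ and using degree $n-1$, together with the trivial case $n=1$.
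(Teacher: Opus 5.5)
Your proposal is correct and follows essentially the same route as the paper. The paper expands the closed form of Theorem~\ref{th:peak-line-formula} binomially in $(1+t)$ and $\pm\sqrt{t}$ to get the gamma-expansion, and obtains the interpretation by summing the contributions $t^{1+\asc(\wvec)}(1+t)^{n-1-2\asc(\wvec)}$ from Corollary~\ref{cor:peak-and-pnest} over Motzkin sequences with entries in $\{1,2\}$. The only differences are cosmetic: you read off symmetry from the expansion itself rather than by substituting $t \mapsto 1/t$ in the closed form, and you make explicit the alternating-$U,D$ count giving $\binom{n-1}{2i}$, which the paper leaves implicit.
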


\begin{proof}
    Let $n \in \mathbb{N}$. First, we will prove that $\mbox{\Large $\nicefrac{\Peak_{\mathcal{N}_2(n)}(t)}{t}$}$ is symmetric for every $n \in \mathbb{N}$. In fact, by \eqref{eq:peak-pol-line},
    \begin{align*}
        t^{n-1} \cdot \frac{\Peak_{\mathcal{N}_2(n)}(1/t)}{1/t} &= \frac{t^{n-1}}{2} \left[ \left( 1 + \frac{1}{t} + \frac{1}{\sqrt{t}} \right)^{n-1} + \left( 1 + \frac{1}{t} - \frac{1}{\sqrt{t}} \right)^{n-1} \right] \\
        &= \frac{t^{n-1}}{2} \left[ \left( \frac{t + 1 + \sqrt{t}}{t} \right)^{n-1} + \left( \frac{t + 1 - \sqrt{t}}{t} \right)^{n-1} \right] \\
        &= \frac{\Peak_{\mathcal{N}_2(n)}(t)}{t}.
    \end{align*}
    Equation \ref{eq:peak-pol-line} also implies
    \begin{align*}
        \frac{\Peak_{\mathcal{N}_2(n)}(t)}{t} &= \frac{(1 + t + \sqrt{t})^{n-1} + (1 + t - \sqrt{t})^{n-1}}{2} \\
                                            &= \frac{1}{2} \left[ \sum_{i=0}^{n-1} \binom{n-1}{i} (\sqrt{t})^i (1+t)^{n-1-i} + \sum_{i=0}^{n-1} \binom{n-1}{i} (-\sqrt{t})^i (1+t)^{n-1-i} \right] \\
                                            &= \sum_{i=0}^{\lfloor \frac{n-1}{2} \rfloor} \binom{n-1}{2i} t^i (1+t)^{n-1-2i}. \\
    \end{align*}
    Hence, \eqref{eq:gamma-line} holds.

    The second part of the claim follows from Corollary \ref{cor:peak-and-pnest}. In fact,
    \begin{align*}
        \frac{\Peak_{\mathcal{N}_2(n)}(t)}{t} &= \frac{1}{t} \sum_{\avec \in \mathcal{N}_2(n)} t^{\peak(\avec)} \\
                                            &= \frac{1}{t} \sum_{\substack{\wvec = (w_1, \ldots, w_n) \in \mathcal{MP}(n) \\ w_i \in \{1,2\} \mbox{ } \forall i \in [n-1]}} \sum_{\substack{\avec \in \mathcal{N}_2(n) \\ \pnv(\avec)=\wvec}} t^{\peak(\avec)} \\
                                            &= \sum_{\substack{\wvec = (w_1, \ldots, w_n) \in \mathcal{MP}(n) \\ w_i \in \{1,2\} \mbox{ } \forall i \in [n-1]}} t^{\asc(\wvec)}(1+t)^{n-1-2\asc(\wvec)} \\
                                            &= \sum_{i=0}^{\lfloor \frac{n-1}{2} \rfloor} \binom{n-1}{2i} t^i (1+t)^{n-1-2i},
    \end{align*}
    which concludes the proof.
\end{proof}

\begin{remark}
\label{rmk:gamma-peak-line}
    For each $n \in \mathbb{N}$, we have
    \[
        \gamma_{\mathcal{N}_2(n+1)}(t) = \frac{\Peak_{\abelian{n}}(t)}{t},
    \]
    see Table \ref{tab:peak-Abelian}. Hence, the polynomials $\{ \gamma_{\mathcal{N}_2(n)}(t) \}_{n \geq 1}$ satisfy the recursion
    \begin{equation}
    \label{rec:gamma-line}
        \gamma_{\mathcal{N}_2(n)}(t) =
        \begin{cases}
            1 &\mbox{if } n = 1 \mbox{ or } n=2, \\
            2 \cdot \gamma_{\mathcal{N}_2(n-1)}(t) + (t-1) \cdot \gamma_{\mathcal{N}_2(n-2)}(t) &\mbox{if } n \geq 3,
        \end{cases}
    \end{equation}
    from which one can prove that $\{\gamma_{\mathcal{N}_2(n)}(t)\}_{n\geq 1}$ is a generalized Sturm sequence.

    Moreover, $\gamma_{\mathcal{N}_2(2n+1)}(t)$ is symmetric for all $n \geq 0$, and
    \[
        \gamma_{\mathcal{N}_2(2n+1)}(t) = \sum_{i=0}^{n} 2^{2i} \binom{n}{2i} t^i (1+t)^{n-2i},
    \]
    where $\displaystyle 2^{2i} \binom{n}{2i}$ is the number of Motzkin paths of length $n$ and height at most $1$ in which every $U$ step and $D$ step is independently colored with one of two possible colors.
\end{remark}

\section{Reduced unit interval graphs}
\label{sec:reduced}

Let $N(i)$ be the set of neighbors of a vertex $i$ in a graph. We say that a unit interval graph $\Gamma_\avec$ is \defin{reduced} \cite{HanlonCounting1982} if there are no two adjacent vertices $j$ and $j+1$ such that $N(j) \setminus \{j+1\} = N(j+1) \setminus \{j\}$, that is, $j$ and $j+1$ have the same set of neighbors. The set of reduced unit interval graphs on $[n]$ will be denoted by $\mathcal{R}(n)$.

The \defin{twin number} of $\Gamma_\avec$ is the number of pairs $(j, j+1)$ of adjacent vertices such that $N(j) \setminus \{j+1\} = N(j+1) \setminus \{j\}$ in $\Gamma_\avec$, and it is denoted by $\twin(\Gamma_\avec)$. The reduced unit interval graphs are exactly those whose twin number is $0$.

\begin{example}
    The following figure shows the 14 area sequences $\avec$ of length 4, with the twin number and the set of neighbors of each vertex of $\Gamma_\avec$.
    \begin{center}
        \setlength{\tabcolsep}{1pt}
        \begin{tabular}{p{0.19\linewidth} p{0.19\linewidth} p{0.19\linewidth} p{0.19\linewidth} p{0.19\linewidth}}
            \shortstack{\dyckDiagram[0.03125\textwidth]{0,0,0,0}\\[-1pt]{\scriptsize 0}\\[-1pt]{\scriptsize N(1) = $\emptyset$}\\[-1pt]{\scriptsize N(2) = $\emptyset$}\\[-1pt]{\scriptsize N(3) = $\emptyset$}\\[-1pt]{\scriptsize N(4) = $\emptyset$}} &
            \shortstack{\dyckDiagram[0.03125\textwidth]{0,1,0,0}\\[-1pt]{\scriptsize 1}\\[-1pt]{\scriptsize N(1) = $\{2\}$}\\[-1pt]{\scriptsize N(2) = $\{1\}$}\\[-1pt]{\scriptsize N(3) = $\emptyset$}\\[-1pt]{\scriptsize N(4) = $\emptyset$}} &
            \shortstack{\dyckDiagram[0.03125\textwidth]{0,0,1,0}\\[-1pt]{\scriptsize 1}\\[-1pt]{\scriptsize N(1) = $\emptyset$}\\[-1pt]{\scriptsize N(2) = $\{3\}$}\\[-1pt]{\scriptsize N(3) = $\{2\}$}\\[-1pt]{\scriptsize N(4) = $\emptyset$}} &
            \shortstack{\dyckDiagram[0.03125\textwidth]{0,1,1,0}\\[-1pt]{\scriptsize 0}\\[-1pt]{\scriptsize N(1) = $\{2\}$}\\[-1pt]{\scriptsize N(2) = $\{1,3\}$}\\[-1pt]{\scriptsize N(3) = $\{2\}$}\\[-1pt]{\scriptsize N(4) = $\emptyset$}} &
            \shortstack{\dyckDiagram[0.03125\textwidth]{0,1,2,0}\\[-1pt]{\scriptsize 2}\\[-1pt]{\scriptsize N(1) = $\{2,3\}$}\\[-1pt]{\scriptsize N(2) = $\{1,3\}$}\\[-1pt]{\scriptsize N(3) = $\{1,2\}$}\\[-1pt]{\scriptsize N(4) = $\emptyset$}}
            \\[8pt]
            \shortstack{\dyckDiagram[0.03125\textwidth]{0,0,0,1}\\[-1pt]{\scriptsize 1}\\[-1pt]{\scriptsize N(1) = $\emptyset$}\\[-1pt]{\scriptsize N(2) = $\emptyset$}\\[-1pt]{\scriptsize N(3) = $\{4\}$}\\[-1pt]{\scriptsize N(4) = $\{3\}$}} &
            \shortstack{\dyckDiagram[0.03125\textwidth]{0,1,0,1}\\[-1pt]{\scriptsize 2}\\[-1pt]{\scriptsize N(1) = $\{2\}$}\\[-1pt]{\scriptsize N(2) = $\{1\}$}\\[-1pt]{\scriptsize N(3) = $\{4\}$}\\[-1pt]{\scriptsize N(4) = $\{3\}$}} &
            \shortstack{\dyckDiagram[0.03125\textwidth]{0,0,1,1}\\[-1pt]{\scriptsize 0}\\[-1pt]{\scriptsize N(1) = $\emptyset$}\\[-1pt]{\scriptsize N(2) = $\{3\}$}\\[-1pt]{\scriptsize N(3) = $\{2,4\}$}\\[-1pt]{\scriptsize N(4) = $\{3\}$}} &
            \shortstack{\dyckDiagram[0.03125\textwidth]{0,0,1,2}\\[-1pt]{\scriptsize 2}\\[-1pt]{\scriptsize N(1) = $\emptyset$}\\[-1pt]{\scriptsize N(2) = $\{3,4\}$}\\[-1pt]{\scriptsize N(3) = $\{2,4\}$}\\[-1pt]{\scriptsize N(4) = $\{2,3\}$}} &
            \shortstack{\dyckDiagram[0.03125\textwidth]{0,1,1,1}\\[-1pt]{\scriptsize 0}\\[-1pt]{\scriptsize N(1) = $\{2\}$}\\[-1pt]{\scriptsize N(2) = $\{1,3\}$}\\[-1pt]{\scriptsize N(3) = $\{2,4\}$}\\[-1pt]{\scriptsize N(4) = $\{3\}$}}
            \\[8pt]
            \shortstack{\dyckDiagram[0.03125\textwidth]{0,1,2,1}\\[-1pt]{\scriptsize 1}\\[-1pt]{\scriptsize N(1) = $\{2,3\}$}\\[-1pt]{\scriptsize N(2) = $\{1,3\}$}\\[-1pt]{\scriptsize N(3) = $\{1,2,4\}$}\\[-1pt]{\scriptsize N(4) = $\{3\}$}} &
            \shortstack{\dyckDiagram[0.03125\textwidth]{0,1,1,2}\\[-1pt]{\scriptsize 1}\\[-1pt]{\scriptsize N(1) = $\{2\}$}\\[-1pt]{\scriptsize N(2) = $\{1,3,4\}$}\\[-1pt]{\scriptsize N(3) = $\{2,4\}$}\\[-1pt]{\scriptsize N(4) = $\{2,3\}$}} &
            \shortstack{\dyckDiagram[0.03125\textwidth]{0,1,2,2}\\[-1pt]{\scriptsize 1}\\[-1pt]{\scriptsize N(1) = $\{2,3\}$}\\[-1pt]{\scriptsize N(2) = $\{1,3,4\}$}\\[-1pt]{\scriptsize N(3) = $\{1,2,4\}$}\\[-1pt]{\scriptsize N(4) = $\{2,3\}$}} &
            \shortstack{\dyckDiagram[0.03125\textwidth]{0,1,2,3}\\[-1pt]{\scriptsize 3}\\[-1pt]{\scriptsize N(1) = $\{2,3,4\}$}\\[-1pt]{\scriptsize N(2) = $\{1,3,4\}$}\\[-1pt]{\scriptsize N(3) = $\{1,2,4\}$}\\[-1pt]{\scriptsize N(4) = $\{1,2,3\}$}}
        \end{tabular}
    \end{center}
\end{example} 

\begin{proposition}
\label{prop:reduced-sequences}
    For each $n \geq 0$, let $m_n = |\mathcal{R}(n)|$, $r_n = |\{ \Gamma_\avec \in \mathcal{R}(n) \colon \Gamma_\avec \mbox{ is connected} \}$, $t_n = |\mathcal{R}(n) \cap \mathcal{N}_2(n)|$ and $f_n = |\{ \Gamma_\avec \in \mathcal{R}(n) \cap \mathcal{N}_2(n) \colon \Gamma_\avec \mbox{ is connected} \}$. Then
    \begin{enumerate}[(i)]
        \item $m_0 = 1$ and $m_n = M_{n-1}$ for $n \geq 1$, where $M_n$ is the $n$th Motzkin number \cite[\oeis{A001006}]{OEIS};
        \item $r_0 = 0$ and $r_n = R_{n-1}$ for $n \geq 1$, where $R_n$ is the $n$th Riordan number \cite[\oeis{A005043}]{OEIS};
        \item $t_0 = 1$, $t_1 = 1$ and $t_n = 2^{n-2}$ for $n \geq 2$;
        \item $f_0 = 0$, $f_1 = 1$ and $f_n = F_{n-2}$ for $n \geq 2$.
    \end{enumerate}
\end{proposition}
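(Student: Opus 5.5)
The approach is to translate reducedness into a condition on the bicolored Motzkin path $M^*_\avec=\varphi_n(\avec)$ of Theorem~\ref{th:bij-dyck-decorated-motzkin}, and then count the resulting restricted Motzkin paths. The key lemma I would establish first is the following: \emph{for $j\in[n-1]$, the pair $(j,j+1)$ is a twin pair of $\Gamma_\avec$ if and only if $j$ is an unmarked plateau of $\wvec^*_\avec$}, that is, $S_j=H$ in $M^*_\avec$.

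To prove the lemma, I first note that the peak-cliques are exactly the maximal cliques of $\Gamma_\avec$. One inclusion is Lemma~\ref{peak-cliques-are-maximal}. Conversely, by Lemma~\ref{lem:uig-clique} a maximal clique is an interval $\{i,\dots,j\}$. Maximality forces $\{i-1,j\}$ and $\{i,j+1\}$ to be non-edges, so $ij$ (or the isolated vertex $i$) is a peak-edge. Next, $j$ and $j+1$ are adjacent twins iff their closed neighbourhoods coincide.

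I then show this happens iff $j$ and $j+1$ lie in the same collection of maximal cliques.
\begin{itemize}
\item If the collections agree and are nonempty, then $j,j+1$ are adjacent and $N[j]=N[j+1]$, since each closed neighbourhood is the union of the maximal cliques containing the vertex.
\item Conversely, suppose some peak-clique $P=[a,j]$ contains $j$ but not $j+1$. If $a$ were adjacent to $j+1$, then by Lemma~\ref{lem:uig-clique} $P\cup\{j+1\}$ would be a clique, contradicting maximality. So $a\in N[j]\setminus N[j+1]$.
\item The case of a clique containing $j+1$ but not $j$ is symmetric.
\end{itemize}
By the definition of $\varphi_n$, "same collection" is exactly the case $w_j=w_{j+1}$ without a mark. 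Ascents and descents of $\wvec$ automatically change the collection, since the collection sizes differ. This proves the lemma. As a byproduct, $\twin(\Gamma_\avec)$ equals the number of unmarked $H$ steps of $M^*_\avec$.

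With the lemma in hand, $\mathcal{R}(n)$ corresponds under $\varphi_n$ to the bicolored Motzkin paths of length $n-1$ in which every horizontal step is $H^*$. Forgetting the marks gives a bijection with $\motzkin(n)$, whose size is $M_{n-1}$; this proves (i). For (ii), I use the observation from the proof of Corollary~\ref{ref:pnest-formulas} that connectedness means no $H^*$ step at height $0$. Since all horizontal steps are marked, this is the same as having no horizontal step at height $0$. These are Riordan paths of length $n-1$, which gives $R_{n-1}$.

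For (iii) and (iv), Proposition~\ref{charac-unit-interval-line} and Lemma~\ref{lem:pnest=height} add the restriction $\height\le 1$. A Motzkin path of height at most $1$ is a word in the blocks $H$ (at height $0$) and $UH^kD$ with $k\ge0$. Equivalently, it is a composition of $n-1$ into parts, where a part of size $1$ is an $H$ and a part of size $k+2\ge2$ is a block. Counting these compositions gives $t_n=2^{n-2}$ for $n\ge2$. I would verify this by the recursion on the first step, $t_{n+1}=t_n+\sum_{m\ge2}t_{n+1-m}$, or directly. In the connected case, no $H$ may appear at height $0$, so we count compositions of $n-1$ into parts $\ge2$, which is $F_{n-2}$. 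The small values of $n$ are checked by hand.

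The main obstacle is the twin lemma, specifically the claim that "same collection of peak-cliques" is equivalent to equal closed neighbourhoods. The remaining steps are routine once the correspondence with Motzkin paths is established.
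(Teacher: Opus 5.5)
Your proposal is correct and follows the same route as the paper. Reducedness becomes ``every horizontal step of $\varphi_n(\avec)$ is marked'', connectedness becomes ``no horizontal step at height $0$'', and $2$-nestedness becomes ``height at most $1$''; one then counts Motzkin paths, Riordan paths and $\{1,2\}$-valued Motzkin sequences, and your composition counts are equivalent to the last of these. The one real difference is that you prove the twin characterization (twins $\Leftrightarrow$ same set of maximal cliques $\Leftrightarrow$ unmarked plateau), which the paper only asserts as following from the proof of Theorem~\ref{th:bij-dyck-decorated-motzkin}, and your argument for it is sound.
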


\begin{table}[H]
\centering
    \begin{tabular}{ c c  c  c  c  c  c  c  c  c  c  c } 
        \toprule
        $\mathbf{n}$ & $\mathbf{0}$ & $\mathbf{1}$ & $\mathbf{2}$ & $\mathbf{3}$ & $\mathbf{4}$ & $\mathbf{5}$ & $\mathbf{6}$ & $\mathbf{7}$ & $\mathbf{8}$ & $\mathbf{9}$ & $\mathbf{10}$ \\
        \midrule
        $\mathbf{m_n}$ & 1 & 1 & 1 & 2 & 4 & 9 & 21 & 51 & 127 & 323 & 835 \\
        \midrule
        $\mathbf{r_n}$ & 0 & 1 & 0 & 1 & 1 & 3 & 6 & 15 & 36 & 91 & 232 \\
        \midrule
        $\mathbf{t_n}$ & 1 & 1 & 1 & 2 & 4 & 8 & 16 & 32 & 64 & 128 & 256 \\
        \bottomrule
        $\mathbf{f_n}$ & 0 & 1 & 0 & 1 & 1 & 2 & 3 & 5 & 8 & 13 & 21 \\
        \midrule
    \end{tabular}
    \caption{Table of values of $m_n$, $r_n$, $t_n$ and $f_n$ for $0 \leq n \leq 10$.}
\end{table}

\begin{proof}
    \hfill
    \begin{enumerate}[(i)]
        \item The empty graph is reduced by definition, so $m_0 = 1$. For $n \geq 1 $, observe that, by the proof of Theorem \ref{th:bij-dyck-decorated-motzkin}, it follows that a unit interval graph is reduced if and only if its associated bicolored Motzkin path is such that all of its plateaus are marked, from which the claim follows;
        \item By definition, the empty graph is not connected, so $r_0 = 0$. For $n \geq 1$, the proof of $(i)$ implies that $r_n$ equals the number of Motzkin sequences $\wvec = (w_1, \ldots, w_n)$ with no index $i \in [n]$ such that $w_i = w_{i+1} = 1$ (also known as \defin{Riordan sequences}), which, by \eqref{eq:bij-motzkin-path-seq} and the proof of Lemma \ref{lem:pnest=height}, is the number of Motzkin paths of length $n-1$ with no horizontal steps at height $0$. By \cite[\oeis{A005043}]{OEIS}, this number is equal to $R_{n-1}$;
        \item The empty graph is reduced and 2-nested by definition, so $t_0 = 1$. For $n \geq 1$, by Proposition \ref{charac-unit-interval-line} and by the proof of $(i)$, the number of reduced 2-nested unit interval graphs equals the number of Motzkin sequences $\wvec = (w_1, \ldots, w_n)$ such that $w_i \in \{1, 2\}$ for all $i \in [n]$. Since $w_1 = w_n = 1$, it follows that $t_n = 2^{n-2}$ for $n \geq 2$;
        \item By definition, the empty graph is not connected, so $f_0 = 0$. For $n \geq 1$, the proofs of $(ii)$ and $(iii)$ imply that the number of reduced connected 2-nested unit interval graphs equals the number of Motzkin sequences $\wvec = (w_1, \ldots, w_n)$ such that $w_i \in \{1, 2\}$ for all $i \in [n]$ and no two consecutive entries of $\wvec$ are equal to $1$, which equals $F_{n-2}$ \cite[\oeis{A000045}]{OEIS} for $n \geq 2$.
    \end{enumerate}
\end{proof}

For each $n \geq 0$, let
\[
    \defin{ \twin_n(t) } \coloneqq \sum_{\avec \in \mathcal{DP}(n)} t^{\twin(\Gamma_\avec)} = \sum_{k=0}^n \tau_{n,k} t^k,
\]
where $\tau_{n,k} \coloneqq |\{ \avec \in \mathcal{DP}(n) \colon \twin(\Gamma_\avec) = k \}|$, see Table \ref{tab:twin-distribution}.

\begin{proposition}
\label{prop:twin-formula}
    For each $n \geq 0$,
    \begin{equation*}
        \tau_{n+1,k} = \binom{n}{k} M_{n-k}
    \end{equation*}
    for all $0 \leq k \leq n+1$, from which follows that the twin statistic is distributed as \cite[\oeis{A091869}]{OEIS} over all Dyck paths. In particular,
    \[
        \twin_n(t) = (1+t)^{n-1} \leftidx{_2}F_1 \left( -\frac{n-1}{2}, \frac{2-n}{2}; 2; \frac{4}{(1+t)^2} \right) .
    \]
\end{proposition}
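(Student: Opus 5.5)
The plan is to read the twin number off the bicolored Motzkin path, so that the count becomes the choice of which plateaus are unmarked.

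The first step is to show that, for $\avec \in \DP(n+1)$ with bicolored Motzkin sequence $\wvec^*_\avec$, $\twin(\Gamma_\avec)$ equals the number of \emph{unmarked} plateaus of $\wvec^*_\avec$. In the proof of Proposition~\ref{prop:reduced-sequences}(i) we already used that $\Gamma_\avec$ is reduced exactly when all plateaus are marked. I would make the pointwise version precise. Vertices $j$ and $j+1$ are adjacent twins iff they lie in exactly the same peak-cliques. Indeed, in a unit interval graph with consecutive cliques, $N[j]$ is the union of the peak-cliques containing $j$. Since peak-cliques are the maximal cliques (Lemma~\ref{peak-cliques-are-maximal}) and are intervals ordered by both endpoints, $N[j]=N[j+1]$ iff the set of peak-cliques containing $j$ equals that containing $j+1$. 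By the definition of $\varphi$ in Theorem~\ref{th:bij-dyck-decorated-motzkin}, this is precisely the case $w_j=w_{j+1}$ with the entry unmarked. An ascent or descent changes the collection, so it is never a twin, and marked plateaus are exactly the plateaus where the collection changes. This identification is the main point needing care, namely the claim that equal closed neighbourhoods force equal families of peak-cliques. I would argue it via the interval structure: if the families differ, some peak-clique either ends at $j$ or starts at $j+1$. In that case its left endpoint is not adjacent to $j+1$, or its right endpoint is not adjacent to $j$, by the peak-edge condition.

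The second step is the count. Bicolored Motzkin paths of length $n$ with exactly $k$ unmarked horizontal steps are obtained by taking an ordinary Motzkin path of length $n-k$ and inserting $k$ unmarked $H$ steps. Equivalently, we choose $k$ of the $n$ positions to be the unmarked $H$'s, and the remaining $n-k$ steps form an arbitrary Motzkin path whose horizontal steps are all marked. Deleting unmarked $H$ steps preserves the Motzkin property, so this is a bijection. Hence
\[
\tau_{n+1,k}=\binom{n}{k}M_{n-k},
\]
which vanishes for $k=n+1$, and by convention $M_{-1}=0$. This matches \oeis{A091869}.

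The last step is the hypergeometric form. Write $\twin_{n+1}(t)=\sum_k \binom{n}{k}M_{n-k}t^k$ and substitute $M_m=\sum_j \binom{m}{2j}C_j$. This gives a double sum that equals $\sum_j \binom{n}{2j}C_j\,(1+t)^{n-2j}$, since inserting unmarked $H$'s and marked $H^*$'s among the $n-2j$ horizontal slots yields $(1+t)^{n-2j}$. This is Touchard-type, as in \eqref{eq:touchard-motzkin}. Finally, recognize $\sum_j \binom{n}{2j}C_j x^{j}$ with $x=(1+t)^{-2}$ as $\leftidx{_2}F_1(-n/2,(1-n)/2;2;4x)$ via term-ratio computation, and shift $n\mapsto n-1$. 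This is routine.
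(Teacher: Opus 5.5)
Your proof is correct, but it takes a different route from the paper's.

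\textbf{The paper's route.} It does not use $\varphi$ at this point. It takes a reduced graph on $[n+1-k]$ (there are $M_{n-k}$ of them by Proposition~\ref{prop:reduced-sequences}) and picks a multiset of $k$ of its vertices. It duplicates these by surgery on the area sequence, inverts this by twin deletion, and counts multisets by stars and bars.

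\textbf{Your route.} You prove the local lemma that $(j,j+1)$ is a twin pair exactly when $j$ is an unmarked plateau of $\wvec^*_\avec$. The twin number is then the number of unmarked $H$ steps, and you delete those steps.

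\textbf{Comparison.} Underneath, this is the same decomposition: a $k$-subset of the $n$ step positions is a $k$-multiset of the $n+1-k$ gaps of the shorter path, that is, of the vertices of the reduced core. Your model makes the bijection self-evident, since inserting or deleting unmarked $H$ steps preserves the Motzkin condition and affects nothing else.

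The paper's surgery needs checks it does not receive. To duplicate $i$, one should add $1$ only to $a_j$ for the later neighbours $j$ of $i$, not to every later vertex of the component. As written, $(0,1,1)$ with $i=1$ becomes $(0,1,2,2)$, whose twin pair is $(2,3)$ rather than $(1,2)$. One must also verify that the twin number rises by exactly one.

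The paper's route has the advantage of not depending on Theorem~\ref{th:bij-dyck-decorated-motzkin}. Yours directly gives the weighted identity $\twin_{n+1}(t)=\sum_j\binom{n}{2j}C_j(1+t)^{n-2j}$, and with it the ${}_2F_1$ form, which the paper only asserts.

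\textbf{One detail to state explicitly.} Lemma~\ref{peak-cliques-are-maximal} shows only that peak-cliques are maximal. The identity $N[j]=\bigcup_{P\ni j}P$ also requires every edge $uv$ ($u<v$) to lie in a peak-clique. This follows by enlarging the interval clique $[u,v]$ to a maximal one, whose endpoints then form a peak-edge.
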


\begin{proof}
    Fix $n \geq 0$. By Proposition \ref{prop:reduced-sequences}(ii)
    \[
        \tau_{n+1,0} = | \mathcal{R}(n+1) | = M_{n}.
    \]
    Now, let $0 < k \leq n+1$. Let $\Gamma_\avec$ be a reduced unit interval graph on $[n+1-k]$. Among the $n+1-k$ vertices of $\Gamma_\avec$, choose $k$ of them, repetitions are allowed, and for each one of the chosen vertices create a twin. A twin of a vertex $i$ is created as follows: let $C$ be the connected component of $\Gamma_\avec$ that contains $i$ and suppose that $i+1, \ldots, i+m \in C$, but $i+m+1 \notin C$. Set $\avec = (0, a_2, \ldots, a_{n+1-k})$. The function
    \[
        \avec \to (0, a_2, \ldots, a_i, a_i + 1, \ldots, a_{i+m} + 1, a_{i+m+1}, \ldots, a_{n+1-k})
    \]
    maps $\Gamma_\avec$ to a unit interval graph on $[n+2-k]$ with $1$ twin. By iterating this function $k$ times, we obtain a new graph, $\Gamma_\bvec$, which is a unit interval graph on $[n+1]$ whose twin number is $k$.
    
    Reciprocally, every reduced unit interval graph on $[n+1-k]$ can be obtained from a unit interval graph $\bvec$ on $[n+1]$ with $k$ twins by deleting the twins. A twin $(i, i+1)$ is deleted as follows: let $C$ be the connected component of $\Gamma_\bvec$ that contains $i$ and suppose that $i+1, \ldots, i+m \in C$, but $i+m+1 \notin C$. Set $\bvec = (0, b_2, \ldots, b_{n+1})$. The function
    \[
        \bvec \to (0, a_2, \ldots, a_i, a_{i+2} - 1, \ldots, a_{i+m} - 1, a_{i+m+1}, \ldots, a_{n+1})
    \]
    maps $\Gamma_\bvec$ to a unit interval graph on $[n]$ with $k-1$ twins. By iterating this function $k$ times, we obtain a new graph, $\Gamma_\avec$, which is a reduced unit interval graph on $[n+1-k]$.

    Since the number of ways of choosing $k$ vertices among $n+1-k$, repetitions are allowed, equals the number of integer nonnegative solutions of
    \[
        x_1 + \ldots + x_{n-k+1} = k,
    \]
    which is $\binom{n}{n-k}$, it follows that $\tau_{n+1,k} = \binom{n}{n-k} \tau_{n+1-k,0} = \binom{n}{k} M_{n-k}$.
\end{proof}

\begin{table}[H]
\centering
\begin{tabular}{@{}r r @{}}
\toprule
$n$ & $\twin_n(t)$ \\
\midrule
0  & $1$ \\
1  & $1$ \\
2  & $1+t$ \\
3  & $2+2t+t^2$ \\
4  & $4+6t+3t^2+t^3$ \\
5  & $9+16t+12t^2+4t^3+t^4$ \\
6  & $21+45t+40t^2+20t^3+5t^4+t^5$ \\
7  & $51+126t+135t^2+80t^3+30t^4+6t^5+t^6$ \\
8  & $127+357t+441t^2+315t^3+140t^4+42t^5+7t^6+t^7$ \\
9  & $323+1016t+1428t^2+1176t^3+630t^4+224t^5+56t^6+8t^7+t^8$ \\
10 & $835+2907t+4572t^2+4284t^3+2646t^4+1134t^5+336t^6+72t^7+9t^8+t^9$\\
\bottomrule
\end{tabular}
\caption{The distribution of the twin number over all Dyck paths of semilength $n$. The coefficients of $\twin_n(t)$ are given by \cite[\oeis{A091869}]{OEIS}.}
\label{tab:twin-distribution}
\end{table}

The polynomials $\twin_n(t)$ are not real-rooted in general. For example, $\twin_3(t)$ has two complex conjugate zeros. However, their coefficients form sequences of real numbers which are log-concave and ultra log-convex simultaneously.

\begin{corollary}
    For each $n \geq 0$, $\{ \tau_{n+1,k} \}_{k=0}^{n}$ is a log-concave and an ultra log-convex sequence.
\end{corollary}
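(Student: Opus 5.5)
By Proposition \ref{prop:twin-formula}, $\tau_{n+1,k} = \binom{n}{k} M_{n-k}$ for $0 \le k \le n$. So I will work directly with this product formula and reduce both claims to properties of the Motzkin numbers. Both claims then become inequalities about consecutive ratios $M_{j+1}/M_j$. The two facts I will use are that $(M_j)_{j\ge0}$ is log-convex, i.e. $M_j^2 \le M_{j-1}M_{j+1}$ for $j\ge1$, and that it is log-concave in the weaker binomial-weighted sense needed below. For the first fact I will quote the known result that the Motzkin numbers are log-convex (Liu--Wang). Alternatively, it follows from the three-term recurrence $(j+3)M_{j+1} = (2j+3)M_j + 3j M_{j-1}$ by induction on the ratio $\rho_j = M_{j}/M_{j-1}$, showing $\rho_j$ is nondecreasing.

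\emph{Ultra log-convexity.} Here the binomial factor cancels. Indeed $\tau_{n+1,k}/\binom{n}{k} = M_{n-k}$, so ultra log-convexity of $\{\tau_{n+1,k}\}_{k=0}^n$ is exactly log-convexity of $(M_0,\dots,M_n)$ read backwards. This follows from the log-convexity of the Motzkin numbers just cited.

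\emph{Log-concavity.} We need $\tau_{n+1,k}^2 \ge \tau_{n+1,k-1}\tau_{n+1,k+1}$ for $1\le k\le n-1$. Put $j = n-k$, so $1\le j\le n-1$. Dividing through, the inequality becomes
\[
\frac{\binom{n}{k}^2}{\binom{n}{k-1}\binom{n}{k+1}} \;\ge\; \frac{M_{j+1}M_{j-1}}{M_j^2}.
\]
The left side equals $\frac{(k+1)(n-k+1)}{k(n-k)} = \bigl(1+\tfrac1k\bigr)\bigl(1+\tfrac1j\bigr)$. It therefore suffices to show $M_{j+1}M_{j-1} \le (1+\tfrac1j)M_j^2$ for all $j\ge1$, since $1+\tfrac1k\ge1$. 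Equivalently, in terms of the ratio, I need $\rho_{j+1} \le (1+\tfrac1j)\rho_j$.

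This ratio bound is the main obstacle. My plan is to use the recurrence to write $\rho_{j+1} = \frac{(2j+3) + 3j/\rho_j}{j+3}$. The desired inequality then becomes the quadratic condition
\[
(j+1)(j+3)\rho_j^2 - j(2j+3)\rho_j - 3j^2 \ge 0,
\]
which holds once $\rho_j$ exceeds the positive root of this quadratic. Since $\rho_j$ increases to $3$ and the root is below $3$ for large $j$, it remains to get an explicit lower bound on $\rho_j$ that beats the root. I will try the crude lower bound $\rho_j\ge 2 - c/j$, proved by induction from the recurrence. Should that fail for small $j$, I will check the finitely many remaining cases directly from the values of $M_j$; the boundary cases $j=1,2$ are immediate from $M_0,\dots,M_3 = 1,1,2,4$.
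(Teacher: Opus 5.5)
Your reduction is the same as the paper's: ultra log-convexity comes from log-convexity of the Motzkin numbers. Log-concavity reduces, via the factor $(1+\frac1k)(1+\frac1j)$, to $M_{j+1}M_{j-1}\le(1+\frac1j)M_j^2$, i.e.\ $M_j^2/(M_{j-1}M_{j+1})\ge j/(j+1)$. The paper simply quotes this inequality from the literature.

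\textbf{The gap is in your plan to prove this inequality yourself.} Let $Q(\rho)=(j+1)(j+3)\rho^2-j(2j+3)\rho-3j^2$. Its positive root is $\frac{j\,(2j+3+\sqrt{16j^2+60j+45})}{2(j+1)(j+3)}=3-\frac{27}{4j}+O(j^{-2})$, so it tends to $3$, not $2$. Already for every $j\ge5$ the root exceeds $2$, since $Q(2)=-3j^2+10j+12<0$. So a lower bound of the form $\rho_j\ge 2-c/j$ lies below the root for all $j\ge5$. It fails for infinitely many $j$, and no finite check can rescue it.

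There is a second problem. The map $\rho\mapsto\frac{(2j+3)+3j/\rho}{j+3}$ is decreasing. Hence a lower bound on $\rho_j$ only yields an \emph{upper} bound on $\rho_{j+1}$, and an induction on a lower bound alone does not close. The same order reversal affects your alternative proof of log-convexity: $\rho_{j+1}\ge\rho_j$ holds iff $\rho_j$ lies below the fixed point of this map, which is strictly less than $3$. So there, rely on the citation.

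\textbf{A repair within your framework.} Prove $\frac{3j}{j+2}\le\rho_j\le3$ for all $j\ge1$ by simultaneous induction.
\begin{itemize}
\item Base case: $\rho_1=1=\frac{3\cdot 1}{1+2}$.
\item If $\rho_j\le3$, then $3j/\rho_j\ge j$, so $\rho_{j+1}\ge\frac{3j+3}{j+3}=\frac{3(j+1)}{(j+1)+2}$.
\item If $\rho_j\ge\frac{3j}{j+2}$, then $3j/\rho_j\le j+2$, so $\rho_{j+1}\le\frac{3j+5}{j+3}<3$.
\end{itemize}
Now substitute the lower bound into your quadratic. This gives exactly $Q\bigl(\frac{3j}{j+2}\bigr)=\frac{3j^2(j-1)}{(j+2)^2}\ge0$. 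The vertex of $Q$ is at $\frac{j(2j+3)}{2(j+1)(j+3)}<1\le\frac{3j}{j+2}\le\rho_j$, so $Q$ is increasing on this range and $Q(\rho_j)\ge0$ for every $j\ge1$. This is precisely $M_{j+1}M_{j-1}\le(1+\frac1j)M_j^2$. With this step, or with the inequality simply quoted as the paper does, the rest of your argument is correct and coincides with the paper's proof.
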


\begin{proof}
    The ultra log-convexity follows from the log-convexity of $\{ M_k \}_{k \geq 0}$ \cite[Proposition 3]{motzkin-aigner}. To prove the log-concavity, we will show that
    \[
        \frac{\tau_{n+1,k}^2}{\tau_{n+1,k-1} \tau_{n+1,k+1}} > 1, \quad k = 1, \ldots, n-1.
    \]
    Observe that, by \cite[Corollary 2]{motzkin-inequal},
    \begin{equation}
    \label{eq:motzkin-ineq}
        \frac{M_k^2}{M_{k-1} M_{k+1}} \geq \frac{k}{k+1}, \quad k \geq 1
    \end{equation}
    Then
    \[
        \frac{\tau_{n+1,k}^2}{\tau_{n+1,k-1} \tau_{n+1,k+1}} = \frac{(k+1)(n-k+1)}{k(n-k)} \frac{M_{n-k}^2}{M_{n-k-1} M_{n-k+1}} \geq \frac{(k+1)(n-k+1)}{k(n-k)} \frac{n-k}{n-k+1} = \frac{k+1}{k} > 1,
    \]
    from which the claim follows.
\end{proof}

\begin{remark}
    The sequence $\{ \tau_{n+1,k} \}_{k=0}^{n}$ is neither ultra log-concave nor log-convex for $n \geq 3$, see Table \ref{tab:twin-distribution}. 
\end{remark}

\subsection{The peak-nesting polynomial of reduced unit interval graphs}
Set $\Pnest_{\mathcal{R}(0)}(t) \coloneqq 1$ and $\Pnest_{\mathcal{R}(0)}^c(t) \coloneqq 0$, and for each $n \in \mathbb{N}$, let
\[
    \Pnest_{\mathcal{R}(n)}(t) \coloneqq \sum_{\avec \in \mathcal{R}(n)} t^{\pnest(\avec)} \quad \mbox{and} \quad \Pnest_{\mathcal{R}(n)}^c(t) \coloneqq \sum_{\substack{\avec \in \mathcal{R}(n) \\ \Gamma_\avec \mbox{ \scriptsize is connected}}} t^{\pnest(\avec)},
\]
see Table \ref{tab:pnest-reduced}.

A \defin{Riordan path} is a Motzkin path with no horizontal steps at height $0$. Recall that a Riordan sequence is a Motzkin sequence with no two consecutive ones. For each $n \geq 0$, let $\mathcal{RP}(n)$ be the set of all Riordan sequences of size $n$ (or, equivalently, the set of all Riordan paths of length $n-1$ when $n \geq 1$). Observe that, by definition, $\mathcal{RP}(n) \subseteq \mathcal{MP}(n)$ for every $n \geq 0$.

\begin{lemma}
For each $n \in \mathbb{N}$,
    \[
        \Pnest_{\mathcal{R}(n)}(t) = \sum_{M \in \mathcal{MP}(n)} t^{\height(M) + 1} \quad \mbox{and} \quad \Pnest_{\mathcal{R}(n)}^c(t) = \sum_{R \in \mathcal{RP}(n)} t^{\height(R) + 1} .
    \]
\end{lemma}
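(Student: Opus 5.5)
The plan is to restrict the bijection $\varphi_n$ of Theorem \ref{th:bij-dyck-decorated-motzkin} to $\mathcal{R}(n)$ and identify its image.

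First, I would show that $\Gamma_\avec$ is reduced if and only if every plateau of $\wvec^*_\avec$ is marked. This is the characterization already used in the proof of Proposition \ref{prop:reduced-sequences}(i). Concretely, take adjacent vertices $j,j+1$. Their closed neighbourhoods are the unions of the peak-cliques containing each of them, because every edge lies in some maximal clique and, in a unit interval graph, the maximal cliques are exactly the peak-cliques (Lemma \ref{peak-cliques-are-maximal} together with Lemma \ref{lem:uig-clique}). So $j$ and $j+1$ are twins exactly when they lie in the same collection of peak-cliques. By the definition of $\varphi_n$, this happens precisely at an unmarked plateau $w_j=w_{j+1}$. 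Indeed, an ascent or a descent means a peak-clique starts at $j+1$ or ends at $j$, so the collections differ. Conversely, if the collections differ, then by the interval structure of the peak-cliques some clique starts or ends between $j$ and $j+1$, which gives either a non-plateau or a marked plateau.

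Next, I would note that forgetting the marks is a bijection from the fully marked bicolored Motzkin sequences onto $\motzkin(n)$. Hence $\wvec^*_\avec\mapsto \wvec_\avec=\pnv(\avec)$ is a bijection $\mathcal{R}(n)\to\motzkin(n)$. By Lemma \ref{lem:pnest=height}, $\pnest(\avec)=\height(M_{\wvec})+1$, and summing over $\mathcal{R}(n)$ gives the first identity.

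For the connected version, I would use the observation from the proof of Corollary \ref{ref:pnest-formulas}: $\Gamma_\avec$ is connected if and only if $\wvec^*_\avec$ has no marked entry $1^*$, that is, no marked plateau at height $0$. In the reduced case every plateau is marked, so connectedness means there is no plateau at height $0$ at all. These are exactly the Riordan sequences, and the bijection restricts to $\mathcal{R}(n)\cap\{\text{connected}\}\to\mathcal{RP}(n)$, which gives the second identity.

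The main obstacle is making the twin characterization rigorous. One must justify that equal neighbourhoods force equal collections of peak-cliques. I would argue this via maximality: if $P$ contains $j$ but not $j+1$, then $P$ ends at $j$. Since $P$ is maximal, some vertex $k<\min P$ is not adjacent to $j$, but it may be adjacent to $j+1$. So I would instead argue from the other side: because $P$ is a peak-clique ending at $j$, the pair $\{\min P, j+1\}$ is not an edge, so $\min P\in N(j)\setminus N(j+1)$. A symmetric argument handles a clique that starts at $j+1$.
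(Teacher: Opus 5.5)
Your proposal is correct and takes the same route as the paper, which invokes Lemma~\ref{lem:pnest=height} together with the characterizations from the proof of Proposition~\ref{prop:reduced-sequences}(i),(ii). Those characterizations say that reduced graphs correspond under $\varphi_n$ to fully marked bicolored Motzkin sequences, and that connectedness means there is no entry $1^*$, which in the reduced case gives exactly the Riordan sequences. Your argument that $j,j+1$ are twins exactly when they lie in the same peak-cliques is sound, and it fills in a step the paper only cites: it uses that the maximal cliques are the peak-cliques, and that $\min P \in N(j)\setminus N(j+1)$ when a peak-clique $P$ of size at least two ends at $j$.
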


\begin{proof}
    It follows directly from Lemma \ref{lem:pnest=height} and from the proofs of (i) and (ii) of Proposition \ref{prop:reduced-sequences}.
\end{proof}

Let $f(t) = \sum_{i = 0}^n a_i t^i$ and $g(t) = \sum_{j = 0}^n b_j t^j$ be two polynomials. The \defin{maximum product} of $f(t)$ and $g(t)$ is defined as
\[
   \defin{ (f \star g)(t) } \coloneqq \sum_{i, j} a_i b_j t^{\max \{i, j\}}.
\]

\begin{proposition}
\label{prop:rec-pnest-red}
    The sequences of polynomials $\{\Pnest_{\mathcal{R}(n)}(t)\}_{n \geq 0}$ and $\{\Pnest_{\mathcal{R}(n)}^c(t)\}_{n \geq 0}$ satisfy the recursions
    \begin{equation}
    \label{eq:recur-red}
        \Pnest_{\mathcal{R}(n)}(t) =
        \begin{cases}
            1 &\mbox{if } n = 0, \\
            t &\mbox{if } n = 1, \\
            \Pnest_{\mathcal{R}(n-1)}(t) + \sum_{i=0}^{n-3} \left(t \cdot \Pnest_{\mathcal{R}(i+1)}(t) \right) \star \Pnest_{\mathcal{R}(n-2-i)}(t) &\mbox{if } n \geq 2,
        \end{cases}
    \end{equation}
    and
    \begin{equation}
    \label{eq:recur-cred}
        \Pnest_{\mathcal{R}(n)}^c(t) =
        \begin{cases}
            0 &\mbox{if } n = 0, \\
            t &\mbox{if } n = 1, \\
            \sum_{i = 0}^{n-3} \left(t \cdot \Pnest_{\mathcal{R}(i+1)}(t) \right) \star \Pnest_{\mathcal{R}(n-2-i)}^c(t) &\mbox{if } n \geq 2 .
        \end{cases}
    \end{equation}
\end{proposition}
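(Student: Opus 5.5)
The plan is to prove both recursions by a first-return decomposition of Motzkin paths. I will use the preceding lemma, which states that
\[
\Pnest_{\mathcal{R}(n)}(t)=\sum_{M\in\mathcal{MP}(n)}t^{\height(M)+1}
\quad\text{and}\quad
\Pnest^c_{\mathcal{R}(n)}(t)=\sum_{R\in\mathcal{RP}(n)}t^{\height(R)+1}.
\]
Here elements of $\mathcal{MP}(n)$ and $\mathcal{RP}(n)$ are viewed as Motzkin and Riordan paths of length $n-1$. A size-$m$ object therefore has length $m-1$, and a path of length $i$ has size $i+1$. The base cases $n=0,1$ hold by definition, since the unique path of length $0$ has height $0$.

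For $n\ge 2$, let $M$ be a Motzkin path of length $n-1$. Either $M=HM'$ with $M'$ a Motzkin path of length $n-2$, or $M=U M_1 D M_2$ with $M_1,M_2$ Motzkin paths. In the second case $M_1$ has some length $i$ and $M_2$ has length $n-3-i$, where $0\le i\le n-3$. This decomposition is unique, by taking the first return of $M$ to height $0$.

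In the first case $\height(M)=\height(M')$, so these paths contribute exactly $\Pnest_{\mathcal{R}(n-1)}(t)$.

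In the second case $\height(M)=\max\{\height(M_1)+1,\height(M_2)\}$. Hence
\[
t^{\height(M)+1}=t^{\max\{(\height(M_1)+1)+1,\ \height(M_2)+1\}}.
\]
This is exactly the maximum product of the monomials $t\cdot t^{\height(M_1)+1}$ and $t^{\height(M_2)+1}$. The product $\star$ is bilinear, so summing over all $M_1$ of size $i+1$ and all $M_2$ of size $n-2-i$ gives
\[
\bigl(t\cdot\Pnest_{\mathcal{R}(i+1)}(t)\bigr)\star\Pnest_{\mathcal{R}(n-2-i)}(t).
\]
Summing over $i$ from $0$ to $n-3$ then yields \eqref{eq:recur-red}. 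When $n=2$ the sum is empty, and the only path is $H$, giving $t$, which matches the formula.

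For \eqref{eq:recur-cred}, let $R$ be a Riordan path of length $n-1\ge 1$. It has no $H$ at height $0$, so it cannot begin with $H$. It therefore decomposes uniquely as $R=UM_1DR_2$. Here $M_1$ is an arbitrary Motzkin path, since its steps lie at height at least $1$ in $R$ and so no restriction applies to them. The path $R_2$ is again a Riordan path, possibly empty. Taking the same lengths $i$ and $n-3-i$ and applying the same height identity gives
\[
\bigl(t\cdot\Pnest_{\mathcal{R}(i+1)}(t)\bigr)\star\Pnest^c_{\mathcal{R}(n-2-i)}(t).
\]
The empty $R_2$ occurs when $i=n-3$. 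It corresponds to size $1$ and is correctly weighted by $\Pnest^c_{\mathcal{R}(1)}(t)=t$. For $n=2$ the sum is empty, which matches $r_2=0$.

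The only delicate point is the bookkeeping of the shift between size and length, together with checking that the boundary terms agree with the conventions $\Pnest_{\mathcal{R}(0)}=1$ and $\Pnest^c_{\mathcal{R}(0)}=0$. These conventions never actually enter the sums, because $n-2-i\ge 1$ throughout. The rest is the routine height identity combined with bilinearity of $\star$.
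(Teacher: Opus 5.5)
Your proof is correct and follows essentially the same route as the paper: the first-return decomposition $M = HM'$ or $M = UADB$ (and $R = UADB$ with $A$ a Motzkin path and $B$ a Riordan path), the identity $\height(UADB)=\max\{\height(A)+1,\height(B)\}$, and bilinearity of $\star$, summed over $|A| = i$. Your explicit bookkeeping of the size/length shift and of the boundary cases ($n=2$, and the empty $B$ weighted by $\Pnest^c_{\mathcal{R}(1)}(t)=t$) is a small addition that the paper leaves implicit.
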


\begin{proof}
    We begin by proving \eqref{eq:recur-red}. Fix $n \geq 1$. Observe that any Motzkin path $M$ of length $n-1$ can be uniquely decomposed either as $HM'$ or $UADB$, where $A$, $B$ and $M'$ are shorter Motzkin paths, and $D$ is the first down-step of $M$ that returns to height $0$.
    
    In the first case, deleting the initial horizontal step does not change the height, so the contribution is $\Pnest_{\mathcal{R}(n-1)}(t)$. In the second case, suppose $|A| = i$. Then $|B| = n - 3 - i$, and $\height(UAD) = \height(A) + 1$. Since the polynomial $\Pnest_{\mathcal{R}(i+1)}(t)$ records the height $\height(A) + 1$, the polynomial $t \cdot \Pnest_{\mathcal{R}(i+1)}(t)$ records the height $\height(UAD) + 1$. Also, $\height(UADB) = \max \{ \height(UAD), \height(B) \}$. Therefore, the contribution of all pairs of Motzkin paths $(A, B)$ is
    \[
        \left(t \cdot \Pnest_{\mathcal{R}(i+1)}(t) \right) \star \Pnest_{\mathcal{R}(n-2-i)}(t) .
    \]
    Summing over $i = 0, 1, \ldots, n-3$ proves the recursion.

    Now, we prove \eqref{eq:recur-cred}. Since $\mathcal{RP}(1) = \{ (0) \}$ and $\pnest( (0) ) = 1$, we have $\Pnest_{\mathcal{R}(1)}^c(t) = t$. Fix $n \geq 2$. Observe that any Riordan path $R$ of length $n-1$ can be uniquely decomposed as $UADB$, where $D$ is the first down-step of $R$ that returns to height $0$, $A$ is a Motzkin path, and $B$ is a Riordan path. Again, suppose that $|A| = i$. Then $|B| = n - 3 - i$, and $1 + \height(R) = 1 + \max \{ 1 + \height(A), \height(B) \} = \max \{ ( \height(A) + 1 ) + 1, \height(B) + 1 \}$, which contributes with
    \[
        \left(t \cdot \Pnest_{\mathcal{R}(i+1)}(t) \right) \star \Pnest_{\mathcal{R}(n-2-i)}^c(t) .
    \]
    Summing over $i = 0, 1, \ldots, n-3$ proves the recursion.
\end{proof}

\begin{table}[H]
\centering
\begin{tabular}{@{}r r r@{}}
\toprule
$n$ & $\Pnest_{\mathcal{R}(n)}(t)$ & $\Pnest_{\mathcal{R}(n)}^c(t)$ \\
\midrule
0  & $1$ & $0$ \\
1  & $t$ & $t$ \\
2  & $t$ & $0$ \\
3  & $t + t^2$ & $t^2$ \\
4  & $t + 3t^2$ & $t^2$ \\
5  & $t + 7t^2 + t^3$ & $2t^2 + t^3$ \\
6  & $t + 15t^2 + 5t^3$ & $3t^2 + 3t^3$ \\
7  & $t + 31t^2 + 18t^3 + t^4$ & $5t^2 + 9t^3 + t^4$ \\
8  & $t + 63t^2 + 56t^3 + 7t^4$ & $8t^2 + 23t^3 + 5t^4$ \\
9  & $t + 127t^2 + 161t^3 + 33t^4 + t^5$ & $13t^2 + 57t^3 + 20t^4 + t^5$ \\
10  & $t + 255t^2 + 441t^3 + 129t^4 + 9t^5$ & $21t^2 + 136t^3 + 68t^4 + 7t^5$ \\
\bottomrule
\end{tabular}
\caption{The distribution of peak-nestings over reduced unit interval graphs and connected reduced unit interval graphs on $[n]$. The coefficients of $\Pnest_{\mathcal{R}(n)}(t)$ are given by \cite[\oeis{A097862}]{OEIS}.}
\label{tab:pnest-reduced}
\end{table}

Although the polynomials $\{\Pnest_{\mathcal{R}(n)}(t)\}_{n \geq 0}$ and $\{\Pnest_{\mathcal{R}(n)}^c(t)\}_{n \geq 0}$ have nice recursive formulas, 
they are \emph{not} real-rooted in general. In fact,
\[
    \Pnest_{\mathcal{R}(12)}(t) = t + 1023 t^2 + 3036 t^3 + 1485 t^4 + 242 t^5 + 11 t^6
\]
has a pair of non-real zeros, and
\[
    \Pnest_{\mathcal{R}(13)}^c(t) = 89 t^2+1693 t^3+1836 t^4+540 t^5+54 t^6+t^7
\]
is also not real-rooted. They are also not ultra log-concave in general, with $\Pnest_{\mathcal{R}(89)}(t)$ and $\Pnest_{\mathcal{R}(95)}^c(t)$ being the first polynomials of their respective sequences where ultra log-concavity fails. However, log-concavity was verified by computer with Mathematica~\cite{Mathematica} for $0 \leq n \leq 200$, which motivates us to conjecture the following:

\begin{conjecture}
\label{conj:pnest-reduced}
    The coefficients of $\Pnest_{\mathcal{R}(n)}(t)$ and $\Pnest_{\mathcal{R}(n)}^c(t)$ form log-concave sequences for every $n \geq 0$.
\end{conjecture}

\subsection{The peak polynomial of reduced unit interval graphs}

Set $\Peak_{\mathcal{R}(0)}(t) \coloneqq 1$ and $\Peak_{\mathcal{R}(0)}^c(t) \coloneqq 0$, and let
\[
    \Peak_{\mathcal{R}(n)}(t) \coloneqq \sum_{\avec \in \mathcal{R}(n)} t^{\peak(\avec)} \quad \mbox{and} \quad \Peak_{\mathcal{R}(n)}^c(t) \coloneqq \sum_{\substack{\avec \in \mathcal{R}(n) \\ \Gamma_\avec \mbox{ \scriptsize is connected}}} t^{\peak(\avec)}
\]
for each $n \in \mathbb{N}$, see Table \ref{tab:peak-reduced}. In this section, we prove that $\Peak_{\mathcal{R}(n)}(t)$ and $\Peak_{\mathcal{R}(n)}^c(t)$ are real-rooted polynomials for every $n \geq 0$. To do that, we need the following lemma:

\begin{lemma}
\label{lem:peak-asc}
    For each $n \in \mathbb{N}$, let
    \[
        \asc_{\mathcal{MP}(n)}(t) = \sum_{\wvec \in \mathcal{MP}(n)} t^{\asc(\wvec)} \quad \mbox{and} \quad \asc_{\mathcal{RP}(n)}(t) = \sum_{\wvec \in \mathcal{RP}(n)} t^{\asc(\wvec)}
    \]
    be the ascent generating polynomials of Motzkin sequences and Riordan sequences, respectively. Then
    \[
        \Peak_{\mathcal{R}(n)}(t) = t^n \cdot \asc_{\mathcal{MP}(n)}(1/t) \quad \mbox{and} \quad \Peak_{\mathcal{R}(n)}^c(t) = t^n \cdot \asc_{\mathcal{RP}(n)}(1/t) .
    \]
\end{lemma}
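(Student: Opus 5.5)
The plan is to transport the peak statistic through the bijection $\varphi_n$ of Theorem~\ref{th:bij-dyck-decorated-motzkin} and then evaluate it on the image of the reduced graphs. First I will recall, from the proof of Proposition~\ref{prop:reduced-sequences}(i), that $\Gamma_\avec$ is reduced if and only if every plateau of $\wvec^*_\avec=\varphi_n(\avec)$ is marked. Consequently $\varphi_n$ restricts to a bijection between $\mathcal{R}(n)$ and the set of fully marked bicolored Motzkin sequences of size $n$. Forgetting the marks identifies this set with $\motzkin(n)$.

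Next I will compute the peak number on this image. Theorem~\ref{th:bij-dyck-decorated-motzkin}(ii) gives $\peak(\avec)=1+\asc(\wvec^*_\avec)+\decor(\wvec^*_\avec)$, where $\decor$ counts the marked plateaus. When all plateaus are marked, $\decor(\wvec^*_\avec)=\plateau(\wvec_\avec)$. Since $\asc=\des$ and the $n-1$ consecutive pairs split into ascents, descents and plateaus, we have $\plateau(\wvec)=n-1-2\asc(\wvec)$. Combining these,
\[
\peak(\avec)=1+\asc(\wvec)+n-1-2\asc(\wvec)=n-\asc(\wvec).
\]
Summing $t^{\peak(\avec)}$ over $\mathcal{R}(n)$ then gives
\[
\sum_{\wvec\in\motzkin(n)}t^{\,n-\asc(\wvec)}=t^n\asc_{\mathcal{MP}(n)}(1/t).
\]
As a sanity check, the case $n=1$ gives $t$, as expected.

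For the connected case I will use the observation from the proof of Corollary~\ref{ref:pnest-formulas} that $\Gamma_\avec$ is connected if and only if its bicolored Motzkin sequence has no marked plateau at height $1$. For a reduced graph every plateau is marked, so connectedness is equivalent to having no plateau at height $1$, i.e.\ no two consecutive entries equal to $1$. This is exactly the Riordan condition, as in Proposition~\ref{prop:reduced-sequences}(ii). The same computation $\peak=n-\asc$ then yields $\Peak^c_{\mathcal{R}(n)}(t)=t^n\asc_{\mathcal{RP}(n)}(1/t)$.

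The only delicate point I anticipate is making sure that $\decor$ in Theorem~\ref{th:bij-dyck-decorated-motzkin}(ii) really counts marked plateaus, so that it equals the $\mathrm{h}^*$ count used in \eqref{eq:peak-path}, and that it is not affected by boundary indices. Beyond that, the argument is a direct substitution.
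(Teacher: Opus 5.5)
Your proposal is correct and follows the paper's proof essentially verbatim: restrict $\varphi_n$ to $\mathcal{R}(n)$, where every plateau is marked, so that $\decor=\plateau=n-1-2\asc$ and Theorem~\ref{th:bij-dyck-decorated-motzkin}(ii) gives $\peak(\avec)=n-\asc(\wvec_\avec)$. You also treat the connected case explicitly (a reduced graph is connected if and only if no two consecutive entries equal $1$, i.e.\ its sequence is a Riordan sequence), which fills in a step the paper leaves implicit via Proposition~\ref{prop:reduced-sequences}(ii).
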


\begin{proof}
    Let $\Gamma_\avec \in \mathcal{R}(n)$. We prove that
    \[
        \peak(\avec) + \asc(\wvec_\avec) = n.
    \]
    In fact, recall that, in any Motzkin sequence $\wvec_\avec \in \mathcal{MP}(n)$, $\asc(\wvec_\avec) + \des(\wvec_\avec) + \plateau(\wvec_\avec) = n-1$ and $\asc(\wvec_\avec) = \des(\wvec_\avec)$. Moreover, it is clear that $\asc(\wvec_\avec^*) = \asc(\wvec_\avec)$, $\des(\wvec_\avec^*) = \des(\wvec_\avec)$ and $\plateau(\wvec_\avec^*) = \plateau(\wvec_\avec)$. Finally, by the proof of Proposition \ref{prop:reduced-sequences}, $\decor(\wvec_\avec^*) = \plateau(\wvec_\avec^*)$. Hence,
    \[
        \peak(\avec) + \asc(\wvec_\avec) = (1 + \asc(\wvec_\avec^*) + \decor(\wvec_\avec^*) ) + \des(\wvec_\avec) = n,
    \]
    where the first equality follows from Theorem \ref{th:bij-dyck-decorated-motzkin}(ii).
\end{proof}

Hence, by Lemma~\ref{lem:peak-asc}, $\Peak_{\mathcal{R}(n)}(t)$ (resp. $\Peak_{\mathcal{R}(n)}^c(t)$) is real-rooted if and only if $\asc_{\mathcal{MP}(n)}(t)$ (resp. $\asc_{\mathcal{RP}(n)}(t)$) is real-rooted. We start by proving the real-rootedness of $\asc_{\mathcal{MP}(n)}(t)$:

\begin{theorem}
\label{th:peak-reduced}
    For each $n \in \setN$,
    \[
        \asc_{\mathcal{MP}(n)}(t) = \sum_{i = 0}^{\lfloor \nicefrac{(n-1)}{2} \rfloor} \binom{n-1}{2i} C_i t^i = \leftidx{_2}F_1 \left( \frac{1-n}{2}, \frac{2-n}{2}; 2; 4t \right) .
    \]
\end{theorem}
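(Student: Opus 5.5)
The first equality is a direct count, mirroring the argument behind \eqref{eq:touchard-motzkin}. A Motzkin sequence $\wvec\in\motzkin(n)$ with $\asc(\wvec)=i$ corresponds, via \eqref{eq:bij-motzkin-path-seq}, to an uncolored Motzkin path of length $n-1$ with exactly $i$ $U$ steps, hence $i$ $D$ steps and $n-1-2i$ $H$ steps. Such a path is determined by two choices. First, choose the $2i$ positions among the $n-1$ steps that carry the non-horizontal steps; this can be done in $\binom{n-1}{2i}$ ways. Second, fill these positions with a word in $\{U,D\}$; deleting the $H$ steps must leave a Dyck path of semilength $i$, which gives $C_i$ choices. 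Conversely, any such pair yields a valid Motzkin path, since inserting horizontal steps does not affect the nonnegativity condition. So $[t^i]\asc_{\motzkin(n)}(t)=\binom{n-1}{2i}C_i$ for $0\le i\le\lfloor (n-1)/2\rfloor$, and this is the whole first equality.

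For the hypergeometric form, I will expand
\[
{}_2F_1\!\left(\tfrac{1-n}{2},\tfrac{2-n}{2};2;4t\right)=\sum_{i\ge0}\frac{\left(\tfrac{1-n}{2}\right)_i\left(\tfrac{2-n}{2}\right)_i}{(2)_i\, i!}(4t)^i ,
\]
where $(x)_i$ denotes the rising factorial. The key step is the duplication identity
\[
4^i\left(\tfrac{1-n}{2}\right)_i\left(\tfrac{2-n}{2}\right)_i=\prod_{j=0}^{i-1}(1-n+2j)(2-n+2j)=\prod_{m=0}^{2i-1}(1-n+m)=(n-1)(n-2)\cdots(n-2i)=\frac{(n-1)!}{(n-1-2i)!},
\]
where the two factors interleave the odd and even shifts and the sign $(-1)^{2i}$ disappears. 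Also $(2)_i\, i!=(i+1)!\,i!$. Hence the coefficient of $t^i$ equals
\[
\frac{(n-1)!}{(n-1-2i)!\,(i+1)!\,i!}=\binom{n-1}{2i}\frac{(2i)!}{(i+1)!\,i!}=\binom{n-1}{2i}C_i .
\]

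It remains to check that the series terminates at the correct degree. One of $\tfrac{1-n}{2},\tfrac{2-n}{2}$ is a nonpositive integer, so the series is a polynomial. Equivalently, the falling product $(n-1)\cdots(n-2i)$ vanishes exactly when $2i>n-1$, i.e. when $i>\lfloor (n-1)/2\rfloor$, which matches the range of summation in the first expression. The edge case $n=1$ gives the constant $1$ on both sides.

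There is no real obstacle here. The only delicate points are the bookkeeping of the Pochhammer duplication identity and of the sign in the termination argument.
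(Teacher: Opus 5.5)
Your proof is correct. For the first equality it follows the paper's route: the paper identifies ascents of $\wvec$ with $U$ steps of $M_\wvec$ and cites OEIS A055151 for the count $\binom{n-1}{2i}C_i$, which you instead derive directly by choosing positions for the non-horizontal steps and filling them with a Dyck word, exactly as in the paper's count for Touchard's identity. The paper gives no argument for the hypergeometric form, so your duplication computation $4^i\left(\tfrac{1-n}{2}\right)_i\left(\tfrac{2-n}{2}\right)_i=(n-1)(n-2)\cdots(n-2i)$, together with the termination check, supplies a step the paper leaves unproved.
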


\begin{proof}
    By \eqref{eq:bij-motzkin-path-seq}, a Motzkin sequence $\wvec$ of size $n$ has $i$ ascents if and only if its corresponding Motzkin path $M_\wvec$ of length $n-1$ has $i$ $U$ steps, and the number of such paths is given by $\displaystyle \binom{n-1}{2i} C_i$ \cite[\oeis{A055151}]{OEIS}, which proves the formula for $\asc_{\mathcal{MP}(n)}(t)$.
\end{proof}

\begin{corollary}\cite[Theorem 4.2(i)]{china2020}
\label{cor:peak-reduced-rr}
    The polynomials $\{ \asc_{\mathcal{MP}(n)}(t) \}_{n \geq 0}$ satisfy the recursion
    \begin{equation}
    \label{eq:asc-rec}
        \asc_{\mathcal{MP}(n)}(t) =
        \begin{cases}
            1 \quad &\mbox{if } n = 0 \mbox{ or } n = 1, \\
            \displaystyle \frac{2n-1}{n+1} \asc_{\mathcal{MP}(n-1)}(t) + \frac{n-2}{n+1} \bigl(4t - 1\bigr) \asc_{\mathcal{MP}(n-2)}(t) \quad &\mbox{if } n \geq 2 .
        \end{cases}
    \end{equation}
    Moreover, $\asc_{\mathcal{MP}(n)}(t)$ is real-rooted for every $n \geq 0$ and $\left\{ \asc_{\mathcal{MP}(n)}(t) \right\}_{n \geq 0}$ is a generalized Sturm sequence.
\end{corollary}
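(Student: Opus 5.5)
The plan has two parts: first the recursion \eqref{eq:asc-rec}, then real-rootedness and interlacing by induction using Lemma \ref{recursion-rr}.

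For the recursion I would use the closed form of Theorem \ref{th:peak-reduced}, $a_n(t) \coloneqq \asc_{\mathcal{MP}(n)}(t) = \sum_i \binom{n-1}{2i} C_i t^i$. Set $m=n-1$ and $c_{m,i}=\binom{m}{2i}C_i$. Multiplying the claimed identity by $n+1$, it becomes the coefficientwise identity
\[
(m+2)c_{m,i} = (2m+1)c_{m-1,i} - (m-1)c_{m-2,i} + 4(m-1)c_{m-2,i-1}.
\]
Divide through by $c_{m,i}$. Each ratio is then rational in $m$ and $i$, using
\[
\binom{m-1}{2i}/\binom{m}{2i} = \tfrac{m-2i}{m}, \qquad C_{i-1}/C_i = \tfrac{i+1}{2(2i-1)}.
\]
The identity reduces to a polynomial check that I would carry out directly. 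An alternative is to view it as the hypergeometric contiguity relation for ${}_2F_1$, or simply to cite \cite[Theorem 4.2(i)]{china2020}. The initial cases $n=0,1$ (both equal to $1$) are immediate.

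For real-rootedness, I would show by induction that $a_{n-1}(t) \interl a_n(t)$. Write
\[
a_n(t) = \alpha_n a_{n-1}(t) + b_n(t) a_{n-2}(t), \qquad \alpha_n=\tfrac{2n-1}{n+1}>0, \quad b_n(t)=\tfrac{n-2}{n+1}(4t-1).
\]
Apply Lemma \ref{recursion-rr} with $F=a_n$, $f=a_{n-1}$, $g=a_{n-2}$. The hypothesis $g\interl f$ is the induction hypothesis. All leading coefficients are positive. The degree of $a_n$ is $\lfloor (n-1)/2\rfloor$, which is either $\deg a_{n-1}$ or $\deg a_{n-1}+1$.

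The remaining condition is $b_n(r)\le 0$ at every zero $r$ of $a_{n-1}$, that is, $r\le 1/4$. All coefficients of $a_{n-1}$ are positive, so every real zero is negative, and in particular $\le 1/4$. The condition therefore holds automatically once $a_{n-1}$ is known to be real-rooted, which the induction supplies. The base cases $a_0=a_1=1$, $a_2=1$, $a_3=1+t$ are trivial and satisfy interlacing by the convention for constants.

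The main obstacle is the algebraic verification of the three-term recursion. I would handle it either by the explicit ratio computation above, or by recognizing $a_n(t)$ as ${}_2F_1(\tfrac{1-n}{2},\tfrac{2-n}{2};2;4t)$ and invoking a Gauss contiguous relation in the parameter shift $n\mapsto n-1$. Once the recursion is in hand, the interlacing step is routine from Lemma \ref{recursion-rr}, and the generalized Sturm property follows.
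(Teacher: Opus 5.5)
Your proof is correct. The paper gives no argument of its own for this corollary: it attributes the statement to \cite[Theorem 4.2(i)]{china2020}, so there is no internal proof to compare against.

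What you supply is a self-contained replacement, and it follows the same template the paper uses elsewhere, for instance in the proof of Corollary \ref{cor:peak-line-rr}. A three-term recursion is fed into Lemma \ref{recursion-rr}, and the sign condition on $b_n(t)=\frac{n-2}{n+1}(4t-1)$ at the zeros of $a_{n-1}$ holds automatically, because $a_{n-1}$ has positive coefficients and $n\geq 2$. The degree bookkeeping $\deg a_n=\lfloor (n-1)/2\rfloor$ also correctly covers both cases of the lemma.

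Your coefficient identity checks out. For $m\geq 2$ and $2i\leq m$, the three ratios are
\[
\frac{c_{m-1,i}}{c_{m,i}}=\frac{m-2i}{m},\qquad \frac{c_{m-2,i}}{c_{m,i}}=\frac{(m-2i)(m-2i-1)}{m(m-1)},\qquad \frac{c_{m-2,i-1}}{c_{m,i}}=\frac{i(i+1)}{m(m-1)}.
\]
The last one uses $\binom{m-2}{2i-2}/\binom{m}{2i}=\frac{2i(2i-1)}{m(m-1)}$ together with your Catalan ratio. After multiplying by $m$, the claim becomes
\[
m(m+2)=(2m+1)(m-2i)-(m-2i)(m-2i-1)+4i(i+1),
\]
which holds identically in $m$ and $i$. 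The remaining cases ($2i>m$, where every term vanishes, and $m\leq 1$) check directly.

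The citation buys brevity. Your version makes both the recursion and the generalized Sturm property verifiable within the paper.
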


Now, we will prove the real-rootedness of $\asc_{\mathcal{RP}(n)}(t)$. We start by stating the following lemma, which will be useful to find a closed formula for $\asc_{\mathcal{RP}(n)}(t)$:

\begin{lemma}[Lagrange Inversion Formula]\cite[Theorem 5.1.1]{Wilf1994}
\label{lem:lagrange}
    Let $f(y)$ and $\phi(y)$ be formal power series in $y$, with $\phi(0) = 1$. Then there is a unique formal power series $g = g(x)$ such that $g = x \cdot \phi(g)$. Further, the coefficients of $f(g(x))$ when expanded in a power series in $x$ about $x = 0$ are given by
    \[
        [x^n] f(g(x)) = \frac{1}{n} [y^{n-1}] ( f'(y) \cdot \phi(y)^n ) .
    \]
\end{lemma}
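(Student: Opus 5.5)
This is a classical result quoted from \cite{Wilf1994}, so in the paper it can simply be cited. If I were to prove it, I would work throughout in the ring of formal Laurent series, and use the formal residue $\operatorname{Res}_x F \coloneqq [x^{-1}]F$ and formal differentiation.

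First, existence and uniqueness of $g$. Write $g=\sum_{k\ge 1} g_k x^k$; the equation $g=x\phi(g)$ forces $g(0)=0$. Comparing coefficients of $x^m$ on both sides, the right-hand side $[x^m]\,x\phi(g)=[x^{m-1}]\phi(g)$ depends only on $g_1,\dots,g_{m-1}$, because $g$ has no constant term. Hence $g_1=\phi(0)=1$, and each $g_m$ is determined recursively and uniquely. In particular $g=x\,u(x)$ with $u(0)=1$, so $g$ has a compositional inverse, and $x=g/\phi(g)$.

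Next, the coefficient formula. For $n\ge1$ I would write
\[
n\,[x^n]f(g(x)) = [x^{n-1}]\tfrac{d}{dx}f(g(x)) = \operatorname{Res}_x\bigl(x^{-n} f'(g(x))\,g'(x)\bigr).
\]
Then I substitute $x=g/\phi(g)$, so that the integrand becomes $H(g(x))\,g'(x)$ with
\[
H(y)=f'(y)\,\phi(y)^n\,y^{-n}.
\]
The heart of the argument is the change-of-variables rule
\[
\operatorname{Res}_x\bigl(H(g(x))\,g'(x)\bigr)=\operatorname{Res}_y H(y)
\]
for any formal Laurent series $H$ and any $g=x\,u(x)$ with $u(0)=1$. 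Granting this rule, the right-hand side equals $[y^{n-1}]\bigl(f'(y)\phi(y)^n\bigr)$, and dividing by $n$ gives the claimed formula.

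The main obstacle is justifying this change-of-variables rule, and I would do it by reducing to monomials. By linearity, and a routine convergence check (only finitely many negative powers occur, and $g^k$ has order $k$), it suffices to treat $H=y^k$. For $k\neq -1$, the series $g^k g'$ is the derivative of $g^{k+1}/(k+1)$, and every derivative has zero residue; this matches $\operatorname{Res}_y y^k=0$. For $k=-1$, I write
\[
\frac{g'}{g}=\frac1x+\frac{u'}{u},
\]
where $u'/u$ is a power series because $u(0)=1$. Its residue is therefore $1=\operatorname{Res}_y y^{-1}$, which completes the proof.
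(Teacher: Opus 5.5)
Your proposal is correct. The paper gives no proof of this lemma: it is quoted from \cite[Theorem 5.1.1]{Wilf1994} and used as a black box in the proof of Theorem~\ref{th:peak-connected-reduced}. So your opening remark is exactly what the paper does, and your argument is a self-contained supplement rather than a competing route.

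That argument is the standard formal-residue proof, and every step holds up:
\begin{itemize}
\item The coefficient recursion gives existence and uniqueness of $g$, with $g_1=\phi(0)=1$.
\item The identity $n[x^n]f(g)=\operatorname{Res}_x\bigl(x^{-n}f'(g)g'\bigr)$, combined with $x^{-1}=\phi(g)/g$, produces $\operatorname{Res}_x\bigl(H(g)g'\bigr)$.
\item The change-of-variables rule is legitimately reduced to monomials. Since $g'$ has constant term $u(0)=1$, the series $g^kg'$ has order $k$, so $\sum_k h_k g^k g'$ is locally finite. The case $k\neq-1$ is then handled by exact derivatives, and $k=-1$ by $g'/g=x^{-1}+u'/u$.
\end{itemize}

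The classical analytic proof uses Cauchy's formula, integration by parts, and the substitution $x=y/\phi(y)$ in a contour integral. Your formal version needs no convergence hypotheses. It also goes through verbatim when the coefficients of $f$ and $\phi$ lie in any commutative $\mathbb{Q}$-algebra. This is exactly the situation in the paper's application, where $\phi(Y)=1+Y+yY^2$ has coefficients in $\mathbb{Q}[y]$.
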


\begin{theorem}
\label{th:peak-connected-reduced}
    For each $n \geq 2$,
    \[
        \asc_{\mathcal{RP}(n)}(t) = \sum_{i = 1}^{\lfloor \nicefrac{(n-1)}{2} \rfloor} \frac{1}{i} \binom{n-1}{i-1} \binom{n-2-i}{i-1} t^i = t \cdot \leftidx{_3}F_2 \left( 1-n, \frac{3-n}{2}, \frac{4-n}{2}; 2, 3-n; 4t \right),
    \]
    where $\leftidx{_q}F_p (a_1, \ldots, a_p; b_1, \ldots, b_q; t)$ is the generalized hypergeometric function.
\end{theorem}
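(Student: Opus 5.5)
The plan is to count Riordan paths of length $n-1$ with exactly $i$ up steps by a generating-function argument, then extract coefficients with Lemma~\ref{lem:lagrange}. First I would set up the bivariate generating function $R(x,t)=\sum_{n\geq 1}\asc_{\mathcal{RP}(n)}(t)\,x^{n-1}$, which enumerates Riordan paths by length (marked by $x$) and number of $U$ steps (marked by $t$). In parallel, I would use $M(x,t)=\sum_{n \geq 1}\asc_{\mathcal{MP}(n)}(t)\,x^{n-1}$ for Motzkin paths.

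Step 1 is to derive the functional equations from the first-return decompositions already used in Proposition~\ref{prop:rec-pnest-red}. A Motzkin path is empty, or $HM'$, or $UADB$, so
\[
M = 1 + xM + tx^2 M^2.
\]
A Riordan path is empty or $UADB$ with $A$ Motzkin and $B$ Riordan, so
\[
R = 1 + tx^2 M R,
\qquad\text{that is,}\qquad
R = \frac{1}{1-tx^2M}.
\]
I would then eliminate $M$ to express $R$ as a function of a single series $g$ satisfying $g = x\,\phi(g)$. The natural choice is $g = xM$, which gives
\[
g = x\,(1+g+tg^2).
\]
So $\phi(y)=1+y+ty^2$ with $\phi(0)=1$, and $R = 1/(1-txg)$. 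This still has an extra factor of $x$. To remove it, I would rewrite $tx g$ via the equation: $x = g/(1+g+tg^2)$, so
\[
txg = \frac{tg^2}{1+g+tg^2},
\qquad
R = \frac{1+g+tg^2}{1+g} = 1 + \frac{tg^2}{1+g} =: f(g).
\]

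Step 2 applies Lemma~\ref{lem:lagrange}:
\[
[x^{m}]R = \frac{1}{m}[y^{m-1}]\bigl(f'(y)(1+y+ty^2)^m\bigr),
\qquad m=n-1 .
\]
Here $f'(y) = t\,\dfrac{2y+y^2}{(1+y)^2}$. Expanding $(1+y+ty^2)^m=\sum_j \binom{m}{j}t^j y^{2j}(1+y)^{m-j}$ and multiplying by $t(2y+y^2)(1+y)^{-2}$, the coefficient of $t^i$ collects the $j=i-1$ term. That coefficient is
\[
\binom{m}{i-1}\,[y^{m-2i+1}]\,(2y+y^2)(1+y)^{m-i-1},
\]
which equals
\[
\binom{m}{i-1}\left[\,2\binom{m-i-1}{m-2i}+\binom{m-i-1}{m-2i-1}\right].
\]

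Step 3 is the simplification I expect to be the main (though routine) obstacle: showing that
\[
\frac{1}{m}\binom{m}{i-1}\left[2\binom{m-i-1}{i-1}+\binom{m-i-1}{i}\right]=\frac1i\binom{n-1}{i-1}\binom{n-2-i}{i-1}.
\]
Write $a=m-i-1$. The bracket is $\binom{a}{i-1}\bigl(2+\frac{a-i+1}{i}\bigr)=\binom{a}{i-1}\frac{a+i+1}{i}=\binom{a}{i-1}\frac{m}{i}$, which gives exactly the claimed term, with $a=n-2-i$. The $i=0$ term vanishes for $n\ge2$, since a nonempty Riordan path must contain a $U$ step, and the range $i\le\lfloor (n-1)/2\rfloor$ is automatic from the binomial coefficients.

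The hypergeometric form then follows by computing the ratio of consecutive coefficients $c_{i+1}/c_i$ and matching it with the $_3F_2$ parameters; I would check this ratio directly rather than prove it by induction. As a sanity check against Table~\ref{tab:pnest-reduced}, the formula should give $\asc_{\mathcal{RP}(5)}(1)=3$.
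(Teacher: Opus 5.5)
Your proposal is correct and follows the paper's proof. Both use the equations $M = 1 + xM + tx^2M^2$ and $R = 1/(1-tx^2M)$, the substitution $g = xM$ with $\phi(y)=1+y+ty^2$, and Lagrange inversion. The one difference is that the paper writes $R = \frac{1}{x}\cdot\frac{g}{1+g}$ and extracts $[x^{n}]$; then $f'(y)=(1+y)^{-2}$, and $\frac1n\binom{n}{i}\binom{n-2-i}{n-1-2i}$ comes out directly without your Step 3 simplification, which is nevertheless correct.

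The paper does not verify the ${}_3F_2$ form at all. If you do, matching the ratio $c_{i+1}/c_i$ also needs the initial value $c_1=1$, and that holds only for $n\ge 3$. At $n=2$ the sum is empty while $t\cdot{}_3F_2(-1,\tfrac12,1;2,1;4t)=t-t^2$, so the second equality in the statement actually fails there. For $n\ge 3$ the lower parameter $3-n\le 0$ forces the usual terminating-series convention.
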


\begin{proof}
    Let $\mathcal{MP} \coloneqq \bigsqcup_{n \geq 1} \mathcal{MP}(n)$ be the set of all Motzkin paths. Given a Motzkin path $M \in \mathcal{MP}$, recall that $\mathrm{u}(M)$ denotes its number of $U$ steps and let $|M|$ denote its length. Set
    \[
        M(x,y) \coloneqq \sum_{M \in \mathcal{MP}} x^{|M|} y^{\mathrm{u}(M)} .
    \]
    Recall that any non-empty Motzkin path $M$ can be written either as $HM'$ or $UADB$, where $A$, $B$ and $M'$ are shorter Motzkin paths, and $D$ is the first step of $M$ whose height is $0$. Hence,
    \begin{align}
        M(x,y) &= 1 + \sum_{M' \in \mathcal{MP}} x^{|HM'|} y^{\mathrm{u}(HM')} + \sum_{A,B \in \mathcal{MP}} x^{|UADB|} y^{\mathrm{u}(UADB)} \nonumber \\
               &= 1 + x \sum_{M' \in \mathcal{MP}} x^{|M'|} y^{\mathrm{u}(M')} + y x^2 \sum_{A,B \in \mathcal{MP}} x^{|A| + |B|} y^{\mathrm{u}(A) + \mathrm{u}(B)} \label{eq:M-series}\\
               &= 1 + x \cdot M(x,y) + yx^2 \cdot [M(x,y)]^2 . \nonumber
    \end{align}

    Let $\mathcal{RP} \coloneqq \bigsqcup_{n \geq 1} \mathcal{RP}(n)$ be the set of all Riordan paths, and set
    \[
        R(x,y) \coloneqq \sum_{R \in \mathcal{RP}} x^{|R|} y^{\mathrm{u}(R)} .
    \]
    Observe that any non-empty Riordan path $R$ has a unique decomposition $R = (U M_1 D) \cdots (U M_r D)$, where $M_1, \ldots, M_r$ are Motzkin paths. Hence,
    \begin{align}
        R(x,y) &= 1 + \sum_{r \geq 1} \sum_{M_1, \ldots, M_r \in \mathcal{MP}} x^{|(U M_1 D) \cdots (U M_r D)|} y^{\mathrm{u}((U M_1 D) \cdots (U M_r D))}  \nonumber\\
               &= 1 + \sum_{r \geq 1} [ y x^2 \cdot M(x,y) ]^r \label{eq:J-series}\\
               & = \frac{1}{1 - y x^2 \cdot M(x,y)}. \nonumber
    \end{align}
    By \eqref{eq:M-series},
    \begin{equation}
    \label{eq:M-series2}
        1 - y x^2 \cdot M(x,y) = \frac{1 + x \cdot M(x,y)}{M(x,y)},
    \end{equation}
    and combining \eqref{eq:J-series} and \eqref{eq:M-series2} we get
    \begin{equation}
    \label{eq:1}
        R(x,y) = \frac{M(x,y)}{1 + x \cdot M(x,y)}.
    \end{equation}
    Let $Y(x,y) = x \cdot M(x,y)$. By \eqref{eq:M-series},
    \[
        Y(x,y) = x \cdot \left\{ 1 + Y(x,y) + y \cdot [Y(x,y)]^2 \right\}
    \]
    and, by \eqref{eq:1},
    \[
        R(x,y) = \frac{Y(x,y)}{x \cdot [1 + Y(x,y)]} .
    \]
    
    Write $Y = Y(x,y)$. Then $Y$ is determined implicitly by
    \[
        Y = x \cdot \phi(Y),
    \]
    where $\phi(Y) = 1 + Y + y Y^2$, and we have
    \[
        R(x,y) = \frac{1}{x} f(Y),
    \]
    where $f(Y) = Y \cdot (1+Y)^{-1}$.
    
    By definition, $[t^i] \asc_{\mathcal{RP}(n)}(t)$ equals the number of Riordan paths of length $n-1$ with $i$ $U$ steps, which is given by $[x^{n-1} y^i] R(x,y)$. Hence, by Lemma \ref{lem:lagrange},
    \begin{align*}
        [t^i] \asc_{\mathcal{RP}(n)}(t) = [x^{n-1} y^i] R(x,y) = [x^n y^i] f(Y) &= \frac{1}{n} [Y^{n-1} y^i] ( f'(Y) \cdot \phi(Y)^n ) \\
                                                                                &= \frac{1}{n} [Y^{n-1} y^i] \frac{\left( (1+Y) + y Y^2 \right)^n}{(1 + Y)^2} \\
                                                                                &= \frac{1}{n} [Y^{n-1} y^i] \frac{1}{(1 + Y)^2} \sum_{j=0}^n \binom{n}{j} (y Y^2)^j (1 + Y)^{n - j} \\
                                                                                &= \frac{1}{n} [Y^{n-1}] \binom{n}{i} Y^{2i}(1 + Y)^{n - 2 - i} \\
                                                                                &= \frac{1}{n} \binom{n}{i} \binom{n-2-i}{n-1-2i} \\
                                                                                &= \frac{1}{i} \binom{n-1}{i-1} \binom{n-2-i}{i-1},
    \end{align*}
    which concludes the proof.
\end{proof}

\begin{corollary}
\label{cor:peak-connected-reduced-rr}
    The polynomial $\asc_{\mathcal{RP}(n)}(t)$ is real-rooted for every $n \geq 0$. 
\end{corollary}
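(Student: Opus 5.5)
The plan is to follow the same template as Corollary \ref{cor:peak-reduced-rr}: find a three-term recurrence for $\asc_{\mathcal{RP}(n)}(t)$ and then argue real-rootedness by induction, using interlacing through Lemma \ref{recursion-rr}. Since $t \mid \asc_{\mathcal{RP}(n)}(t)$ for $n \geq 3$, I will work with $q_n(t) \coloneqq \asc_{\mathcal{RP}(n)}(t)/t$. Its coefficients are
\[
    c_{n,i} = \frac{1}{i+1}\binom{n-1}{i}\binom{n-3-i}{i},
\]
obtained from Theorem \ref{th:peak-connected-reduced} by shifting $i$. The small cases $n \leq 6$ are checked by hand against Table \ref{tab:peak-reduced} together with Lemma \ref{lem:peak-asc}.

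\textbf{Step 1: find the recurrence.} The coefficients $c_{n,i}$ are hypergeometric in both $n$ and $i$. So I will apply Zeilberger's algorithm, or verify an ansatz directly on the coefficients, to get a recurrence of the form
\[
    q_n(t) = \alpha_n\, q_{n-1}(t) + \beta_n(t)\, q_{n-2}(t),
\]
with $\alpha_n$ a positive rational function of $n$ and $\beta_n(t)$ linear in $t$. The model is the Motzkin case \eqref{eq:asc-rec}, where $\beta_n(t) = \tfrac{n-2}{n+1}(4t-1)$. A more structural way to obtain it is to start from \eqref{eq:1}, $R(x,y)=M(x,y)/(1+xM(x,y))$. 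The series $M$ satisfies a quadratic equation, hence a first-order linear ODE in $x$ with coefficients polynomial in $x$ and $y$. Transporting that ODE through the Möbius-type relation \eqref{eq:1} gives an ODE for $R$, and extracting $[x^{n-1}]$ then yields the recurrence. If a pure three-term recurrence does not exist, I will allow $\beta_n$ to be of the form $a_n + b_n t$ with extra terms, or use a mixed recurrence involving $\asc_{\mathcal{MP}}$, which is already known to be Sturm.

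\textbf{Step 2: induction via Lemma \ref{recursion-rr}.} Assume $q_{n-2} \interl q_{n-1}$. I need $\beta_n(r) \leq 0$ at every zero $r$ of $q_{n-1}$. All coefficients are positive, so these zeros are negative. It therefore suffices to have $\beta_n(t) \leq 0$ for $t \leq 0$, which holds if $\beta_n(t) = \kappa_n(4t-1)$ or $\kappa_n(t-1)$ with $\kappa_n > 0$, as in the Motzkin case. The degree condition $\deg q_n \in \{\deg q_{n-1}, \deg q_{n-1}+1\}$ can be read off from the upper limit $\lfloor (n-1)/2\rfloor$ of the sum. The sign condition on leading coefficients also has to be checked: at the step where the degree rises, $q_n$ and $q_{n-2}$ must have leading coefficients of the same sign. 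Lemma \ref{recursion-rr} then gives $q_{n-1} \interl q_n$, so $q_n$ is real-rooted.

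\textbf{Main obstacle and fallback.} The main obstacle is Step 1: obtaining a recurrence whose second coefficient has the right sign on $(-\infty,0]$. Riordan-type objects often lead to recurrences whose coefficients change sign, because $R_n$ satisfies $(n+1)R_n=(n-1)(2R_{n-1}+3R_{n-2})$ and this does not refine cleanly. If that happens, the fallback is a multiplier-sequence argument. By Theorem \ref{th:peak-reduced}, $\asc_{\mathcal{MP}(n')}(t)$ is real-rooted, and I would try to write $c_{n,i}$ as $\sigma_i \cdot [t^i]\asc_{\mathcal{MP}(n')}(t)$ for a suitable $n'$. Alternatively, I would write $c_{n,i}$ as a product of a real-rooted binomial-type polynomial coefficient (for example $\binom{n-1}{i}$, coming from $(1+t)^{n-1}$) and a sequence $\sigma_i$ whose exponential generating function has the Laguerre–Pólya form of Lemma \ref{lem:schur-polya}. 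For instance, $\binom{n-3-i}{i}/(i+1)$ is a candidate, after reversing the polynomial if necessary. Then real-rootedness follows from Lemma \ref{lem:schur-polya}.
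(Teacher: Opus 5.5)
Your main line fails at Step 1: the three-term recurrence that Step 2 needs does not exist. This holds even if you allow $\alpha_n$ to be linear in $t$. Take $q_8=1+14t+21t^2$, $q_9=1+20t+56t^2+14t^3$ and $q_{10}=1+27t+120t^2+84t^3$, and try $q_{10}=(a+bt)q_9+(c+dt)q_8$. The $t^4$ coefficient forces $b=0$. The coefficients of $t^0,t^1,t^3$ then give $a=27/16$, $c=-11/16$, $d=23/8$, and with these values the $t^2$ coefficient is $1925/16\neq 120$.

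This is structural, not an accident of small $n$. Combining \eqref{eq:M-series} and \eqref{eq:1} gives $x(1+tx)R^2-(1+x)R+1=0$. So for generic $t$ the algebraic function $R(x,t)$ has three distinct nonzero singular points on its two branches: the two roots of $(1-x)^2-4tx^2$, and a pole at $x=-1/t$. The leading coefficient of any linear ODE annihilating $R$ must vanish at all three. Hence every recurrence in $n$ with coefficients polynomial in $n$ and $t$ has order at least $3$. The ODE route you sketch does produce one:
\[
(m+1)\rho_m=\bigl[(2m-1)-(m+1)t\bigr]\rho_{m-1}-\bigl[(m-2)-(6m-9)t\bigr]\rho_{m-2}-(m-2)\,t(1-4t)\,\rho_{m-3},\qquad \rho_m=\asc_{\mathcal{RP}(m+1)}(t).
\]
This is a four-term recurrence, so Lemma~\ref{recursion-rr} does not apply. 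At $t=1$ the pole merges with the branch point $x=-1$, which is why the Riordan numbers alone satisfy a two-term recurrence. Your escape clauses (``extra terms'', a ``mixed recurrence'') are not specified, so this line gives no proof.

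Your fallback is the right kind of argument, but you stop exactly at the decisive step. The paper factors $c_{n,i}=\frac{1}{i+1}\binom{n-1}{i}\cdot\binom{n-3-i}{i}$ as follows:
\begin{itemize}
\item The real-rooted base is the Fibonacci-type polynomial $p_n(t)=\sum_i\binom{n-3-i}{i}t^i$. It satisfies $p_n=p_{n-1}+tp_{n-2}$, so consecutive members interlace.
\item The multiplier is $\sigma_i=\frac{1}{i+1}\binom{n-1}{i}$. The key observation is that $\Phi_\sigma(z)=L^{(1)}_{n-1}(-z)/n$ is a generalized Laguerre polynomial, whose zeros are real and negative, so Lemma~\ref{lem:schur-polya} applies.
\end{itemize}
Your candidate split, $\sigma_i=\binom{n-3-i}{i}/(i+1)$ acting on $(1+t)^{n-1}$, is also valid. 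It works only because $\sum_i\binom{n-3-i}{i}\frac{z^i}{(i+1)!}$ is real-rooted. The natural proof of that uses the same two facts: the Fibonacci polynomials are real-rooted, and $\{1/(i+1)!\}$ is a multiplier sequence (its Jensen polynomials are $L^{(1)}_N(-z)/(N+1)$). Your proposal names the candidate but gives no argument that its exponential generating function lies in the Laguerre--P\'olya class, and that is the step carrying the entire proof.
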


\begin{proof}
    The claim is clear for $n = 0$ and $n = 1$, see Table \ref{tab:peak-reduced}. For $n \geq 1$, write
    \[
        \asc_{\mathcal{RP}(n+1)}(t) = t \sum_{i = 0}^{\lfloor \nicefrac{(n-2)}{2} \rfloor} \frac{1}{i+1} \binom{n}{i} \binom{n-2-i}{i} t^i .
    \]
    The proof consists of two parts: first, we prove that the polynomials
    \[
        p_{n+1}(t) = \sum_{i=0}^{\lfloor \nicefrac{(n-2)}{2} \rfloor} \binom{n-2-i}{i} t^i
    \]
    are real-rooted. Then, we show that the sequence
    \[
        \sigma = \left\{ \frac{1}{i+1} \binom{n}{i} \right\}_{i=0}^n
    \]
    is a multiplier sequence, which implies that $\asc_{\mathcal{RP}(n+1)}(t)$ is real-rooted.

    In fact, by induction, one can verify that the polynomials $\{p_n(t)\}_{n \geq 2}$ satisfy the recursion
    \[
        p_n(t) =
        \begin{cases}
            0 &\mbox{if } n = 2, \\
            1 &\mbox{if } n = 3, \\
            p_{n-1}(t) + t \cdot p_{n-2}(t) &\mbox{if } n \geq 4.
        \end{cases}
    \]
    Moreover, since $p_2(t) \interl p_3(t)$ clearly, one can use induction and Lemma \ref{lem:interlacing-properties}(i) to prove that $p_{n-1}(t) \interl p_n(t)$ for all $n \geq 4$, which proves the real-rootedness of the polynomials $\{p_n(t)\}_{n \geq 2}$.

    Now, we prove that $\sigma$ is a multiplier sequence. In fact, by Lemma \ref{lem:schur-polya}, it is sufficient to show that the polynomial
    \[
        \Phi_\sigma (t) = \sum_{i=0}^n \frac{1}{i+1} \binom{n}{i} \frac{t^i}{i!} = \frac{1}{n+1} \sum_{i=0}^n \binom{n+1}{i+1} \frac{t^i}{i!}
    \]
    is real-rooted. In fact, $\displaystyle \Phi_\sigma (t) = \frac{L_n^{(1)}(-t)}{n+1}$, where
    \[
        L_n^{(\alpha)}(t) = \sum_{i=0}^n (-1)^i \binom{n + \alpha}{n-i} \frac{t^i}{i!}
    \]
    is the generalized Laguerre polynomial of degree $n$ \cite{sonine1880, booklaguerre}, which is known to be real-rooted for $\alpha > -1$, and concludes the proof.
\end{proof}

\begin{table}[H]
\centering
\begin{tabular}{@{}r r r@{}}
\toprule
$n$ & $\Peak_{\mathcal{R}(n)}(t)$ & $\Peak_{\mathcal{R}(n)}^c(t)$ \\
\midrule
0  & $1$ & $0$ \\
1  & $t$ & $t$ \\
2  & $t^2$ & $0$ \\
3  & $t^2 + t^3$ & $t^2$ \\
4  & $3t^3 + t^4$ & $t^3$ \\
5  & $2t^3 + 6t^4 + t^5$ & $2t^3 + t^4$ \\
6  & $10t^4 + 10t^5 + t^6$ & $5t^4 + t^5$ \\
7  & $5t^4 + 30t^5 + 15t^6 + t^7$ & $5t^4 + 9t^5 + t^6$ \\
8  & $35t^5 + 70t^6 + 21t^7 + t^8$ & $21t^5 + 14t^6 + t^7$ \\
9  & $14t^5 + 140t^6 + 140t^7 + 28t^8 + t^9$ & $14t^5 + 56t^6 + 20t^7 + t^8$ \\
10 & $126t^6 + 420t^7 + 252t^8 + 36t^9 + t^{10}$ & $84t^6 + 120t^7 + 27t^8 + t^9$ \\
\bottomrule
\end{tabular}
\caption{The distribution of peaks over reduced unit interval graphs and connected reduced unit interval graphs on $[n]$. The coefficients of $t^n \Peak_{\mathcal{R}(n)}(1/t)$ are given by \cite[\oeis{A055151}]{OEIS}, while the coefficients of $t^n \Peak_{\mathcal{R}^c(n)}(1/t)$ are given by \cite[\oeis{A132081}]{OEIS}.}
\label{tab:peak-reduced}
\end{table}

\section{Open questions and further directions}
\label{sec:open}

We finish this work with some open questions and further directions. In Section \ref{sec:abelian}, we studied Abelian unit interval graphs and we characterized them as the unit interval graphs $\Gamma_\avec \in \DP(n)$ for which there exists $k \in [n]$ such that $\{1, \ldots, k\}$ and $\{k+1, \ldots, n\}$ are cliques of $\Gamma_\avec$, see Lemma \ref{charac-abelian}. Hence, we can generalize this definition as follows: we say that $\Gamma_\avec \in \DP(n)$ is an \defin{$m$-Abelian unit interval graph} if there exist $1 \leq k_1\leq \cdots \leq k_m \leq n$ such that $\{1, \ldots, k_1 \}, \{k_1 + 1, \ldots, k_2 \}, \ldots, \{k_m + 1, \ldots, n\}$ are cliques of $\Gamma_\avec$. In particular, an Abelian unit interval graph is $1$-Abelian. Similarly, in Section \ref{sec:line-graphs}, we defined the class of $m$-nested unit interval graphs as those unit interval graphs whose peak-nesting is at most $m$.

The following questions naturally arise:

\begin{question}
    Are there nice closed formulas for the peak and the peak-nesting polynomials over all (or over the connected) $m$-Abelian unit interval graphs and $m$-nesting unit interval graphs? Are their sequences of coefficients related to other combinatorial objects? For which values of $m \in \mathbb{N}$ are some of these polynomials real-rooted, ultra log-concave, log-concave or unimodal? Are any of these symmetric? Do they have a nonnegative expansion in the gamma-basis? If so, are there nice interpretations for these coefficients?
\end{question}

We conclude this section by presenting a (unexpected) bijection between unit interval graphs and lattice path matroids \cite{BoninMierNoy2003, BoninMier2006, Bonin2010}. Observe that, if the set of peak-cliques of $\Gamma_\avec$ is
\[
    \mathcal{P}_\avec = \{P_1, \ldots, P_m\},
\]
then $[n] = \bigcup_{k=1}^m P_k$, where each $P_k = \{a_k, a_k +1, \ldots, b_k\}$ for some $1 \leq a_k \leq b_k \leq n$. Moreover, relabeling the $P_k$'s if necessary, it is clear that $a_1 < \cdots < a_k$ and $b_1 < \cdots < b_k$, see Example \ref{ex:def-peaks}. From \cite[Theorem 2.1]{multi-path}, it follows that $\mathcal{M}_\avec = (P_1, \ldots, P_m)$ is the presentation of some lattice path matroid on $[n]$ with $\rank(\mathcal{M}_\avec) = m$, and we call $\mathcal{M}_\avec$ (resp., $\Gamma_\avec$) the \defin{associated LPM} (resp., the \defin{associated unit interval graph}) of $\Gamma_\avec$ (resp., $\mathcal{M}_\avec$). This allows us to characterize the three subclasses of unit interval line graphs studied in this paper as subclasses of lattice path matroids:

\begin{proposition}
\label{prop:lpms}
The LPMs associated to
    \begin{enumerate}[(i)]
        \item Abelian unit interval graphs are those with presentation $\mathcal{M}_{\mathcal{A}} = (P_1, \ldots, P_m)$, where $P_1 \cup P_m = [n]$;
        \item $m$-nested unit interval graphs are those whose diagram does not contain $((m+1) \times m)$-grids;
        \item reduced unit interval line graphs on $[n]$ are those with presentation $\mathcal{M}_{\mathcal{R}} = ([a_1, b_1], \ldots, [a_m, b_m])$ such that, for every $i \in [n-1]$, either $i = b_j$ or $i+1 = a_j$ for some $j \in [m]$.
    \end{enumerate}
\end{proposition}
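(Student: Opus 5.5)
The plan is to handle the three items separately, each time translating the graph condition into a condition on the peak-cliques $P_k=[a_k,b_k]$ with $a_1<\cdots<a_m$ and $b_1<\cdots<b_m$. These are exactly the intervals of the LPM presentation $\mathcal{M}_\avec=(P_1,\ldots,P_m)$ given by \cite[Theorem 2.1]{multi-path}. The tool throughout is Lemma \ref{peak-cliques-are-maximal}: every edge $ij$ lies in some peak-clique, since the clique $\{i,\ldots,j\}$ of Lemma \ref{lem:uig-clique} extends to a maximal clique, and the maximal cliques are exactly the peak-cliques.

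\emph{Item (i).} I would use Lemma \ref{charac-abelian}. If $\{1,\ldots,k\}$ and $\{k+1,\ldots,n\}$ are cliques, the first lies in the unique peak-clique containing $1$, namely $P_1$. The second lies in the unique peak-clique containing $n$, namely $P_m$. Hence $P_1\cup P_m=[n]$. Conversely, if $P_1\cup P_m=[n]$, set $k=b_1$. Then $\{1,\ldots,k\}=P_1$ is a clique, and $\{k+1,\ldots,n\}\subseteq P_m$ is a clique.

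\emph{Item (iii).} A pair $(i,i+1)$ consists of adjacent twins exactly when $i$ and $i+1$ lie in the same collection of peak-cliques. This uses that neighborhoods in $\Gamma_\avec$ are unions of the peak-cliques containing the vertex. It is also the unmarked-plateau or ascent/descent condition already used in the proof of Theorem \ref{th:bij-dyck-decorated-motzkin} and Proposition \ref{prop:reduced-sequences}(i). The collections for $i$ and $i+1$ differ if and only if some interval ends at $i$ (so $i=b_j$) or some interval starts at $i+1$ (so $i+1=a_j$). So reducedness is exactly the stated condition. One caveat: if $i,i+1$ are not adjacent, then $i=b_j$ automatically, so non-adjacent pairs are consistent with the statement.

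\emph{Item (ii).} This is the main obstacle, because it needs the lattice path diagram of an LPM. The diagram of $M[P,Q]$ is the region between the two bounding paths $P$ and $Q$. Its column at step $i$ (or its diagonal) has width equal to the number of intervals $P_k$ containing $i$, which is $\pnest(i)$. I would show that the region contains an $((m+1)\times m)$-grid, meaning a rectangle of $m+1$ by $m$ unit steps, if and only if some element $i$ lies in $m+1$ of the intervals. For the forward direction, $m+1$ intervals sharing $i$ give $m+1$ consecutive rows all crossing the same vertical band, which spans a rectangle of the required size. For the converse, I would read off from such a rectangle $m+1$ distinct intervals that contain a common element. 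I expect to need care with the conventions of \cite{BoninMier2006} here, and I would first verify the equivalence on the small cases in Example \ref{ex:def-peaks}. Concluding, the diagram avoids these grids exactly when $\pnest(\avec)\le m$, that is, when $\Gamma_\avec$ is $m$-nested.
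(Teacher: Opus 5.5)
Your proposal is correct and takes essentially the same route as the paper. Item (i) goes through Lemma \ref{charac-abelian}. Item (ii) reads off the occurrences of one element as North steps in $m+1$ consecutive rows, i.e.\ intervals $P_j,\ldots,P_{j+m}$, of the LPM diagram. Item (iii) uses ``adjacent twins iff the same collection of peak-cliques''.

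Your version is more complete than the paper's, which argues only one direction of (ii) (grid $\Rightarrow$ $\pnest\ge m+1$) and one direction of (iii). For your forward direction of (ii), the $m+1$ rows cover a common band of $m$ columns because $a_{j+l}\le a_{j+m}-(m-l)$ and $b_{j+l}\ge b_j+l$ for $0\le l\le m$. This uses the strict increase of the endpoints, which you should state explicitly.
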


\begin{proof}
\leavevmode
    \begin{enumerate}[(i)]
        \item The proof follows immediately from Lemma \ref{charac-abelian};
        \item The claim follows from the fact that a lattice path matroid $M = (P_1, \ldots, P_k)$ that contains a $((m+1) \times m)$-grid in its diagram contains an element $i+m$, $i \geq 1$, where $i+m \in P_j, P_{j+1}, \ldots, P_{j+m}$ for some $j$, see Figure \ref{fig:line-lpms}. Hence, $\pnest(i+m) \geq m+1$ in its associated unit interval graph;
        \begin{figure}[H]
            \centering
                \begin{tikzpicture}[scale=2.0]
                    \draw[black,line width=1.2pt] (0,0)--(0,3);
                    \draw[black,line width=1.2pt] (2,0)--(2,3);
                    \draw[black,line width=1.2pt] (4,0)--(4,3);
    
                    \draw[black,line width=1.2pt] (0,0)--(4,0);
                    \draw[black,line width=1.2pt] (0,1)--(4,1);
                    \draw[black,line width=1.2pt] (0,2)--(4,2);
                    \draw[black,line width=1.2pt] (0,3)--(4,3);
           
                    \node[left] at (0,0.5) {\small $i$};
                    \node[left] at (2,0.5) {\small $i+1$};
                    \node[left] at (4,0.5) {\small $i+2$};
                    
                    \node[left] at (0,1.5) {\small $i+1$};
                    \node[left] at (2,1.5) {\small $i+2$};
                    \node[left] at (4,1.5) {\small $i+3$};
    
                    \node[left] at (0,2.5) {\small $i+2$};
                    \node[left] at (2,2.5) {\small $i+3$};
                    \node[left] at (4,2.5) {\small $i+4$};
                \end{tikzpicture}
            \caption{A $(3 \times 2)$-grid in an LPM with its $N$ steps labeled. Observe that $i+2$ is shown at least $3$ times in an LPM containing such a grid.}
            \label{fig:line-lpms}
        \end{figure}
    \item In fact, if $\Gamma_\avec \in \mathcal{R}(n)$ is a reduced unit interval graph with set of peak-cliques $\mathcal{P}_\avec = \{ P_1, \ldots, P_m \}$, then there exists no $i \in [n-1]$ such that $i$ and $i+1$ belong to the same peak-cliques. Hence, if $i \neq b_j$ and $i+1 \neq a_j$ for all $j \in [m]$, then $i \in [a_j, b_j]$ if and only if $i+1 \in [a_j, b_j]$, which is a contradiction. Hence, the claim follows.
    \end{enumerate}
\end{proof}   

The following natural questions related to LPMs and unit interval graphs arise from this bijection and Proposition \ref{prop:lpms}:

\begin{question}
    How can we characterize important subclasses of lattice path matroids, such as Schubert matroids, snakes and panhandle matroids, as unit interval graphs? What are their peak and peak-nesting polynomials? Can we find nice descriptions of structures of lattice path matroids in terms of unit interval graphs, such as flats, circuits and bases? How are operations in LPMs, such as restriction, contraction and duality translated to unit interval graphs? How does that affect their corresponding peak and peak-nesting polynomials? Can we say anything interesting about other matroidal polynomials, such as Tutte polynomials and Ehrhart polynomials?
\end{question}

\section*{Declaration on the use of generative AI}

During the development and preparation of this work, the authors used ChatGPT 5.6 Sol (OpenAI) \cite{openai2026chatgpt} for exploratory discussions, including suggestions for mathematical approaches, as well as for language and presentation improvements, and in the elaboration of the pictures. Any AI-generated mathematical suggestions were independently checked, modified when necessary, and incorporated only after verification by the authors. Generative AI was not treated as a source of mathematical authority, and the authors remain fully responsible for all results, proofs, and conclusions in the manuscript.

We list the main mathematical contributions of AI to this project:
\begin{itemize}
    \item the finding of the formulas for $\Pnest_n(t)$ and $\Pnest_n^c(t)$ in Corollary \ref{ref:pnest-formulas}, and the recursive formulas for $\Pnest_{\mathcal{R}(n)}(t)$ and $\Pnest_{\mathcal{R}(n)}^c(t)$ in Proposition \ref{prop:rec-pnest-red};
    \item the hypergeometric formulas presented in Section \ref{sec:peak-all}, Proposition \ref{prop:twin-formula} and Theorem \ref{th:peak-reduced};
    \item the statement and the proof of Proposition \ref{charac-unit-interval-line};
    \item the Lagrange Inversion Formula's argument in the proof of Theorem \ref{th:peak-connected-reduced}.
\end{itemize}

\bibliographystyle{alphaurl}
\bibliography{bibliography}

\end{document}